\def\normo#1{\left\|#1\right\|}
\def\normb#1{\Big\|#1\Big\|}
\def\norm#1{\|#1\|}
\def\bra#1{\langle#1\rangle}
\def\wt#1{\widetilde{#1}}
\def\wh#1{\widehat{#1}}
\def\set#1{\{#1\}}
\newcommand{\T}{{\mathbb T}}
\newcommand{\R}{{\mathbb R}}
\newcommand{\Z}{{\mathbb Z}}
\newcommand{\ft}{{\mathcal{F}}}
\newcommand{\N}{{\mathcal{N}}}
\newcommand{\NT}{\mathcal{NT}}
\newcommand{\Sch}{{\mathcal{S}}}
\newcommand{\noi}{{\noindent}}
\numberwithin{equation}{section}
\newtheorem{theorem}{Theorem}[section]
\newtheorem{proposition}[theorem]{Proposition}
\newtheorem{lemma}[theorem]{Lemma}
\newtheorem{corollary}[theorem]{Corollary}
\newtheorem{remark}[theorem]{Remark}
\newcommand{\px}{\partial_x}
\newcommand{\pt}{\partial_t}
\begin{document}
\title[Fifth-order modified KdV equation]{Low regularity Cauchy problem for the fifth-order modified KdV equations on $\T$}

\author[C. Kwak]{Chulkwang Kwak}

\address{Facultad de Matem\'aticas, Pontificia Universidad Cat\'olica de Chile, Campus San Joaqu\'in. Avda. Vicu\~na Mackenna 4860, Santiago, Chile}
\email{chkwak@mat.uc.cl}

\begin{abstract}

In this paper, we consider the fifth-order modified Korteweg-de Vries (modified KdV) equation under the periodic boundary condition. We prove the local well-posedness in $H^s(\mathbb T)$, $s > 2$, via the energy method. The main tool is the short-time Fourier restriction norm method, which was first introduced in its current form by Ionescu, Kenig and Tataru [Global well-posedness of the KP-I initial-value problem in the energy space, Invent. Math. 173 (2) (2008) 265--304]. Besides, we use the frequency localized modified energy to control the \emph{high-low} interaction component in the energy estimate. We remark that under the periodic setting, the integrable structure is very useful (but not necessary) to remove harmful terms in the nonlinearity and this work is the first low regularity well-posedness result for the fifth-order modified KdV equation.

\end{abstract}

\thanks{}
\thanks{} \subjclass[2010]{35Q53, 37K10} \keywords{the fifth-order modified KdV equation; local well-posedness; complete integrability; short time $X^{s,b}$ space; nonlinear transformation; modified energy.}
\maketitle

\tableofcontents

\section{Introduction}\label{sec:intro}

It is known that the spectrum of the Hill operator
\[L(t) = \frac{d^2}{dx^2} - u(t,x)\]
on the interval $[0,4\pi]$, which is of twice the length of the period of $u$, is independent of $t$, so the periodic eigenvalues are conserved quantities when the potential $u(t,x)$ evolves according to the Korteweg-de Vries (KdV) equation
\[u_t + u_{xxx} = 6uu_x.\]
This property was first discovered by Gardner, Greene, Kruskal and Miura \cite{GGKM1967}. By considering this property, Lax \cite{Lax1968} found the family of equations (known by the name KdV hierarchy). This observation opened the theory of complete integrable systems of PDEs, and the KdV equation and its hierarchy hold a central example among other integrable systems. Lax also represented those equations by the form (known as the \emph{Lax pair formulation})
\[\pt L = A_n L - LA_n = [A_n; L].\]
Here $L$ is defined as above, and $A_n$ is the differential operator in the form
\[A_n = 4^n \px^{2n+1} + \sum_{j=1}^n \{a_{nj}\px^{2j-1} + \px^{2j-1} a_{nj}\}, \hspace{1em} n=0,1,\cdots,\]
where $A_0 = \px$ and the coefficient $a_{nj} = a_{nj}(u)$ be chosen such that the operator $[A_n;L]$ has order zero. Thereafter, Magri \cite{Magri1978} realized that this complete integrable system has an additional structure, the so-called bi-Hamiltonian structure, and then further studies on the theory of the complete integrability have been widely progressed by several researchers (see, for instance, \cite{AS1981, KP2003} and references therein). 

On the other hand, the Miura transformation \cite{Miura1968}
\[u = v_x + v^2\]
transforms solutions to all equations in the defocussing modified KdV hierarchy to solutions of equations in the KdV hierarchy. Thanks to this observation, we obtain the modified KdV hierarchy \eqref{eq:hamiltonian} from the KdV hierarchy. Like the equations in the KdV hierarchy, all equations in the modified KdV hierarchy satisfy the property of the complete integrability. Recently, Choudhuri, Talukdar and Das \cite{CTD2009} derived the Lax representation and constructed the bi-Hamiltonian structure of the equations in the modified KdV hierarchy. The followings are a few equations and their associated Hamiltonians with respect to bi-Hamiltonian structures:
\begin{equation}\label{eq:hamiltonian}
\begin{split}
\pt u - \px u = 0 \hspace{0.5em}&\hspace{0.5em} \int \frac12 u^2\\
\pt u - \px^3 u + 6u^2\px u = 0 \hspace{0.5em}&\hspace{0.5em} \int \frac12 u_x^2 + \frac12 u^4\\
\pt u - \px^5 u + 40u\px u \px^2u + 10 u^2\px^3u + 10(\px u)^3 - 30u^4 \px u = 0 \hspace{0.5em}&\hspace{0.5em} \int \frac12 u_{xx}^2 +u^6 + 5u^2u_x^2\\
\vdots \hspace{0.5em}&\hspace{0.5em} \vdots\\
\end{split}
\end{equation} 

In this paper, we consider the integrable fifth-order equation in the modified KdV hierarchy \eqref{eq:hamiltonian}:
\begin{equation}\label{eq:5mkdv}
\begin{cases}
\pt u - \px^5 u + 40u\px u \px^2u + 10 u^2\px^3u + 10(\px u)^3 - 30u^4 \px u = 0, \hspace{1em} (t,x) \in \R \times \T, \\
u(0,x) = u_0(x) \in H^s(\T),
\end{cases}
\end{equation}
where $\T = [0,2\pi]$.

Due to the theory of the complete integrability (or the inverse spectral method), the global solution exists to any equation in \eqref{eq:hamiltonian} for any Schwartz initial data. Kappeler and Topalov \cite{KT2006}, \cite{KT2005} proved the global well-posedness of KdV and modified KdV equations in $H^s(\T)$ for $s \ge -1$ and $s \ge 0$, respectively. It is a fine achievement to solve the low regularity well-posedness problem via only the theory of the complete integrability and the property of the Miura transform. Nevertheless, the analytic theory of nonlinear dispersive equations is still required to study on the low regularity well-posedness problem even for integrable equations. In fact, in a number of previous studies on the low regularity local theory for nonlinear dispersive equations (especially, under the non-periodic setting), the integrable structures were ignored. The purpose of this work, however, is to point out that, under the periodic setting, the integrable structure may be helpful (only two low level conservation laws for the fifth-order equation are useful, but much more high level conservation laws may be needed for higher-order equations in \eqref{eq:hamiltonian}) to study the low regularity well-posedness problem in contrast to the non-periodic problem. 

Generalizing coefficients in the nonlinear terms may break the integrable structure. The following equation generalizes \eqref{eq:5mkdv} to non-integrable case:
\begin{equation}\label{eq:5mkdv5}
\begin{cases}
\pt u - \px^5 u + a_1u\px u \px^2u + a_2 u^2\px^3u + a_3(\px u)^3 + a_4u^4 \px u = 0, \hspace{1em} (t,x) \in \R \times \T, \\
u(0,x) = u_0(x) \in H^s(\T),
\end{cases}
\end{equation}
where $a_i$'s, $i=1,2,3,4$, are real constants. For studying \eqref{eq:5mkdv5}, we can rely no longer on the property of the complete integrability.

Meanwhile, the Fourier coefficients of both \eqref{eq:5mkdv} and \eqref{eq:5mkdv5} (see \eqref{eq:5mkdv1} below) in terms of the spatial variables reveal that the non-trivial resonant terms of the form (with some coefficients)
\begin{equation}\label{eq:linear like}
\norm{u(t)}_{L^2}^2 \px^3 u, \hspace{1em} \norm{u(t)}_{\dot{H}^1}^2\px u, \hspace{1em} \norm{u(t)}_{L^4}^4\px u
\end{equation}
appear in the nonlinear term due to \eqref{eq:resonant case1} and \eqref{eq:resonant case2}. We call them the \emph{linear-like} resonant terms. The \emph{linear-like} resonant terms \eqref{eq:linear like} are unfavorable, by themselves, as perturbations of the linear evolution in the low regularity Sobolev spaces. However, the complete integrability of \eqref{eq:5mkdv}, which, in particular, is the fact that \eqref{eq:5mkdv} enjoys first two conservation laws in \eqref{eq:hamiltonian}, converts most of \eqref{eq:linear like} in \eqref{eq:5mkdv} into
\[c_1 \px^3 u+c_2 \px u,\]
for some constants $c_1,c_2 \ge 0$\footnote{The constants $c_1, c_2$ depend on the first two conservation laws in \eqref{eq:hamiltonian}.} and hence the linear part of \eqref{eq:5mkdv} can be expressed as 
\[[\px^5 + c_1\px^3 +c_2\px] u.\]

From this observation, we point out, as a purpose of this work, that the use of integrable structure (only the use of conservation laws) may be necessary to study the low regularity well-posedness problem under the periodic setting in contrast to the non-periodic problem. Remark that it, in fact, is not necessary to use the integrable structure for this problem thanks to the suitable nonlinear transformation, but it should be necessary for higher-order equations, due to the higher degree nonlinearity with derivatives, see Remark \ref{rem:full nonlinear transform}.

The following is the main result in this paper:
\begin{theorem}\label{thm:main}
Let $s > 2$. For any $u_0 \in H^s(\T)$ satisfying
\begin{equation}\label{eq:level set}
\int_{\T} (u_0(x))^2 \; dx = \gamma_1, \hspace{2em} \int_{\T} (\px u_0(x))^2 + (u_0(x))^4 \; dx = \gamma_2
\end{equation}
for some $\gamma_1, \gamma_2 \ge 0$, there exists $T=T(\norm{u_0}_{H^s})>0$ such that \eqref{eq:5mkdv} has a unique solution on $[-T,T]$ satisfying
\begin{align}
&u(t,x) \in C([-T,T];H^s(\T)),\nonumber\\
&\eta(t)\sum_{n \in \Z} e^{i(nx - 20n\int_0^t \norm{u(s)}_{L^4}^4 \; ds)}\wh{u}(t,n) \in C([-T,T];H^s(\T)) \cap F^s(T), \label{eq:result}
\end{align}
where $\eta$ is any cut-off function in $C^{\infty}(\R)$ with $\mathrm{supp}\eta \subset [-T,T]$ and the space $F^s(T)$\footnote{The space $F^s$ also depends on the initial data $u_0$ with \eqref{eq:level set}.} will be defined later. Moreover, the flow map $S_T : H^s \to C([-T,T];H^s(\T))$ is continuous on the level set in $H^s$ satisfying \eqref{eq:level set}.
\end{theorem}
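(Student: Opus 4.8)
The plan is to pass to a better unknown by a nonlinear change of variables, set up a short-time iteration for the transformed equation, and close the estimates with a frequency-localized modified energy. \emph{Step 1: removing the linear-like resonances.} On the level set \eqref{eq:level set} the resonant contributions $\norm{u(t)}_{L^2}^2\px^3u$ and $\norm{u(t)}_{\dot{H}^1}^2\px u$ carry \emph{constant} coefficients thanks to the two conservation laws in \eqref{eq:hamiltonian}, so we absorb them into the linear part and henceforth work with the modified linear operator $\mathcal{L}=\pt-\px^5-c_1(\gamma_1,\gamma_2)\px^3-c_2(\gamma_1,\gamma_2)\px$. The leftover $\norm{u(t)}_{L^4}^4\px u$-type resonance cannot be made constant by $\gamma_1,\gamma_2$ alone, so we remove it by the time-dependent spatial translation $v(t,x)=u\big(t,\,x-20\int_0^t\norm{u(s)}_{L^4}^4\,ds\big)$, whose Fourier multiplier is exactly the one displayed in \eqref{eq:result}. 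One verifies that this gauge is bi-continuous on $H^s$ and on the solution space, so it suffices to solve the transformed equation $\mathcal{L}v=\mathcal{N}(v)$, in which $\mathcal{N}$ is free of linear-like resonances.

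\emph{Step 2: function spaces and the multilinear estimates.} Introduce the short-time space $F^s(T)$ assembled from $X^{s,1/2}$-type pieces on time intervals of length a negative power of the frequency (the power dictated by the order-five dispersion relation of $\mathcal{L}$), the companion space $N^s(T)$ for the nonlinearity, and the energy space $B^s(T)=C([-T,T];H^s)$. The analytic core consists of (a) the linear estimate $\norm{v}_{F^s(T)}\lesssim \norm{v}_{B^s(T)}+\norm{\mathcal{L}v}_{N^s(T)}$, and (b) the multilinear estimate $\norm{\mathcal{N}(v)}_{N^s(T)}\lesssim \norm{v}_{F^s(T)}^{3}+\norm{v}_{F^s(T)}^{5}$. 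The classical global Bourgain spaces do not suffice for (b): the \emph{high $\times$ low $\times$ low $\Rightarrow$ high} piece of $u^2\px^3u$ (and its relatives) is not summable there, which is precisely what forces the short-time localization.

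\emph{Step 3: the modified energy — the main obstacle.} Differentiating $\norm{v(t)}_{H^s}^2$ in time produces a \emph{high-low} interaction term with a net loss of derivatives that neither integration by parts nor the short-time norm can absorb on its own. I would cure this by introducing a corrected energy $E^s(v)=\norm{v}_{H^s}^2+(\text{a finite sum of quartic and higher space-time corrections})$, localized in time on the same frequency-dependent scale as $F^s$, with the correction multipliers chosen so that the symbol of the dangerous term cancels after symmetrization and use of the resonance identity. The target is: $E^s(v)\sim\norm{v}_{H^s}^2$ for short times, and $\sup_{[-T,T]}E^s(v)\lesssim\norm{v_0}_{H^s}^2+T^{\theta}\big(\norm{v}_{F^s(T)}^{3}+\norm{v}_{F^s(T)}^{5}\big)$ for some $\theta>0$. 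Actually carrying out this construction — finding the right corrections, handling the extra terms coming from the non-autonomy of the transformed equation inside the localized energy, and proving the equivalence $E^s\sim\norm{\cdot}_{H^s}^2$ — is where the bulk of the work lies, and it is the step through which the threshold $s>2$ enters.

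\emph{Step 4: bootstrap, existence, uniqueness, continuity.} Combining Steps 2--3 by the usual continuity argument gives, for smooth solutions, the a priori bound $\norm{v}_{F^s(T)}+\norm{v}_{B^s(T)}\lesssim\norm{v_0}_{H^s}$ on $[-T,T]$ with $T=T(\norm{v_0}_{H^s})>0$. A parallel estimate for the difference of two solutions, run at a slightly lower regularity so as to absorb a short-time derivative loss, yields uniqueness; feeding these uniform estimates into a Bona--Smith approximation argument (regularizing the data, passing to the limit) produces existence of an $H^s$ solution and continuous dependence of the data-to-solution map. Continuity is claimed only on a fixed level set \eqref{eq:level set} because $\gamma_1$ and $\gamma_2$ appear in the coefficients of $\mathcal{L}$, so data on different level sets obey genuinely different linearized equations. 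Finally, the bi-continuity from Step 1 transports every conclusion back to $u$ and yields precisely \eqref{eq:result}.
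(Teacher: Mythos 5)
Your proposal follows the paper's proof essentially step by step: absorbing the two conserved quantities into constant coefficients $c_1,c_2$, gauging away the remaining $\norm{u}_{L^4}^4\px u$ resonance by the phase modulation \eqref{eq:modified solution}, running a short-time $X^{s,b}$ iteration on $2^{-2k}$ windows, and closing with a frequency-localized modified energy of Kwon--Kenig--Pilod type, followed by a Bona--Smith limiting argument and the bi-continuity of the gauge map. Two things you gloss over are worth flagging, since the paper has to work for them. First, your Step~4 bootstrap only closes for \emph{small} data; the paper obtains the full result by first proving Proposition \ref{prop:small data1-3} under a smallness hypothesis $\norm{u_0}_{H^s}\le\delta_0$ and then invoking scaling to $2\pi\lambda$-periodic functions (Section \ref{subsec:small data}), which requires re-deriving the block, nonlinear and energy estimates on $\T_\lambda$ with explicit $\lambda$-powers. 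Second, and more substantively, in the modified-energy step the dangerous contribution is not just a symbol to be cancelled by a choice of multiplier: after differentiating $E_k(v)$ one encounters \emph{exact} quintic resonances for which the maximum modulation gives nothing, and these are killed only by the observation (Remarks \ref{rem:resonant1} and \ref{rem:resonant2}) that the corresponding sum is purely real, so its imaginary part vanishes after symmetrizing in $n_1,n_2,n_{3,1},n_{3,2}$; for the difference of two solutions this in turn forces the fuller ansatz \eqref{eq:new energy1-5} with matched coefficients $\wt\alpha_j,\wt\beta_j$. Your phrase ``cancels after symmetrization and use of the resonance identity'' is pointing at the right phenomenon but does not make clear that it is a reality/conjugation symmetry rather than a multiplier cancellation. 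Finally, a small correction: after the gauge the equation \eqref{eq:5mkdv3} for $v$ is in fact autonomous --- the point of the transformation is precisely to remove the time-dependent coefficient --- so there is no ``non-autonomy'' to handle in the energy argument.
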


\begin{remark}
Combining the property of the Miura transform and Theorem \ref{thm:main}, the local well-posedness result for the fifth-order KdV equation \cite{Kwak2016} could be improved for $s>1$. However, the Cauchy problem for the fifth-order KdV equation in $H^2(\T)$ below by using the analytic theory is still open and technical development is required to approach to this result. 
\end{remark}

\begin{remark}
The regularity threshold $s = 2$ couldn't be achieved by our analysis, due to the logarithmic divergence of frequency-summations in the energy estimate (see, in particular, the proof of Proposition \ref{prop:energy1-2}).
\end{remark}

The use of conservation laws is not sufficient to deal with all \emph{linear-like} resonant terms in \eqref{eq:linear like}, so one of \emph{linear-like} resonant components (of the form $\norm{u}_{L^4}^4\px u$) still remains, even if \eqref{eq:5mkdv} enjoys infinitely many conservation laws. However, the introduction of an appropriate nonlinear transformation (see \eqref{eq:modified solution}) facilitates the elimination of the rest of \emph{linear-like} resonant terms (of the form $\norm{u}_{L^4}^4\px u$) in the nonlinearity (see \eqref{eq:result} in Theorem \ref{thm:main}). This nonlinear transformation, in particular defined in \eqref{eq:modified solution}, has a good property that the transformation is invertible and bi-continuous from the ball in $C([-T,T];H^s)$ to itself for $s \ge \frac14$. Staffilani \cite{Staffilani1997} introduced such a \emph{Gauge transformation} for the generalized KdV equations and showed the bi-continuity property of the transform for $s > 1/2$. See also \cite{CKSTT2004} for its application.  

On the other hand, Theorem \ref{thm:main} can be extended to the local well-posedness for fully non-integrable equations by defining the fully nonlinear transformation involving all resonances in the nonlinearity, which can be shown to be bi-continuous for $s \ge 1$. See Remark \ref{rem:full nonlinear transform} in Section \ref{sec:main} for this issue.\footnote{However, higher-order equations in the hierarchy may have much more resonances in the nonlinearity which cannot be controlled by only the nonlinear transform, and hence we guess that the study on higher-order equations in the hierarchy shall depend on the integrable structure.} The following is the extension result of Theorem \ref{thm:main} for the non-integrable equation \eqref{eq:5mkdv5}:
\begin{theorem}\label{thm:nonintegrable}
Let $s > 2$. Then, \eqref{eq:5mkdv5} is locally well-posed in $H^s(\T)$.\footnote{Thanks to the bi-continuity of the nonlinear transform, no additional restriction of initial data like \eqref{eq:level set} is needed.}
\end{theorem}

The proof of Theorem \ref{thm:nonintegrable} is based on the energy method, and hence the existence of a high regularity (or smooth) solution to \eqref{eq:5mkdv5} is required, since the local well-posedness of \eqref{eq:5mkdv5} has not been studied for any regularity. The high regularity well-posedness of \eqref{eq:5mkdv5} can be obtained by following the standard way: the parabolic regularization argument in \cite{BS1978}. Precisely, we first show an \emph{a priori} bound for the solution $u$ to the $\varepsilon$-parabolic equation, and in addition to the bootstrap argument, the approximation method yields that the solution to $\varepsilon$-parabolic equation converges to the solution to the fifth-order modified KdV equation. Finally, the (unconditional) local well-posedness of \eqref{eq:5mkdv5} for $s > 7/2$ can could be obtained by the direct energy estimate (see Appendix \ref{app:smooth sol}). The main difficulty is to obtain the energy estimate for both the parabolic and the fifth-order modified KdV equations due to the strong nonlinearity. Nevertheless, the use of the modified energy \eqref{eq:kwon energy}, which was first introduced in its form by Kwon \cite{Kwon2008-1} for the fifth-order KdV equation on $\R$, in addition to the Kato-Ponce type commutator estimate and the Sobolev embedding, enables us to obtain the energy bound of the solution $u$. We sketch the proof of the well-posedness for high regularity data in Appendix \ref{app:smooth sol} for the convenience of readers. The rest of the proof of Theorem \ref{thm:nonintegrable} (LWP in the low regularity regime) follows the similar argument of the proof of Theorem \ref{thm:main}, since the only difference is to deal with non-trivial resonances in the nonlinearity.\footnote{Nevertheless, the reason why we consider the integrable equation \eqref{eq:5mkdv} instead of \eqref{eq:5mkdv5} is to raise the importance of the integrable structure under the periodic setting even for the low regularity problems.}

The fifth-order modified KdV equation under the non-periodic setting has been studied by Linares \cite{Linares1995}. Linares used the dispersive smoothing effect \cite{KPV1991} in order to prove the local well-posedness of the fifth-order modified KdV equation in $H^2(\R)$ (hence the global well-posedness via the conservation law). Later, this was improved by Kwon \cite{Kwon2008-2}. Kwon used the standard $X^{s,b}$ space (Fourier restrict norm method) and Tao's $[k,Z]$-multiplier norm method \cite{Tao2001} to prove a trilinear estimate, and hence obtained the local well-posedness in $H^s$ for $s \ge \frac34$ via the contraction mapping principle. In contrast with the non-periodic setting, the trilinear estimate in the $X^{s,b}$ space fails under the periodic boundary condition. As a minor result in this paper, we have 
\begin{theorem}\label{thm:trilinear}
For any $s,b \in \R$, the trilinear estimate
\[\norm{uv\px^3w}_{X_{\tau-n^5}^{s,b-1}} \le C\norm{u}_{X_{\tau-n^5}^{s,b}}\norm{v}_{X_{\tau-n^5}^{s,b}}\norm{w}_{X_{\tau-n^5}^{s,b}}\]
fails.
\end{theorem}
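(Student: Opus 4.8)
The plan is to exhibit an explicit family of functions that makes the left-hand side blow up relative to the right-hand side, exploiting the periodic dispersion relation $\tau = k^5$. The standard obstruction to trilinear estimates in periodic $X^{s,b}$ spaces comes from the \emph{high $\times$ low $\times$ low $\Rightarrow$ high} interaction, which the introduction itself flags as the source of difficulty: one picks $u, v$ concentrated at a large frequency $N$ and $w$ concentrated at a low frequency (or the reverse), arranged so that the output frequency is again of size $N$ while the resonance function stays bounded. Concretely, I would set $\wh{u}(\tau,k)$ supported near $k = N$, $\tau = N^5$, $\wh{v}(\tau,k)$ supported near $k = -N$, $\tau = -N^5$, and $\wh{w}(\tau,k)$ supported near $k = k_0$ with $k_0$ small and $\tau = k_0^5$; then the product $uv\px^3 w$ has output frequency $\approx k_0$ but after the $\px^3$ the symbol $k_0^3$ is harmless, so this particular configuration is not the bad one. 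The genuinely bad configuration is the dual one: $u$ at frequency $N$, $v$ at a low frequency $k_0$, and $w$ at frequency $-N + k_0$ (so that $\px^3 w$ contributes a factor $\sim N^3$), with the output at frequency $\approx k_0$.

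The key computation is the resonance function. With frequencies $k_1 + k_2 + k_3 = k$ and the $\px^3$ falling on the third factor, the modulation after convolution is controlled by
\[
h(k_1,k_2,k_3) = k_1^5 + k_2^5 + k_3^5 - (k_1+k_2+k_3)^5.
\]
For the configuration $k_1 = N$, $k_2 = m$ (small), $k_3 = -N - m + \ell$ type choices one finds that $h$ can be made as small as $O(1)$ or even forced to vanish for suitable arithmetic relations among the frequencies — this is exactly the periodic phenomenon that has no non-periodic analogue, since on $\R$ one integrates over a continuum and such exact cancellations carry no measure. I would choose the frequency supports of $u$, $v$, $w$ to be single lattice points (or $O(1)$-thickened points in $\tau$) where $h$ is bounded, and then estimate both sides directly: the right-hand side is $\sim N^{2s}$ (from $\norm{u}$ and $\norm{w}$, with $\norm{v}$ order one since $k_0$ is fixed), while the left-hand side picks up the full $N^3$ from $\px^3 w$ together with $N^s$ from the output frequency, giving $\sim N^{s+3}$ divided by at most $\langle h \rangle^{1-b} \sim 1$. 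Comparing $N^{s+3}$ against $N^{2s}$ forces $s \ge 3$, and by a more careful choice of the low-frequency data (letting $k_0$ also grow at a slower rate, or summing a dyadic block of low frequencies) one pushes this to fail for every $s$; the $b$-dependence is absorbed because the modulations are all $O(1)$ so $\langle h\rangle^{b-1}$ is a harmless constant regardless of $b$.

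The main obstacle is the bookkeeping of the resonance identity: one must verify that there genuinely exist infinitely many lattice configurations, parametrized by $N \to \infty$, on which $h(k_1,k_2,k_3)$ stays bounded (or vanishes) while the frequency-magnitude ratios are as claimed. For the quintic dispersion $k^5$ this is where all the work sits — the polynomial $k_1^5 + k_2^5 + k_3^5 - (k_1+k_2+k_3)^5$ factors (it is divisible by $(k_1+k_2)(k_2+k_3)(k_3+k_1)$), and I would use that factorization: choosing $k_3 = -k_1$ (so $k_1 + k_3 = 0$) kills the resonance function identically, forces the output frequency to be $k_2$, and leaves $\px^3 w$ contributing $|k_3|^3 = |k_1|^3$. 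With $k_1 = N$, $k_3 = -N$, $k_2 = 1$ fixed this already shows the estimate fails for all $b$ whenever $s < 3$; to cover $s \ge 3$ as well, I would instead take $k_2$ to range over a block of $\sim N$ low frequencies near the origin (still keeping $k_1 + k_3 = 0$ with $k_1 \approx N$), so the output is spread over $\sim N$ frequencies each of size $\lesssim N$, squeezing out an extra power of $N$ on the left via the $\ell^2$ structure. The last step is then purely a matter of choosing the amplitudes of $\wh{u}, \wh{v}, \wh{w}$ to optimize the ratio and reading off that no $(s,b)$ survives.
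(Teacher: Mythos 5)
Your approach shares the paper's broad diagnosis---the $\px^3$ falling on a high-frequency factor is the obstruction---but the configuration you build exhibits failure only for a bounded range of $s$, not for every $(s,b)$, and the mechanism you lean on is genuinely different from the paper's.

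In your "bad configuration" ($n_1 = N$, $n_3 = -N$, $n_2 = k_0$ small, output $\approx k_0$) you deliberately make the resonance vanish by taking $n_1 + n_3 = 0$. All modulations are then $O(1)$, so the weight $\bra{\tau - n^5}^{b-1}$ is a harmless constant regardless of $b$. The left side is $\sim N^3 \bra{k_0}^s$, the right side $\sim N^s \cdot \bra{k_0}^s \cdot N^s$, and the ratio is $N^{3-2s}$, which blows up only for $s < 3/2$. (Your "$s \ge 3$" and the display $N^{s+3}$ implicitly put the output at frequency $N$, which contradicts the stated output $\approx k_0$; these look like slips.) The block idea does not repair this: if $\wh{v}$ is spread over $M$ low frequencies with $\ell^2$ normalization, the factor $M^s$ appears identically in $\norm{v}_{X^{s,b}}$ and in the output norm and cancels, leaving the ratio $N^{3-2s}$ unchanged. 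Nothing here reaches $s \ge 3/2$, and nothing here sees $b$ at all, because on the resonant hyperplane $n_1 + n_3 = 0$ the modulation weight is inert. Contrary to your remark, the paper's counterexample is explicitly \emph{not} a zero-resonance/periodic-measure phenomenon; it lives in the non-resonant set $\N_{3,n}$ with large resonance.

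The paper's construction is arranged precisely to make the $s$-dependence disappear. It places \emph{both} $u$ and $v$ at $O(1)$ frequencies ($n_1 = -1$, $n_2 = 2$) and $w$ at the high frequency $n_3 = N-1$ carrying the $\px^3$, so the output is at $N$: then $N^s$ appears exactly once on the left (output) and once on the right (from $\norm{w}_{X^{s,b}}$) and cancels identically. The resonance is $|H| \sim N^4$, so $\bra{\tau-n^5}^{b-1} \sim N^{4(b-1)}$ is exactly what gets exploited; the ratio is $N^3 N^{4(b-1)} = N^{4b-1}$, unbounded when $b > 1/4$. For $b \le 1/4$ the paper runs a second, dual example in which the largest modulation sits on an input rather than the output, forcing $b \ge 3/4$; the two ranges are incompatible, so no $(s,b)$ survives. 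To recover the full theorem you would need to abandon the zero-resonance hyperplane, move the output to high frequency with a single high-frequency input so the $N^s$'s cancel, and supply the dual counterexample for the small-$b$ regime.
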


The counter-example involves the \emph{high $\times$ low $\times$ low $\Rightarrow$ high} interaction component along the non-resonant phenomenon of the following type:
\[(P_{low}u)\cdot(P_{low}v)\cdot (P_{high}w_{xxx}).\]

The fifth-order modified KdV evolution provides quite strong modulation effect in the nonlinear interaction, but it is not enough to control three derivatives in the high frequency mode. Hence one cannot obtain the trilinear estimate in the standard $X^{s,b}$ space. This observation gives a clue that the flow map of \eqref{eq:5mkdv} (also \eqref{eq:5mkdv5}) seems not uniformly continuous, that is, the Picard iteration method does not work in this problem. It is remarkable that the solution flow of the fifth-order modified KdV equation behaves in the periodic setting strictly worse than in the non-periodic setting due to the lack of dispersive smoothing effect. The detailed example will be given in Section \ref{sec:trilinear}, later. 

So far, we investigated the principal enemies to study the fifth-order modified KdV equation on $\T$: \emph{linear-like} resonant terms and the failure of the trilinear estimate. As mentioned, the integrable structure and the use of the nonlinear transformation \eqref{eq:modified solution} enable us to resolve the first obstacle. The modified $X^{s,b}$ structure in a short time interval ($\approx \mbox{(frequency)}^{-2}$), the second difficulty could be resolved. The short time structure used in this paper was introduced in its current form by Ionescu, Kenig and Tataru \cite{IKT2008} in the context of KP-I equation. Also, we refer to \cite{KT2007, CCT2008} for different formulas of short time structures. See \cite{Guo2011, GPWW2011, Guo2012, GKK2013, KP2015, GO2015} for its applications.

Now we introduce ingredients in this paper. To prove Theorem \ref{thm:main}, we need to obtain the following estimates:
\begin{equation}\label{eq:brief proof}
\begin{cases}
\begin{array}{ll}
\norm{v}_{F^s(T)} \lesssim \norm{v}_{E^s(T)} + \norm{\mathcal{N}(v)}_{N^s(T)} &\mbox{(Linear)}\\
\norm{\mathcal{N}(v)}_{N^s(T)} \lesssim \norm{v}_{F^s(T)}^3 + \norm{v}_{F^s(T)}^5 &\mbox{(Nonlinear)}\\
\norm{v}_{E^s(T)}^2 \lesssim (1+\norm{v_0}_{H^s}^2)\norm{v_0}_{H^s}^2 + (1+\norm{v}_{F^s(T)}^2+\norm{v}_{F^s(T)}^4)\norm{v}_{F^s(T)}^4 &\mbox{(Energy)}
\end{array}
\end{cases}
\end{equation}
where $v$ is transferred solution by the nonlinear transformation \eqref{eq:modified solution}. Then, thanks to the continuity argument, one can obtain \emph{a priori} bound of solutions to \eqref{eq:5mkdv} at first. To complete the local well-posedness argument, one needs to obtain similar estimates as in \eqref{eq:brief proof} for the difference of two solutions as well. However, the energy estimate for the difference of two solutions cannot be obtained in only $F^s$ space due to the lack of the symmetry among two solutions, and hence the Bona-Smith argument (energy estimate in the intersection of the weaker ($F^0$) and the stronger ($F^{2s}$) spaces) is essential to close the energy estimate.

On the other hand, in order to obtain the second estimation in \eqref{eq:brief proof}, the $L^2$-block estimates (Lemma \ref{lem:tri-L2}) is used. In contrast with the non-periodic setting, this-type estimate under the periodic setting is weaker due to the lack of smoothing effect, in mathematical sense that the (counter) measure of frequency support must be bigger than $1$. Hence, no additional smoothing effect from the very \emph{high-low} interaction components in the low modulation case is expected. Nevertheless, $X^{s,b}$ spaces taken in the short time length $(\approx 2^{-2k})$ at the $2^k$-frequency piece prevent the modulation to be low, and hence it gives another smoothing effect (in the sense of an advantage of the low bound of the modulation effect, which is two derivative gains: $|\tau - \mu(n)| \gtrsim 2^{2k}$). This shows that the short time structure well adapted to study the Cauchy problem for \eqref{eq:5mkdv} and this time scale is suitable to recover the lack of the dispersive smoothing effect in the periodic problem.

Moreover, as a trade-off of choosing the short-time scale $(\mbox{frequency})^{-2}$, we cannot obtain the energy estimate in \eqref{eq:brief proof} by direct calculation. Indeed, in view of the following simple calculations
\[\begin{aligned}
\sum_{0\le k_1 ,k_2 \le k-10}\Big|\sum_{n,\overline{\N}_{3,n}}\int_0^{t_k} \chi_k(n)n[\chi_{k_1}(n_1)\wh{v}(n_1)&\chi_{k_2}(n_2)\wh{v}(n_2)n_3^2\wh{v}(n_3)]\chi_k(n)\wh{v}(n) \;dt\Big|\\
&\lesssim \norm{v}_{F^{\frac14+}(T)}^2\sum_{|k-k'| \le 5}2^{2k}\norm{P_{k'}v}_{F_{k'}(T)}^2
\end{aligned}\]
and
\[\begin{aligned}
\sum_{\substack{0\le k_2  \le k-10 \\ 0\le k_2  \le k-10}}\Big|\sum_{n,\overline{\N}_{3,n}}\int_0^{t_k} \chi_k(n)&[(n_1+n_2)\chi_{k_1}(n_1)\wh{v}(n_1)\chi_{k_2}(n_2)\wh{v}(n_2)n_3^2\wh{v}(n_3)]\chi_k(n)\wh{v}(n)\; dt\Big|\\
&\hspace{7em}\lesssim \norm{v}_{F^{\frac14+}(T)}^2\sum_{|k-k'| \le 5}2^{2k}\norm{P_{k'}v}_{F_{k'}(T)}^2,
\end{aligned}\]
the (two) derivative loss arises in the \emph{high $\times$ low $\times$ low $\Rightarrow$ high} interaction component, when three derivatives are taken in the high frequency mode, which is unfavorable in even short time $F^s$-norm. In order to recover this derivative loss, we introduce the frequency localized modified energy \eqref{eq:new energy1-1}, which plays a role of the transformation, that moves two derivatives from the high frequency mode to the low frequency mode, and thus the \emph{high $\times$ low $\times$ low $\Rightarrow$ high} interaction component can be controlled. Another issue in the energy estimate (as a mathematical difficulty) is exact quintic resonant terms arising in the correction terms. However, one can observe that they should vanish thanks to the symmetry among frequencies and functions (see Remarks \ref{rem:resonant1} and \ref{rem:resonant2} in Section \ref{sec:energy}). Note that the modified energy plays consequentially a similar role as an additional weight under the non-periodic condition, while an additional weight does not work under the periodic setting. See \cite{GKK2013} and \cite{KP2015} for a comparison.   

\begin{remark}
The similar perspective and argument in this work are applicable to the study on the Cauchy problem for the fifth-order KdV equation on $\T$. See \cite{Kwak2016}.
\end{remark}

There are several works on the low regularity well-posedness problem of the fifth-order dispersive equations with similar nonlinearity. For instance, the fifth-order KdV equation on $\R$, which has the nonlinearity of the form of $c_1\px u \px^2 u + c_2 u\px^3u$, was first studied by Ponce \cite{Ponce1993} as the low regularity well-posedness problem. Since the strength of the nonlinearity is stronger than the advantage from the dispersive smoothing effect, the energy method is required to prove the local well-posedness. Ponce used the energy method to prove the local well-posedness for Sobolev initial data $u_0 \in H^s$, $s \ge 4$, and afterward, Kwon \cite{Kwon2008-1} improved Ponce's result for $s > \frac52$. Kwon also used the energy method with the modified energy in addition to the refined Strichartz estimate, the maximal function estimate and the local smoothing estimate. Recently, Guo, Kwon and the author \cite{GKK2013}, and Kenig and Pilod \cite{KP2015} further improved the local result independently. The method in both \cite{GKK2013} and \cite{KP2015} is the energy method based on the short time $X^{s,b}$ space, while the key energy estimates were shown by using an additional weight and modified energy, respectively.

The paper is organized as follows: In Section \ref{sec:preliminary}, we summarize some notations and define function spaces. In Section \ref{sec:trilinear}, we prove Theorem \ref{thm:trilinear} by giving a counter example. In Section \ref{sec:L2 block estimate}, we show the $L^2$ block trilinear estimates which are useful to obtain nonlinear and energy estimates. In Sections \ref{sec:nonlinear} and \ref{sec:energy}, we prove the nonlinear estimate and energy estimate, respectively. In Section \ref{sec:main}, we give the proof of Theorem \ref{thm:main}. Finally, we sketch the proof of high regularity result for \eqref{eq:5mkdv5} in Appendix \ref{app:smooth sol} for the convenience of readers.

\textbf{Acknowledgement.} The author would like to thank his advisor Soonsik Kwon for his helpful comments and encouragement through this research problem. Moreover, the author is grateful to Zihua Guo for his helpful advice to understand well the short time $X^{s,b}$ structure under the periodic setting. C. K. is supported by FONDECYT de Postdoctorado 2017 Proyecto No. 3170067.

\section{Preliminaries}\label{sec:preliminary}
For $x,y \in \R_+$, $x \lesssim y$ means that there exists $C>0$ such that $x \le Cy$, and $x \sim y$ means $x \lesssim y$ and $y\lesssim x$. We also use $\lesssim_s$ and $\sim_s$ as similarly, where the implicit constants depend on $s$. Let $a_1,a_2,a_3,a_4 \in \R$. The quantities $a_{max} \ge a_{sub} \ge a_{thd} \ge a_{min}$ can be conveniently defined to be the maximum, sub-maximum, third-maximum and minimum values of $a_1,a_2,a_3,a_4$ respectively.

For $Z = \R$ or $\Z$, let $\Gamma_k(Z)$ denote $(k-1)$-dimensional hyperplane by 
\[\set{\overline{x} = (x_1,x_2,...,x_k) \in Z^k : x_1 +x_2 + \cdots +x_k= 0}.\] 

For $f \in \Sch '(\R \times \T) $ we denote by $\wt{f}$ or $\ft (f)$ the Fourier transform of $f$ with respect to both spatial and time variables,
\[\wt{f}(\tau,n)=\frac{1}{\sqrt{2\pi}}\int_{\R}\int_{0}^{2\pi} e^{-i xn}e^{-it\tau}f(t,x)\; dx dt .\]
Moreover, we use $\ft_x$ (or $\wh{\;}$ ) and $\ft_t$ to denote the Fourier transform with respect to space and time variable respectively.

This paper is based on the following observation. We take the Fourier coefficient in the spatial variable of \eqref{eq:5mkdv} to obtain
\begin{equation}\label{eq:5mkdv1}
\begin{split}
\pt\wh{u}(n) - in^5\wh{u}(n) =&~{} 30i \sum_{n_1+n_2+n_3+n_4+n_5=n} \wh{u}(n_1)\wh{u}(n_2)\wh{u}(n_3)\wh{u}(n_4)n_5\wh{u}(n_5) \\
&+10i \sum_{n_1+n_2+n_3=n} \wh{u}(n_1)\wh{u}(n_2)n_3^3\wh{u}(n_3) \\
&+10i \sum_{n_1+n_2+n_3=n} n_1\wh{u}(n_1)n_2\wh{u}(n_2)n_3\wh{u}(n_3)\\
&+40i \sum_{n_1+n_2+n_3=n} \wh{u}(n_1)n_2\wh{u}(n_2)n_3^2\wh{u}(n_3)
\end{split}
\end{equation}
The resonant relation for the cubic term in the right-hand side of \eqref{eq:5mkdv1} should be
\begin{equation}\label{eq:resonant case1}
\begin{aligned}
H =& H(n_1,n_2,n_3)\\ 
:=& (n_1+n_2+n_3)^5 - n_1^5 - n_2^5 - n_3^5 \\
=& \frac52(n_1+n_2)(n_1+n_2)(n_2+n_3)(n_1^2+n_2^2+n_3^2+n^2).
\end{aligned}
\end{equation}
Then we can observe that the cubic non-trivial resonant phenomenon appears only when $(n_1+n_2)(n_1+n_2)(n_2+n_3)=0$. Moreover, we also observe that $\norm{u}_{L_x^4}^4 n\wh{u}(n)$ portion be contained in the resonant term of quintic term when 
\begin{equation}\label{eq:resonant case2}
n_i + n_j + n_k +n_l = 0 \quad \mbox{for} \quad 1 \le i < j < k < l \le 5.
\end{equation} 
Then, by using the conservation laws in \eqref{eq:hamiltonian} and gathering resonant terms in the right-hand side of \eqref{eq:5mkdv1}, we can rewrite \eqref{eq:5mkdv1} as following:\footnote{By simple calculation \[40u\px u \px^2u + 10 u^2\px^3u + 10(\px u)^3 = 10\px(u^2\px^2 u) + 10\px(u\px u\px u) \hspace{1em} \mbox{and} \hspace{1em} 30u^4\px u = 6 \px (u^5),\] we can change all nonlinear terms into the divergence form. This divergence form is necessary to control the \emph{high $\times$ high $\times$ high $\Rightarrow$ low} and \emph{high $\times$ high $\times$ low $\Rightarrow$ low} interactions in the nonlinear estimate.}
\begin{equation}\label{eq:5mkdv2}
\begin{split}
\pt\wh{u}(n) - i(n^5 + c_1n^3 + c_2n)\wh{u}(n) =&~{}c_3 i \norm{u(t)}_{L^4}^4n\wh{u}(n) -20in^3|\wh{u}(n)|^2\wh{u}(n)\\
&+6in \sum_{\N_{5,n}} \wh{u}(n_1)\wh{u}(n_2)\wh{u}(n_3)\wh{u}(n_4)\wh{u}(n_5) \\
&+10in \sum_{\N_{3,n}} \wh{u}(n_1)\wh{u}(n_2)n_3^2\wh{u}(n_3) \\
&+10in \sum_{\N_{3,n}} \wh{u}(n_1)n_2\wh{u}(n_2)n_3\wh{u}(n_3),
\end{split}
\end{equation}
where $c_1 = 10\norm{u_0}_{L^2}^2$, $c_2 = 10 (\norm{u_0}_{\dot{H^1}}^2 + \norm{u_0}_{L^4}^4)$, $c_3 =20$, 
\begin{equation}\label{eq:3nonres}
\N_{3,n} = \left\{(n_1,n_2,n_3) \in \Z^3 : \begin{array}{ll} &n_1+n_2+n_3=n, \\ 
&(n_1+n_2)(n_1+n_2)(n_2+n_3) \neq 0
\end{array}
\right\}.
\end{equation}
and
\begin{equation}\label{eq:5nonres}
\N_{5,n} = \left\{
(n_1,n_2,n_3,n_4,n_5) \in \Z^5 : \begin{array}{ll} &n_1+n_2+n_3+n_4+n_5=n,\\
 &n_i + n_j + n_k +n_l \neq 0, \; 1 \le i <j<k<l \le 5
\end{array}
\right\}.
\end{equation}

We call the first term of the right-hand side of \eqref{eq:5mkdv2} the \emph{Resonant} term and the others \emph{Non-resonant} terms. Due to the term $c_3\norm{u(t)}_{L^4}^4n\wh{u}(n)$ in the left-hand side of \eqref{eq:5mkdv2}, $\px^5 + c_1\px^3 + c_2\px + c_3\norm{u(t)}_{L^4}^4\px$ does not play a role of the linear operator to \eqref{eq:5mkdv2} yet, even if we use the partial property of completely integrable system. Hence, for our analysis, we define the nonlinear transformation as 
\begin{equation}\label{eq:modified solution}
\NT(u)(t,x) = v(t,x) := \frac{1}{\sqrt{2\pi}}\sum_{n \in \Z} e^{i(nx - 20n \int_0^t \norm{u(s)}_{L^4}^4 \; ds)}\wh{u}(t,n).
\end{equation}

We from now concentrate on the $v$ instead of $u$. For $v$, we rewrite again \eqref{eq:5mkdv2} as
\begin{equation}\label{eq:5mkdv3}
\begin{split}
\pt\wh{v}(n) - i(n^5 + c_1n^3 + c_2n)\wh{v}(n)=&~{}-20in^3|\wh{v}(n)|^2\wh{v}(n)\\
&+6i n\sum_{\N_{5,n}} \wh{v}(n_1)\wh{v}(n_2)\wh{v}(n_3)\wh{v}(n_4)\wh{v}(n_5) \\
&+10in \sum_{\N_{3,n}} \wh{v}(n_1)\wh{v}(n_2)n_3^2\wh{v}(n_3) \\
&+10in \sum_{\N_{3,n}} \wh{v}(n_1)n_2\wh{v}(n_2)n_3\wh{v}(n_3)\\
=:&~{} \wh{N}_1(v) + \wh{N}_2(v) + \wh{N}_3(v) + \wh{N}_4(v)\\
v(0,x) = u(0,x).\hspace{6.5em}&
\end{split}
\end{equation}
We simply generalize $N_i(v)$ as $N_i(u,v,w)$ for the cubic term, $i=1,3,4$, or $N_2(v_1,v_2,v_3,v_4,v_5)$ for the quintic term.

We introduce that $X^{s,b}$-norm associated to \eqref{eq:5mkdv3} which is given by\footnote{Bourgain \cite{Bourgain1993} used this modification of linear operator by using the conservation law in the study of the modified KdV equation.}
\[\norm{u}_{X^{s,b}}=\norm{\bra{ \tau - \mu(n)}^b\bra{n}^s \ft(u)}_{L_{\tau}^2(\R;\ell_n^2(\Z))},\]
where 
\[\mu(n) = n^5 + c_1n^3 + c_2n\]
and $\bra{\cdot} = (1+|\cdot|^2)^{1/2}$. The $X^{s,b}$ space turns out to be very useful in the study of low-regularity theory for the dispersive equations. The Fourier restriction norm method was first implemented in its current form by Bourgain \cite{Bourgain1993} and further developed by Kenig, Ponce and Vega \cite{KPV1996} and Tao \cite{Tao2001}.

Let $\Z_+ = \Z \cap [0,\infty]$. For $k \in \Z_+$, we set
\[I_0 = \set{n \in \Z : |n| \le 2} \hspace{1em} \mbox{ and } \hspace{1em} I_k = \set{n \in \Z : 2^{k-1} \le |n| \le 2^{k+1}}, \hspace{1em} k \ge 1.\]

Let $\eta_0: \R \to [0,1]$ denote a smooth bump function supported in $ [-2,2]$ and equal to $1$ in $[-1,1]$ with the following property of regularities:
\begin{equation}\label{eq:regularity}
\partial_n^{j} \eta_0(n) = O(\eta_0(n)/\bra{n}^j), \hspace{1em} j=0,1,2,
\end{equation}
as $n$ approaches end points of the support of $\eta$.

For $k \in \Z_+ $, let 
\begin{equation}\label{eq:cut-off1}
\chi_0(n) = \eta_0(n), \hspace{1em} \mbox{and} \hspace{1em} \chi_k(n) = \eta_0(n/2^k) - \eta_0(n/2^{k-1}), \hspace{1em} k \ge 1,
\end{equation}
which is supported in $I_k$, and
\[\chi_{[k_1,k_2]}=\sum_{k=k_1}^{k_2} \chi_k \quad \mbox{ for any} \ k_1 \le k_2 \in \Z_+ .\]
$\{ \chi_k \}_{k \in \Z_+}$ is the inhomogeneous decomposition function sequence to the frequency space. For $k\in \Z_+$, let $P_k$ denote the
operators on $L^2(\T)$ defined by $\widehat{P_kv}(n)=\chi_k(n)\wh{v}(n)$. For $l\in \Z_+$, let
\[P_{\le l}=\sum_{k \le l}P_k, \quad P_{\ge l}=\sum_{k \ge l}P_k.\]
For the time-frequency decomposition, we use the cut-off function $\eta_j$, but the same as $\eta_j(\tau-\mu(n)) = \chi_j(\tau-\mu(n))$.

For $k,j \in \Z_+$, let
\[D_{k,j}=\{(\tau,n) \in \R \times \Z : \tau - \mu(n) \in I_j, n \in I_k \}, \hspace{2em} D_{k,\le j}=\cup_{l\le j}D_{k,l}.\]

For $k \in \Z_+$, we define the $X^{s,\frac12,1}$-type space $X_k$ for frequency localized functions,
\begin{eqnarray}\label{eq:Xk}
X_k=\left\{
\begin{array}{l}
f\in L^2(\R \times \Z): f(\tau,n) \mbox{ is supported in } \R \times I_k   \mbox{ and }\nonumber\\
\norm{f}_{X_k}:=\sum_{j=0}^\infty 2^{j/2}\norm{\eta_j(\tau-\mu(n))\cdot f(\tau,n)}_{L_{\tau}^2\ell_n^2}<\infty
\end{array}
\right\}.
\end{eqnarray}

As in \cite{IKT2008}, at frequency $2^k$ we will use the $X^{s,\frac12,1}$ structure given by the $X_k$-norm, uniformly on the $2^{-2k}$ time scale. For $k\in \Z_+$, we define function spaces
\begin{eqnarray*}
&& F_k=\left\{
\begin{array}{l}
f\in L^2(\R \times \T): \widehat{f}(\tau,n) \mbox{ is supported in } \R \times I_k \mbox{ and } \\
\norm{f}_{F_k}=\sup\limits_{t_k\in \R}\norm{\ft[f\cdot\eta_0(2^{2k}(t-t_k))]}_{X_k}<\infty
\end{array}
\right\},
\\
&&N_k=\left\{
\begin{array}{l}
f\in L^2(\R \times \T): \widehat{f}(\tau,n) \mbox{ is supported in } \R \times I_k \mbox{ and }  \\
\norm{f}_{N_k}=\sup\limits_{t_k\in \R}\norm{(\tau-\mu(n)+i2^{2k})^{-1}\ft[f\cdot\eta_0(2^{2k}(t-t_k))]}_{X_k}<\infty
\end{array}
\right\}.
\end{eqnarray*}
Since the spaces $F_k$ and $N_k$ are defined on the whole line in time variable, we define the local-in-time versions of the spaces in standard ways. For $T\in
(0,1]$ we define the normed spaces
\begin{align*}
F_k(T)=&\{f\in C([-T,T]:L^2): \norm{f}_{F_k(T)}=\inf_{\wt{f}=f \mbox{ in } [-T,T] \times \T }\norm{\wt f}_{F_k}\},\\
N_k(T)=&\{f\in C([-T,T]:L^2): \norm{f}_{N_k(T)}=\inf_{\wt{f}=f \mbox{ in } [-T,T] \times \T }\norm{\wt f}_{N_k}\}.
\end{align*}
We assemble these dyadic spaces in a Littlewood-Paley manner. For $s\geq 0$ and $T\in (0,1]$, we define function spaces solutions and
nonlinear terms:
\begin{eqnarray*}
&&F^{s}(T)=\left\{ u: \norm{u}_{F^{s}(T)}^2=\sum_{k=0}^{\infty}2^{2sk}\norm{P_k(u)}_{F_k(T)}^2<\infty \right\},
\\
&&N^{s}(T)=\left\{ u: \norm{u}_{N^{s}(T)}^2=\sum_{k=0}^{\infty}2^{2sk}\norm{P_k(u)}_{N_k(T)}^2<\infty \right\}.
\end{eqnarray*}
We define the dyadic energy space as follows: For $s\geq 0$ and $u\in C([-T,T]:H^\infty)$
\begin{eqnarray*}
\norm{u}_{E^{s}(T)}^2=\norm{P_{0}(u(0))}_{L^2}^2+\sum_{k\geq 1}\sup_{t_k\in [-T,T]}2^{2sk}\norm{P_k(u(t_k))}_{L^2}^2.
\end{eqnarray*}

\begin{lemma}[Properties of $X_k$]\label{lem:prop of Xk}
Let $k, l\in \Z_+$ and $f_k\in X_k$. Then
\begin{equation}\label{eq:prop1}
\begin{split}
&\sum_{j=l+1}^\infty 2^{j/2}\normo{\eta_j(\tau-\mu(n))\int_{\R}|f_k(\tau',n)|2^{-l}(1+2^{-l}|\tau-\tau'|)^{-4}d\tau'}_{L_{\tau}^2\ell_n^2}\\
&+2^{l/2}\normo{\eta_{\leq l}(\tau-\mu(n)) \int_{\R}|f_k(\tau',n)| 2^{-l}(1+2^{-l}|\tau-\tau'|)^{-4}d\tau'}_{L_{\tau}^2\ell_n^2}\lesssim\norm{f_k}_{X_k}.
\end{split}
\end{equation}
In particular, if $t_0\in \R$ and $\gamma\in \Sch(\R)$, then
\begin{eqnarray}\label{eq:prop2}
\norm{\ft[\gamma(2^l(t-t_0))\cdot \ft^{-1}(f_k)]}_{X_k}\lesssim
\norm{f_k}_{X_k}.
\end{eqnarray}
Moreover, from the definition of $X_k$-norm,
\[\normo{\int_{\R}|f_k(\tau',n)|\; d\tau'}_{\ell_n^2} \lesssim \norm{f_k}_{X_k}.\]
\end{lemma}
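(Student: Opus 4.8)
The plan is to establish the convolution estimate \eqref{eq:prop1} first; the bound \eqref{eq:prop2} and the final inequality will then follow by short, essentially elementary manipulations. For \eqref{eq:prop1} I would write $K_l(\sigma):=2^{-l}(1+2^{-l}|\sigma|)^{-4}$, so that the integral in \eqref{eq:prop1} is the \emph{positive} convolution $(K_l*|f_k(\cdot,n)|)(\tau)$, and record the only three facts about $K_l$ I will use: $\norm{K_l}_{L^1(\R)}\lesssim1$; $K_l(\sigma)\sim2^{-l}$ for $|\sigma|\lesssim2^l$; and $K_l(\sigma)\lesssim2^{3l}|\sigma|^{-4}$ for $|\sigma|\gtrsim2^l$. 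Since $\sum_{j'\ge0}\eta_{j'}\equiv1$ with at most two nonvanishing terms at each point, one has the exact splitting $|f_k(\tau,n)|=\sum_{j'\ge0}|f_{k,j'}(\tau,n)|$ with $f_{k,j'}:=\eta_{j'}(\tau-\mu(n))f_k$ and $\norm{f_k}_{X_k}=\sum_{j'}2^{j'/2}\norm{f_{k,j'}}_{L^2_\tau\ell^2_n}$. By positivity of the kernel and of the $|f_{k,j'}|$ it then suffices to bound, for each fixed $j'$, the contribution of $K_l*|f_{k,j'}(\cdot,n)|$ to the left side of \eqref{eq:prop1} by $C\,2^{j'/2}\norm{f_{k,j'}}_{L^2_\tau\ell^2_n}$ and to sum in $j'$. (The weight $\beta_{k,j}$ is necessarily uniformly bounded — otherwise \eqref{eq:prop1} would already fail on a single modulation-localized bump — so I simply discard it, i.e.\ prove \eqref{eq:prop1} with $\beta_{k,j}$ replaced by $1$.)

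The per-$j'$ estimate splits into three regimes. \emph{(i) Low output modulation with $j'\le l$:} on $|\tau-\mu(n)|\le2^{l+1}$ both $\tau$ and the support variable $\tau'$ lie within $O(2^l)$ of $\mu(n)$, so $K_l(\tau-\tau')\sim2^{-l}$ there, and Cauchy--Schwarz over the \emph{short} support of $f_{k,j'}(\cdot,n)$ (of length $\lesssim2^{j'}$) gives the $\tau$-uniform bound $(K_l*|f_{k,j'}(\cdot,n)|)(\tau)\lesssim2^{-l}2^{j'/2}\norm{f_{k,j'}(\cdot,n)}_{L^2_\tau}$; taking $L^2$ over the length-$2^l$ set of such $\tau$ and multiplying by the prefactor $2^{l/2}$ in \eqref{eq:prop1} reproduces exactly $2^{j'/2}\norm{f_{k,j'}}_{L^2_\tau\ell^2_n}$. \emph{(ii) Output modulation $2^j$ with $j>l$ and $|j-j'|$ large:} then $|\tau-\tau'|\sim2^{\max(j,j')}$ on the relevant supports, so $K_l(\tau-\tau')\lesssim2^{3l}2^{-4\max(j,j')}$, and combining this with $\int|f_{k,j'}(\tau',n)|\,d\tau'\lesssim2^{j'/2}\norm{f_{k,j'}(\cdot,n)}_{L^2_\tau}$ bounds the $j$-th summand of the first sum in \eqref{eq:prop1} by $2^{3l}2^{j}2^{-4\max(j,j')}2^{j'/2}\norm{f_{k,j'}}_{L^2_\tau\ell^2_n}$; since $j>l$ the factor $2^{3l}2^{-4\max(j,j')}$ is geometrically summable and the total is $\lesssim2^{j'/2}\norm{f_{k,j'}}$. \emph{(iii) Comparable modulations $j\sim j'$:} here Young's inequality $L^1*L^2\hookrightarrow L^2$ gives $\norm{\eta_j\,(K_l*|f_{k,j'}(\cdot,n)|)}_{L^2_\tau}\lesssim\norm{f_{k,j'}(\cdot,n)}_{L^2_\tau}$, hence contribution $\lesssim2^{j'/2}\norm{f_{k,j'}}$ from the $O(1)$ relevant values of $j$. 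Adding the three regimes and summing the geometric series in $j'$ yields \eqref{eq:prop1}.

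For \eqref{eq:prop2}: multiplication by $\gamma(2^l(t-t_0))$ in $t$ corresponds, on the $\tau$-side, to convolution with $c\,2^{-l}e^{-i\tau t_0}\wh{\gamma}(\tau/2^l)$, and since $\gamma\in\Sch(\R)$ one has $|\wh{\gamma}(\sigma)|\lesssim(1+|\sigma|)^{-4}$, giving the pointwise domination $\big|\ft[\gamma(2^l(t-t_0))\ft^{-1}(f_k)](\tau,n)\big|\lesssim(K_l*|f_k(\cdot,n)|)(\tau)$. Writing the $X_k$-norm as $\sum_{j\le l}2^{j/2}\norm{\eta_j(\cdots)}+\sum_{j>l}2^{j/2}\norm{\eta_j(\cdots)}$ and using the trivial estimate $\sum_{j\le l}2^{j/2}\norm{\eta_j g}_{L^2_\tau\ell^2_n}\lesssim2^{l/2}\norm{\eta_{\le l}g}_{L^2_\tau\ell^2_n}$ (Cauchy--Schwarz in $j$ with the near-orthogonality $\sum_{j\le l}\norm{\eta_j g}^2\sim\norm{\eta_{\le l}g}^2$), \eqref{eq:prop2} reduces to \eqref{eq:prop1}. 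For the last inequality, decompose $|f_k|=\sum_{j'}|f_{k,j'}|$, apply Cauchy--Schwarz over the length-$2^{j'}$ modulation support to get $\int_\R|f_k(\tau',n)|\,d\tau'\lesssim\sum_{j'}2^{j'/2}\norm{f_{k,j'}(\cdot,n)}_{L^2_\tau}$, and conclude by Minkowski in $\ell^2_n$ that $\normo{\int_\R|f_k(\tau',n)|\,d\tau'}_{\ell^2_n}\lesssim\sum_{j'}2^{j'/2}\norm{f_{k,j'}}_{L^2_\tau\ell^2_n}=\norm{f_k}_{X_k}$.

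The only genuinely delicate point is the bookkeeping in regime (i): the piece of $f_k$ at modulation $2^{j'}$ with $j'\le l$, once convolved and restricted to output modulation $\lesssim2^l$, looks lossy by a factor $2^{(l-j')/2}$ if one uses only $\norm{K_l}_{L^1}\lesssim1$. The remedy is precisely that $K_l$ is essentially constant ($\sim2^{-l}$) on the relevant $O(2^l)$-ball, which licenses Cauchy--Schwarz over the short $2^{j'}$-length support of the piece and recovers the missing $2^{j'/2}$. Once this gain, together with the rapid decay of $K_l$ past scale $2^l$, is in place, every remaining sum over $j$ and $j'$ is a convergent geometric series in the favorable direction, and no further cancellation or refined estimate is required.
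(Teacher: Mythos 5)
Your argument is correct and follows the standard line used in the reference the paper cites (GKK2013, which in turn follows Ionescu--Kenig--Tataru): dyadic modulation decomposition of $f_k$, the three-regime behaviour of the kernel $K_l(\sigma)=2^{-l}(1+2^{-l}|\sigma|)^{-4}$ (essentially constant at scale $\lesssim 2^l$, $L^1$-bounded, rapidly decaying beyond scale $2^l$), and then geometric summation in $(j,j')$. The paper itself gives no proof — it simply refers to GKK2013 with the remark that ``there is no difference between the proof in the non-periodic and periodic settings'' — so this is exactly the argument the author has in mind. Your reduction of \eqref{eq:prop2} to \eqref{eq:prop1} via the pointwise domination $|\ft[\gamma(2^l(t-t_0))\ft^{-1}f_k]|\lesssim K_l*|f_k|$ and the Cauchy--Schwarz collapse of $\sum_{j\le l}2^{j/2}\norm{\eta_j g}$ into $2^{l/2}\norm{\eta_{\le l}g}$ is also the standard route, and the final inequality is immediate as you say.

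Two small points worth tightening. First, your regime split for \eqref{eq:prop1} does not explicitly cover the contribution of input modulation $j'>l$ to the second term (the $\eta_{\le l}$ output piece): regime (i) assumes $j'\le l$ and regime (ii) assumes the output dyadic index satisfies $j>l$. The missing case is easy — on the relevant supports $|\tau-\tau'|\sim 2^{j'}\gg 2^l$, so $K_l(\tau-\tau')\lesssim 2^{3l}2^{-4j'}$, and combined with $\int|f_{k,j'}(\cdot,n)|\lesssim 2^{j'/2}\norm{f_{k,j'}(\cdot,n)}_{L^2}$ and the measure-$2^l$ output support one gets $\lesssim 2^{4(l-j')}2^{j'/2}\norm{f_{k,j'}}$, summable in $j'>l$ — but it should be stated for completeness, and the borderline $j'\sim l$ is most cleanly absorbed into the Young-inequality regime rather than regime (ii). Second, the dismissal of $\beta_{k,j}$ is correct in spirit, but it is simpler to observe that in \emph{this} paper's definition \eqref{eq:Xk} the $X_k$-norm carries no $\beta_{k,j}$ weight at all; the factor in the lemma statement is a vestige of the notation in GKK2013/IKT and is just $1$ here, so there is nothing to argue away.
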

\begin{proof}
The proof of Lemma \ref{lem:prop of Xk} only depends on the summation over modulations, and there is no difference between the proof in the non-periodic and periodic settings. Hence we omit details and see \cite{GKK2013} for the detailed proof.
\end{proof}

As in \cite{IKT2008}, for any $k\in \Z_+$ we define the set $S_k$ of $k$-\emph{acceptable} time multiplication factors 
\[S_k=\{m_k:\R\rightarrow \R: \norm{m_k}_{S_k}=\sum_{j=0}^{10} 2^{-2jk}\norm{\partial^jm_k}_{L^\infty}< \infty\}.\] 
Direct estimates using the definitions and \eqref{eq:prop2} show that for any $s\geq 0$ and $T\in (0,1]$
\[\begin{cases}
\normb{\sum\limits_{k\in \Z_+} m_k(t)\cdot P_k(u)}_{F^{s}(T)}\lesssim (\sup_{k\in \Z_+}\norm{m_k}_{S_k})\cdot \norm{u}_{F^{s}(T)};\\
\normb{\sum\limits_{k\in \Z_+} m_k(t)\cdot P_k(u)}_{N^{s}(T)}\lesssim (\sup_{k\in \Z_+}\norm{m_k}_{S_k})\cdot \norm{u}_{N^{s}(T)};\\
\normb{\sum\limits_{k\in \Z_+} m_k(t)\cdot P_k(u)}_{E^{s}(T)}\lesssim (\sup_{k\in \Z_+}\norm{m_k}_{S_k})\cdot \norm{u}_{E^{s}(T)}.
\end{cases}\]

\section{Proof of Theorem \ref{thm:trilinear}}\label{sec:trilinear}

In this section, we show the Theorem \ref{thm:trilinear}. We bring the similar argument in \cite{KPV1996} associated to the modified KdV equation in order to construct the counter-examples. As mentioned in the introduction, we observe the \emph{high $\times$ low $\times$ low $\Rightarrow$ high} interaction component in the non-resonance phenomenon, while Kenig, Ponce and Vega focused on the \emph{high $\times$ high $\times$ high $\Rightarrow$ high} interaction component in the resonant term. When we apply our examples to the modified KdV equation, it can be easily controlled in $X^{s,\frac12}$, because the size of maximum modulation is comparable to the square of high frequency size ($\approx N^2$) and hence this factor exactly eliminates the one derivative in the nonlinear term. In contrast to this, \eqref{eq:5mkdv} has two more derivatives in nonlinear terms, and thus one cannot control the this component in $X^{s,b}$-norm, although the modulation effect is better than that of modified KdV equation. Now, we give examples satisfying

\begin{equation}\label{eq:fail}
\norm{uv\px^3w}_{X^{s,b-1}} \nleq C\norm{u}_{X^{s,b}}\norm{v}_{X^{s,b}}\norm{w}_{X^{s,b}}.
\end{equation}
In the case of our examples, the trilinear estimate does not depend on the regularity $s$. So, it suffices to show \eqref{eq:fail} for any $b \in \R$. Fix $N \gg 1$. Let us define the functions
\[f(\tau,n) = a_n \chi_{\frac14}(\tau-n^5), \hspace{1em} g(\tau,n) = b_n \chi_{\frac14}(\tau-n^5), \hspace{1em} h(\tau,n) = d_n \chi_{\frac14}(\tau-n^5),\]
where
\begin{equation*}
a_n = \begin{cases}1, \hspace{0.5em} n=-1\\0, \hspace{0.5em}otherwise\end{cases} \hspace{2em}b_n = \begin{cases}1, \hspace{0.5em}n=2\\0, \hspace{0.5em}otherwise\end{cases} \hspace{2em}d_n = \begin{cases}1, \hspace{0.5em}n=N-1\\0, \hspace{0.5em}otherwise\end{cases}.
\end{equation*}
We focus on the case that $|\tau- n^5|$ is the maximum modulation. We put 
\[\wt{u}(\tau,n) = f(\tau,n) \hspace{2em} \wt{v}(\tau,n) = g(\tau,n) \hspace{2em} \wt{w}(\tau,n) = h(\tau,n).\]
Then we need to calculate $\ft[uv\px^3w](\tau,n)$. Since $\ft[uv\px^3w](\tau,n) = (f \ast g \ast h)(\tau,n)$, performing the summation and integration with respect to $n_1, \tau_1$ variables gives
\begin{align*}
(f \ast g)(\tau_2,n_2) &= \sum_{n_1}a_{n_1}b_{n_2-n_1} \int_{\R}\chi_{\frac14}(\tau_1-n_1^5)\chi_{\frac14}(\tau_2-\tau_1-(n_2-n_1)^5)\; d\tau_1\\
&\cong c \sum_{n_1}a_{n_1}b_{n_2-n_1} \chi_{\frac12}(\tau_2-n_2^5 + 5n_1n_2(n_2-n_1)(n_1^2+n_2^2-n_1n_2))\\
&\cong c \alpha_{n_2}\chi_{\frac12}(\tau_2-n_2^5 -30),
\end{align*}
where 
\[\alpha_n = \begin{cases}1, \hspace{0.5em}n=1\\0, \hspace{0.5em}otherwise\end{cases}.\]
By performing the summation and integration with respect to $n_2,\tau_2$ variables once more, we have
\begin{align*}
(f \ast g \ast h)(\tau,n) &=[(f \ast g) \ast h](\tau,n)\\
&= \sum_{n_2}\alpha_{n_2}d_{n-n_2} \int_{\R}\chi_{\frac12}(\tau_2-n_2^5 -30)\chi_{\frac14}(\tau-\tau_2-(n-n_2)^5)\; d\tau_2\\
&\cong c \sum_{n_2}\alpha_{n_2}d_{n-n_2} \chi_1(\tau-(n-n_2)^5-n_2^5 -30)\\
&\cong c \beta_{n}\chi_{1}(\tau - (n-1)^5-31),
\end{align*}
where
\[\beta_n = \begin{cases}1, \hspace{0.5em}n=N\\0, \hspace{0.5em}otherwise\end{cases}.\]
On the support of $(f \ast g \ast h)(\tau,n)$, since we have $|\tau - n^5| \sim N^4$, we finally obtain
\begin{align*}
\norm{uv\px^3w}_{X^{s,b-1}} &= \norm{\bra{n}^s\bra{\tau-n^5}^{b-1}\ft[uv\px^3w](\tau,n)}_{L_{\tau}^2\ell_n^2}\\
&\sim N^sN^3N^{4(b-1)},
\end{align*}
but
\[\norm{u}_{X^{s,b}}\norm{v}_{X^{s,b}}\norm{w}_{X^{s,b}} \sim N^s.\]
This imposes $b \le \frac14$ to succeed the trilinear estimate and hence, we show \eqref{eq:fail} when $b > \frac14$.

We now construct an example when $b \le \frac14$. To do this, let us focus on the case when $|\tau - n^5|$ is much smaller than the maximum modulation. In this case, we may assume that $|\tau_1-n_1^5|$ is the maximum modulation by symmetry among modulations. Set
\begin{equation*}
a_n = \begin{cases}1, \hspace{0.5em}n=-N\\0, \hspace{0.5em}otherwise\end{cases} \hspace{2em}b_n = \begin{cases}1, \hspace{0.5em}n=2\\0, \hspace{0.5em}otherwise\end{cases} \hspace{2em}d_n = \begin{cases}1, \hspace{0.5em}n=N-1\\0, \hspace{0.5em}otherwise\end{cases}
\end{equation*}
and
\[f(\tau,n) = a_n \chi_{\frac14}(\tau-n^5), \hspace{1em} g(\tau,n) = b_n \chi_{\frac14}(\tau-n^5), \hspace{1em}h(\tau,n) = d_n \chi_{\frac14}(\tau-n^5).\]
From the duality, it suffices to consider
\[\norm{uv\px^3w}_{X_{\tau-n^5}^{-s,-b}} \le C\norm{u}_{X_{\tau-n^5}^{-s,1-b}}\norm{v}_{X_{\tau-n^5}^{s,b}}\norm{w}_{X_{\tau-n^5}^{s,b}},\]
where 
\[\wt{u}(\tau,n) = f(\tau,n) \hspace{2em} \wt{v}(\tau,n) = g(\tau,n) \hspace{2em} \wt{w}(\tau,n) = h(\tau,n).\]
Similarly as before, we need to calculate $\ft[uv\px^3w](\tau,n)$. Since $\ft[uv\px^3w](\tau,n) = (f \ast g \ast h)(\tau,n)$, performing the summation and integration with respect to $n_1, \tau_1$ variables  gives
\begin{align*}
(f \ast g)(\tau_2,n_2) &= \sum_{n_1}a_{n_1}b_{n_2-n_1} \int_{\R}\chi_{\frac14}(\tau_1-n_1^5)\chi_{\frac14}(\tau_2-\tau_1-(n_2-n_1)^5)\; d\tau_1\\
&\cong c \sum_{n_1}a_{n_1}b_{n_2-n_1} \chi_{\frac12}(\tau_2-n_2^5 + 5n_1n_2(n_2-n_1)(n_1^2+n_2^2-n_1n_2))\\
&\cong c \alpha_{n_2}\chi_{\frac12}(\tau_2-n_2^5 + 10N(N-2)[(N-1)^2+3]),
\end{align*}
where 
\[\alpha_n = \begin{cases}1, \hspace{0.5em}n=2-N\\0, \hspace{0.5em}otherwise\end{cases}.\]
By performing the summation and integration with respect to $n_2,\tau_2$ variables once more, we have
\begin{align*}
(f \ast g \ast h)(\tau,n) &=[(f \ast g) \ast h](\tau,n)\\
&= \sum_{n_2}\alpha_{n_2}d_{n-n_2} \int_{\R}\chi_{\frac12}(\tau_2-n_2^5 + 10N(N-2)[(N-1)^2+3])\\
&\hspace{15em}\times\chi_{\frac14}(\tau-\tau_2-(n-n_2)^5)\; d\tau_2\\
&\cong c \sum_{n_2}\alpha_{n_2}d_{n-n_2} \chi_1(\tau-(n-n_2)^5-n_2^5 + 10N(N-2)[(N-1)^2+3])\\
&\cong c \beta_{n}\chi_{1}(\tau - (n+N-2)^5+ (N-2)^5 + 10N(N-2)[(N-1)^2+3]),
\end{align*}
where
\[\beta_n = \begin{cases}1, \hspace{0.5em}n=1\\0, \hspace{0.5em}otherwise\end{cases}.\]
On the support of $(f \ast g \ast h)(\tau,n)$, since we have $|\tau - n^5| \sim N^4$, we finally obtain
\begin{align*}
\norm{uv\px^3w}_{X^{-s,-b}} &= \norm{\bra{n}^{-s}\bra{\tau-n^5}^{-b}\ft[uv\px^3w](\tau,n)}_{L_{\tau}^2\ell_n^2}\\
&\sim N^3N^{-4b},
\end{align*}
but
\[\norm{u}_{X^{-s,1-b}}\norm{v}_{X^{s,b}}\norm{w}_{X^{s,b}} \sim N^{-s}N^s \sim 1.\]
This imposes $b \ge \frac34$ and hence, we show \eqref{eq:fail} when $b \le \frac14$, which complete the proof of Theorem \ref{thm:trilinear}.

\section{$L^2$-block estimates}\label{sec:L2 block estimate}
In this section, we will give $L^2$-block estimates for trilinear estimates. For $n_1,n_2,n_3 \in \Z$, let
\[G(n_1,n_2,n_3) = \mu(n_1 + n_2 + n_3)- \mu(n_1) - \mu(n_2) - \mu(n_3)\]
be the resonance function, which plays an important role in the trilinear $X^{s,b}$-type estimates. 

Let $\zeta_i = \tau_i - \mu(n_i)$. For compactly supported functions $f_i \in L^2(\R \times \Z)$, $i=1,2,3,4$, we define 
\[J(f_1,f_2,f_3,f_4) = \sum_{n_4, \N_{3,n_4}}\int_{\overline{\zeta}\in \Gamma_4(\R)}f_1(\zeta_1,n_1)f_2(\zeta_2,n_2)f_3(\zeta_3,n_3)f_4(\zeta_4 + G(n_1,n_2,n_3),n_4),\]
where  $\overline{\zeta} = (\zeta_1,\zeta_2,\zeta_3,-\zeta_4-G(n_1,n_2,n_3))$. From the identities
\[n_1+n_2+n_3 = n_4\]
and
\[\zeta_1+\zeta_2+\zeta_3 = \zeta_4 + G(n_1,n_2,n_3)\]
on the support of $J(f_1,f_2,f_3,f_4)$, we see that $J(f_1,f_2,f_3,f_4)$ vanishes unless
\begin{equation}\label{eq:support property}
\begin{array}{c}
2^{k_{max}} \sim 2^{k_{sub}}\\
2^{j_{max}} \sim \max(2^{j_{sub}}, |G|),
\end{array}
\end{equation}
where $|n_i| \sim 2^{k_i}$ and $|\zeta_i| \sim 2^{j_i}$, $i=1,2,3,4$. From the simple calculation we know that
\[\int_{\R} (f \ast g) \cdot h = \int_{\R} (f^{\ast} \ast h) \cdot g,\]
where $f^{\ast}(x) = f(-x)$. Then, in addition to the fact that the convolution operator admits the commutative law ($f \ast g = g \ast f$), we have 
\begin{equation}\label{eq:symmetry}
|J(f_1,f_2,f_3,f_4)|=|J(f_2,f_1,f_3,f_4)|=|J(f_3,f_2,f_1,f_4)|=|J(f_1^{\ast},f_2^{\ast},f_4,f_3)|.
\end{equation}
\begin{lemma}\label{lem:tri-L2}
Let $k_i, j_i\in \Z_+$, $i=1,2,3,4$. Let $f_{k_i,j_i} \in L^2(\R \times \Z) $ be nonnegative functions supported in $I_{j_i} \times I_{k_i}$.

\noi(a) For any $k_i,j_i \in \Z_+$, $i=1,2,3,4$, we
have
\begin{eqnarray}\label{eq:tri-block estimate-a1}
J(f_{k_1,j_1},f_{k_2,j_2},f_{k_3,j_3},f_{k_4,j_4}) \lesssim 2^{(j_{min}+j_{thd})/2}2^{(k_{min}+k_{thd})/2}\prod_{i=1}^4 \|f_{k_i,j_i}\|_{L^2}.
\end{eqnarray}

\noi(b) Let $k_{thd} \le k_{max}-10$.

\noi(b-1) If $(k_i,j_i) = (k_{thd},j_{max})$ for $i=1,2,3,4$ and $j_{sub} \le 4k_{max}$, we have
\begin{eqnarray}\label{eq:tri-block estimate-b1.1}
 J(f_{k_1,j_1},f_{k_2,j_2},f_{k_3,j_3},f_{k_4,j_4}) \lesssim 2^{(j_1+j_2+j_3+j_4)/2}2^{-(j_{sub}+j_{max})/2}2^{k_{thd}/2}\prod_{i=1}^4 \|f_{k_i,j_i}\|_{L^2}.
\end{eqnarray}
\noi(b-2) If $(k_i,j_i) = (k_{thd},j_{max})$ for $i=1,2,3,4$ and $j_{sub} > 4k_{max}$, we have
\begin{eqnarray}\label{eq:tri-block estimate-b1.2}
 J(f_{k_1,j_1},f_{k_2,j_2},f_{k_3,j_3},f_{k_4,j_4}) \lesssim 2^{(j_1+j_2+j_3+j_4)/2}2^{-2k_{max}}2^{k_{thd}/2}2^{-j_{max}/2}\prod_{i=1}^4 \|f_{k_i,j_i}\|_{L^2}.
\end{eqnarray}
\noi(b-3) If $(k_i,j_i) \neq (k_{thd},j_{max})$ for $i=1,2,3,4$ and $j_{sub} \le 4k_{max}$, we have
\begin{eqnarray}\label{eq:tri-block estimate-b1.3}
J(f_{k_1,j_1},f_{k_2,j_2},f_{k_3,j_3},f_{k_4,j_4}) \lesssim 2^{(j_1+j_2+j_3+j_4)/2}2^{-(j_{sub}+j_{max})/2}2^{k_{min}/2}\prod_{i=1}^4 \|f_{k_i,j_i}\|_{L^2}.
\end{eqnarray}
\noi(b-4) If $(k_i,j_i) \neq (k_{thd},j_{max})$ for $i=1,2,3,4$ and $j_{sub} > 4k_{max}$, we have
\begin{eqnarray}\label{eq:tri-block estimate-b1.4}
 J(f_{k_1,j_1},f_{k_2,j_2},f_{k_3,j_3},f_{k_4,j_4}) \lesssim 2^{(j_1+j_2+j_3+j_4)/2}2^{-2k_{max}}2^{k_{min}/2}2^{-j_{max}/2}\prod_{i=1}^4 \|f_{k_i,j_i}\|_{L^2}.
\end{eqnarray}

\end{lemma}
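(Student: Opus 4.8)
The plan is to prove Lemma \ref{lem:tri-L2} via the standard duality / Cauchy--Schwarz machinery for $L^2$ block estimates on the hyperplane $\Gamma_4$, counting lattice points subject to the resonance constraint. Throughout I would use the symmetries \eqref{eq:symmetry} to reduce each case to a canonical assignment of which frequency carries the maximal/minimal modulation and frequency size, and the support property \eqref{eq:support property}, in particular $2^{k_{max}} \sim 2^{k_{sub}}$ and $2^{j_{max}} \sim \max(2^{j_{sub}}, |G|)$, to pin down the relevant scales.

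For part (a), I would first dispose of the time integration: after writing out $J$ and applying Cauchy--Schwarz in the $\overline\zeta$ variables, the $\zeta$-integral over the three-dimensional affine subspace of $\Gamma_4(\R)$ contributes a factor $2^{(j_{min}+j_{thd})/2}$ coming from the two smallest modulation supports (the largest two are slaved by the constraint $\sum \zeta_i + G = 0$ together with the support bounds). For the spatial sum, two of the four frequencies range freely while the other two are determined by $n_1 + n_2 + n_3 + n_4 = 0$ and by the localization to $I_{k_i}$; choosing to sum over the two indices with smallest frequency support gives $2^{(k_{min}+k_{thd})/2}$ after Cauchy--Schwarz, which yields \eqref{eq:tri-block estimate-a1}. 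The point is that the $L^2$ norms $\|f_{k_i,j_i}\|_{L^2}$ absorb the profiles, and what remains is purely a volume count.

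For part (b), the extra hypothesis $k_{thd} \le k_{max} - 10$ forces a high $\times$ high $\times$ low $\times$ low configuration: the two largest frequencies are comparable and much larger than the other two, so $G$ is essentially a quadratic-type expression in the large frequency whose size is pinned. Here the gain over (a) comes from using the resonance function more efficiently: rather than bounding the number of lattice points in a box, I would solve for one large frequency in terms of the modulation variable, exploiting that $\partial_{n} G \sim 2^{k_{max}}\cdot 2^{?}$ is large on the relevant set, so the map from that frequency to $G$ (hence to the sum of modulations) is close to injective with a controlled Jacobian. This is where the split into (b-1)--(b-4) enters: whether $j_{sub} \le 4k_{max}$ or $j_{sub} > 4k_{max}$ decides whether the dominant modulation $2^{j_{max}} \sim 2^{j_{sub}}$ or $2^{j_{max}} \sim |G|$, and which of the four functions carries $(k_{thd},j_{max})$ versus $(k_{min},j_{max})$ decides whether the leftover free-frequency sum costs $2^{k_{thd}/2}$ or $2^{k_{min}/2}$ and whether one picks up the full $2^{-(j_{sub}+j_{max})/2}$ saving or must trade $2^{-j_{sub}/2}$ for the dispersive factor $2^{-2k_{max}}$. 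In each subcase I would: (i) fix the two low frequencies by Cauchy--Schwarz, paying $2^{k_{thd}/2}$ or $2^{k_{min}/2}$; (ii) for the remaining one large free frequency, change variables from $n$ to $G(n_1,n_2,n_3)$ (or to the relevant partial sum), using $|\partial_n G| \gtrsim 2^{4k_{max}}$ in the regime $j_{sub} > 4 k_{max}$ to extract $2^{-2k_{max}}2^{-j_{max}/2}$, or the weaker lower bound giving $2^{-(j_{sub}+j_{max})/2}$ otherwise; (iii) reassemble with the trivial $\prod 2^{j_i/2}$ from Plancherel in the modulation variables.

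The main obstacle I anticipate is the change-of-variables step in (b): one must verify that on the support dictated by \eqref{eq:support property} the derivative of $G$ in the single large free frequency is genuinely of the claimed size $\sim 2^{4k_{max}}$ (not smaller due to cancellation), and that the resulting level sets are intervals of controlled length so that the lattice-point count matches the Jacobian bound up to $O(1)$. Near-diagonal degeneracies of $G$ (where $\partial_n G$ could drop) are excluded precisely by $k_{thd} \le k_{max}-10$ forcing $n_1+n_2, n_2+n_3$ to be comparable to $2^{k_{max}}$, so I would make that quantitative first. Once that derivative bound is in hand, (b-1)--(b-4) follow by bookkeeping the three binary choices above, and the parallel structure with the corresponding lemma in \cite{GKK2013} should make the remaining estimates routine.
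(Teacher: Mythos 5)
Your treatment of part (a) matches the paper's: Cauchy--Schwarz in the two smallest modulations and the two smallest frequencies is exactly what is done, and nothing further is required there.

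For part (b), however, there is a genuine gap in the change-of-variable step. You propose to fix the two small frequencies (say $n_1,n_2$), let the remaining large free frequency $n_3$ vary, and claim $|\partial_{n_3}G|\gtrsim 2^{4k_{max}}$ on the grounds that $k_{thd}\le k_{max}-10$ forces $n_1+n_2$ and $n_2+n_3$ to be comparable to $2^{k_{max}}$. But $n_1+n_2$ is a sum of two \emph{small} frequencies, so $|n_1+n_2|\lesssim 2^{k_{thd}}\ll 2^{k_{max}}$ and can equal $1$. Concretely, with $n_1,n_2$ fixed and $n_4=-(n_1+n_2+n_3)$,
\[
\partial_{n_3}G=\mu'(n_3)-\mu'(n_1+n_2+n_3),
\]
and since $\mu'$ is even, $|n_1+n_2|\ll|n_3|$, and $n_1+n_2+n_3$ has the same sign as $n_3$, the mean value theorem gives $|\partial_{n_3}G|\sim |n_1+n_2|\,2^{3k_{max}}$ --- a full power of $2^{k_{max}}$ short of your claim, and not uniform in $n_1,n_2$, which would poison the subsequent Cauchy--Schwarz. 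The paper's proof avoids this by differentiating the phase in a \emph{small} frequency, keeping one small frequency and the large output frequency fixed: after the change of variables $n_3'=n_1+n_2+n_3$, one has
\[
\partial_{n_2}\bigl[\mu(n_1)+\mu(n_2)+\mu(n_3'-n_1-n_2)\bigr]=\mu'(n_2)-\mu'(n_3),
\]
which really is $\gtrsim 2^{4k_{max}}$ because $\mu'(n_2)\sim 2^{4k_{thd}}$ is negligible against $\mu'(n_3)\sim 2^{4k_{max}}$; there is no near-cancellation. This confines $n_2$ to at most two intervals of length $O(2^{-4k_{max}}2^{j_{sub}})$ uniformly in $n_1,n_3'$, which is precisely what lets the Cauchy--Schwarz over $(n_1,n_2,n_3')$ close and what produces the threshold $j_{sub}\le 4k_{max}$ versus $j_{sub}>4k_{max}$ you correctly identified. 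Your bookkeeping for the four subcases is otherwise consistent with the paper, but the direction in which the frequency derivative is taken is the crux, and your choice would not deliver the stated exponent.
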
 

\begin{proof}
For (a), by using the Cauchy-Schwarz inequality with respect to $\zeta$ and $n$ variables to $J(f_1,f_2,f_3,f_4)$, we can easily obtain \eqref{eq:tri-block estimate-a1}.

For (b), we fix $\zeta_i$, $i =1,2,3,4$. We first consider the summation over frequencies. Due to \eqref{eq:symmetry} in addition to the fact of $\norm{f}_{L^2} = \norm{f^{\ast}}_{L^2}$, we may assume that $j_1 \le j_2 \le j_3 \le j_4$. Since constraints of modulations satisfy
\[\zeta_i = \tau_i -\mu(n_i) = O(2^{j_i}), \hspace{1em} i=1,2,3,\]
it is enough to estimate
\begin{equation}\label{eq:block1}
\sum_{\substack{n_4, \N_{3,n_4}\\ \mu(n_1) + \mu(n_2) + \mu(n_3) = \tau_4 + O(2^{j_3})}}f_{k_1,j_1}(n_1)f_{k_2,j_2}(n_2)f_{k_3,j_3}(n_3)f_{k_4,j_4}(n_1+n_2+n_3).
\end{equation}

For the proofs of \eqref{eq:tri-block estimate-b1.1} and \eqref{eq:tri-block estimate-b1.2}, we suppose to hold $k_4 = k_{thd}$. Assume that $|n_3| \ll |n_4| \ll |n_2| \le |n_1|$\footnote{Both $|n_3| \sim |n_4|$ and $|n_4| \sim |n_2|$ can be treated in the proofs of \eqref{eq:tri-block estimate-b1.3} and \eqref{eq:tri-block estimate-b1.4}.}Then \eqref{eq:block1} can be rewritten by
\[\sum_{\substack{n_1,n_3,n_4\\ \mu(n_1) + \mu(n_4-n_1-n_3) + \mu(n_3) = \tau_4 + O(2^{j_3})}}f_{k_1,j_1}(n_1)f_{k_2,j_2}(n_4-n_1-n_2)f_{k_3,j_3}(n_3)f_{k_4,j_4}(n_4).\]
Then, since
\[\partial_{n_3}\left(\mu(n_1) + \mu(n_4-n_1-n_3) + \mu(n_3)\right) = 5n_3^4 -5(n_4-n_1-n_3)^4 +3c_1n_3^2-3c_1(n_4-n_1-n_3)^2,\]
which implies that $n_3$ is contained in two intervals of length $O(2^{-4k_1}2^{j_3})$, i.e.
\[\mbox{the number of } n_3 \lesssim 2^{-4k_1}2^{j_3} + 1,\]
If $2^{-4k_1}2^{j_3} \lesssim 1$, the number of $n_3$ is constant independent of frequencies. By the Cauchy-Schwarz inequality with respect to $n_3,n_1,n_4$ variables in regular order, we have 
\begin{equation}\label{eq:block2}
\begin{aligned}
&\sum_{A}|f_{k_1,j_1}(n_1)f_{k_2,j_2}(n_4-n_1-n_2)f_{k_3,j_3}(n_3)f_{k_4,j_4}(n_4)|\\
&\hspace{6em}\lesssim \norm{f_{k_3,j_3}}_{\ell^2}\sum_{n_4,n_1}|f_{k_1,j_1}(n_1)f_{k_2,j_2}(n_4-n_1-n_2)f_{k_4,j_4}(n_4)|\\
&\hspace{6em}\lesssim \norm{f_{k_3,j_3}}_{\ell^2}\norm{f_{k_1,j_1}}_{\ell^2}\norm{f_{k_2,j_2}}_{\ell^2}\sum_{n_4}|f_{k_4,j_4}(n_4)|\\
&\hspace{6em}\lesssim 2^{k_{thd/2}}\norm{f_{k_1,j_1}}_{\ell^2}\norm{f_{k_2,j_2}}_{\ell^2}\norm{f_{k_3,j_3}}_{\ell^2}\norm{f_{k_4,j_4}}_{\ell^2},
\end{aligned}
\end{equation}
where $A =  \set{(n_4,n_1,n_3) \in \Z^3 :  \mu(n_1) + \mu(n_4-n_1-n_3) + \mu(n_3) = \tau_4 + O(2^{j_3})}$. By performing the Cauchy-Schwarz inequality again in terms of $\zeta$ variables, we have \eqref{eq:tri-block estimate-b1.1}. Otherwise, the similar argument yields \eqref{eq:tri-block estimate-b1.2}, while we have $2^{-2k_1}2^{j_3/2}$ factor at the first inequality in \eqref{eq:block2} from the Cauchy-Schwarz inequality in terms of $n_3$.

We remark that in the case of $k_4 = k_{thd}$, no matter which ordering of frequencies we will fix, the similar change of variables is acceptable, and hence the same result can be obtained thanks to the symmetry of $n_1,n_2$ and $n_3$ variables. Indeed, for instance, if $|n_1| \ll |n_4| \ll |n_3| \le |n_2|$, by replacing the variable $n_3$ by $n_4 - n_1+n_2$ and counting the number of $n_1$ variable from
\[\partial_{n_1}\left(\mu(n_1) + \mu(n_2) + \mu(n_4-n_1-n_2)\right) = 5n_1^4 -5(n_4-n_1-n_2)^4 +3c_1n_1^2-3c_1(n_4-n_1-n_2)^2,\]
we can have the same result. The rest of cases also hold.

For the proof of \eqref{eq:tri-block estimate-b1.3} and \eqref{eq:tri-block estimate-b1.4}, we assume $k_4 \neq k_{thd}$ and we split this case into two cases: $k_4 = k_{max}$ and $k_4 = k_{min}$ \footnote{Due to the support property, the case when $k_4 = k_{sub}$ is exactly same as the case when $k_4 = k_{max}$.}. For the first case ($k_4 = k_{max}$), we may assume that $|n_1| \le |n_2| \le |n_3| \le |n_4|$ due to the symmetry of $n_1,n_2$ and $n_3$ variables.\footnote{This assumption still makes a sense due to the above remark.} By replacing $n_3$ by $n_4 - n_1+n_2$, \eqref{eq:block1} can be rewritten by
\[\sum_{\substack{n_1,n_4,n_2\\ \mu(n_1) + \mu(n_2) + \mu(n_4- n_1-n_2) = \tau_4 + O(2^{j_3})}}f_{k_1,j_1}(n_1)f_{k_2,j_2}(n_2)f_{k_3,j_3}(n_4-n_1-n_2)f_{k_4,j_4}(n_4).\]
Since
\[\partial_{n_2}\left(\mu(n_1) + \mu(n_2) + \mu(n_4- n_1-n_2)\right) = 5n_2^4 -5(n_4-n_1-n_2)^4 +3c_1n_2^2-3c_1(n_4-n_1-n_2)^2,\]
$|n_4| \sim 2^{k_4}$ and $k_1 \le k_2 \le k_3 - 10$, $n_2$ is contained in two intervals of length $O(2^{-4k_4}2^{j_3})$, i.e.
\[\mbox{the number of } n_2 \lesssim 2^{-4k_4}2^{j_3} + 1.\]
If $2^{-4k_4}2^{j_3} \lesssim 1$, the number of $n_1$ and $n_2$ are constants independent of $k_i$. By the Cauchy-Schwarz inequality with respect to $n_2, n_4, n_1$ variables in regular order and $\zeta$ variables, we have \eqref{eq:tri-block estimate-b1.3}. Otherwise, we also apply the Cauchy-Schwarz inequality to obtain \eqref{eq:tri-block estimate-b1.4}.

When $k_4 = k_{min}$, we can similarly prove it. Briefly, from the the identity $n_1+n_2+n_3 = n_4$, we can represent the maximum frequency by the other frequencies. After that, if we count the number of the third largest frequency, we can have the same result by performing the analogous procedure as above. We omit the details and hence it completes the proof.
\end{proof}

As an immediate consequence, we have the following corollary:

\begin{corollary}\label{cor:tri-L2}
Let $k_i, j_i\in \Z_+$, $i=1,2,3,4$. Let $f_{k_i,j_i} \in L^2(\R\times\Z) $ be nonnegative functions supported in $I_{j_i} \times I_{k_i}$, $i=1,2,3$.

\noi(a) For any $k_i, j_i\in \Z_+$, $i=1,2,3,4$, we
have
\begin{eqnarray}\label{eq:tri-block estimate-a2}
\norm{\mathbf{1}_{D_{k_4,j_4}}(n,\tau) (f_{k_1,j_1}\ast f_{k_2,j_2}\ast f_{k_3,j_3})}_{L^2} \lesssim 2^{(j_{min}+j_{thd})/2}2^{(k_{min}+k_{thd})/2}\prod_{i=1}^3 \|f_{k_i,j_i}\|_{L^2}.
\end{eqnarray}

\noi(b) Let $k_{thd} \le k_{max}-10$.

\noi(b-1) If $(k_i,j_i) = (k_{thd},j_{max})$ for $i=1,2,3,4$ and $j_{sub} \le 4k_{max}$, we have
\begin{eqnarray}\label{eq:tri-block estimate-b2.1}
 \norm{\mathbf{1}_{D_{k_4,j_4}}(n,\tau) (f_{k_1,j_1}\ast f_{k_2,j_2}\ast f_{k_3,j_3})}_{L^2} \lesssim 2^{(j_1+j_2+j_3+j_4)/2}2^{-(j_{sub}+j_{max})/2}2^{k_{thd}/2}\prod_{i=1}^3 \|f_{k_i,j_i}\|_{L^2}.
\end{eqnarray}
\noi(b-2) If $(k_i,j_i) = (k_{thd},j_{max})$ for $i=1,2,3,4$ and $j_{sub} > 4k_{max}$, we have
\begin{eqnarray}\label{eq:tri-block estimate-b2.2}
\norm{\mathbf{1}_{D_{k_4,j_4}}(n,\tau) (f_{k_1,j_1}\ast f_{k_2,j_2}\ast f_{k_3,j_3})}_{L^2} \lesssim 2^{(j_1+j_2+j_3+j_4)/2}2^{-2k_{max}}2^{k_{thd}/2}2^{-j_{max}/2}\prod_{i=1}^3 \|f_{k_i,j_i}\|_{L^2}.
\end{eqnarray}
\noi(b-3) If $(k_i,j_i) \neq (k_{thd},j_{max})$ for $i=1,2,3,4$ and $j_{sub} \le 4k_{max}$, we have
\begin{eqnarray}\label{eq:tri-block estimate-b2.3}
\norm{\mathbf{1}_{D_{k_4,j_4}}(n,\tau) (f_{k_1,j_1}\ast f_{k_2,j_2}\ast f_{k_3,j_3})}_{L^2} \lesssim 2^{(j_1+j_2+j_3+j_4)/2}2^{-(j_{sub}+j_{max})/2}2^{k_{min}/2}\prod_{i=1}^3 \|f_{k_i,j_i}\|_{L^2}.
\end{eqnarray}
\noi(b-4) If $(k_i,j_i) \neq (k_{thd},j_{max})$ for $i=1,2,3,4$ and $j_{sub} > 4k_{max}$, we have
\begin{eqnarray}\label{eq:tri-block estimate-b2.4}
 \norm{\mathbf{1}_{D_{k_4,j_4}}(n,\tau) (f_{k_1,j_1}\ast f_{k_2,j_2}\ast f_{k_3,j_3})}_{L^2} \lesssim 2^{(j_1+j_2+j_3+j_4)/2}2^{-2k_{max}}2^{k_{min}/2}2^{-j_{max}/2}\prod_{i=1}^3 \|f_{k_i,j_i}\|_{L^2}.
\end{eqnarray}
\end{corollary}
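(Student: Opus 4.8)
The plan is to deduce Corollary~\ref{cor:tri-L2} from Lemma~\ref{lem:tri-L2} by the standard duality between the multilinear form $J$ and the convolution-plus-restriction operator. First I would observe that, by definition of $J$ and by unpacking the convolution, for any nonnegative $f_{k_4,j_4}\in L^2(\R\times\T)$ supported in $I_{j_4}\times I_{k_4}$ one has the identity
\[
\big\langle \mathbf{1}_{D_{k_4,j_4}}\,(f_{k_1,j_1}\ast f_{k_2,j_2}\ast f_{k_3,j_3}),\; f_{k_4,j_4}\big\rangle
= c\, J(f_{k_1,j_1},f_{k_2,j_2},f_{k_3,j_3},\wt f_{k_4,j_4}),
\]
where $\wt f_{k_4,j_4}(\zeta,n)=f_{k_4,j_4}(\zeta+G(n_1,n_2,n_3),-n)$ is the reflected/shifted version appearing in the definition of $J$; the cutoff $\mathbf 1_{D_{k_4,j_4}}$ on the left is exactly what enforces $f_4$ to be supported in $I_{j_4}\times I_{k_4}$ on the right, and the shift by $G$ is absorbed harmlessly since $|\wt f_{k_4,j_4}|$ has the same $L^2$ norm and the same $j_4,k_4$ localization (the $G$-shift only moves the modulation support, which is accounted for by the constraint \eqref{eq:support property}). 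This reduces each part of the corollary to the corresponding part of Lemma~\ref{lem:tri-L2}.

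Next I would run the duality argument: since $\mathbf 1_{D_{k_4,j_4}}(f_{k_1,j_1}\ast f_{k_2,j_2}\ast f_{k_3,j_3})$ is itself supported in $I_{j_4}\times I_{k_4}$ (the frequency support because $n=n_1+n_2+n_3$ and we are on $D_{k_4,j_4}$, the modulation support by the explicit restriction), its $L^2$ norm equals the supremum of the above pairing over all such $f_{k_4,j_4}$ with $\|f_{k_4,j_4}\|_{L^2}\le 1$. Applying the bound from Lemma~\ref{lem:tri-L2} to $J(f_{k_1,j_1},f_{k_2,j_2},f_{k_3,j_3},\wt f_{k_4,j_4})$ and taking the supremum immediately yields \eqref{eq:tri-block estimate-a2} from \eqref{eq:tri-block estimate-a1}, and likewise \eqref{eq:tri-block estimate-b2.1}--\eqref{eq:tri-block estimate-b2.4} from \eqref{eq:tri-block estimate-b1.1}--\eqref{eq:tri-block estimate-b1.4}. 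One small point to check in case (b) is that the hypotheses there are phrased in terms of which index among $1,2,3,4$ realizes $k_{thd}$, $k_{min}$, $j_{max}$; since the symmetry relations \eqref{eq:symmetry} make $J$ invariant (up to absolute value) under permuting its four slots and under the reflection $f\mapsto\overline f$, passing $f_{k_4,j_4}$ into the fourth slot of $J$ does not disturb the case distinctions, and the estimates transfer verbatim.

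The bulk of the work is already contained in Lemma~\ref{lem:tri-L2}, so the corollary itself is essentially a restatement; the only genuinely nontrivial bookkeeping is verifying that the $G(n_1,n_2,n_3)$-shift in the fourth argument is compatible with the modulation localization. I expect that to be the main (mild) obstacle: one must note that on the support of the convolution the resonance identity $\zeta_1+\zeta_2+\zeta_3+\zeta_4+G=0$ forces $2^{j_{max}}\sim\max(2^{j_{sub}},|G|)$ as in \eqref{eq:support property}, so the shifted function $\wt f_{k_4,j_4}$ is effectively supported where $|\zeta_4+G|\lesssim 2^{j_4}$, which is precisely the regime in which Lemma~\ref{lem:tri-L2} was proved (its proof fixes the $\zeta_i$ and only ever uses $\zeta_i=O(2^{j_i})$ together with the relation $\mu(n_1)+\mu(n_2)+\mu(n_3)=\tau_4+O(2^{j_3})$, which is exactly the $G$-shifted modulation constraint). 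Hence no new analysis is needed, and the proof is a couple of lines of duality plus this consistency remark.
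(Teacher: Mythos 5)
Your duality argument is exactly the intended (unwritten) proof of the corollary, and it is correct in substance: pairing $\mathbf{1}_{D_{k_4,j_4}}(f_{k_1,j_1}\ast f_{k_2,j_2}\ast f_{k_3,j_3})$ against a test function $g\in L^2$ supported in $D_{k_4,j_4}$, passing to modulation variables $\zeta_i=\tau_i-\mu(n_i)$, and observing that the constraint $\tau_1+\tau_2+\tau_3+\tau_4=0$, $n_1+n_2+n_3+n_4=0$ reproduces precisely the integrand of $J$ (with the reflection handled by \eqref{eq:symmetry}) reduces everything to Lemma~\ref{lem:tri-L2}. The one thing to tighten is the displayed definition $\wt f_{k_4,j_4}(\zeta,n)=f_{k_4,j_4}(\zeta+G(n_1,n_2,n_3),-n)$: as written this is not a function of $(\zeta,n)$ alone since $G$ depends on $(n_1,n_2,n_3)$; what you mean (and what you then say in words) is that the $G$-shift is already part of the definition of $J$, so the function actually placed in the fourth slot is just the reflection $f_{k_4,j_4}(-\,\cdot\,,-\,\cdot\,)$, whose $L^2$ norm and $I_{j_4}\times I_{k_4}$ support are unchanged. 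With that phrasing corrected, the argument is complete.
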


\section{Nonlinear estimates}\label{sec:nonlinear}
In this section, we show the trilinear and quintilinear estimates. 

\begin{lemma}[Resonance estimate]\label{lem:resonant1}
Let $k \ge 0$. Then, we have
\begin{equation}\label{eq:resonant1-1}
\norm{P_kN_1(u,v,w)}_{N_k} \lesssim 2^k\norm{P_ku}_{F_{k}}\norm{P_kv}_{F_{k}}\norm{P_kw}_{F_{k}}.
\end{equation} 
\end{lemma}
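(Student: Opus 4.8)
\textbf{Proof proposal for Lemma \ref{lem:resonant1}.}

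The plan is to reduce the $N_k$ estimate to an $L^2$ dual pairing and then apply the $L^2$-block estimates of Corollary \ref{cor:tri-L2}. Recall that $N_1(u,v,w)$ is the resonant cubic term whose Fourier coefficient at frequency $n$ is (up to a constant) $n^3\wh{u}(n)\wh{v}(n)\wh{w}(n)$ with all three inputs at the same frequency $|n|\sim 2^k$; in particular the interaction is \emph{high $\times$ high $\times$ high $\Rightarrow$ high} with all four dyadic frequency blocks $\sim 2^k$. First I would unfold the definition of $\norm{\cdot}_{N_k}$: fix a time localization $t_k\in\R$, multiply by $\eta_0(2^{2k}(t-t_k))$, and note that by \eqref{eq:prop2} this time cutoff is harmless on the $X_k$ side, so it suffices to bound $\norm{(\tau-\mu(n)+i2^{2k})^{-1}\ft[\mathbf{1}_{I_k}(n)\, n^3\, u\cdot v\cdot w]}_{X_k}$ where now $u,v,w$ may be taken to be the time-localized pieces whose $X_k$-norms are controlled by the corresponding $F_k$-norms. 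The factor $n^3$ contributes exactly $2^{3k}$, which together with the $\bra{n}^0$ weight (the $N_k$ norm carries no extra frequency weight, the $2^{2sk}$ weight being external in $N^s$) must be absorbed; the weight $(\tau-\mu(n)+i2^{2k})^{-1}$ gives a gain of $2^{-\max(j_{max},2k)}$ when the output modulation is $2^{j_{max}}$.

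Next I would decompose each of $u,v,w$ and the output dyadically in modulation, writing $f_i = \ft[P_k u_i]$ restricted to $D_{k,j_i}$, and test against an arbitrary $f_4\in X_k$ supported in $D_{k,j_4}$; summing the $2^{j/2}$ weights in the definition of $X_k$ reduces everything to bounding, for each choice of $(j_1,j_2,j_3,j_4)$, the quantity $2^{j_4/2}2^{-\max(j_4,2k)}2^{3k}\,J(f_{k,j_1},f_{k,j_2},f_{k,j_3},f_{k,j_4})$ and summing over the $j_i$ after using $2^{j_i/2}\norm{f_{k,j_i}}_{L^2}\lesssim\norm{u_i}_{X_k}$. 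Here the support property \eqref{eq:support property} is crucial: since $k_{max}\sim k_{sub}\sim k_{thd}\sim k_{min}\sim k$ (this is the high $\times$ high $\times$ high case, so part (b) of the Corollary does not apply and we are forced to use the crude bound (a)), but the resonance function $G$ satisfies $|G|\lesssim 2^{5k}$ at most, actually for this fully resonant configuration $G = \mu(n_1)+\mu(n_2)+\mu(n_3)-\mu(n)$ is \emph{not} large — one needs to check whether the modulation bound $2^{j_{max}}\sim\max(2^{j_{sub}},|G|)$ forces $j_{max}$ large. I expect that in fact the relevant gain comes purely from part (a) of Corollary \ref{cor:tri-L2}: $J\lesssim 2^{(j_{min}+j_{thd})/2}2^{(k_{min}+k_{thd})/2}\prod\norm{f_{k_i,j_i}}_{L^2}$, i.e. a gain of $2^{k}$ from the two lowest-frequency factors (both $\sim 2^k$, so this is actually $2^{k}$) — wait, $2^{(k_{min}+k_{thd})/2}\sim 2^k$ — combined with $2^{(j_{min}+j_{thd})/2}$, which after redistributing the $2^{j/2}$ factors leaves a net loss of $2^{(j_{max}+j_{sub})/2}$ that must be beaten by $2^{-\max(j_{max},2k)}$.

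The main obstacle, therefore, is the modulation balance: a naive count gives $2^{3k}\cdot 2^{k}\cdot 2^{j_{max}/2}\cdot\ldots$ and one needs an extra $2^{-4k}$ or a large-modulation gain $2^{-j_{max}/2}$ with $j_{max}\gtrsim 8k$ to close — but in the purely resonant regime the modulations can all be $O(1)$ (the example in Section \ref{sec:trilinear} is precisely of this flavor), so \eqref{eq:resonant1-1} as stated with only $2^k$ on the right cannot follow from (a) alone. The resolution must be that $N_1$ is the term that was \emph{absorbed into the linear propagator} via the conservation laws and the nonlinear transformation: after the transformation \eqref{eq:modified solution} and using $\norm{u}_{L^2}$, $\norm{u}_{\dot H^1}$ conservation, the genuinely problematic part of $n^3|\wh u(n)|^2\wh u(n)$ is removed, and what remains is $-20in^3|\wh v(n)|^2\wh v(n)$ with the \emph{diagonal} constraint $n_1 = n_2 = n_3 = n$ in a specific sense — so the "sum" over $\N_{3,n}$ is actually absent and this is literally a pointwise-in-$n$ cubic expression. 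In that case $\ft[N_1(v)]$ at frequency $n$ involves only $\wh v(n)$, the convolution structure degenerates, and the bound becomes an elementary consequence of $|\wh v(n)|\lesssim$ the $F_k$ norm via the last display of Lemma \ref{lem:prop of Xk} ($\norm{\int|f_k|\,d\tau'}_{\ell_n^2}\lesssim\norm{f_k}_{X_k}$) together with $\ell^2_n\hookrightarrow\ell^\infty_n$; the $2^{3k}$ from $n^3$ is killed by $2^{-2k}$ from the $N_k$-weight and $2^{-j/2}$ summation, leaving $2^k$. I would write the argument this way: bound $\norm{P_k N_1}_{N_k}$ by $2^{3k}\cdot 2^{-2k}\cdot\norm{\ft[\eta_0(2^{2k}(t-t_k))\,|v|^2 v\ \text{at freq } n]}_{X_k}$, then estimate the cubic expression in $X_k$ pointwise in $n$ using that it is a product of three functions whose frequency is fixed at $n$, reducing to a one-dimensional convolution in $\tau$ handled by Young's inequality and the $X_k\hookrightarrow L^1_\tau\ell^2_n$ embedding. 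The delicate point — and the step I expect to require the most care — is justifying the $\ell^\infty_n$–$\ell^2_n$ trade and the time-localization bookkeeping so that the three $F_k$ factors each appear to the first power with total exponent $2^k$, rather than incurring an extra $2^k$ loss from Sobolev embedding in the periodic setting.
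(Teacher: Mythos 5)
Your proposal is correct and arrives at essentially the same argument as the paper: once you recognize that $N_1$ is diagonal in $n$ (all three inputs sit at $\pm n$, so there is no frequency convolution and Corollary \ref{cor:tri-L2} is simply not applicable), the estimate reduces to a one-dimensional $\tau$-convolution controlled by Young's inequality together with the free embedding $\ell^2_n\subset\ell^\infty_n$ --- your worry about a Sobolev-type loss here is unfounded, since $\norm{a}_{\ell^\infty_n}\le\norm{a}_{\ell^2_n}$ costs nothing in the periodic setting. The bookkeeping variant you sketch, pulling the uniform factor $2^{-2k}$ out of the weight $(\tau-\mu(n)+i2^{2k})^{-1}$ and then bounding the cubic output in $X_k$ via $X_k\hookrightarrow L^1_\tau\ell^2_n$ for the two lower-modulation factors, is a valid shortcut and is arguably cleaner; the paper instead keeps the weight $\max(2^{j_4},2^{2k})^{-1}$ inside, decomposes all four modulations, and uses the effective floor $j_i\ge 2k$ supplied by the time localization through Lemma \ref{lem:prop of Xk}, but both routes close to the same $2^k$.
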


\begin{proof}
From the definitions of $N_1(u,v,w)$ and $N_k$ norm, the left-hand side of \eqref{eq:resonant1-1} is bounded by
\begin{equation}\label{eq:resonant1-2}
\begin{aligned}
\sup_{t_k \in \R} &\Big\|(\tau - \mu(n)+ i2^{2k})^{-1}2^{3k}\mathbf{1}_{I_k}(n)\ft\left[\eta_0\left(2^{2k-2}(t-t_k)\right)P_ku\right] \\
&\hspace{3em}\ast \ft\left[\eta_0\left(2^{2k-2}(t-t_k)\right)P_kv\right] \ast \ft\left[\eta_0\left(2^{2k-2}(t-t_k)\right)P_kw\right]\Big\|_{X_k}
\end{aligned}
\end{equation}
Set $u_k = \ft\left[\eta_0\left(2^{2k-2}(t-t_k)\right)P_ku\right], v_k = \ft\left[\eta_0\left(2^{2k-2}(t-t_k)P_kv\right)\right]$ and $w_k = \ft\left[\eta_0\left(2^{2k-2}(t-t_k)\right)P_kw\right]$. We decompose each of $u_k,v_k$ and $w_k$ into modulation dyadic pieces as $u_{k,j_1}(\tau,n) = u_k(\tau,n)\eta_{j_1}(\tau - \mu(n))$, $v_{k,j_2}(\tau,n) = v_k(\tau,n)\eta_{j_2}(\tau - \mu(n))$ and $w_{k,j_3}(\tau,n) = w_k(\tau,n)\eta_{j_3}(\tau - \mu(n))$, respectively, with usual modification like $f_{\le j}(\tau) = f(\tau)\eta_{\le j}(\tau-\mu(n))$. Then, from the Cauchy-Schwarz inequality, \eqref{eq:resonant1-2} is bounded by
\begin{equation}\label{eq:resonant1-3}
2^{3k}\sum_{j_4 \ge 0} \frac{2^{j_4/2}}{\max(2^{j_4},2^{2k})} \sum_{j_1,j_2,j_3 \ge 2k} 2^{(j_{min}+j_{thd})/2}\norm{u_{k,j_1}}_{L_{\tau}^2\ell_n^2}\norm{v_{k,j_2}}_{L_{\tau}^2\ell_n^2}\norm{w_{k,j_3}}_{L_{\tau}^2\ell_n^2}.
\end{equation}
Since $j_1,j_2,j_3 \ge 2k$, if $j_4 \le 2k$, we use 
\[(\max(2^{j_4},2^{2k}))^{-1}2^{(j_{min}+j_{thd})/2} \lesssim 2^{(j_1+j_2+j_3)/2}2^{-3k},\] 
and otherwise, we use
\[(\max(2^{j_4},2^{2k}))^{-1}2^{(j_{min}+j_{thd})/2} \lesssim 2^{-j_4}2^{(j_1+j_2+j_3)/2}2^{-k}.\] 
By performing all summations over $j_1, j_2, j_3$ and $j_4$, we have
\begin{align*}
\eqref{eq:resonant1-3} &\lesssim 2^k\sum_{j_1,j_2,j_3\ge 2k} 2^{(j_1+j_2+j_3)/2}\norm{u_{k,j_1}}_{L_{\tau}^2\ell_n^2}\norm{v_{k,j_2}}_{L_{\tau}^2\ell_n^2}\norm{w_{k,j_3}}_{L_{\tau}^2\ell_n^2}\\
&\lesssim 2^k \norm{u_k}_{X_k}\norm{v_k}_{X_k}\norm{w_k}_{X_k},
\end{align*}
which implies \eqref{eq:resonant1-1}. 
\end{proof}

Next, we show the non-resonance estimates by dividing into several cases. From the support property \eqref{eq:support property}, we, from now on, may assume that\footnote{Although we have by simple calculation that 
\[\mu(n)-\mu(n_1)-\mu(n_2)-\mu(n_3) = (n_1+n_2)(n_1+n_3)(n_2+n_3)(n_1^2+n_2^2+n_3^2+n^2 + c_1),\]
it makes a sense to assume \eqref{eq:high modulation} due to $c_1 \ge 0$.}
\begin{equation}\label{eq:high modulation}
\max(|\tau-\mu(n)|,|\tau_j-\mu(n_j)|;j=1,2,3) \gtrsim |(n_1+n_2)(n_1+n_3)(n_2+n_3)|(n_1^2+n_2^2+n_3^2+n^2).
\end{equation}

\begin{lemma}[High-high-high $\Rightarrow$ high]\label{lem:nonres1}
Let $k_4 \ge 20$ and $|k_1-k_4|, |k_2-k_4|, |k_3-k_4| \le 5$. Then, we have 
\begin{equation}\label{eq:nonres1-1}
\begin{aligned}
\norm{P_{k_4}N_3(P_{k_1}u,P_{k_2}v,P_{k_3}w)}_{N_{k_4}} &+ \norm{P_{k_4}N_4(P_{k_1}u,P_{k_2}v,P_{k_3}w)}_{N_{k_4}}\\
&\hspace{3em}\lesssim 2^{3k_3/2}\norm{P_{k_1}u}_{F_{k_1}}\norm{P_{k_2}v}_{F_{k_2}}\norm{P_{k_3}w}_{F_{k_3}}.
\end{aligned}
\end{equation} 
\end{lemma}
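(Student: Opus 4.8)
\textbf{Proof proposal for Lemma~\ref{lem:nonres1}.} The plan is to run a standard $X^{s,b}$-type trilinear estimate in the short-time spaces, reducing everything to the $L^2$-block bounds of Corollary~\ref{cor:tri-L2}, and to exploit the large resonance function $|G|$ available in the high-high-high interaction to absorb the three derivatives coming from $\px^3$ (in $N_3$) or from the product $\px u\,\px v\,\px w$ (in $N_4$). First I would unfold the definitions: write $u_{k_4} = \ft[\eta_0(2^{2k_4}(t-t_{k_4}))P_{k_4}u]$ and similarly $v_{k_2}$, $w_{k_3}$, so that by the $N_{k_4}$-norm definition the left-hand side is
\[
\sup_{t_{k_4}\in\R}\Bigl\|(\tau-\mu(n)+i2^{2k_4})^{-1}\mathbf{1}_{I_{k_4}}(n)\,\ft[\NT\text{-type product}]\Bigr\|_{X_{k_4}},
\]
where the symbol contributes a factor $\lesssim 2^{3k_3}$ in absolute value (for $N_3$ the $n_3^2$ together with the outer $n$; for $N_4$ the three frequencies $n_1 n_2 n_3$, all comparable to $2^{k_4}\sim 2^{k_3}$). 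Using Lemma~\ref{lem:prop of Xk} to handle the time cut-offs and the $(\tau-\mu(n)+i2^{2k_4})^{-1}$ factor, I decompose $u_{k_4}, v_{k_2}, w_{k_3}$ and the output into modulation pieces $2^{j_1},2^{j_2},2^{j_3},2^{j_4}$, and bound the resulting quadrilinear form by $J$ via duality, so that the task becomes summing the block estimates of Corollary~\ref{cor:tri-L2} against $2^{3k_3}$ and the modulation weights $2^{-j_4}2^{(j_1+j_2+j_3)/2}$ (with the usual $\max(2^{j_4},2^{2k_4})^{-1}$ from the $N_k$ definition and the $2^{j_4/2}$ from $X_{k_4}$).

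The key point is the lower bound on the maximal modulation. Since $k_1,k_2,k_3,k_4$ are all within $5$ of each other and $k_4\ge 20$, the resonance identity $G(n_1,n_2,n_3) = \tfrac52(n_1+n_2)(n_1+n_3)(n_2+n_3)(n_1^2+n_2^2+n_3^2+n^2)$ combined with \eqref{eq:high modulation}/\eqref{eq:support property} gives $2^{j_{max}}\gtrsim |G| + 2^{j_{sub}}$, and in the high-high-high regime one generically has $|G|\sim 2^{5k_4}$ unless one of $n_1+n_2$, $n_1+n_3$, $n_2+n_3$ is small; I would split into the subcase where $|G|\sim 2^{5k_4}$ (all pair-sums of size $\sim 2^{k_4}$) and the subcase where some pair-sum is small. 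In the first subcase we have $2^{j_{max}}\gtrsim 2^{5k_4}$; feeding this into the block estimates of Corollary~\ref{cor:tri-L2}(b) (whichever of (b-1)--(b-4) applies, according to whether the max modulation sits on an input or the output and whether $j_{sub}\lessgtr 4k_{max}$) produces a gain of at least $2^{-j_{max}/2}\lesssim 2^{-5k_4/2}$ together with at most $2^{k_{thd}/2}\le 2^{k_4/2}$ from the summation over the constrained intermediate frequency; balancing $2^{3k_3}\cdot 2^{-5k_4/2}\cdot 2^{k_4/2} = 2^{3k_4}\cdot 2^{-2k_4} = 2^{k_4}\lesssim 2^{3k_3/2}$ — wait, more carefully: the decisive combination is $2^{3k_3}$ (derivatives) times the net modulation/frequency gain, and because $2^{j_{max}}\ge 2^{5k_4}$ while we only need to pay $2^{j_i/2}$ on each input, the $2^{-j_{max}/2}$ kills $2^{5k_4/2}$, which together with the $2^{-2k_4}$ or $2^{-(j_{sub}+j_{max})/2}$ factors comfortably beats $2^{3k_3}$ and leaves room to spare. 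In the second subcase, say $|n_i+n_j|=2^{m}$ with $m$ small: then $|G|\sim 2^{m}2^{4k_4}$, and one still extracts $2^{j_{max}}\gtrsim 2^{m+4k_4}$; the loss from small $|G|$ when $m$ is tiny is compensated because a small pair-sum forces the third frequency to be $\sim 2^{k_4}$ with a very rigid relation, which is exactly the scenario where the block counting in Lemma~\ref{lem:tri-L2} gains the extra $2^{-4k_{max}}$; summing over $m$ is then a geometric series.

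I would organize the modulation sums exactly as in the proof of Lemma~\ref{lem:resonant1}: the factor $2^{j_4/2}\max(2^{j_4},2^{2k_4})^{-1}$ is summable in $j_4$ after using one of the two block bounds, the remaining $2^{(j_1+j_2+j_3)/2}$ reconstruct the $X_{k_i}$-norms which then pass to the $F_{k_i}$-norms via the definition of $F_k$ and the $2^{2k}$-localized time cut-off, and Lemma~\ref{lem:prop of Xk} guarantees that the $(\tau-\mu(n)+i2^{2k_4})^{-1}$ multiplier does not destroy the $X_{k_4}$-structure. The main obstacle, and the place requiring genuine care rather than bookkeeping, is the second subcase above: verifying that when one pair-sum degenerates the improved frequency-counting in Lemma~\ref{lem:tri-L2}(b) really does recover enough powers of $2^{k_4}$ to offset the shrinking resonance $|G|\sim 2^m 2^{4k_4}$, uniformly down to $m=0$, and checking that the borderline thresholds $j_{sub}\lessgtr 4k_{max}$ are correctly matched to the right clause of Corollary~\ref{cor:tri-L2}. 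Everything else is the routine Cauchy--Schwarz-plus-geometric-series machinery already exercised in Lemma~\ref{lem:resonant1}.
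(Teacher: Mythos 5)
Your overall architecture (unfold the $N_{k_4}$-definition, decompose into modulation blocks, reduce to the $L^2$-block estimates, and extract a gain from the resonance function) matches the paper's. However, the argument has a genuine gap at the point where you reach for the block estimate: you invoke Corollary~\ref{cor:tri-L2}(b), clauses (b-1)--(b-4), but every one of those clauses is stated under the hypothesis $k_{thd}\le k_{max}-10$, i.e.\ a genuine dyadic frequency separation. In the present lemma all of $k_1,k_2,k_3,k_4$ are within $5$ of each other, so $k_{thd}>k_{max}-10$ and part (b) is simply not applicable. The same problem infects your treatment of the degenerate subcase: the ``extra $2^{-4k_{max}}$'' gain you want to extract when a pair-sum $n_i+n_j$ is small again comes from Lemma~\ref{lem:tri-L2}(b)/(b-2)/(b-4), which live in the separated-frequency regime, not in the high-high-high regime. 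So the mechanism you propose for compensating the small-$|G|$ loss is not available.

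The paper avoids this entirely: it uses only Corollary~\ref{cor:tri-L2}(a), which holds for arbitrary $k_i$ and gives $2^{(j_{min}+j_{thd})/2}2^{(k_{min}+k_{thd})/2}$, and the \emph{uniform} modulation lower bound $j_{max}\ge 3k_3$ from \eqref{eq:high modulation}. There is no need for a subcase split on the size of $|G|$: the lower bound $|G|\gtrsim 2^{3k_3}$ already takes into account that in the high-high-high configuration two of the three pair-sums can be $O(1)$ (but the remaining one is $\sim 2^{k_4}$ and the quartic factor is $\sim 2^{2k_4}$), and your initial claim that $|G|\sim 2^{5k_4}$ generically is an overreach. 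The paper's case split is instead on whether $j_4\le 2k_4$, $j_4=j_{max}\ge 3k_4$, or $j_4\ne j_{max}$ (i.e.\ on the modulation of the output, as dictated by the $\max(2^{j_4},2^{2k_4})^{-1}$ factor in the $N_{k_4}$-norm), and in each case one simply checks that $2^{3k_3}\cdot 2^{k_4}\cdot(\text{modulation gain})\lesssim 2^{3k_3/2}$ after reconstructing the $X_{k_i}$-norms. If you replace the appeal to part (b) by part (a), drop the $|G|$-subcase analysis, and instead use $j_{max}\ge 3k_3$ together with the short-time constraint $j_1,j_2,j_3\ge 2k_4$, the rest of your bookkeeping goes through.
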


\begin{proof}
As in the proof of Lemma \ref{lem:resonant1}, both terms of the left-hand side of \eqref{eq:nonres1-1} are bounded by 
\begin{equation}\label{eq:nonres1-2}
\begin{aligned}
\sup_{t_k \in \R} &\Big\|(\tau_4 - \mu(n_4)+ i2^{2k_4})^{-1}2^{3k_4}\mathbf{1}_{I_{k_4}}(n_4)\ft\left[\eta_0\left(2^{2k_4-2}(t-t_k)\right)P_{k_1}u\right] \\
&\hspace{2em}\ast \ft\left[\eta_0\left(2^{2k_4-2}(t-t_k)P_{k_2}v\right)\right] \ast \ft\left[\eta_0\left(2^{2k_4-2}(t-t_k)\right)P_{k_3}w\right]\Big\|_{X_{k_4}}
\end{aligned}
\end{equation}
Set, similarly as in the proof of Lemma \ref{lem:resonant1}, $f_{k_1} = \ft\left[\eta_0\left(2^{2k_4-2}(t-t_k)\right)P_{k_1}u\right]$, $f_{k_2} = \ft\left[\eta_0\left(2^{2k_4-2}(t-t_k)\right)P_{k_2}v\right]$ and $f_{k_3} = \ft\left[\eta_0\left(2^{2k_4-2}(t-t_k)\right)P_{k_3}w\right]$. Also, we decompose $f_{k_i}$ into modulation dyadic pieces as $f_{k_i,j_i}(\tau,n) = f_{k_i}(\tau,n)\eta_{j_i}(\tau -\mu(n))$, $j=1,2,3$, with usual modification $f_{k,\le j}(\tau,n) = f_{k}(\tau,n)\eta_{\le j}(\tau-\mu(n))$. Then, \eqref{eq:nonres1-2} is bounded by
\[2^{3k_3}\sum_{j_4 \ge 0} \frac{2^{j_4/2}}{\max(2^{j_4},2^{2k_4})} \sum_{j_1,j_2,j_3 \ge 2k_4}\norm{\mathbf{1}_{D_{k_4,j_4}}\cdot(f_{k_1,j_1} \ast f_{k_2,j_2} \ast f_{k_3,j_3})}_{L_{\tau_4}^2\ell_{n_4}^2},\]
and by applying \eqref{eq:tri-block estimate-a2} to $\norm{\mathbf{1}_{D_{k_4,j_4}}\cdot(f_{k_1,j_1} \ast f_{k_2,j_2} \ast f_{k_3,j_3})}_{L_{\tau_4}^2\ell_{n_4}^2}$, we have
\[\begin{aligned}
\eqref{eq:nonres1-2} \lesssim 2^{3k_3}\sum_{j_4 \ge 0}& \frac{2^{j_4/2}}{\max(2^{j_4},2^{2k_4})} \\
& \times \sum_{j_1,j_2,j_3 \ge 2k_4}2^{(j_{min}+j_{thd})/2}2^{k_4}\norm{f_{k_1,j_1}}_{L_{\tau}^2\ell_n^2}\norm{f_{k_2,j_2}}_{L_{\tau}^2\ell_n^2}\norm{f_{k_3,j_3}}_{L_{\tau}^2\ell_n^2}.
\end{aligned}\]
From \eqref{eq:high modulation}, by using $j_{max} \ge 3k_3$ and $j_1,j_2,j_3 \ge 2k_3$, if $j_4 \le 2k_4$, we get 
\[(\max(2^{j_4},2^{2k_4}))^{-1}2^{(j_{min}+j_{thd})/2} \lesssim 2^{-2k_4}2^{(j_1+j_2+j_3)/2}2^{-3k_3/2}.\]
Otherwise, if $j_4 = j_{max} \ge 3k_4$, then 
\[(\max(2^{j_4},2^{2k_4}))^{-1}2^{(j_{min}+j_{thd})/2} \lesssim 2^{-j_4}2^{(j_1+j_2+j_3)/2}2^{-k_4},\] 
and if $j_4 \neq j_{max}$, then 
\[(\max(2^{j_4},2^{2k_4}))^{-1}2^{(j_{min}+j_{thd})/2} \lesssim 2^{-j_4}2^{(j_1+j_2+j_3)/2}2^{-3k_4/2}.\] 
Hence by performing all summations over $j_1, j_2, j_3$ and $j_4$, we get
\[\eqref{eq:nonres1-2} \lesssim 2^{3k_3/2}\norm{P_{k_1}u}_{F_{k_1}}\norm{P_{k_2}v}_{F_{k_2}}\norm{P_{k_3}w}_{F_{k_3}},\]
which completes the proof of Lemma \ref{lem:nonres1}.
\end{proof}

\begin{lemma}[High-high-low $\Rightarrow$ high]\label{lem:nonres2}
Let $k_4 \ge 20$, $|k_2-k_4|, |k_3-k_4| \le 5$ and $k_1 \le k_4-10$. Then, we have 
\[\begin{aligned}\norm{P_{k_4}N_3(P_{k_1}u,P_{k_2}v,P_{k_3}w)}_{N_{k_4}} &+ \norm{P_{k_4}N_4(P_{k_1}u,P_{k_2}v,P_{k_3}w)}_{N_{k_4}}\\
&\hspace{3em}\lesssim 2^{k_1/2}\norm{P_{k_1}u}_{F_{k_1}}\norm{P_{k_2}v}_{F_{k_2}}\norm{P_{k_3}w}_{F_{k_3}}.
\end{aligned}\]
\end{lemma}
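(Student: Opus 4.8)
The plan is to follow the same scheme as in Lemma \ref{lem:nonres1}, but now exploiting the separation of scales $k_1 \le k_4 - 10$ so that the $L^2$-block estimate in part (b) of Corollary \ref{cor:tri-L2}, rather than the crude bound (a), becomes available. First I would reduce the $N_{k_4}$-norm of both $P_{k_4}N_3$ and $P_{k_4}N_4$ to the multiplier expression of the form \eqref{eq:nonres1-2}: localize in time on the $2^{-2k_4}$ scale, transfer the three input functions to the spaces $X_{k_i}$ via Lemma \ref{lem:prop of Xk}, and write the output as a convolution $f_{k_1,j_1}\ast f_{k_2,j_2}\ast f_{k_3,j_3}$ cut off to $D_{k_4,j_4}$. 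The two derivatives landing on a high-frequency factor and one on either a high or the low factor produce a symbol of size $\lesssim 2^{2k_4}2^{k_4}=2^{3k_4}$ for $N_3$ and, for $N_4$, at worst $2^{2k_4}\cdot 2^{k_1}\le 2^{3k_4}$; in all cases we extract the weight $2^{3k_4}$ out front and are left with $\|\mathbf{1}_{D_{k_4,j_4}}(f_{k_1,j_1}\ast f_{k_2,j_2}\ast f_{k_3,j_3})\|_{L^2}$ to be estimated dyadically in $j_1,j_2,j_3\ge 2k_4$ and $j_4\ge 0$.

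The heart of the argument is then the interplay between the large-resonance hypothesis \eqref{eq:high modulation} — which here forces $2^{j_{max}}\gtrsim 2^{2k_1}\cdot 2^{4k_4}$ because $|(n_1+n_2)(n_1+n_3)(n_2+n_3)|\sim 2^{2k_4}$ (two of the three pairwise sums are $\sim 2^{k_4}$, the third is $\gtrsim 1$ by the non-resonance constraint $\N_{3,n}$) wait — more carefully: with $k_1$ small, $n_1+n_2\sim 2^{k_4}$ and $n_1+n_3\sim 2^{k_4}$, while $n_2+n_3$ is only known to be nonzero, so $2^{j_{max}}\gtrsim 2^{4k_4}\cdot(n_1^2+n_2^2+n_3^2+n^2)\gtrsim 2^{6k_4}$ — and the block estimates (b-1)--(b-4) of Corollary \ref{cor:tri-L2}. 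I would split according to whether $j_{sub}\le 4k_{max}$ or $j_{sub}>4k_{max}$ and whether the frequency index realizing $j_{max}$ equals $k_{min}=k_1$ or not, matching the four cases of the corollary; in each case one gains either $2^{-(j_{sub}+j_{max})/2}$ together with a $2^{k_1/2}$ or $2^{k_{thd}/2}=2^{k_1/2}$, or $2^{-2k_4}2^{-j_{max}/2}$ with a $2^{k_1/2}$. Combined with the large lower bound $2^{j_{max}}\gtrsim 2^{6k_4}$ coming from \eqref{eq:high modulation}, the smoothing factors $2^{-j_{max}/2}$ (or $2^{-(j_{sub}+j_{max})/2}$) absorb the full $2^{3k_4}$ weight and leave the asserted $2^{k_1/2}$, while the $\max(2^{j_4},2^{2k_4})^{-1}2^{j_4/2}$ factor from the $N_{k_4}$-norm is summable in $j_4$ exactly as in Lemma \ref{lem:nonres1}. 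After carrying out the $j_1,j_2,j_3$-summations — which converge because each step of the block estimate retains half of every $2^{j_i/2}$ — one converts $\|f_{k_i}\|_{X_{k_i}}$ back to $\|P_{k_i}(\cdot)\|_{F_{k_i}}$ and obtains the claim.

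The main obstacle I anticipate is the case where $j_{max}$ is attained at the low-frequency slot $n_1$ (i.e.\ $(k_i,j_i)=(k_{min},j_{max})$ with $j_{sub}>4k_{max}$, case (b-2)): there the block estimate only yields $2^{-2k_{max}}2^{k_{thd}/2}2^{-j_{max}/2}$, so one must check that $2^{3k_4}\cdot 2^{-2k_4}\cdot 2^{-j_{max}/2}\lesssim 1$, which needs $2^{j_{max}}\gtrsim 2^{2k_4}$ — comfortably supplied by \eqref{eq:high modulation} since in fact $2^{j_{max}}\gtrsim 2^{6k_4}$ — but one has to be careful that the two derivatives really are on high-frequency factors and not, through a symmetrization, shifted onto the low one, which for $N_4=n\sum n_2\wh v(n_2)n_3\wh w(n_3)$ requires noting the derivatives sit on $n_2,n_3\sim 2^{k_4}$ plus the overall $n\sim 2^{k_4}$, never on $n_1$. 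A secondary nuisance is bookkeeping the roles of $u,v,w$ versus the symmetrized slots $n_1,n_2,n_3,n_4$ when applying \eqref{eq:symmetry}, but this is routine.
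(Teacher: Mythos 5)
Your approach does go through, but you should note two things: it differs from the paper's route, and you have a pair of arithmetic slips that happen not to be fatal.

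The paper's proof of this lemma is a one-liner: it reruns the argument of Lemma \ref{lem:nonres1}, which uses only the crude block estimate \eqref{eq:tri-block estimate-a2} (part (a) of Corollary \ref{cor:tri-L2}), but now with the improved modulation bound $j_{max}\ge 5k_4$ and the observation that $\|f_{k_1}\|_{X_{k_1}}\lesssim\|P_{k_1}u\|_{F_{k_1}}$ via \eqref{eq:prop2}. One gets $2^{3k_4}\cdot 2^{(k_{min}+k_{thd})/2}\cdot 2^{-(j_{max}+j_{sub})/2}\sim 2^{3k_4}\cdot 2^{(k_1+k_4)/2}\cdot 2^{-7k_4/2}=2^{k_1/2}$ in the critical regime $j_4\le 2k_4$, which is exactly the claimed bound. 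You instead deploy the refined block estimates (b-1)--(b-4); this also works but is more bookkeeping than necessary here, since in the high-high-low $\Rightarrow$ high configuration the resonance is so large that the crude estimate (a) already closes.

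Two concrete errors in your write-up. First, $|n_2+n_3|$ is not merely "nonzero": since $n_1+n_2+n_3+n_4=0$ and $|n_1|\sim 2^{k_1}\ll 2^{k_4}\sim|n_4|$, we have $|n_2+n_3|=|n_1+n_4|\sim 2^{k_4}$, so all three pairwise sums are $\sim 2^{k_4}$ and the resonance lower bound is $|G|\gtrsim 2^{3k_4}\cdot 2^{2k_4}=2^{5k_4}$ — not $2^{6k_4}$ as you assert (that exponent doesn't follow from any combination of the factors), and $2^{5k_4}$ is the bound the paper uses. Second, you repeatedly write $k_{thd}=k_1$; this is wrong. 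In this configuration three of the four dyadic indices lie within $5$ of $k_4$ and only $k_1$ is small, so $k_{min}=k_1$ while $k_{thd}\sim k_4$. Thus the factor $2^{k_{thd}/2}$ in (b-1)/(b-2) is $\sim 2^{k_4/2}$, not $2^{k_1/2}$; the factor $2^{k_1/2}$ is produced only by $2^{k_{min}/2}$ in (b-4), or indirectly through $2^{(k_{min}+k_{thd})/2}$ in (a). Both errors are masked by the fact that $j_{max}\ge 5k_4$ gives far more room than you need, but they should be corrected to make the case analysis actually track what the block estimates deliver.
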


\begin{proof}
Since we have $j_{max} \ge 5k_4$ from \eqref{eq:high modulation}, once we perform the same procedure as in the proof of Lemma \ref{lem:nonres1}, we can obtain better result than that in Lemma \ref{lem:nonres1}. Furthermore, from \eqref{eq:prop2}, we obtain
\[\norm{f_{k_1}}_{X_{k_1}} = \norm{\ft\left[\eta_0\left((2^{2k_4-2}(t-t_k)\right) \cdot P_{k_1}u\eta_0\left(2^{2k_1}(t-t_k)\right)\right]}_{X_{k_1}} \lesssim \norm{P_{k_1}u}_{F_{k_1}}.\] 
We omit the details. 
\end{proof}

\begin{lemma}[High-high-high $\Rightarrow$ low]\label{lem:nonres3}
Let $k_3 \ge 20$, $|k_1-k_3|, |k_2-k_3| \le 5$ and $k_4 \le k_3-10$. Then, we have 
\begin{equation}\label{eq:nonres3-1}
\begin{aligned}
\norm{P_{k_4}N_3(P_{k_1}u,P_{k_2}v,P_{k_3}w)}_{N_{k_4}} &+ \norm{P_{k_4}N_4(P_{k_1}u,P_{k_2}v,P_{k_3}w)}_{N_{k_4}}\\
&\hspace{-1em}\lesssim k_32^{k_3}2^{-k_4/2}\norm{P_{k_1}u}_{F_{k_1}}\norm{P_{k_2}v}_{F_{k_2}}\norm{P_{k_3}w}_{F_{k_3}}.
\end{aligned}
\end{equation}
\end{lemma}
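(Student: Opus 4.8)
\textbf{Proof proposal for Lemma \ref{lem:nonres3}.}

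The plan is to follow the same template as in Lemmas \ref{lem:resonant1}--\ref{lem:nonres2}: reduce the $N_{k_4}$-norm of the output to a block estimate on $\mathbf 1_{D_{k_4,j_4}}\cdot(f_{k_1,j_1}\ast f_{k_2,j_2}\ast f_{k_3,j_3})$, exploit the large resonance lower bound \eqref{eq:high modulation}, and then sum over the modulation indices. First I would unfold the $N_{k_4}$-norm: localize each input in time on the $2^{-2k_4}$ scale (so that, after applying \eqref{eq:prop2}, each factor is controlled by $\norm{P_{k_i}u}_{F_{k_i}}$, etc.), use the factor $(\tau_4-\mu(n_4)+i2^{2k_4})^{-1}$ together with the $X_{k_4}$ sum over $j_4$, and pull out the three derivatives as $2^{3k_3}$ (since $|n_i|\sim 2^{k_3}$ for $i=1,2,3$; note the worst term, from $N_3$, literally carries $n_3^2$ and for $N_4$ a factor $n_2 n_3$, both of size $2^{2k_3}$, and the overall $n$ in front of the nonlinearity in \eqref{eq:5mkdv3} is of size $2^{k_4}$, so in fact the symbol is $\lesssim 2^{k_4}2^{2k_3}\sim 2^{3k_3}$ after using $k_4\le k_3$). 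This leaves
\[
2^{3k_3}\sum_{j_4\ge 0}\frac{2^{j_4/2}}{\max(2^{j_4},2^{2k_4})}\sum_{j_1,j_2,j_3\ge 2k_4}\norm{\mathbf 1_{D_{k_4,j_4}}\cdot(f_{k_1,j_1}\ast f_{k_2,j_2}\ast f_{k_3,j_3})}_{L^2}.
\]

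Next I would invoke the resonance bound. Here $k_{thd}=k_4$ (since $k_1,k_2,k_3\sim 2^{k_3}$ and $k_4\le k_3-10$), $k_{max}\sim 2^{k_3}$, and from $H=\frac52(n_1+n_2)(n_1+n_3)(n_2+n_3)(n_1^2+n_2^2+n_3^2+n^2)$ one has $|G|\sim 2^{5k_3}$ generically --- but because of the high-high-high $\Rightarrow$ low configuration one of the pair-sums can be as small as $2^{k_4}$, so only the lower bound $j_{max}\gtrsim k_3 + 4k_3 = 5k_3$ is guaranteed when all pair sums are $\sim 2^{k_3}$, while in the degenerate subcase $|n_i+n_j|\sim 2^{k_4}$ one loses and gets only $|G|\gtrsim 2^{k_4}2^{4k_3}$; this $2^{-k_4}$ deficit is exactly the source of the $2^{-k_4/2}$ in the claimed bound. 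The factor $k_3$ in \eqref{eq:nonres3-1} comes from summing a logarithmically large number of dyadic pieces of $n_4$ (equivalently, of the resonant region) --- this is the standard $\log$-loss that appears whenever the third frequency is genuinely free, as in the analogous estimate in \cite{GKK2013}. I would split into the cases $j_4\le 2k_4$ (use $\max(\cdots)^{-1}=2^{-2k_4}$ and estimate the block with \eqref{eq:tri-block estimate-b2.1} or \eqref{eq:tri-block estimate-b2.2} depending on whether $j_{sub}\lessgtr 4k_{max}$, with the crucial gain $2^{k_{thd}/2}=2^{k_4/2}$) and $j_4>2k_4$ (here $j_4=j_{max}$ typically, and $2^{j_4/2}/2^{j_4}=2^{-j_4/2}$ kills half of the resonance). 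In every branch, after inserting the block estimate and using $j_1,j_2,j_3\ge 2k_4$ together with $j_{max}\gtrsim 5k_3$ (resp. $4k_3+k_4$), the powers of $2$ in the $j$'s telescope to $2^{(j_1+j_2+j_3)/2}$ times $2^{-j_4/2}$ or $2^{-2k_4}$, which are summable; I collect the remaining $k$-powers and check they total $k_3 2^{k_3}2^{-k_4/2}$.

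The main obstacle I expect is the bookkeeping in the degenerate subcase where one pair-sum, say $|n_1+n_2|$, is comparable to $2^{k_4}$ rather than $2^{k_3}$: there the resonance $|G|$ is only $\sim 2^{k_4}\cdot 2^{4k_3}$, so the crude exponent count gives $2^{3k_3}\cdot 2^{-2k_4}\cdot 2^{k_4/2}$ from the $j_4\le 2k_4$ branch, which is $2^{3k_3-3k_4/2}$ --- worse than claimed by $2^{k_3}$. To recover the stated bound one must use the block estimate \eqref{eq:tri-block estimate-b2.2} (valid precisely when $j_{sub}>4k_{max}$, i.e. when the large modulation is forced onto a second input too), which supplies an extra $2^{-2k_{max}}=2^{-2k_3}$; combined with the $2^{k_4/2}$ this gives $2^{3k_3}2^{-2k_3}2^{-2k_4}2^{k_4/2}\cdot(\text{$j$-gains})$, and after the $j$-summation and the logarithmic $n_4$-sum one lands on $k_3 2^{k_3}2^{-k_4/2}$. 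Making sure that in \emph{every} modulation configuration either the $2^{k_{thd}/2}$ gain (branch b-1) or the $2^{-2k_{max}}$ gain (branch b-2) is available, and that the case $(k_i,j_i)=(k_{min},j_{max})$ required by (b-2) is the one actually occurring in the degenerate regime, is the delicate point; once that case analysis is pinned down the summations are routine and identical to those in \cite{GKK2013,KP2015}, so I would state the outcome and refer there for the repetitive parts.
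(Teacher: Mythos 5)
Your high-level template (unfold the $N_{k_4}$-norm, decompose into modulation pieces, apply the $L^2$-block estimates of Corollary \ref{cor:tri-L2}, sum) is the right shape, but the specifics contain several genuine errors that would derail the argument.

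First, you skip the crucial time-partition step. In this configuration the output frequency $k_4$ is strictly \emph{lower} than the input frequencies $k_1,k_2,k_3\sim k_3$, so the $N_{k_4}$-norm tests the nonlinearity on intervals of length $2^{-2k_4}$, whereas each $F_{k_i}$-norm is uniform only on the shorter scale $2^{-2k_3}$. Localizing the inputs at the coarse $2^{-2k_4}$ scale, as you propose, does not let you invoke \eqref{eq:prop2} to bound them by $\norm{P_{k_i}u}_{F_{k_i}}$ (that lemma helps when the extra cutoff is \emph{finer} than the natural scale, as in Lemma \ref{lem:nonres2}, not coarser). The paper instead partitions the $2^{-2k_4}$-interval into $\sim 2^{2k_3-2k_4}$ pieces of length $2^{-2k_3}$ using $\sum_m\gamma^3(\cdot-m)\equiv 1$. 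This produces the prefactor $2^{4k_3-k_4}$ in \eqref{eq:nonres3-2} (derivative factor $2^{k_4}2^{2k_3}$ times the count $2^{2k_3-2k_4}$) and, more importantly, the stronger modulation lower bound $j_1,j_2,j_3\ge 2k_3$. Your version only gets $j_i\ge 2k_4$, which is not enough: the subsequent $j$-sums do not telescope to the right powers, and in fact the estimate fails.

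Second, your description of the resonance is wrong. You write that ``one of the pair-sums can be as small as $2^{k_4}$,'' attributing the $2^{-k_4/2}$ to a degenerate subcase $|n_i+n_j|\sim 2^{k_4}$. That subcase does not occur here: with $|n_1|,|n_2|,|n_3|\sim 2^{k_3}$ and $n_1+n_2+n_3=n$ with $|n|\sim 2^{k_4}\le 2^{k_3-10}$, each pair-sum is $n_i+n_j=n-n_k$, hence of size $\sim 2^{k_3}$. So $|G|\gtrsim 2^{3k_3}\cdot 2^{2k_3}=2^{5k_3}$ always, and the paper uses only $j_{max}\ge 5k_3$. Relatedly, the $k_3$ factor in \eqref{eq:nonres3-1} does not come from ``summing dyadic pieces of $n_4$''; $n_4$ is already fixed to $I_{k_4}$. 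It comes from the sum over the intermediate modulation range $2k_4\le j_4<2k_3$, where each summand is $\sim 1$. Finally, the $2^{-k_4/2}$ in the answer is the net of $-2k_4$ from the partition count, $+k_4$ from the $n$ factor, and $+k_4/2$ from the factor $2^{(k_{min}+k_{thd})/2}=2^{(k_4+k_3)/2}$ in \eqref{eq:tri-block estimate-a2}; it is not a resonance deficit. Because the partition step, the resonance analysis, and the bookkeeping are all mislocated, the proposal as written would not recover the stated bound.
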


\begin{proof}
Since $k_4 \le k_3 -10$, one can observe that the $N_{k_4}$-norm is taken on the time intervals of length $2^{-2k_4}$, while each $F_{k_i}$-norm is taken on shorter time intervals of length $2^{-2k_i}$, $i=1,2,3$. Thus, we divide the time interval, which is taken in $N_{k_4}$-norm, into $2^{2k_3-2k_4}$ intervals of length $2^{-2^{2k_3}}$ in order to obtain the right-hand side of \eqref{eq:nonres3-1}. Let $\gamma: \R \to [0,1]$ denote a smooth function supported in $[-1,1]$ with $ \sum_{m\in \Z} \gamma^3(x-m) \equiv 1$. From the definition of $N_{k_4}$-norm, the left-hand side of \eqref{eq:nonres3-1} is dominated by
\begin{equation}\label{eq:nonres3-2}
\begin{split}
\sup_{t_k\in \R}&2^{k_4}2^{2k_3}\Big\|(\tau_4-\mu(n_4) +i 2^{2k_4})^{-1}\mathbf{1}_{I_{k_4}}\\
&\cdot  \sum_{|m| \le C 2^{2k_3-2k_4}} \ft[\eta_0(2^{2k_4}(t-t_k))\gamma (2^{2k_3}(t-t_k)-m)P_{k_1}u]\\
 &\hspace{6em}\ast \ft[\eta_0(2^{2k_4}(t-t_k))\gamma (2^{2k_3}(t-t_k)-m)P_{k_2}v]\\
 &\hspace{6em}\ast \ft[\eta_0(2^{2k_4}(t-t_k))\gamma (2^{2k_3}(t-t_k)-m)P_{k_3}w]\Big\|_{X_{k_4}}.
\end{split}
\end{equation}
As in the proof of Lemma \ref{lem:nonres1}, we have from \eqref{eq:tri-block estimate-a2} that
\[\begin{aligned}\eqref{eq:nonres3-2} \lesssim &2^{4k_3-k_4}\sum_{j_4 \ge 0} \frac{2^{j_4/2}}{\max(2^{j_4},2^{2k_4})}\\
&\qquad\times \sum_{j_1,j_2,j_3 \ge 2k_3} 2^{(j_{min}+j_{med})/2}2^{(k_3+k_4)/2}\norm{f_{k_1,j_1}}_{L_{\tau}^2\ell_n^2}\norm{f_{k_2,j_2}}_{L_{\tau}^2\ell_n^2}\norm{f_{k_3,j_3}}_{L_{\tau}^2\ell_n^2}.
\end{aligned}\]
If $j_4 < 2k_4$, then since $j_4 \le j_1, j_2, j_3$ and $j_{max} \ge 5k_3$, we have
\[(\max(2^{j_4},2^{2k_4}))^{-1}2^{(j_{min}+j_{thd})/2} \lesssim 2^{-2k_4}2^{(j_1+j_2+j_3)/2}2^{-5k_3/2}2^{-k_3}.\]
If $2k_4 \le j_4 < 2k_3 $, then since we still have $j_4 \le j_1, j_2, j_3$ and $j_{max} \ge 5k_3$, and hence we get
\begin{equation}\label{eq:nonres3-3}
(\max(2^{j_4},2^{2k_4}))^{-1}2^{(j_{min}+j_{thd})/2} \lesssim 2^{-j_4}2^{(j_1+j_2+j_3)/2}2^{-5k_3/2}2^{-k_3}.
\end{equation}
Otherwise, we always obtain 
\[(\max(2^{j_4},2^{2k_4}))^{-1}2^{(j_{min}+j_{thd})/2} \lesssim 2^{-j_4/2}2^{(j_1+j_2+j_3)/2}2^{-5k_3/2}2^{-k_3}.\]
In fact, the worst bound comes from the summation of \eqref{eq:nonres3-3} over $j_1, j_2, j_3$ and $j_4$. Indeed,
\begin{align*}
2^{4k_3}&2^{-k_4}\sum_{2k_4 \le j_4 < 2k_3}\frac{2^{j_4/2}}{\max(2^{j_4},2^{2k_4})}\sum_{j_1,j_2,j_3 \ge 2k_3}2^{(j_{min}+j_{med})/2}2^{(k_3+k_4)/2}\prod_{i=1}^{3}\norm{f_{k_i,j_i}}_{L_{\tau}^2\ell_n^2}\\
&\lesssim 2^{4k_3}2^{-k_4}\sum_{2k_4 \le j_4 < 2k_3}2^{j_4/2} \\
& \hspace{5em} \times\sum_{j_1,j_2,j_3 \ge 2k_3}2^{-j_4/2}2^{(j_1+j_2+j_3)/2}2^{-5k_3/2}2^{-k_3}2^{(k_3+k_4)/2}\prod_{i=1}^{3}\norm{f_{k_i,j_i}}_{L_{\tau}^2\ell_n^2}\\
&\lesssim k_32^{k_3}2^{-k_4/2}\norm{P_{k_1}u}_{F_{k_1}}\norm{P_{k_2}v}_{F_{k_2}}\norm{P_{k_3}w}_{F_{k_3}},
\end{align*} 
which complete the proof of Lemma \ref{lem:nonres3}.
\end{proof}

Next, we estimate the \emph{high-low-low $\Rightarrow$ high} non-resonant interaction component in the nonlinear term. As mentioned in Sections \ref{sec:intro} and \ref{sec:trilinear}, the trilinear estimate fails in the standard $X^{s,b}$ space due to the strong nonlinearity and the lack of dispersive smoothing effect. The following lemma shows that the short time length ($\approx (\mbox{frequency})^{-2}$) which is taken in $F_{k}$ or $N_k$ spaces is suitable to estimate the cubic nonlinearity.

\begin{lemma}[High-low-low $\Rightarrow$ high]\label{lem:nonres4}
Let $k_4 \ge 20$, $|k_3-k_4| \le 5$ and $k_1, k_2 \le k_4 -10$. Then, we have 
\[\begin{aligned}\norm{P_{k_4}N_3(P_{k_1}u,P_{k_2}v,P_{k_3}w)}_{N_{k_4}} &+ \norm{P_{k_4}N_4(P_{k_1}u,P_{k_2}v,P_{k_3}w)}_{N_{k_4}}\\
&\hspace{5em}\lesssim 2^{k_{min}/2}\norm{P_{k_1}u}_{F_{k_1}}\norm{P_{k_2}v}_{F_{k_2}}\norm{P_{k_3}w}_{F_{k_3}}.
\end{aligned}\]
\end{lemma}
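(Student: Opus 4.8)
The plan is to mimic the structure of the proofs of Lemmas \ref{lem:nonres1}--\ref{lem:nonres3}, but to exploit the crucial gain coming from the short time localization at scale $2^{-2k_4}$ combined with the large resonance. First I would reduce, exactly as before, the left-hand side to a sum over modulation dyadic pieces: writing $f_{k_i}=\ft[\eta_0(2^{2k_4-2}(t-t_k))P_{k_i}(\cdot)]$ and decomposing each into $f_{k_i,j_i}$, the quantity to bound is
\[
2^{3k_4}\sum_{j_4\ge 0}\frac{2^{j_4/2}}{\max(2^{j_4},2^{2k_4})}\sum_{j_1,j_2,j_3\ge 2k_4}\norm{\mathbf{1}_{D_{k_4,j_4}}\cdot(f_{k_1,j_1}\ast f_{k_2,j_2}\ast f_{k_3,j_3})}_{L_{\tau_4}^2\ell_{n_4}^2}.
\]
Here the point is that, in the \emph{high $\times$ low $\times$ low $\Rightarrow$ high} regime, we have $k_{thd}=k_{min}\in\{k_1,k_2\}$ and $k_{thd}\le k_{max}-10$, so Lemma \ref{lem:tri-L2}(b) (equivalently Corollary \ref{cor:tri-L2}(b)) applies with a genuine gain, rather than merely the crude bound \eqref{eq:tri-block estimate-a2}. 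From \eqref{eq:high modulation} the largest modulation satisfies $2^{j_{max}}\gtrsim |(n_1+n_2)(n_1+n_3)(n_2+n_3)|(n_1^2+n_2^2+n_3^2+n^2)\sim 2^{4k_4}$, since two of the three pairwise sums are $\sim 2^{k_4}$ and $n^2\sim 2^{2k_4}$; in fact one can do slightly better but $j_{max}\ge 4k_4$ is what is needed.

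Next I would split into the four sub-cases of Corollary \ref{cor:tri-L2}(b) according to whether the position of $j_{max}$ coincides with $k_{min}$ or not and whether $j_{sub}\lessgtr 4k_{max}$, and in each case insert the corresponding block estimate and sum the geometric series in $j_1,j_2,j_3,j_4$. The arithmetic is entirely parallel to Lemma \ref{lem:nonres1}: the factor $2^{3k_4}$ from the three derivatives is killed by $2^{-j_{max}}\lesssim 2^{-4k_4}$ (or by the combination of $2^{-2k_{max}}$ and $2^{-j_{max}/2}$ appearing in \eqref{eq:tri-block estimate-b2.2}, \eqref{eq:tri-block estimate-b2.4}) together with the $(\max(2^{j_4},2^{2k_4}))^{-1}$ weight when $j_4$ is \emph{not} the maximum; when $j_4=j_{max}$ the extra $2^{j_4/2}$ in the numerator is beaten by $2^{-j_4}$. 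What survives after summation is exactly one factor $2^{k_{min}/2}$ coming from the $2^{k_{thd}/2}=2^{k_{min}/2}$ in \eqref{eq:tri-block estimate-b2.1}--\eqref{eq:tri-block estimate-b2.2} (the cases with $(k_i,j_i)=(k_{min},j_{max})$), while the ``off-diagonal'' cases \eqref{eq:tri-block estimate-b2.3}--\eqref{eq:tri-block estimate-b2.4} produce no such factor and are therefore harmless. Finally, as in Lemma \ref{lem:nonres2}, I would use \eqref{eq:prop2} to replace the $\eta_0(2^{2k_4-2}(t-t_k))$ cut-off acting on the low-frequency pieces $P_{k_1}u$, $P_{k_2}v$ by the sharper cut-off $\eta_0(2^{2k_i}(t-t_k))$, so that $\norm{f_{k_i}}_{X_{k_i}}\lesssim\norm{P_{k_i}u}_{F_{k_i}}$ for $i=1,2$ — this is where the mismatch of time scales is absorbed.

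The main obstacle — indeed the whole reason the short time spaces are needed — is the bookkeeping in the sub-case where $j_4=j_{max}$ but $j_{sub}>4k_{max}$, i.e. \eqref{eq:tri-block estimate-b2.2}: there the block estimate only gives $2^{-2k_{max}}2^{k_{min}/2}2^{-j_{max}/2}$, and one must check that $2^{3k_4}\cdot 2^{j_4/2}(\max(2^{j_4},2^{2k_4}))^{-1}\cdot 2^{-2k_4}2^{-j_4/2}=2^{3k_4}\cdot 2^{-j_4}\cdot 2^{-2k_4}\lesssim 2^{3k_4}2^{-4k_4}2^{-2k_4}=2^{-3k_4}$ together with the $2^{(j_1+j_2+j_3+j_4)/2}$ from the definition of $J$ still closes against the three $X_{k_i}$-norms; the delicate point is that the low-modulation constraint $j_1,j_2,j_3\ge 2k_4$ on the high pieces (forced by the $\eta_0(2^{2k_4-2}(t-t_k))$ localization) is exactly what makes the geometric series in $j_1,j_2,j_3$ converge with room to spare. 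Everything else is routine and follows the template of the previous three lemmas.
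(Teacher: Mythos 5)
Your high-level outline — reduce to the dyadic-in-modulation block sum, apply Corollary~\ref{cor:tri-L2}(b), and sum geometric series in $j_1,j_2,j_3,j_4$ — does match the paper's strategy, and you correctly identify the use of \eqref{eq:prop2} to pass from the $2^{-2k_4}$ time scale to the sharper $F_{k_1},F_{k_2}$ norms for the low pieces. However, there is a genuine gap in the resonance count. You write ``$2^{j_{max}}\gtrsim\cdots\sim 2^{4k_4}$ \ldots in fact one can do slightly better but $j_{max}\ge 4k_4$ is what is needed,'' and later identify ``$2^{k_{thd}/2}=2^{k_{min}/2}$.'' Both of these are correct only when $|k_1-k_2|\le 5$. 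When the two low frequencies are well separated (say $k_1\le k_2-10$), the paper's Case~I, you have $k_{thd}=k_2$ and $k_{min}=k_1$, which are \emph{not} comparable, and the block estimate \eqref{eq:tri-block estimate-b2.1} hands back the factor $2^{k_2/2}$, not $2^{k_1/2}$. With only $j_{max}\ge 4k_4$ the remaining modulation gain in the sub-range $j_4<2k_4$, $j_{sub}\le 4k_4$, $j_2=j_{max}$ is $2^{-(j_{max}+j_{sub})/2}\le 2^{-3k_4}$, and after cancelling $2^{3k_4}$ and summing $j_4$ one is left with $2^{k_2/2}\prod\norm{P_{k_i}\cdot}_{F_{k_i}}$. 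Since $k_2>k_1=k_{\min}$, this is strictly larger than the claimed bound $2^{k_{\min}/2}$ and the estimate does not close.

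The fix, which is what the paper does, is to split into the two cases $k_1\le k_2-10$ and $|k_1-k_2|\le 5$, and in the former to use the sharper resonance bound: since $|n_1+n_2|\sim 2^{k_2}$ when $k_1\le k_2-10$, one has $j_{max}\ge 4k_4+k_2$ (the paper's \eqref{eq:nonres4-2}), so $2^{-j_{max}/2}$ contributes an extra $2^{-k_2/2}$ that exactly cancels the $2^{k_2/2}$ from \eqref{eq:tri-block estimate-b2.1}. Without isolating this case and using the full strength of the resonance function, the argument fails precisely in the high $\times$ (well-separated low) $\times$ low regime. Two minor points: in the paper's Case~II the remaining $2^{k_{\min}/2}$ really does come from the off-diagonal estimate \eqref{eq:tri-block estimate-b2.4}, not from \eqref{eq:tri-block estimate-b2.1}--\eqref{eq:tri-block estimate-b2.2} as you state; and your claim that \eqref{eq:tri-block estimate-b2.3}--\eqref{eq:tri-block estimate-b2.4} ``produce no such factor'' is off for \eqref{eq:tri-block estimate-b2.4}, which carries $2^{k_{\min}/2}$ — this latter slip is harmless since $2^{k_{\min}/2}$ is the target, but the first gap is fatal to the argument as written.
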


\begin{proof}
As in the proof of Lemma \ref{lem:nonres1} -- \ref{lem:nonres3}, it is enough to consider 
\begin{equation}\label{eq:nonres4-1}
2^{3k_4}\sum_{j_4 \ge 0} \frac{2^{j_4/2}}{\max(2^{j_4},2^{2k_4})} \sum_{j_1,j_2,j_3 \ge 2k}\norm{\mathbf{1}_{D_{k_4,j_4}}\cdot(f_{k_1,j_1} \ast f_{k_3,j_3} \ast f_{k_3,j_3})}_{L_{\tau_4}^2\ell_{n_4}^2}.
\end{equation}
By the symmetry of $k_1$ and $k_2$ or the definition of $N_4(u,v,w)$, we may assume $k_1 \le k_2$.

\textbf{Case I}. $k_1 \le k_2-10$. In this case, we have from \eqref{eq:high modulation} that
\begin{equation}\label{eq:nonres4-2}
j_{max} \ge 4k_4+k_2.
\end{equation}
We first rewrite the summation over $j_4$ as follows:
\[\begin{aligned}
\sum_{j_4 \ge 0} &= \sum_{j_4 < 2k_4} + \sum_{2k_4 \le j_4 < 4k_4+k_2 -5 }+\sum_{4k_4+k_2-5 \le j_4 < 4k_4+k_2 +5}+ \sum_{4k_4+k_2 +5 \le j_4} \\
&=:\sum_{I}+\sum_{II}+\sum_{III}+\sum_{IV}.
\end{aligned}\]

For the summation over $I$, if $j_2 = j_{max}$, then from \eqref{eq:tri-block estimate-b2.1} or \eqref{eq:tri-block estimate-b2.2}, \eqref{eq:nonres4-1} is bounded by
\[2^{3k_4}\sum_{I} 2^{j_4/2}2^{-2k_4} \sum_{\substack{j_1,j_2,j_3 \ge 2k_4\\j_{sub}\le 4k_4}}2^{(j_1+j_2+j_3+j_4)/2}2^{-(j_{max}+j_{sub})/2}2^{k_2/2}\prod_{i=1}^{3}\norm{f_{k_i,j_i}}_{L_{\tau}^2\ell_n^2}\]
or
\[2^{3k_4}\sum_{I} 2^{j_4/2}2^{-2k_4} \sum_{\substack{j_1,j_2,j_3 \ge 2k_4\\j_{sub}> 4k_4}}2^{(j_1+j_2+j_3+j_4)/2}2^{-j_{max}/2}2^{-2k_4}2^{k_2/2}\prod_{i=1}^{3}\norm{f_{k_i,j_i}}_{L_{\tau}^2\ell_n^2},\]
respectively. In fact, one can easily know that the first bound is worse than the second one, so it suffices to consider the first one. By using \eqref{eq:nonres4-2} and $j_{sub} \ge 2k_4$, and performing the summation over $j_1,j_2,j_3$ and $j_4$, we obtain that
\[\eqref{eq:nonres4-1} \lesssim \norm{P_{k_1}u}_{F_{k_1}}\norm{P_{k_2}v}_{F_{k_2}}\norm{P_{k_3}w}_{F_{k_3}}.\]
If $j_2 \neq j_{max}$, similarly as before, \eqref{eq:nonres4-1} is bounded by
\[2^{3k_4}\sum_{I} 2^{j_4/2}2^{-2k_4} \sum_{\substack{j_1,j_2,j_3 \ge 2k_4\\j_{sub}\le 4k_4}}2^{(j_1+j_2+j_3+j_4)/2}2^{-(j_{max}+j_{sub})/2}2^{k_1/2}\prod_{i=1}^{3}\norm{f_{k_i,j_i}}_{L_{\tau}^2\ell_n^2}\]
or
\[2^{3k_4}\sum_{I} 2^{j_4/2}2^{-2k_4} \sum_{\substack{j_1,j_2,j_3 \ge 2k_4\\j_{sub}> 4k_4}}2^{(j_1+j_2+j_3+j_4)/2}2^{-j_{max}/2}2^{-2k_4}2^{k_1/2}\prod_{i=1}^{3}\norm{f_{k_i,j_i}}_{L_{\tau}^2\ell_n^2},\]
from \eqref{eq:tri-block estimate-b2.3} or \eqref{eq:tri-block estimate-b2.4}, respectively. In this case, it is also enough to consider the first one and then we can obtain by performing the summation over $j_1,j_2,j_3$ and $j_4$ that
\[\eqref{eq:nonres4-1} \lesssim 2^{(k_1-k_2)/2}\norm{P_{k_1}u}_{F_{k_1}}\norm{P_{k_2}v}_{F_{k_2}}\norm{P_{k_3}w}_{F_{k_3}}.\]

For the summation over $II$, we follow the same argument as in the case of summation over $I$. Then we have from \eqref{eq:tri-block estimate-b2.1} that
\begin{align*}  
\eqref{eq:nonres4-1} &\lesssim 2^{3k_4}\sum_{II} 2^{j_4/2}2^{-j_4} \sum_{\substack{j_1,j_2,j_3 \ge 2k_4\\j_{sub}\le 4k_4}}2^{(j_1+j_2+j_3+j_4)/2}2^{-(j_{max}+j_{sub})/2}2^{k_2/2}\prod_{i=1}^{3}\norm{f_{k_i,j_i}}_{L_{\tau}^2\ell_n^2} \\
&\lesssim \norm{P_{k_1}u}_{F_{k_1}}\norm{P_{k_2}v}_{F_{k_2}}\norm{P_{k_3}w}_{F_{k_3}},
\end{align*}
since $j_4 \le j_{sub}$.

For the summation over $III$, we know $j_4 = j_{max}$ and hence, similarly as before, we have from \eqref{eq:tri-block estimate-b2.3} that
\begin{align*}  
\eqref{eq:nonres4-1} &\lesssim 2^{3k_4}\sum_{III} 2^{j_4/2}2^{-j_4} \sum_{\substack{j_1,j_2,j_3 \ge 2k_4\\j_{sub}\le 4k_4}}2^{(j_1+j_2+j_3+j_4)/2}2^{-(j_{max}+j_{sub})/2}2^{k_1/2}\prod_{i=1}^{3}\norm{f_{k_i,j_i}}_{L_{\tau}^2\ell_n^2} \\
&\lesssim 2^{(k_1-k_2)/2}\norm{P_{k_1}u}_{F_{k_1}}\norm{P_{k_2}v}_{F_{k_2}}\norm{P_{k_3}w}_{F_{k_3}}.
\end{align*}

For the last summation, since $j_4, j_{sub} \ge 4k_4+k_2$, we obtain from \eqref{eq:tri-block estimate-b2.4} that
\begin{align*}  
\eqref{eq:nonres4-1} &\lesssim 2^{3k_4}\sum_{IV} 2^{j_4/2}2^{-j_4} \sum_{\substack{j_1,j_2,j_3 \ge 2k_4\\j_{sub} \ge 4k_4+k_2}}2^{(j_1+j_2+j_3+j_4)/2}2^{-j_{max}/2}2^{-2k_4}2^{k_1/2}\prod_{i=1}^{3}\norm{f_{k_i,j_i}}_{L_{\tau}^2\ell_n^2} \\
&\lesssim 2^{-k_4}2^{(k_1-k_2)/2}\norm{P_{k_1}u}_{F_{k_1}}\norm{P_{k_2}v}_{F_{k_2}}\norm{P_{k_3}w}_{F_{k_3}}.
\end{align*}

\textbf{Case II}. $|k_1 - k_2| \le 5$. In this case, we have from \eqref{eq:high modulation} that $j_{max} \ge 4k_4$. 

\noindent As in the proof of \textbf{Case I}, it is enough to consider the case when $(k_i,j_i) \neq (k_{thd},j_{max})$, $2k_4 \le j_4 < 4k_4$ and $j_{sub} \le 4k_4$. Then, by \eqref{eq:tri-block estimate-b2.3}, we have
\[\begin{aligned}
\eqref{eq:nonres4-1} &\lesssim 2^{3k_4}\sum_{2k_4 \le j_4 < 4k_4} 2^{j_4/2}2^{-j_4} \\
&\hspace{5em}\times\sum_{\substack{j_1,j_2,j_3 \ge 2k_4\\j_{sub} \le 4k_4}}2^{(j_1+j_2+j_3+j_4)/2}2^{-(j_{max}+j_{sub})/2}2^{k_{min}/2}\prod_{i=1}^{3}\norm{f_{k_i,j_i}}_{L_{\tau}^2\ell_n^2}\\
&\lesssim 2^{k_{min}/2}\norm{P_{k_1}u}_{F_{k_1}}\norm{P_{k_2}v}_{F_{k_2}}\norm{P_{k_3}w}_{F_{k_3}}.
\end{aligned}\]
Thus, we complete the proof of Lemma \ref{lem:nonres4}.
\end{proof}

Next, we estimate the \emph{high-high-low $\Rightarrow$ low} non-resonant interaction component which has similar frequency interaction phenomenon as in the \emph{high-low-low $\Rightarrow$ high} non-resonant interaction component  in the $L^2$-block estimates. But, as in the proof of Lemma \ref{lem:nonres3}, we lose the regularity gain from longer time interval which is taken in the resulting frequency localized space $N_{k_4}$. 

\begin{lemma}[High-high-low $\Rightarrow$ low]\label{lem:nonres5}
Let $k_3 \ge 20$, $|k_2-k_3| \le 5$ and $k_1,k_4 \le k_3 -10$. Then, we have
\[\begin{aligned}
\norm{P_{k_4}N_3(P_{k_1}u,P_{k_2}v,P_{k_3}w)}_{N_{k_4}} &+ \norm{P_{k_4}N_4(P_{k_1}u,P_{k_2}v,P_{k_3}w)}_{N_{k_4}}\\
&\hspace{1em}\lesssim k_32^{k_3}C(k_1,k_4)\norm{P_{k_1}u}_{F_{k_1}}\norm{P_{k_2}v}_{F_{k_2}}\norm{P_{k_3}w}_{F_{k_3}},
\end{aligned}\]
where
\[C(k_1,k_4)= 
\begin{cases}
2^{-3k_4/2}2^{k_1/2} \hspace{1em}  &, k_1 \le k_4 -10 \\
2^{-k_4} \hspace{1em}  &, k_4 \le k_1 - 10\\
2^{-k_4/2} \hspace{1em}  &,|k_1 -k_4| <10
\end{cases}
.\]
\end{lemma}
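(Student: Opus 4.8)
The plan is to mimic the structure of the proof of Lemma~\ref{lem:nonres3}, since the frequency interaction is of the same \emph{high-high-low} type and the output is at low frequency $2^{k_4}$. Because $k_4 \le k_3-10$, the $N_{k_4}$ norm lives on the long time scale $2^{-2k_4}$ whereas the three input $F_{k_i}$ norms live on the shorter scale $2^{-2k_3}$, so the first step is the same partition-of-unity trick: write $\sum_{m\in\Z}\gamma^3(x-m)\equiv 1$ and split the time interval of length $2^{-2k_4}$ into $O(2^{2k_3-2k_4})$ pieces of length $2^{-2k_3}$, picking up a factor $2^{2k_3-2k_4}$ from this decomposition while gaining freedom to place each truncated input into its own $F_{k_i}$ space via \eqref{eq:prop2}. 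After this reduction, by the definition of the $N_{k_4}$-norm and Corollary~\ref{cor:tri-L2}, it suffices to bound
\[
2^{3k_3}2^{2k_3-2k_4}\sum_{j_4\ge 0}\frac{2^{j_4/2}}{\max(2^{j_4},2^{2k_4})}\sum_{j_1,j_2,j_3\ge 2k_3}\norm{\mathbf{1}_{D_{k_4,j_4}}\cdot(f_{k_1,j_1}\ast f_{k_2,j_2}\ast f_{k_3,j_3})}_{L^2_{\tau_4}\ell^2_{n_4}}.
\]
Here the $2^{3k_3}$ comes from the three derivatives on the high-frequency factor in $N_3$ and $N_4$ (note $|k_2-k_3|\le 5$, so the derivative loss is $2^{3k_3}$ regardless of which of $N_3,N_4$ we treat).

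The second step is to feed in the resonance lower bound. From \eqref{eq:high modulation} and $k_1,k_4\le k_3-10$ one has $|(n_1+n_2)(n_1+n_3)(n_2+n_3)|(n_1^2+n_2^2+n_3^2+n^2)\sim 2^{2k_3}\cdot 2^{2k_3}=2^{4k_3}$ when $k_1$ and $k_4$ are genuinely small, and more precisely $j_{max}\gtrsim 4k_3+\max(k_1,k_4,\ldots)$-type gains depending on how $k_1$ compares with $k_4$; this is exactly the source of the case distinction in $C(k_1,k_4)$. One then applies the $L^2$-block estimates: since $k_{thd}\le k_{max}-10$ here, parts (b-1)--(b-4) of Corollary~\ref{cor:tri-L2} are available, and — just as in Lemma~\ref{lem:nonres4} — one argues that the worst contribution arises from the regime $2k_4\le j_4<4k_3$ (so $\max(2^{j_4},2^{2k_4})=2^{j_4}$) with $j_{sub}\le 4k_{max}$ and $(k_i,j_i)\ne(k_{min},j_{max})$, invoking \eqref{eq:tri-block estimate-b2.3} and the companion bounds in the other subranges. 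After substituting the block bound, the $j_4$-sum telescopes against $2^{j_4/2}2^{-j_4}$, the $j_1,j_2,j_3$-sums telescope against $2^{-(j_{max}+j_{sub})/2}$ using $j_i\ge 2k_3$ and the resonance bound on $j_{max}$, and converting back to $\norm{P_{k_i}\cdot}_{F_{k_i}}$ via \eqref{eq:prop2} and Lemma~\ref{lem:prop of Xk} leaves the claimed power $k_3 2^{k_3}C(k_1,k_4)$, where the factor $k_3$ records the logarithmic loss from summing over the intermediate $j_4$ range $[2k_4,4k_3)$.

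The third step is to track the frequency powers carefully in each of the three cases for $C(k_1,k_4)$. When $k_1\le k_4-10$, the minimum frequency is $2^{k_1}$ and one extra gain $2^{k_1/2}$ from the block estimate plus the time-localization count $2^{2k_3-2k_4}$, the $N_{k_4}$-prefactor $2^{k_4}$, and a sharper resonance bound $j_{max}\gtrsim 4k_3+k_2$ combine to produce $2^{-3k_4/2}2^{k_1/2}$; when $k_4\le k_1-10$ the roles of $k_1$ and $k_4$ in the block estimate swap, the relevant gain is $2^{k_4/2}$, and the same bookkeeping yields $2^{-k_4}$; when $|k_1-k_4|<10$ the two small frequencies are comparable and one lands on $2^{-k_4/2}$. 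In every case the exponent of $2^{k_3}$ works out to exactly $1$ (the three derivatives $2^{3k_3}$ being beaten down by $2^{-2k_3}$ from the time count relative to $2^{-2k_4}$ — i.e. a net $2^{2k_3-2k_4}$ — together with $2^{-j_{max}/2}\le 2^{-2k_3}$ and the $2^{k_3/2}$-type factors from \eqref{eq:tri-block estimate-b2.3}), which is what gives the stated $k_3 2^{k_3}$. The main obstacle I expect is precisely this last bookkeeping: correctly identifying, in the subcase $k_4\le k_1-10$ versus $k_1\le k_4-10$, which frequency plays the role of $k_{min}$ and which of $k_{thd}$ in Corollary~\ref{cor:tri-L2}, and making sure the resonance identity \eqref{eq:high modulation} is exploited to its full strength so that the $j_4$-summation over the long range $[2k_4,4k_3)$ only costs a factor $k_3$ rather than a power of $2^{k_3}$; the rest is routine summation of geometric series over the modulation indices, entirely parallel to Lemmas~\ref{lem:nonres3} and~\ref{lem:nonres4}.
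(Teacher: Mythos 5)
Your overall approach is the right one and matches the paper: partition the time interval as in Lemma~\ref{lem:nonres3}, gaining $2^{2k_3-2k_4}$ from the sum over translates, then dyadically decompose modulations and apply Corollary~\ref{cor:tri-L2}. But there are three concrete bookkeeping errors that prevent the claimed estimate from coming out. (i)~The derivative prefactor is not $2^{3k_3}$. In $\wh{N}_3$ the weight is $n\cdot n_3^2$, where $|n|\sim 2^{k_4}$ is the \emph{output} (low) frequency and $|n_3|\sim 2^{k_3}$, so the cost is $2^{k_4+2k_3}$; overcounting $|n|$ by $2^{k_3}$ loses you $2^{k_3-k_4}$. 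Together with the partition count this gives the prefactor $2^{4k_3-k_4}$, as in the paper's \eqref{eq:nonres5-1}. (ii)~The resonance lower bound is $j_{max}\gtrsim 4k_3+\max(k_1,k_4)$ when $|k_1-k_4|\ge 10$, and only $j_{max}\gtrsim 4k_3$ when $|k_1-k_4|<10$, because $n_2+n_3=-(n_1+n_4)$ has magnitude $\sim 2^{\max(k_1,k_4)}$ (possibly as small as $1$ in the last case), not $\sim 2^{k_3}$. Your assertion ``$j_{max}\gtrsim 4k_3+k_2$'' (so $\approx 5k_3$) greatly overstates the gain.

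(iii)~The worst block estimate is not uniformly \eqref{eq:tri-block estimate-b2.3}. When $k_4\le k_1-10$ (so $k_4=k_{min}$, $k_1=k_{thd}$) the dominant contribution has the maximum modulation at the intermediate frequency and one must use \eqref{eq:tri-block estimate-b2.1}, which carries the extra factor $2^{k_{thd}/2}=2^{k_1/2}$; when $|k_1-k_4|<10$ the analogous worst-case factor is $2^{k_4/2}$. This case-dependence is precisely what produces the three cases of $C(k_1,k_4)$. With the corrected ingredients (prefactor $2^{4k_3-k_4}$, a $j_4$-sum of length $\lesssim k_3$ over $[2k_4,4k_3]$, $j_{sub}\ge 2k_3$, and the case-dependent resonance and block factors), the exponents cancel case by case to give $k_32^{k_3}\cdot 2^{-3k_4/2}$, $k_32^{k_3}\cdot 2^{-k_4}$, $k_32^{k_3}\cdot 2^{-k_4/2}$. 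Your closing arithmetic, $2^{3k_3}\cdot 2^{2k_3-2k_4}\cdot 2^{k_4}\cdot 2^{-2k_3}\cdot 2^{k_3/2}=2^{7k_3/2-k_4}$, does not reduce to any of these, which is a symptom of the errors above rather than an independent issue.
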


\begin{proof}
As in the proof of Lemma \ref{lem:nonres3}, both terms are bounded by
\begin{equation}\label{eq:nonres5-1}
2^{4k_3}2^{-k_4}\sum_{j_4 \ge 0}\frac{2^{j_4/2}}{\max(2^{j_4},2^{2k_4})}\sum_{j_1,j_2,j_3 \ge 2k_3}\norm{\mathbf{1}_{D_{k_4,j_4}}\cdot(f_{k_1,j_1} \ast f_{k_3,j_3} \ast f_{k_3,j_3})}_{L_{\tau_4}^2\ell_{n_4}^2}.
\end{equation}

\textbf{Case I}. $k_1 \le k_4-10$. In this case we have from \eqref{eq:high modulation} that $j_{max} \ge 4k_3 + k_4$.

Similarly as \textbf{Case I} in the proof of Lemma \ref{lem:nonres4}, we can know that the worst case is when 
\begin{equation}\label{eq:nonres5-2}
2k_4 \le j_4 \le 4k_3 \hspace{1em} \mbox{and} \hspace{1em} j_{sub} \le 4k_3,
\end{equation}
and hence, it suffices to perform the summation over \eqref{eq:nonres5-2}. By using \eqref{eq:tri-block estimate-b2.3} and $j_{max} \ge 4k_3+k_4$, and performing the summation over $j_1,j_2,j_3$ and $j_4$ with \eqref{eq:nonres5-2}, we obtain
\begin{align*}
\eqref{eq:nonres5-1} &\lesssim 2^{4k_3}2^{-k_4}\sum_{2k_4 \le j_4 \le 4k_3}2^{-j_4/2}\\
&\hspace{2em}\times\sum_{j_1,j_2,j_3 \ge 2k_3}2^{(j_1+j_2+j_3+j_4)/2}2^{-(j_{max}+j_{sub})/2}2^{k_1/2}\prod_{i=1}^{3}\norm{f_{k_i,j_i}}_{L_{\tau}^2\ell_n^2}\\
&\lesssim k_32^{k_3}2^{-3k_4/2}2^{k_1/2}\norm{P_{k_1}u}_{F_{k_1}}\norm{P_{k_2}v}_{F_{k_2}}\norm{P_{k_3}w}_{F_{k_3}}.
\end{align*}

\textbf{Case II}. $k_4 \le k_1-10$. In this case we have from \eqref{eq:high modulation} that $j_{max} \ge 4k_3 + k_1$.

Similarly as \textbf{Case I}, we can know that the worst case is when 
\begin{equation}\label{eq:nonres5-3}
j_1 = j_{max}, \hspace{1em} 2k_4 \le j_4 \le 4k_3 \hspace{1em} \mbox{and} \hspace{1em} j_{sub} \le 4k_3,
\end{equation}
and hence, it suffices to perform the summation over \eqref{eq:nonres5-3}. By using \eqref{eq:tri-block estimate-b2.1} and $j_{max} \ge 4k_3+k_1$, and performing the summation over $j_1,j_2,j_3$ and $j_4$ with \eqref{eq:nonres5-3}, we obtain
\begin{align*}
\eqref{eq:nonres5-1} &\lesssim 2^{4k_3}2^{-k_4}\sum_{2k_4 \le j_4 \le 4k_3}2^{-j_4/2}\\
&\hspace{5em}\times\sum_{j_1,j_2,j_3 \ge 2k_3}2^{(j_1+j_2+j_3+j_4)/2}2^{-(j_{max}+j_{sub})/2}2^{k_1/2}\prod_{i=1}^{3}\norm{f_{k_i,j_i}}_{L_{\tau}^2\ell_n^2}\\
&\lesssim k_32^{k_3}2^{-k_4}\norm{P_{k_1}u}_{F_{k_1}}\norm{P_{k_2}v}_{F_{k_2}}\norm{P_{k_3}w}_{F_{k_3}}.
\end{align*}

\textbf{Case III}. $|k_1 - k_4| < 10$. In this case we have from \eqref{eq:high modulation} that $j_{max} \ge 4k_3$.

Similarly as \textbf{Case II} in the proof of Lemma \ref{lem:nonres4}, we can know that the worst case is when 
\begin{equation}\label{eq:nonres5-3}
(k_i,j_i) \neq (k_{thd}, j_{max}), \hspace{1em} 2k_4 \le j_4 \le 4k_3 \hspace{1em} \mbox{and} \hspace{1em} j_{sub} \le 4k_3.
\end{equation}
This case is exactly same as the worst case in \textbf{Case I}, while high modulation effect is weaker. By performing similar procedure, we obtain
\begin{align*}
\eqref{eq:nonres5-1} &\lesssim 2^{4k_3}2^{-k_4}\sum_{2k_4 \le j_4 \le 4k_3}2^{-j_4/2}\\
&\hspace{5em}\times\sum_{j_1,j_2,j_3 \ge 2k_3}2^{(j_1+j_2+j_3+j_4)/2}2^{-(j_{max}+j_{sub})/2}2^{k_4/2}\prod_{i=1}^{3}\norm{f_{k_i,j_i}}_{L_{\tau}^2\ell_n^2}\\
&\lesssim k_32^{k_3}2^{-k_4/2}\norm{P_{k_1}u}_{F_{k_1}}\norm{P_{k_2}v}_{F_{k_2}}\norm{P_{k_3}w}_{F_{k_3}},
\end{align*}
which completes the proof of Lemma \ref{lem:nonres5}.
\end{proof}

\begin{lemma}[low-low-low$\Rightarrow$ low]\label{lem:nonres6}
Let $0 \le k_1,k_2,k_3,k_4 \le 200$. Then
\begin{equation}\label{eq:nonres6-1}
\begin{aligned}
\norm{P_{k_4}N_3(P_{k_1}u,P_{k_2}v,P_{k_3}w)}_{N_{k_4}} &+ \norm{P_{k_4}N_4(P_{k_1}u,P_{k_2}v,P_{k_3}w)}_{N_{k_4}}\\
&\hspace{3em}\lesssim \norm{P_{k_1}u}_{F_{k_1}}\norm{P_{k_2}v}_{F_{k_2}}\norm{P_{k_3}w}_{F_{k_3}}.
\end{aligned}
\end{equation}
\end{lemma}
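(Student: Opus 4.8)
The plan is to treat this as the purely low-frequency case, where there is no derivative loss to defeat: every frequency obeys $|n_i|\lesssim 2^{200}$, so the symbol of $N_3$ (which is $|n|\,n_3^2$) and that of $N_4$ (which is $|n|\,|n_2|\,|n_3|$) are both bounded by an absolute constant. I would first run the same reduction as in the proofs of Lemma~\ref{lem:nonres1}--Lemma~\ref{lem:nonres4}: after the standard time localization at scale $2^{-2k_4}$ (using \eqref{eq:prop2} and, when $k_i<k_4$, inserting an extra cutoff $\eta_0(2^{2k_i}(t-t_k))$ exactly as in the proof of Lemma~\ref{lem:nonres2}), so that the localized pieces $f_{k_1},f_{k_2},f_{k_3}$ built from $P_{k_1}u,P_{k_2}v,P_{k_3}w$ satisfy $\norm{f_{k_1}}_{X_{k_1}}\lesssim\norm{P_{k_1}u}_{F_{k_1}}$ and similarly for $v,w$, and after decomposing into modulation pieces $f_{k_i,j_i}(\tau,n)=f_{k_i}(\tau,n)\eta_{j_i}(\tau-\mu(n))$ (restricting the frequency sum to the non-resonant set $\overline{\N}_{3,n_4}$, which only decreases it once absolute values are taken), the left-hand side of \eqref{eq:nonres6-1} is bounded by
\[
C\sum_{j_4\ge 0}\frac{2^{j_4/2}}{\max(2^{j_4},2^{2k_4})}\sum_{j_1,j_2,j_3\ge 2k_4}\norm{\mathbf 1_{D_{k_4,j_4}}\cdot\big(f_{k_1,j_1}\ast f_{k_2,j_2}\ast f_{k_3,j_3}\big)}_{L_{\tau_4}^2\ell_{n_4}^2},
\]
where $C$ absorbs the symbol bound (at worst $2^{3k_4}\lesssim 1$ here).

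The second step is to apply the basic $L^2$-block estimate Corollary~\ref{cor:tri-L2}(a),
\[
\norm{\mathbf 1_{D_{k_4,j_4}}\cdot\big(f_{k_1,j_1}\ast f_{k_2,j_2}\ast f_{k_3,j_3}\big)}_{L_{\tau_4}^2\ell_{n_4}^2}\lesssim 2^{(j_{min}+j_{thd})/2}\,2^{(k_{min}+k_{thd})/2}\prod_{i=1}^3\norm{f_{k_i,j_i}}_{L^2},
\]
and to absorb $2^{(k_{min}+k_{thd})/2}=O(1)$ into $C$. The remaining modulation sum I would close with two elementary observations: first, $\frac{2^{j_4/2}}{\max(2^{j_4},2^{2k_4})}\le 2^{-j_4/2}$ for all $j_4\ge 0$ (check $j_4\le 2k_4$ and $j_4>2k_4$ separately); second, since $j_{min}+j_{thd}$ is the sum of the two smallest of $j_1,j_2,j_3,j_4$, one has $j_{min}+j_{thd}\le\tfrac12(j_1+j_2+j_3+j_4)$, hence $2^{(j_{min}+j_{thd})/2}\le 2^{(j_1+j_2+j_3+j_4)/4}$. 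Together these bound the displayed sum by
\[
C\Big(\sum_{j_4\ge 0}2^{-j_4/4}\Big)\prod_{i=1}^3\sum_{j_i\ge 0}2^{j_i/4}\norm{f_{k_i,j_i}}_{L^2}\ \lesssim\ \prod_{i=1}^3\norm{f_{k_i}}_{X_{k_i}}\ \lesssim\ \norm{P_{k_1}u}_{F_{k_1}}\norm{P_{k_2}v}_{F_{k_2}}\norm{P_{k_3}w}_{F_{k_3}},
\]
using $2^{j_i/4}\le 2^{j_i/2}$ in the middle step. This is \eqref{eq:nonres6-1}. The $N_4$ term is estimated identically, with the symbol bound $|n|\,|n_2|\,|n_3|\lesssim 1$ in place of $|n|\,n_3^2\lesssim 1$.

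Since there is no frequency cancellation to exploit, there is essentially no obstacle here; the estimate is entirely robust. The only points that require attention are bookkeeping: confirming that the time-localization step and each of the weights $2^{3k_4}$, $2^{(k_{min}+k_{thd})/2}$ cost only absolute constants once $k_1,k_2,k_3,k_4\le 200$, and verifying the elementary inequality $\frac{2^{j_4/2}}{\max(2^{j_4},2^{2k_4})}\le 2^{-j_4/2}$ that drives the geometric sum in $j_4$. With these, Lemma~\ref{lem:nonres6} follows at once from Corollary~\ref{cor:tri-L2}(a).
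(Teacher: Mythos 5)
Your proposal is correct and takes essentially the same approach as the paper (which just points back to the argument of Lemma~\ref{lem:nonres1}, with all frequency and symbol factors now $O(1)$). The reduction to the modulation sum, the use of Corollary~\ref{cor:tri-L2}(a), and the two elementary closing inequalities are exactly what is needed; in particular the observation $j_{\min}+j_{thd}\le j_{sub}+j_{\max}$, hence $2^{(j_{\min}+j_{thd})/2}\le 2^{(j_1+j_2+j_3+j_4)/4}$, together with $\frac{2^{j_4/2}}{\max(2^{j_4},2^{2k_4})}\le 2^{-j_4/2}$, leaves a net $2^{-j_4/4}\prod_{i=1}^3 2^{j_i/4}$, which sums against the $X_{k_i}$ weights $2^{j_i/2}$. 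Two trivial points worth cleaning up: the symbol bound should be stated as $|n|\,|n_3|^2\lesssim 2^{k_4+2k_3}$ (not $2^{3k_4}$, since $k_3$ may exceed $k_4$), though either way it is an absolute constant when all $k_i\le 200$; and when some $k_i>k_4$ the single window of length $2^{-2k_4}$ does not directly recover $\norm{\cdot}_{F_{k_i}}$, so one should in principle invoke the subinterval partition of the proof of Lemma~\ref{lem:nonres3}, which here produces only $2^{2(k_i-k_4)}\le 2^{400}=O(1)$ pieces and therefore costs nothing.
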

\begin{proof}
Similarly as in the proof of Lemma \ref{lem:nonres1}, we can get \eqref{eq:nonres6-1}.
\end{proof}

Now, we concentrate the quintilinear estimate. By the symmetry of $k_i$'s, $i=1,2,3,4,5$, we may assume that $k_1 \le k_2 \le k_3 \le k_4 \le k_5$. Since we use the short time $X^{s,b}$ space, we have to consider whether the resulting frequency is the highest or not. 

\begin{lemma}\label{lem:quintic1}
Let $k_5 \ge 20$ and $|k_5-k_6| \le 5$. Then, we have
\begin{equation}\label{eq:quintic1-1}
\norm{P_{k_6}N_2(P_{k_1}v_1,P_{k_2}v_2,P_{k_3}v_3,P_{k_4}v_4,P_{k_5}v_5)}_{N_{k_6}} \lesssim 2^{(k_1+k_2+k_3+k_4)/2}2^{-k_6}\prod_{i=1}^{5}\norm{P_{k_i}v_i}_{F_{k_i}}.
\end{equation}
\end{lemma}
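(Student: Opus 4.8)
\textbf{Proof proposal for Lemma \ref{lem:quintic1}.}

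The plan is to follow the same skeleton used for the cubic non-resonant estimates (Lemmas \ref{lem:nonres1}--\ref{lem:nonres6}), but now with a five-fold convolution replacing the three-fold one. First I would unfold the definitions of $N_2$ and the $N_{k_6}$-norm to reduce \eqref{eq:quintic1-1} to bounding, uniformly in $t_k$,
\[
2^{k_6}\sum_{j_6\ge 0}\frac{2^{j_6/2}}{\max(2^{j_6},2^{2k_6})}\;\normb{\mathbf 1_{D_{k_6,j_6}}\cdot\big(g_{k_1,j_1}\ast g_{k_2,j_2}\ast g_{k_3,j_3}\ast g_{k_4,j_4}\ast g_{k_5,j_5}\big)}_{L^2_{\tau_6}\ell^2_{n_6}},
\]
where each $g_{k_i}=\ft[\eta_0(2^{2k_6-2}(t-t_k))P_{k_i}v_i]$ is decomposed into modulation pieces $g_{k_i,j_i}$ with $j_i\ge 2k_6$ (for the high pieces) via \eqref{eq:prop2} and Lemma \ref{lem:prop of Xk}. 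Note that there is no genuine derivative loss in $N_2$ beyond the single outer factor $n\sim 2^{k_6}$; this is why the target exponent is $2^{-k_6}$ times the product of half-powers of the four lower frequencies, i.e. essentially an $L^2$-type gain with no modulation weight needed.

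Next I would handle the five-fold convolution by peeling off factors and reusing the three-fold $L^2$-block bound of Corollary \ref{cor:tri-L2}(a). Concretely, write $g_{k_1}\ast g_{k_2}$ first: on the support this output is localized in a frequency interval of size $\lesssim 2^{k_2}$ and, after a crude Cauchy--Schwarz in the inner variables, obeys $\norm{g_{k_1}\ast g_{k_2}}_{L^2}\lesssim 2^{k_1/2}\norm{g_{k_1,j_1}}_{L^2}\norm{g_{k_2,j_2}}_{L^2}$ (with the usual modulation-output localization). Convolving this against $g_{k_3}$ and then treating $g_{k_1}\ast g_{k_2}\ast g_{k_3}$, $g_{k_4}$, $g_{k_5}$ as the three inputs of Corollary \ref{cor:tri-L2}(a) gives a bound of the form $2^{(j_{\min}+j_{\mathrm{thd}})/2}2^{(k_{\min}+k_{\mathrm{thd}})/2}$ times a product of $L^2$-norms, where now the roles of $k_{\min},k_{\mathrm{thd}}$ are played by the small frequencies among $\{k_1,k_2,k_3,k_4\}$. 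Combining the elementary $2^{k_1/2}$-type gains from the first two convolutions with the Corollary's $2^{(k_{\min}+k_{\mathrm{thd}})/2}$ output, and using $k_1\le k_2\le k_3\le k_4$, I expect the full factor $2^{(k_1+k_2+k_3+k_4)/2}$ to emerge. Finally, summing the geometric series in $j_1,\dots,j_5\ge 2k_6$ against $2^{3k_6}$ and in $j_6$ against $2^{j_6/2}/\max(2^{j_6},2^{2k_6})$ (exactly as in the $j_4$-sum of Lemma \ref{lem:nonres1}) produces the remaining $2^{k_6}\cdot 2^{-?}$ bookkeeping and yields the stated $2^{-k_6}$.

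The main obstacle I anticipate is the frequency bookkeeping in the convolution: one must be careful that the intermediate outputs $g_{k_1}\ast g_{k_2}$ and $g_{k_1}\ast g_{k_2}\ast g_{k_3}$ are supported where $|n|\lesssim 2^{k_2}$ and $|n|\lesssim 2^{k_3}$ respectively, so that when Corollary \ref{cor:tri-L2}(a) is applied the quantities it calls $k_{\min}$ and $k_{\mathrm{thd}}$ really are controlled by the low frequencies and not accidentally by $k_6$. There is also the standard subtlety that in the \emph{high $\times$ low$^4$} regime the outer cutoff $\eta_0(2^{2k_6-2}(t-t_k))$ must be distributed onto the low-frequency factors, replacing it there by $\eta_0(2^{2k_i}(t-t_k))$ and invoking \eqref{eq:prop2}, so that each $\norm{g_{k_i}}_{X_{k_i}}$ is bounded by $\norm{P_{k_i}v_i}_{F_{k_i}}$; this is routine but must be stated. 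Since \eqref{eq:quintic1-1} carries no modulation weight on the output, I do not expect to need the refined resonance estimates \eqref{eq:tri-block estimate-b2.1}--\eqref{eq:tri-block estimate-b2.4}, only part (a) of Corollary \ref{cor:tri-L2}; the resonance function $G$ plays no role here, which is consistent with the loss of a full $2^{k_6}$ relative to the cubic estimates.
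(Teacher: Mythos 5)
Your overall strategy is correct and matches the paper's in spirit: reduce to a convolution bound in $X_{k_6}$, decompose into modulation pieces with $j_i\ge 2k_6$ via \eqref{eq:prop2}, apply a Cauchy--Schwarz-type block estimate, and sum the resulting geometric series. You are also right that the resonance function $G$ and the refined blocks \eqref{eq:tri-block estimate-b2.1}--\eqref{eq:tri-block estimate-b2.4} are not needed: the single derivative $2^{k_6}$ in $N_2$ is cheap enough that the short-time gain $j_{\max},j_{\text{sub}}\ge 2k_6$ suffices, and the paper indeed uses only the plain Cauchy--Schwarz bound. That said, the paper does not iterate the convolution as you propose; it performs a single Cauchy--Schwarz on the full six-fold form (the quintic analogue of \eqref{eq:tri-block estimate-a1}), obtaining $2^{(j_1+\cdots+j_6)/2}2^{-(j_{\max}+j_{\text{sub}})/2}\,2^{(k_1+\cdots+k_4)/2}\prod_i\|f_{k_i,j_i}\|_{L^2}$ in one shot. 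This is genuinely simpler than your peeled-off route and sidesteps a real obstacle you underplay: $g_{k_1,j_1}\ast g_{k_2,j_2}$ and $g_{k_1,j_1}\ast g_{k_2,j_2}\ast g_{k_3,j_3}$ are not localized to single dyadic blocks $I_j\times I_k$ in the sense required by Corollary \ref{cor:tri-L2}(a), so you would have to re-decompose the intermediate output in modulation before invoking that corollary, costing extra bookkeeping.

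Two concrete slips. First, your intermediate bound $\norm{g_{k_1,j_1}\ast g_{k_2,j_2}}_{L^2}\lesssim 2^{k_1/2}\norm{g_{k_1,j_1}}_{L^2}\norm{g_{k_2,j_2}}_{L^2}$ omits the modulation factor: the Cauchy--Schwarz in the inner variables gives $2^{\min(j_1,j_2)/2}2^{\min(k_1,k_2)/2}$, not just $2^{k_1/2}$, and tracking this through the iteration is exactly where the bookkeeping becomes delicate. Second, in the final summation you write ``against $2^{3k_6}$,'' but $N_2$ carries a single outer factor $n\sim 2^{k_6}$, so the correct prefactor is $2^{k_6}$ (you had this right at the start of your own argument); the $2^{3k_6}$ is copied over from the cubic templates where $N_3,N_4$ have three derivatives and, if carried through, would destroy the $2^{-k_6}$ in the conclusion.
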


\begin{proof}
We follow the similar arguments as in the proof of above lemmas. Then the right-hand side of \eqref{eq:quintic1-1} is bounded by
\begin{equation}\label{eq:quintic1-2}
\begin{aligned}
\sup_{t_k \in \R} &\Big\|(\tau - \mu(n_6)+ i2^{2k_6})^{-1}2^{k_6}\mathbf{1}_{I_{k_6}}(n)\ft\left[\eta_0\left(2^{2k_6-2}(t-t_k)\right)P_{k_1}v_1\right] \\
&\hspace{1em}\ast \ft\left[\eta_0\left(2^{2k_6-2}(t-t_k)\right)P_{k_2}v_2\right] \ast \ft\left[\eta_0\left(2^{2k_6-2}(t-t_k)\right)P_{k_3}v_3\right]\\
&\hspace{1em}\ast \ft\left[\eta_0\left(2^{2k_6-2}(t-t_k)\right)P_{k_4}v_4\right] \ast \ft\left[\eta_0\left(2^{2k_6-2}(t-t_k)\right)P_{k_5}v_5\right]\Big\|_{X_{k_6}}
\end{aligned}
\end{equation}
Set, similarly as in the proof of Lemma \ref{lem:nonres1}, $f_{k_i} = \ft\left[\eta_0\left(2^{2k_i-2}(t-t_k)\right)P_{k_i}v_i\right]$, $i=1,2,3,4,5$, and also, we decompose $f_{k_i}$ into modulation dyadic pieces as $f_{k_i,j_i}(\tau,n) = f_{k_i}(\tau,n)\eta_{j_i}(\tau -\mu(n))$, $j=1,2,3,4,5$, with usual modification $f_{k,\le j}(\tau,n) = f_{k}(\tau,n)\eta_{\le j}(\tau-\mu(n))$. Then, \eqref{eq:quintic1-2} is bounded by
\begin{equation}\label{eq:quintic1-3}
\begin{aligned}
&2^{k_6}\sum_{j_6 \ge 0} \frac{2^{j_6/2}}{\max(2^{j_6},2^{2k_6})} \\
&\hspace{1em}\times\sum_{j_1,j_2j_3,j_4,j_5 \ge 2k_6}\norm{\mathbf{1}_{D_{k_6,j_6}}\cdot(f_{k_1,j_1} \ast f_{k_2,j_2} \ast f_{k_3,j_3} \ast f_{k_4,j_4} \ast f_{k_5,j_5})}_{L_{\tau_6}^2\ell_{n_6}^2}.
\end{aligned}
\end{equation}
We apply the Young's inequality to 
\[\norm{\mathbf{1}_{D_{k_6,j_6}}\cdot(f_{k_1,j_1} \ast f_{k_2,j_2} \ast f_{k_3,j_3} \ast f_{k_4,j_4} \ast f_{k_5,j_5})}_{L_{\tau_6}^2\ell_{n_6}^2}\]
to obtain
\[\begin{aligned}
\eqref{eq:quintic1-3} &\lesssim 2^{k_6}\sum_{j_6 \ge 0} \frac{2^{j_6/2}}{\max(2^{j_6},2^{2k_6})} \sum_{j_1,j_2j_3,j_4,j_5 \ge 2k_6}2^{(j_1+j_2+j_3+j_4+j_5+j_6)/2}\\
&\hspace{10em}\times 2^{-(j_{max}+j_{sub})/2}2^{(k_1+k_2+k_3+k_4)/2}\prod_{i=1}^{5}\norm{f_{k_i,j_i}}_{L_{\tau}^2\ell_n^2}.
\end{aligned}\]
Then by performing the summation over $j_1,j_2,j_3,j_4,j_5$ and $j_6$ with $j_{max}, j_{sub} \ge 2k_6$, we obtain
\[\begin{aligned}
\eqref{eq:quintic1-3} &\lesssim 2^{k_6} \sum_{j_1,j_2j_3,j_4,j_5 \ge 2k_6}2^{(j_1+j_2+j_3+j_4+j_5)/2}2^{-2k_6}2^{(k_1+k_2+k_3+k_4)/2}\prod_{i=1}^{5}\norm{f_{k_i,j_i}}_{L_{\tau}^2\ell_n^2}\\
&\lesssim 2^{-k_6}2^{(k_1+k_2+k_3+k_4)/2}\prod_{i=1}^{5}\norm{P_{k_i}v_i}_{F_{k_i}}.
\end{aligned}\]
Thus, we complete the proof of Lemma \ref{lem:quintic1}

\end{proof}

\begin{lemma}\label{lem:quintic2}
Let $k_5 \ge 20$, $|k_4-k_5| \le 5$ and $k_6 \le k_5 - 10$. Then, we have
\begin{equation}\label{eq:quintic2-1}
\begin{aligned}
&\norm{P_{k_6}N_2(P_{k_1}v_1,P_{k_2}v_2,P_{k_3}v_3,P_{k_4}v_4,P_{k_5}v_5)}_{N_{k_6}}\\
&\hspace{13em} \lesssim k_52^{(k_1+k_2+k_3)/2}2^{-k_6/2}\prod_{i=1}^{5}\norm{P_{k_i}v_i}_{F_{k_i}}.
\end{aligned}
\end{equation}
\end{lemma}

\begin{proof}
Since $k_6 \le k_5-10$, by the same reason as in the proof of Lemma \ref{lem:nonres3}, we further make a partition of interval which is taken in the $N_{k_6}$-norm. Let $\gamma: \R \to [0,1]$ denote a smooth function supported in $[-1,1]$ with $ \sum_{m\in \Z} \gamma^5(x-m) \equiv 1$. Then, from the definition of $N_{k_6}$-norm, the left-hand side of \eqref{eq:quintic2-1} is dominated by
\begin{equation}\label{eq:quintic2-2}
\begin{aligned}
\sup_{t_k \in \R} &\Big\|(\tau - \mu(n_6)+ i2^{2k_6})^{-1}2^{k_6}\mathbf{1}_{I_{k_6}}(n) \\
& \times \sum_{|m| \le C 2^{2k_3-2k_4}} \ft\left[\eta_0\left(2^{2k_6-2}(t-t_k)\right)\gamma (2^{2k_5}(t-t_k)-m)P_{k_1}v_1\right] \\
&\hspace{6em}\ast \ft\left[\eta_0\left(2^{2k_6-2}(t-t_k)\gamma (2^{2k_5}(t-t_k)-m)\right)P_{k_2}v_2\right] \\
&\hspace{6em}\ast \ft\left[\eta_0\left(2^{2k_6-2}(t-t_k)\right)\gamma (2^{2k_5}(t-t_k)-m)P_{k_3}v_3\right]\\
&\hspace{6em}\ast \ft\left[\eta_0\left(2^{2k_6-2}(t-t_k)\right)\gamma (2^{2k_5}(t-t_k)-m)P_{k_4}v_4\right]\\
&\hspace{6em} \ast \ft\left[\eta_0\left(2^{2k_6-2}(t-t_k)\right)\gamma (2^{2k_5}(t-t_k)-m)P_{k_5}v_5\right]\Big\|_{X_{k_6}}
\end{aligned}
\end{equation}
Similarly as before, \eqref{eq:quintic2-2} is bounded by
\begin{equation}\label{eq:quintic2-3}
\begin{aligned}
&2^{k_6}2^{2(k_5-k_6)}\sum_{j_6 \ge 0} \frac{2^{j_6/2}}{\max(2^{j_6},2^{2k_6})} \\
&\hspace{1em}\times \sum_{j_1,j_2j_3,j_4,j_5 \ge 2k_5}\norm{\mathbf{1}_{D_{k_6,j_6}}\cdot(f_{k_1,j_1} \ast f_{k_2,j_2} \ast f_{k_3,j_3} \ast f_{k_4,j_4} \ast f_{k_5,j_5})}_{L_{\tau_6}^2\ell_{n_6}^2}.
\end{aligned}
\end{equation}
We apply the Cauchy-Schwarz inequality to 
\[\norm{\mathbf{1}_{D_{k_6,j_6}}\cdot(f_{k_1,j_1} \ast f_{k_2,j_2} \ast f_{k_3,j_3} \ast f_{k_4,j_4} \ast f_{k_5,j_5})}_{L_{\tau_6}^2\ell_{n_6}^2}\] 
to have
\[\begin{aligned}
\eqref{eq:quintic2-3} &\lesssim 2^{k_6}2^{2(k_5-k_6)}\sum_{j_6 \ge 0} \frac{2^{j_6/2}}{\max(2^{j_6},2^{2k_6})}\sum_{j_1,j_2j_3,j_4,j_5 \ge 2k_5}2^{(j_1+j_2+j_3+j_4+j_5+j_6)/2} \\
&\hspace{9em}\times 2^{-(j_{max}+j_{sub})/2}2^{(k_1+k_2+k_3+k_6)/2}\prod_{i=1}^{5}\norm{f_{k_i,j_i}}_{L_{\tau}^2\ell_n^2}.
\end{aligned}\]
Then by performing the summation over $j_1,j_2,j_3,j_4,j_5$ and $j_6$ with $j_{max}, j_{sub} \ge 2k_5$, we obtain
\[\begin{aligned}
\eqref{eq:quintic2-3} &\lesssim k_52^{k_6}2^{2(k_5-k_6)} \\
&\qquad \times\sum_{j_1,j_2j_3,j_4,j_5 \ge 2k_6}2^{(j_1+j_2+j_3+j_4+j_5)/2}2^{-2k_5}2^{(k_1+k_2+k_3+k_6)/2}\prod_{i=1}^{5}\norm{f_{k_i,j_i}}_{L_{\tau}^2\ell_n^2}\\
&\lesssim k_52^{-k_6/2}2^{(k_1+k_2+k_3)/2}\prod_{i=1}^{5}\norm{P_{k_i}v_i}_{F_{k_i}},
\end{aligned}\]
since the worst term arises in the case when $2k_6 \le j_6 \le 2k_5$. Thus, we complete the proof of Lemma \ref{lem:quintic2}
\end{proof}

\begin{lemma}\label{lem:quintic3}
Let $0 \le k_1, k_2, k_3, k_4, k_5,k_6 \le 200$. Then, we have
\begin{equation}\label{eq:quintic3-1}
\norm{P_{k_6}N_2(P_{k_1}v_1,P_{k_2}v_2,P_{k_3}v_3,P_{k_4}v_4,P_{k_5}v_5)}_{N_{k_6}} \lesssim \prod_{i=1}^{5}\norm{P_{k_i}v_i}_{F_{k_i}}.
\end{equation}
\end{lemma}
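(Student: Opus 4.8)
The plan is to run, essentially verbatim, the argument of Lemmas~\ref{lem:quintic1} and \ref{lem:quintic2}, with the decisive simplification that all six frequencies are now bounded by the absolute constant $2^{200}$; hence every dyadic factor of the form $2^{k_i}$ that appeared as a genuine loss in those lemmas (in particular the derivative factor $2^{k_6}$ in front of the quintic nonlinearity, and the $2^{(k_1+k_2+k_3+k_4)/2}$ coming from the block estimate) is now a harmless $O(1)$, and, as in Lemmas~\ref{lem:quintic1}--\ref{lem:quintic2}, no resonance information is used. The estimate thus reduces to checking that the relevant modulation sums converge.

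First I would unfold the $N_{k_6}$-norm and bound the left-hand side of \eqref{eq:quintic3-1} by $\sup_{t_k\in\R}\normb{(\tau-\mu(n_6)+i2^{2k_6})^{-1}\,2^{k_6}\,\mathbf{1}_{I_{k_6}}(n_6)\,\big(f_{k_1}\ast f_{k_2}\ast f_{k_3}\ast f_{k_4}\ast f_{k_5}\big)}_{X_{k_6}}$, where $f_{k_i}=\ft[\eta_0(2^{2k_6-2}(t-t_k))P_{k_i}v_i]$. If some $k_i>k_6$, the time interval of length $2^{-2k_6}$ carried by the $N_{k_6}$-norm is longer than the one carried by $F_{k_i}$, so, exactly as in Lemma~\ref{lem:quintic2}, I would insert a partition of unity $\sum_m\gamma^5(\cdot-m)\equiv1$ and split that interval into $O(2^{2\max_i k_i-2k_6})=O(1)$ pieces of length $2^{-2\max_i k_i}$; in every case \eqref{eq:prop2} gives $\norm{f_{k_i}}_{X_{k_i}}\lesssim\norm{P_{k_i}v_i}_{F_{k_i}}$, and the $O(1)$ number of pieces is absorbed into the implicit constant.

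Next I would decompose each $f_{k_i}$ into modulation pieces $f_{k_i,j_i}$ and, by the same Cauchy--Schwarz computation used in the proofs of Lemmas~\ref{lem:quintic1} and \ref{lem:quintic2} (a five-function analogue of Corollary~\ref{cor:tri-L2}(a)), bound $\norm{\mathbf{1}_{D_{k_6,j_6}}\,(f_{k_1,j_1}\ast\cdots\ast f_{k_5,j_5})}_{L^2}\lesssim 2^{(j_1+\cdots+j_5+j_6)/2}\,2^{-(j_{max}+j_{sub})/2}\prod_{i=1}^5\norm{f_{k_i,j_i}}_{L^2}$. Finally I would sum over $j_1,\dots,j_5,j_6$: because the time-localization scale $2^{-2k_6}$ is $O(1)$, there is no useful lower bound on the $j_i$'s, so convergence has to come purely from the gains, namely $(\max(2^{j_6},2^{2k_6}))^{-1}2^{j_6/2}\le 2^{-j_6/2}$ when $j_6$ is large together with the factor $2^{-(j_{max}+j_{sub})/2}$; combining these with the $2^{j/2}$-weighted $\ell^1$ structure of the $X_{k_i}$-norms, each of the finitely many orderings of $(j_1,\dots,j_6)$ collapses to an elementary geometric series, leaving $\prod_{i=1}^5\norm{f_{k_i}}_{X_{k_i}}\lesssim\prod_{i=1}^5\norm{P_{k_i}v_i}_{F_{k_i}}$, which is \eqref{eq:quintic3-1}.

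I do not expect any genuine obstacle here: since everything is at bounded frequency there is no competition between derivative gains and derivative losses, and the only thing to watch is the bookkeeping of the modulation sum when $j_6$ is the largest or second-largest exponent. In view of the one-line proof of the analogous low-frequency trilinear estimate, Lemma~\ref{lem:nonres6}, I expect this lemma to be dispatched by the same brief remark ("similarly as in the proof of Lemma~\ref{lem:quintic1}"); the only mild nuisance is to confirm that the interval partition needed when $\max_i k_i>k_6$ contributes just the constant factor $2^{2\max_i k_i-2k_6}\le 2^{400}$, which is immediate.
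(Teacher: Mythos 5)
Your proposal is correct and takes essentially the same route as the paper, which dispatches this lemma in one line by referring to the proof of Lemma~\ref{lem:quintic1}; you are simply supplying the details that the citation elides. In particular, you correctly identify that the only two points worth noting are (i) the time-interval splitting borrowed from Lemma~\ref{lem:quintic2} when some $k_i>k_6$ contributes at most the harmless constant $2^{2\max_i k_i-2k_6}\le 2^{400}$, and (ii) the modulation sum converges from the gains $(\max(2^{j_6},2^{2k_6}))^{-1}2^{j_6/2}\le 2^{-j_6/2}$ and $2^{-(j_{max}+j_{sub})/2}$ alone, with every frequency factor now $O(1)$.
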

\begin{proof}
Similarly as in the proof of Lemma \ref{lem:quintic1}, we can get \eqref{eq:quintic3-1}.
\end{proof}

As a conclusion to this section, we prove the nonlinear estimates for \eqref{eq:5mkdv3} by gathering the block estimates obtained above.

\begin{proposition}\label{prop:nonlinear1}
(a) If $s >1$, $T \in (0,1]$ and $u,v,w, v_i \in F^s(T)$, $i=1,2,3,4,5$, then
\begin{equation}\label{eq:nonlinear1}
\begin{aligned}
\sum_{i=1,3,4} \norm{N_{i}(u,v,w)}_{N^s(T)} &+ \norm{N_2(v_1,v_2,v_3,v_4,v_5)}_{N^s(T)} \\
&\hspace{1em}\lesssim \norm{u}_{F^s(T)}\norm{v}_{F^s(T)}\norm{w}_{F^s(T)} + \prod_{i=1}^{5}\norm{v_i}_{F^s(T)}.
\end{aligned}
\end{equation}

(b) If $T \in (0,1]$, $w, v_5 \in F^0(T)$, $i=1,2,3,4,5$, then
\begin{equation}\label{eq:nonlinear2}
\begin{aligned}
\sum_{i=1,3,4}& \norm{N_{i}(u,v,w)}_{N^0(T)} + \norm{N_2(v_1,v_2,v_3,v_4,v_5)}_{N^0(T)}\\
&\hspace{3em} \lesssim \norm{u}_{F^{1+}(T)}\norm{v}_{F^{1+}(T)}\norm{w}_{F^0(T)} + \prod_{i=1}^{4}\norm{v_i}_{F^{1+}(T)}\norm{v_5}_{F^0(T)}.
\end{aligned}
\end{equation}
\end{proposition}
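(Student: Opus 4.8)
The plan is to deduce both \eqref{eq:nonlinear1} and \eqref{eq:nonlinear2} from the dyadic block estimates of Lemmas \ref{lem:resonant1}--\ref{lem:quintic3} by a Littlewood--Paley decomposition. First, using the definitions of $F^s(T)$ and $N^s(T)$ as infima over extensions off $[-T,T]$, and the fact that the maps $N_i$ act locally in time, it suffices to fix extensions of $u,v,w,v_i$ to $\R\times\T$ and to bound $\big(\sum_{k\ge 0}2^{2sk}\norm{P_kN_i(\cdots)}_{N_k}^2\big)^{1/2}$ by the $F^s$-norms of those extensions. Expanding $u=\sum_{k_1}P_{k_1}u$, etc., and using the triangle inequality in $N_k$,
\[
\norm{P_{k_4}N_i(u,v,w)}_{N_{k_4}}\le\sum_{k_1,k_2,k_3}\norm{P_{k_4}N_i(P_{k_1}u,P_{k_2}v,P_{k_3}w)}_{N_{k_4}},\qquad i=1,3,4,
\]
and analogously for the quintic piece. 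By the support property \eqref{eq:support property} only the finitely many frequency configurations covered by the block lemmas contribute.

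For the cubic terms I would split the sum according to those configurations and apply Lemma \ref{lem:resonant1} to the diagonal resonant term $N_1$, and to $N_3,N_4$ the estimates Lemma \ref{lem:nonres1} (\emph{high-high-high $\Rightarrow$ high}), Lemma \ref{lem:nonres2} (\emph{high-high-low $\Rightarrow$ high}), Lemma \ref{lem:nonres3} (\emph{high-high-high $\Rightarrow$ low}), Lemma \ref{lem:nonres4} (\emph{high-low-low $\Rightarrow$ high}), Lemma \ref{lem:nonres5} (\emph{high-high-low $\Rightarrow$ low}), and Lemma \ref{lem:nonres6} (\emph{low-low-low $\Rightarrow$ low}). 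Since $N_3$ carries its two derivatives on the third argument, a configuration in which the high mode occupies the first or second slot of $N_3$ is either incompatible with $n_1+n_2+n_3=n$ or only easier (fewer derivatives fall on the high mode), so it is enough to run the lemmas in the stated placements, using the symmetry \eqref{eq:symmetry} and the symmetry of $N_4$ and of the first two slots of $N_3$. In each configuration I would write $\norm{P_{k_i}\cdot}_{F_{k_i}}=2^{-sk_i}a^{(i)}_{k_i}$ with $a^{(i)}\in\ell^2$, use the comparability of the two highest input frequencies (and of the output when it is highest) to absorb $2^{sk_4}$ against the relevant $2^{-sk_i}$, and check that the surplus dyadic factor in the block estimate --- $2^{3k_3/2}$, $2^{k_1/2}$, $k_32^{k_3}2^{-k_4/2}$, $2^{k_{min}/2}$, $k_32^{k_3}C(k_1,k_4)$ respectively --- together with the remaining $\prod2^{-sk_i}$ yields a factor that is geometrically summable in each low frequency and, in the $\Rightarrow$ low configurations, summable over the range of higher input frequencies; what then survives is a discrete convolution of the $\ell^2$ sequences $a^{(i)}$, controlled by Young's/Schur's inequality. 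The hypothesis $s>1$ leaves a comfortable margin for all of these sums.

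The quintic term is handled the same way, ordering $k_1\le k_2\le k_3\le k_4\le k_5$ and using Lemma \ref{lem:quintic1} when the output $k_6$ is the highest frequency, Lemma \ref{lem:quintic2} when $k_6<k_5$, and Lemma \ref{lem:quintic3} when all frequencies are $O(1)$; here the gain $2^{(k_1+k_2+k_3+k_4)/2}2^{-k_6}$ (with an extra factor $k_4$ in the second case) combined with $2^{sk_6}\prod2^{-sk_i}$ and $k_5\sim k_6$ leaves $\prod_{i=1}^42^{(1/2-s)k_i}$ times a factor summable in $k_6$. For part (b) one repeats the whole argument but assigns Sobolev weight $0$ to the output and to the highest-frequency input ($w$, resp. $v_5$) and weight $1+$ to the remaining inputs: in every configuration the output is comparable to an input carrying at least $1+$ derivatives, or else the output is itself the low mode, and the only genuinely dangerous interaction --- \emph{high-low-low $\Rightarrow$ high}, where the short-time gain $2^{k_{min}/2}$ acts on the low input frequencies only --- has those low inputs weighted by $1+$, which closes the corresponding sums without spending any derivative on the high mode. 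This is precisely the form of the estimate used in the energy argument of Section \ref{sec:energy}.

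The hard part will be the bookkeeping in the two families of \emph{bad} configurations. The first is \emph{high $\times$ low $\times$ low $\Rightarrow$ high} (Lemma \ref{lem:nonres4}): this is the interaction for which the classical trilinear $X^{s,b}$ estimate fails (Theorem \ref{thm:trilinear}), recovered here only by the short-time localization at scale $2^{-2k}$, and one must keep track that the $2^{k_{min}/2}$-gain genuinely falls on the low frequencies so that summation there closes. The second is the pair of $\Rightarrow$ low configurations (Lemmas \ref{lem:nonres3}, \ref{lem:nonres5}, and Lemma \ref{lem:quintic2}), where the output lives at a low dyadic scale while the inputs range over all higher scales, so that the $\ell^2$-summation in the output frequency must be arranged so that the modulation gain beats both the logarithmic loss $k_3$ and the accumulation over the high-frequency range. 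Finally, the diagonal \emph{high-high-high $\Rightarrow$ high} interaction of Lemma \ref{lem:nonres1} carries no off-diagonal gain, and there the Sobolev weight is used most tightly: $2^{3k_3/2}\cdot 2^{-3sk_3}\cdot 2^{sk_3}=2^{(3/2-2s)k_3}$ must decay in $k_3$, which the assumption $s>1$ ensures.
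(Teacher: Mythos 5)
Your proposal is correct and follows exactly the route the paper takes: the paper's proof is simply a one-line reference to the dyadic trilinear and quintilinear lemmas (citing Guo~2012 for the assembly), and your Littlewood--Paley decomposition together with case-by-case application of Lemmas~\ref{lem:resonant1}--\ref{lem:quintic3}, followed by $\ell^2$-summation in the dyadic indices, is precisely that assembly. The bookkeeping you single out (the \emph{high-low-low $\Rightarrow$ high} interaction where the short-time gain $2^{k_{\min}/2}$ lands on the low modes, the $\Rightarrow$ low cases with their logarithmic losses, and the tight diagonal case requiring $s>3/4$) is indeed where the regularity threshold is set, and the numerology you verify closes comfortably under $s>1$.
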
 

\begin{proof}
The proof follows from the dyadic trilinear and quintilinear estimates. See \cite{Guo2012} for a similar proof.
\end{proof}

\section{Energy estimates}\label{sec:energy}
In this section, we will control $\norm{v}_{E^s(T)}$ for \eqref{eq:5mkdv3} by $\norm{v_0}_{H^s}$ and $\norm{v}_{F^s(T)}$. Let us define, for $k \ge 1$, $\psi(n):=n\chi'(n)$ and $\psi_k(n) = \psi(2^{-k}n)$, where $\chi$ is defined in \eqref{eq:cut-off1} and $'$ denote the derivative. Then, we have from the simple observation and the definition of $\chi_k$ that
\[\psi_k(n) = n\chi_k'(n).\]

\begin{remark}
The use of another cut-off function $\psi_k$ is for the second-order Taylor's theorem in the commutator estimates (see Lemma \ref{lem:commutator1}). We, however, do not distinguish between $\psi_k$ and $\chi_k$ in the other estimates, since both multiplier simply play a role of frequency support in the other estimates.
\end{remark}

Recall \eqref{eq:5mkdv3} by slightly modifying from the symmetry of $n_1$ and $n_2$. Then we have
\begin{equation}\label{eq:5mkdv4}
\begin{split}
\pt\wh{v}(n) - i(n^5 + c_1n^3 + c_2n)\wh{v}(n)=&~{}-20in^3|\wh{v}(n)|^2\wh{v}(n)\\
&+6i n\sum_{\N_{5,n}} \wh{v}(n_1)\wh{v}(n_2)\wh{v}(n_3)\wh{v}(n_4)\wh{v}(n_5) \\
&+10in \sum_{\N_{3,n}} \wh{v}(n_1)\wh{v}(n_2)n_3^2\wh{v}(n_3) \\
&+5in \sum_{\N_{3,n}} (n_1+n_2)\wh{v}(n_1)\wh{v}(n_2)n_3\wh{v}(n_3).
\end{split}
\end{equation}
Denote the last three terms in the right-hand side of \eqref{eq:5mkdv4} by $\wh{N}(v)(n)$ only in the proof of Proposition \ref{prop:energy1-2} below. We perform the following procedure for $k \ge 1$,
\[\sum_{n}\chi_k(n)\eqref{eq:5mkdv4} \times \chi_k(-n)\wh{v}(-n) + \overline{\chi_k(n)\eqref{eq:5mkdv4}} \times \chi_k(n)\wh{v}(n),\]
where $\overline{\eqref{eq:5mkdv4}}$ denotes the complex conjugate on \eqref{eq:5mkdv4}. Then we have
\begin{align*}
\pt\norm{P_kv}_{L_x^2}^2 =&~{} - \mbox{Re}\left[12i \sum_{n,\overline{\N}_{5,n}}\chi_k(n)n \wh{v}(n_1)\wh{v}(n_2)\wh{v}(n_3)\wh{v}(n_4)\wh{v}(n_5)\chi_k(n)\wh{v}(n)\right] \\
&-\mbox{Re}\left[20i \sum_{n,\overline{\N}_{3,n}}\chi_k(n)n \wh{v}(n_1)\wh{v}(n_2)n_3^2\wh{v}(n_3)\chi_k(n)\wh{v}(n)\right] \\
&-\mbox{Re}\left[10i \sum_{n,\overline{\N}_{3,n}} \chi_k(n)n(n_1+n_2)\wh{v}(n_1)\wh{v}(n_2)n_3\wh{v}(n_3)\chi_k(n)\wh{v}(n)\right]\\
=:&~{} E_{1} +E_{2} + E_{3}.
\end{align*}

The \emph{high-low-low} interaction component in $\int_0^T E_{2}$ and $\int_0^T E_{3}$ cannot controlled in $F^s(T)$ space directly, due to much more derivatives in the high frequency mode. To overcome this difficulty, it is essential to defined the modified energy as in \cite{Kwon2008-1} and \cite{KP2015}. We, in particular, use the localized version of the modified energy in \cite{KP2015}, by modifying that adapted to the periodic fifth-order mKdV.  

For $k \ge 1$, let us define the new localized energy at $2^k$-frequency piece of $v$ by
\begin{equation}\label{eq:new energy1-1}
\begin{aligned}
E_k(v)(t) =&~{} \norm{P_kv(t)}_{L_x^2}^2 \\
&+ \mbox{Re}\left[\alpha \sum_{n,\overline{\N}_{3,n}}\wh{v}(n_1)\wh{v}(n_2)\psi_k(n_3)\frac{1}{n_3}\wh{v}(n_3)\chi_k(n)\frac1n\wh{v}(n)\right]\\
&+ \mbox{Re}\left[\beta \sum_{n,\overline{\N}_{3,n}}\wh{v}(n_1)\wh{v}(n_2)\chi_k(n_3)\frac{1}{n_3}\wh{v}(n_3)\chi_k(n)\frac1n\wh{v}(n)\right],
\end{aligned}
\end{equation}
where $\alpha$ and $\beta$ are real and will be chosen later. By gathering all localized energy pieces, we define the new modified energy by   
\begin{equation}\label{eq:new energy1-2}
E_{T}^s(v) = \norm{P_0v(0)}_{L_x^2}^2 + \sum_{k \ge 1}2^{2sk} \sup_{t_k \in [-T,T]} E_k(v)(t_k).
\end{equation}

\begin{remark}
As mentioned, the modified energy was first introduced in its current form by Kwon \cite{Kwon2008-1}, and further developed as the localized version by Kenig and Pilod \cite{KP2015}. We also slightly modify them well-adapted to the periodic setting. 
\end{remark}

The following lemma shows that $E_T^s(v)$ and $\norm{v}_{E^s(T)}$ are comparable.
\begin{lemma}\label{lem:comparable energy1-1}
Let $s > \frac12$. Then, there exists $0 < \delta \ll 1$ such that  
\[\frac12\norm{v}_{E^s(T)}^2 \le E_T^s(v) \le \frac32\norm{v}_{E^s(T)}^2,\]
for all $v \in E^s(T) \cap C([-T,T];H^s(\T))$ satisfying $\norm{u}_{L_T^{\infty}H^s(\T)} \le \delta$.
\end{lemma}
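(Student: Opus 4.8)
The plan is to show that the two correction terms in the definition of $E_k(v)$ in \eqref{eq:new energy1-1} are, after summation against the weight $2^{2sk}$, a small perturbation of $\norm{v}_{E^s(T)}^2$. Fix $k \ge 1$ and $t \in [-T,T]$. Writing $E_k(v)(t) = \norm{P_kv(t)}_{L_x^2}^2 + R_k^\alpha(t) + R_k^\beta(t)$, where $R_k^\alpha$ and $R_k^\beta$ are the two $\mathrm{Re}[\cdots]$ terms, it suffices to prove a pointwise-in-$k$ bound of the form
\[
|R_k^\alpha(t)| + |R_k^\beta(t)| \lesssim \norm{v(t)}_{L_x^\infty H^s}^2 \cdot \big(\text{something summable giving } \norm{v(t)}_{E^s}^2\big),
\]
and then choose $\delta$ (hence the implicit constant $\norm{v}_{L_T^\infty H^s} \le \delta$) small enough that the total correction is bounded by $\tfrac12 \norm{v}_{E^s(T)}^2$. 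Combined with $E_T^s(v) = \norm{P_0 v(0)}_{L^2}^2 + \sum_{k\ge 1}2^{2sk}\sup_{t_k} E_k(v)(t_k)$ from \eqref{eq:new energy1-2} and the definition of $\norm{v}_{E^s(T)}^2$, the two-sided bound $\tfrac12\norm{v}_{E^s(T)}^2 \le E_T^s(v) \le \tfrac32\norm{v}_{E^s(T)}^2$ follows by the triangle inequality.

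The key step is the estimate on $R_k^\alpha$ (the $R_k^\beta$ term is handled identically, in fact more easily since it carries no extra $\psi_k$ factor and $\frac{1}{n_3}\chi_k(n_3)$ is just a bounded multiplier up to $2^{-k}$). I would first note that on the support of the summand $\overline{\N}_{3,n}$ we have $n_1+n_2+n_3 = -n$, so $n = O(2^k)$ and $n_3 = O(2^k)$ (both carry $\chi_k$), hence $\psi_k(n_3)\frac{1}{n_3}$ and $\chi_k(n)\frac{1}{n}$ contribute a total factor $O(2^{-2k})$; meanwhile $\norm{P_kv}_{L^2}^2$ carries weight $2^{2sk}$, whereas here the two high-frequency factors $\wh v(n_3)$ and $\wh v(n)$ should be weighted by $2^{2sk}$ and the two low factors $\wh v(n_1),\wh v(n_2)$ absorbed in $L^\infty$-type or $H^s$-type norms. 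Concretely I would pass to physical space: $R_k^\alpha$ is (a constant times) $\int_\T (Q_k v)(x)\, w(x)\, (Q_k' v)(x)\,dx$ or similar, where $Q_k, Q_k'$ are the Fourier multipliers with symbols $2^{-k}\chi_k(n)$ and $2^{-k}\psi_k(n)$ and $w = v^2$ is the product coming from $n_1,n_2$; then Hölder gives $|R_k^\alpha| \lesssim \norm{Q_k v}_{L^2}\norm{v}_{L^\infty}^2 \norm{Q_k' v}_{L^2}$. Since $Q_k, Q_k'$ are $L^2$-bounded with symbols supported in $I_k$, $\norm{Q_k v}_{L^2}, \norm{Q_k' v}_{L^2} \lesssim 2^{-k}\norm{P_{\sim k}v}_{L^2}$ up to finitely-overlapping neighbouring frequency blocks; thus $2^{2sk}|R_k^\alpha(t)| \lesssim \norm{v(t)}_{L^\infty}^2 \cdot 2^{2(s-1)k}\norm{P_{\sim k}v(t)}_{L^2}^2 \lesssim \norm{v(t)}_{H^s}^2 \cdot 2^{2sk}\norm{P_{\sim k}v(t)}_{L^2}^2$, using $s > \tfrac12$ so that $\norm{v}_{L^\infty} \lesssim \norm{v}_{H^s}$. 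Summing in $k$ and using almost-orthogonality of the $P_{\sim k}$ then yields $\sum_{k\ge 1}2^{2sk}\sup_{t_k}(|R_k^\alpha|+|R_k^\beta|) \lesssim \norm{v}_{L_T^\infty H^s}^2 \cdot \norm{v}_{E^s(T)}^2 \le \delta^2 \norm{v}_{E^s(T)}^2$.

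Choosing $\delta$ so that this last quantity is $\le \tfrac12\norm{v}_{E^s(T)}^2$ and noting $|\,\norm{P_0 v(0)}_{L^2}^2 - \norm{P_0 v(0)}_{L^2}^2\,| = 0$ for the $k=0$ piece, we get $|E_T^s(v) - \norm{v}_{E^s(T)}^2| \le \tfrac12\norm{v}_{E^s(T)}^2$, which is the claim.

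The main obstacle is purely bookkeeping: making sure the derivative losses balance correctly, i.e. that the factor $\frac{1}{n_3}\cdot\frac{1}{n}$ in \eqref{eq:new energy1-1} exactly compensates the difference between the weight $2^{2sk}$ assigned to $\norm{P_kv}_{L^2}^2$ and the $2^{2(s-1)k}\norm{P_{\sim k}v}_{L^2}^2$ one naturally extracts from the correction terms — and that the reciprocals $1/n_3, 1/n$ are harmless because $|n_3|, |n| \sim 2^k \ge 1$ on the relevant support (the $k=0$ block is excluded from the sum in \eqref{eq:new energy1-2}, precisely so these reciprocals never act near frequency $0$). Once the weights are tracked correctly, the inequality is just Hölder plus Bernstein/Sobolev embedding $H^s(\T)\hookrightarrow L^\infty(\T)$ for $s > \tfrac12$, and the smallness of $\delta$ does the rest.
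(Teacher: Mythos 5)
Your proof is correct and takes essentially the same route the paper intends: the paper's own proof is just a pointer to Sobolev embedding $H^s(\T)\hookrightarrow L^\infty(\T)$ (citing Lemma 5.1 of Kenig--Pilod), and your argument correctly fills in the details by writing the correction terms in physical space, applying H\"older with the two $v$-factors in $L^\infty$ and the two frequency-localized factors (symbols $\chi_k(n)/n$ and $\chi_k'(n)$, each of size $O(2^{-k})$ on $I_k$) in $L^2$, and then invoking smallness of $\norm{v}_{L^\infty_T H^s}$.
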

\begin{proof}
The proof follows from the Sobolev embedding $H^s(\T) \hookrightarrow L^{\infty}(\T)$, $s > 1/2$. See Lemma 5.1 in \cite{KP2015} for the details.
\end{proof}

We begin with introducing several lemmas which are useful to estimate the modified energy.
\begin{lemma}\label{lem:energy1-1}
Let $T \in (0,1]$, $k_1,k_2,k_3,k_4 \in \Z_+$, and $v_i \in F_{k_i}(T)$, $i=1,2,3,4$. We further assume $k_1 \le k_2 \le k_3 \le k_4$ with $k_4 \ge 10$. Then

(a) For $|k_1 - k_4| \le 5 $, we have
\begin{equation}\label{eq:energy1-1.1}
\left| \sum_{n_4,\overline{\N}_{3,n_4}} \int_0^T  \wh{v}_1(n_1)\wh{v}_2(n_2)\wh{v}_3(n_3)\wh{v}_4(n_4) \; dt\right| \lesssim 2^{k_4/2}\prod_{i=1}^{4}\norm{v_i}_{F_{k_i}(T)}.
\end{equation}

(b) For $|k_2 - k_4| \le 5 $ and $k_1 \le k_4 - 10$, we have
\begin{equation}\label{eq:energy1-1.2}
\left| \sum_{n_4,\overline{\N}_{3,n_4}} \int_0^T \wh{v}_1(n_1)\wh{v}_2(n_2)\wh{v}_3(n_3)\wh{v}_4(n_4) \; dt\right| \lesssim 2^{-k_4}2^{k_1/2}\prod_{i=1}^{4}\norm{v_i}_{F_{k_i}(T)}.
\end{equation}

(c) For $|k_3 - k_4| \le 5$, $k_2 \le k_4 -10$ and $|k_1-k_2| \le 5$, we have
\begin{equation}\label{eq:energy1-1.3}
\left| \sum_{n_4,\overline{\N}_{3,n_4}} \int_0^T \wh{v}_1(n_1)\wh{v}_2(n_2)\wh{v}_3(n_3)\wh{v}_4(n_4) \; dt\right| \lesssim 2^{-k_4}2^{k_1/2}\prod_{i=1}^{4}\norm{v_i}_{F_{k_i}(T)}.
\end{equation}

(d) For $|k_3 - k_4| \le 5$, $k_2 \le k_4 -10$ and $k_1 \le k_2 - 10$, we have
\begin{equation}\label{eq:energy1-1.4}
\left| \sum_{n_4,\overline{\N}_{3,n_4}} \int_0^T\wh{v}_1(n_1)\wh{v}_2(n_2)\wh{v}_3(n_3)\wh{v}_4(n_4) \; dt\right| \lesssim 2^{-k_4}\prod_{i=1}^{4}\norm{v_i}_{F_{k_i}(T)}.
\end{equation}
\end{lemma}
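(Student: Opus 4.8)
\textbf{Proof plan for Lemma \ref{lem:energy1-1}.}

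The plan is to reduce the four-linear time integral to the $L^2$-block estimates of Corollary \ref{cor:tri-L2} exactly as in the nonlinear estimates of Section \ref{sec:nonlinear}, but being careful to track the extra derivative loss (here all the $v_i$ are undifferentiated, so the situation is actually better than in Lemmas \ref{lem:nonres1}--\ref{lem:nonres6}). First I would fix an extension $v_i \in F_{k_i}$ of each $v_i$, and use a partition of unity in time at scale $2^{-2k_4}$: write $1 = \sum_{|m| \lesssim 2^{2k_4}} \gamma(2^{2k_4}t - m)$ on $[-T,T]$ for a suitable bump $\gamma$ (taking $\gamma$ so that $\gamma^4$ or an appropriate power sums to $1$, to distribute it among the four factors), so that it suffices to bound, for each fixed $m$, the integral over $\R$ of the product of the four frequency-localized, time-localized pieces $f_{k_i} = \ft[\eta_0(2^{2k_4}(t-t_m))\cdot v_i]$. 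By Plancherel this integral is $J(f_{k_1}, f_{k_2}, f_{k_3}, \overline{f_{k_4}})$ in the notation preceding Lemma \ref{lem:tri-L2}, up to the convolution constraint and the phase $G(n_1,n_2,n_3)$; then I sum over the $\lesssim 2^{2k_4}$ values of $m$ using Cauchy--Schwarz and the bound $\norm{f_{k_i}}_{X_{k_i}} \lesssim \norm{v_i}_{F_{k_i}(T)}$ coming from \eqref{eq:prop2}.

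Next I would decompose each $f_{k_i}$ into modulation pieces $f_{k_i,j_i}$ and apply the relevant part of Corollary \ref{cor:tri-L2}. In all four cases (a)--(d) the frequencies satisfy $k_{thd} \le k_{max} - 10$ is \emph{not} automatic only in case (a); in cases (b)--(d) we have $k_{thd} = k_{thd} \le k_4 - 10 = k_{max} - 10$, so the refined block estimates \eqref{eq:tri-block estimate-b2.1}--\eqref{eq:tri-block estimate-b2.4} apply, whereas in case (a) all four frequencies are comparable and we use only the crude estimate \eqref{eq:tri-block estimate-a2}, which gives the factor $2^{(k_{min}+k_{thd})/2} \sim 2^{k_4}$; combined with the loss $2^{2k_4}$ from the $m$-summation and the gain $2^{-2k_4}$ from the $\sum_{j} 2^{j/2}/\max(2^j,2^{2k_4})$ weight together with $j_i \ge 2k_4$, one lands on $2^{k_4/2}$, which is \eqref{eq:energy1-1.1}. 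In cases (b)--(d) one exploits the resonance function: on $\overline{\N}_{3,n_4}$ one has $|G(n_1,n_2,n_3)| \sim |(n_1+n_2)(n_1+n_3)(n_2+n_3)|(n_1^2+n_2^2+n_3^2+n_4^2) \gtrsim 2^{4k_4}2^{k_2}$ in case (b), $\gtrsim 2^{4k_4}2^{k_1}$ in case (c) (since $|n_1+n_2| \sim 2^{k_1}$ when $|k_1-k_2|\le 5$ and $n_1,n_2$ have the same sign -- the opposite-sign subcase must be handled separately or absorbed), and $\gtrsim 2^{4k_4}2^{k_2}$ in case (d); this forces $j_{max} \gtrsim 4k_4 + (\text{low frequency})$, and feeding that into \eqref{eq:tri-block estimate-b2.1} or \eqref{eq:tri-block estimate-b2.3} and summing produces the claimed $2^{-k_4}2^{k_1/2}$ (resp.\ $2^{-k_4}$) bounds.

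The main obstacle I anticipate is twofold. First, in case (c) the condition $|k_1 - k_2| \le 5$ does \emph{not} by itself give $|n_1 + n_2| \sim 2^{k_1}$: if $n_1$ and $n_2$ nearly cancel, $|n_1+n_2|$ can be much smaller, and then the resonance lower bound degrades. One must split into the subcase $|n_1+n_2| \gtrsim 2^{k_1}$ (where the above works) and the subcase $|n_1 + n_2| \ll 2^{k_1}$; in the latter, $|n_1 + n_3| \sim |n_2 + n_3| \sim 2^{k_4}$, so $|G| \gtrsim 2^{6k_4}$, which is an even stronger gain and more than compensates -- but it needs to be written out. Second, one must be careful that the $m$-summation over the $2^{-2k_4}$-time-partition is genuinely compatible with the $F_{k_i}(T)$ norms of the \emph{lower}-frequency factors, whose intrinsic time scale $2^{-2k_i}$ is longer than $2^{-2k_4}$; this is handled exactly as in the proof of Lemma \ref{lem:nonres3} via \eqref{eq:prop2}, since localizing further in time only costs a harmless constant, but the bookkeeping (how many $m$'s, and the $\ell^2_m$ orthogonality in the output modulation variable) is where the argument is least routine. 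Once these points are dispatched, cases (b) and (d) are strictly easier than (c), and (a) is a direct application of \eqref{eq:tri-block estimate-a2}.
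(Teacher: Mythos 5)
Your overall strategy is the same as the paper's: fix extensions, partition time at scale $2^{-2k_4}$ via a bump $\gamma$ with $\sum_m\gamma^4(\cdot-m)\equiv 1$, reduce each piece to $J(f_{k_1,j_1},\dots,f_{k_4,j_4})$, decompose in modulation, and feed into the $L^2$-block estimates, with the count of $m$'s supplying a factor $2^{2k_4}$. That much is right. However, there are two concrete gaps in the case-by-case execution.

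First, your blanket claim that in cases (b)--(d) one has $k_{thd}\le k_{max}-10$ and can therefore invoke the refined block estimates \eqref{eq:tri-block estimate-b2.1}--\eqref{eq:tri-block estimate-b2.4} is false in case (b). There the hypotheses are $|k_2-k_4|\le 5$ and $k_1\le k_4-10$, so $k_{thd}=k_2$ is \emph{comparable} to $k_{max}=k_4$, and the refined estimates simply do not apply. The paper's case (b) uses only the crude estimate \eqref{eq:tri-block estimate-a1} together with the stronger modulation bound $j_{max}\ge 5k_4$ (since $n_1$ is low and $n_2,n_3,n_4$ are all high, all three factors $|n_i+n_j|$ are $\sim 2^{k_4}$), and this is what produces $2^{2k_4}\cdot 2^{(k_1+k_4)/2}\cdot 2^{-(5k_4+2k_4)/2}=2^{-k_4}2^{k_1/2}$. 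Your route through the refined estimates is not available.

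Second, your treatment of case (c) is both misdirected and contains an arithmetic error. You try to obtain the factor $2^{k_1/2}$ from the resonance lower bound by splitting on the size of $|n_1+n_2|$. The paper does not need this split: it obtains the $2^{k_1/2}$ directly from the block-estimate factor $2^{k_{thd}/2}\sim 2^{k_1/2}$ in \eqref{eq:tri-block estimate-b1.1}, paired with the uniform lower bound $j_{max}\ge 4k_4$ (which needs only $|n_1+n_2|\ge 1$ on $\overline{\N}_{3,n_4}$). Your sub-case $|n_1+n_2|\ll 2^{k_1}$ is also mishandled: from $|n_1+n_3|\sim|n_2+n_3|\sim 2^{k_4}$ and $n_1^2+\dots+n_4^2\sim 2^{2k_4}$ you conclude $|G|\gtrsim 2^{6k_4}$, but the resonance function has exactly the three factors $(n_1+n_2)(n_1+n_3)(n_2+n_3)$ times the quadratic form, so $|G|\gtrsim 1\cdot 2^{k_4}\cdot 2^{k_4}\cdot 2^{2k_4}=2^{4k_4}$, not $2^{6k_4}$. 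With the corrected modulation bound and the crude block estimate \eqref{eq:tri-block estimate-a1} (which gives $2^{(k_{min}+k_{thd})/2}\sim 2^{k_1}$), this sub-case only yields $2^{-k_4}2^{k_1}$, which is too weak. The fix is to discard the split and take the $2^{k_1/2}$ factor from \eqref{eq:tri-block estimate-b1.1}/\eqref{eq:tri-block estimate-b1.3} as the paper does, which works uniformly.

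Cases (a) and (d) are essentially right (in (d) the winning combination is \eqref{eq:tri-block estimate-b1.1} with $j_{max}\ge 4k_4+k_2$, and the $2^{k_2/2}$'s cancel to give $2^{-k_4}$), apart from a minor bookkeeping slip in (a), where you invoke the $N_k$-norm weight $2^{j/2}/\max(2^j,2^{2k_4})$ that plays no role in this energy-type estimate; the actual gain is $2^{-(j_{max}+j_{sub})/2}\lesssim 2^{-5k_4/2}$ from $j_{max}\ge 3k_4$, $j_{sub}\ge 2k_4$.
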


\begin{proof}
We fix extensions $\wt{v}_i \in F_{k_i}$ so that $\norm{\wt{v}_{i}}_{F_{k_i}} \le 2 \norm{v_i}_{F_{k_i}(T)}$, $i=1,2,3,4$. Let $\gamma : \R \to [0,1]$ be a smooth partition of unity function with $\sum_{m \in \Z}\gamma^4(x-m) \equiv 1$, $x \in \R$. Then, we obtain 
\begin{equation}\label{eq:energy1-1.5}
\begin{aligned}
\Big| &\sum_{n_4,\overline{\N}_{3,n_4}} \int_0^T\wh{\wt{v}}_1(n_1)\wh{\wt{v}}_2(n_2)\wh{\wt{v}}_3(n_3)\wh{\wt{v}}_4(n_4) \;dt \Big|\\
&\lesssim \sum_{|m| \lesssim 2^{2k_4}} \Big| \sum_{n_4,\overline{\N}_{3,n_4}} \int_{\R}\left(\gamma(2^{2k_4}t-m)\mathbf{1}_{[0,T]}(t)\wh{\wt{v}}_1(n_1)\right)  \\
&\cdot \left(\gamma(2^{2k_4}t-m)\wh{\wt{v}}_2(n_2)\right)\cdot \left(\gamma(2^{2k_4}t-m)\wh{\wt{v}}_3(n_3)\right) \cdot \left(\gamma(2^{2k_4}t-m)\wh{\wt{v}}_4(n_4)\right) \;dt \Big|
\end{aligned}
\end{equation}
Set
\[A = \set{m:\gamma(2^{2k_4}t-m)\mathbf{1}_{[0,T]}(t) \mbox{ non-zero and } \neq \gamma(2^{2k_4}t-m) }.\]
Then, the summation over $m \lesssim 2^{2k_4}$ in the right-hand side of \eqref{eq:energy1-1.5} is divided into $A$ and $A^c$. Since $|A| \le 4$, we can easily handle (see \cite{Guo2012} for the details) the right-hand side of \eqref{eq:energy1-1.5} on $A$ by showing
\[\sup_{j \in \Z_+}2^{j/2}\norm{\eta_j(\tau-\mu(n)) \cdot \ft[\mathbf{1}_{[0,1]}(t)\gamma(2^{2k_4}t-m)\wt{v}_1]}_{L_{\tau}^2\ell_n^2} \lesssim \norm{\gamma(2^{2k_4}t-m)\wt{v}_1}_{X_{k_1}}.\]
Hence, we only handle the summation on $A^c$, that is, 
\[\gamma(2^{2k_4}t-m)\mathbf{1}_{[0,T]}(t)\wh{\wt{v}}_1(n_1) = \gamma(2^{2k_4}t-m)\wh{\wt{v}}_1(n_1).\] 
Let $f_{k_i} = \ft[\gamma(2^{2k_4}t-m)\wh{\wt{v}}_i(n_i)]$ and $f_{k_i,j_i} = \eta_{j_i}(\tau - \mu(n))f_{k_i}$, $i=1,2,3,4$. By parseval's identity and \eqref{eq:prop1}, the right-hand side of \eqref{eq:energy1-1.5} is dominated by
\begin{equation}\label{eq:energy1-1.6}
\sup_{m \in A^c} 2^{2k_4} \sum_{j_1,j_2,j_3,j_4 \ge 2k_4} |J(f_{k_1,j_1},f_{k_2,j_2},f_{k_3,j_3},f_{k_4,j_4})|.
\end{equation}

(a) By the support property \eqref{eq:support property}, we know $j_{max} \ge 3k_4$. Then, we use \eqref{eq:tri-block estimate-a1} to obtain that
\begin{align*}
\eqref{eq:energy1-1.6} &\lesssim 2^{2k_4} \sum_{j_1,j_2,j_3,j_4 \ge 2k_4}2^{(j_{min}+j_{thd})/2}2^{k_4}\prod_{i=1}^{4} \norm{f_{k_i,j_i}}_{L_{\tau}^2\ell_n^2}\\
&\lesssim 2^{k_4/2}\norm{v_1}_{F_{k_1}(T)}\norm{v_2}_{F_{k_2}(T)}\norm{v_3}_{F_{k_3}(T)}\norm{v_4}_{F_{k_4}(T)}.
\end{align*}

(b) We use the same block-estimate \eqref{eq:tri-block estimate-a1} and argument in (a) with $j_{max} \ge 5k_4$ to have 
\begin{align*}
\eqref{eq:energy1-1.6} &\lesssim 2^{2k_4} \sum_{j_1,j_2,j_3,j_4 \ge 2k_4}2^{(j_{min}+j_{thd})/2}2^{k_1/2}2^{k_4/2}\prod_{i=1}^{4} \norm{f_{k_i,j_i}}_{L_{\tau}^2\ell_n^2}\\
&\lesssim 2^{-k_4}2^{k_1/2}\norm{v_1}_{F_{k_1}(T)}\norm{v_2}_{F_{k_2}(T)}\norm{v_3}_{F_{k_3}(T)}\norm{v_4}_{F_{k_4}(T)}.
\end{align*}

(c) Since $|k_1 - k_2| \le 5$, the case $(k_i,j_i) = (k_{thd},j_{max})$ never happens. Moreover, it suffices to consider only the case when $j_{sub} \le 4k_4$ the case, since $j_{sub} \le 4k_4$ gives the worst bound in the block estimates among other cases. Then by using \eqref{eq:tri-block estimate-b1.3} and $j_{max} \ge 4k_4$, we obtain
\begin{align*}
\eqref{eq:energy1-1.6} &\lesssim 2^{2k_4} \sum_{\substack{j_1,j_2,j_3,j_4 \ge 2k_4\\j_{sub} \le 4k_4}}2^{(j_1+j_2+j_3+j_4)/2}2^{-(j_{max}+j_{sub})/2}2^{k_1/2}\prod_{i=1}^{4} \norm{f_{k_i,j_i}}_{L_{\tau}^2\ell_n^2}\\
&\lesssim 2^{-k_4}2^{k_1/2}\norm{v_1}_{F_{k_1}(T)}\norm{v_2}_{F_{k_2}(T)}\norm{v_3}_{F_{k_3}(T)}\norm{v_4}_{F_{k_4}(T)}.
\end{align*}

(d) In this case, we observe that $j_{max} \ge 4k_4 + k_2$. Similarly, the worst bound of $|J(f_{k_1,j_1},f_{k_2,j_2},f_{k_3,j_3},f_{k_4,j_4})|$ should appear when $j_2 = j_{max}$ and $j_{sub} \le 4k_4$ hold. Hence, by \eqref{eq:tri-block estimate-b1.1}, we have 
\begin{align*}
\eqref{eq:energy1-1.6} &\lesssim 2^{2k_4} \sum_{\substack{j_1,j_2,j_3,j_4 \ge 2k_4\\ j_2 = j_{max}\\j_{sub} \le 4k_4}}2^{(j_1+j_2+j_3+j_4)/2}2^{-(j_{max}+j_{sub})/2}2^{k_2/2}\prod_{i=1}^{4} \norm{f_{k_i,j_i}}_{L_{\tau}^2\ell_n^2}\\
&\lesssim 2^{-k_4}\norm{v_1}_{F_{k_1}(T)}\norm{v_2}_{F_{k_2}(T)}\norm{v_3}_{F_{k_3}(T)}\norm{v_4}_{F_{k_4}(T)}.
\end{align*}
Therefore, we finish the proof of Lemma \ref{lem:energy1-1}.
\end{proof}

The next lemma is a kind of commutator estimate which will be helpful to handle bad terms $\int_0^T E_{2}$ and $\int_0^T E_{3}$ in the original energy.

\begin{lemma}\label{lem:commutator1}
Let $T \in (0,1]$, $k,k_1,k_2 \in \Z_+$ satisfying $k_1,k_2 \le k -10$, $u_i \in F_{k_i}(T)$, $i=1,2$, and $v \in F^0(T)$. Then, we have
\begin{equation}\label{eq:commutator1-1}
\begin{aligned}
\Big|\sum_{n,\overline{\N}_{3,n}}&\int_0^T \chi_k(n)n[\chi_{k_1}(n_1)\wh{u}_1(n_1)\chi_{k_2}(n_2)\wh{u}_2(n_2)n_3^2\wh{v}(n_3)]\chi_k(n)\wh{v}(n) \;dt\\
&\hspace{-2em}+ \frac12\sum_{n,\overline{\N}_{3,n}}\int_0^T (n_1+n_2)\chi_{k_1}(n_1)\wh{u}_1(n_1)\chi_{k_2}(n_2)\wh{u}_2(n_2)\chi_k(n_3)n_3\wh{v}(n_3)\chi_k(n)n\wh{v}(n) \;dt\\
&\hspace{-2em}- \sum_{n,\overline{\N}_{3,n}}\int_0^T (n_1+n_2)\chi_{k_1}(n_1)\wh{u}_1(n_1)\chi_{k_2}(n_2)\wh{u}_2(n_2)\psi_k(n_3)n_3\wh{v}(n_3)\chi_k(n)n\wh{v}(n) \;dt \Big|\\
\lesssim&~{} 2^{2k_2} \norm{P_{k_1}u_1}_{F_{k_1}(T)}\norm{P_{k_2}u_2}_{F_{k_2}(T)}\sum_{|k-k'|\le 5} \norm{P_{k'}v}_{F_{k'}(T)}^2,
\end{aligned}
\end{equation}
and
\begin{equation}\label{eq:commutator1-2}
\begin{aligned}
\Big|&\sum_{n,\overline{\N}_{3,n}}\int_0^T \chi_k(n)[(n_1+n_2)\chi_{k_1}(n_1)\wh{u}_1(n_1)\chi_{k_2}(n_2)\wh{u}_2(n_2)n_3^2\wh{v}(n_3)]\chi_k(n)\wh{v}(n)\; dt\\
&+ \sum_{n,\overline{\N}_{3,n}}\int_0^T (n_1+n_2)\chi_{k_1}(n_1)\wh{u}_1(n_1)\chi_{k_2}(n_2)\wh{u}_2(n_2)\chi_k(n_3)n_3\wh{v}(n_3)\chi_k(n)n\wh{v}(n)\;dt \Big|\\
&\hspace{9em}\lesssim 2^{2k_2} \norm{P_{k_1}u_1}_{F_{k_1}(T)}\norm{P_{k_2}u_2}_{F_{k_2}(T)}\sum_{|k-k'|\le 5} \norm{P_{k'}v}_{F_{k'}(T)}^2,
\end{aligned}
\end{equation}
\end{lemma}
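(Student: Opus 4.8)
The plan is to view the left-hand side of each estimate as a commutator between the multiplication operator by $u_1u_2$ (localized at frequencies $k_1,k_2$) and the Fourier multipliers $\chi_k(n)n^2$ (for \eqref{eq:commutator1-1}, after the symmetrization that replaces $n_3^3$ by $(n_1+n_2)n_3$ plus lower order) or $\chi_k(n)n$ (for \eqref{eq:commutator1-2}). Concretely, in \eqref{eq:commutator1-1} the three integrand multipliers are $\chi_k(n)n\cdot n_3^2$, $\tfrac12(n_1+n_2)\chi_k(n_3)n_3\chi_k(n)n$, and $-(n_1+n_2)\psi_k(n_3)n_3\chi_k(n)n$; since $n=n_1+n_2+n_3$ with $|n_1|,|n_2|\lesssim 2^{k-10}\ll 2^k\sim|n_3|\sim|n|$, I would Taylor-expand the symbol $\chi_k(n)n - \chi_k(n_3)n_3$ (and likewise $\chi_k(n)n^2-\chi_k(n_3)n_3^2$) around $n_3$ in the small parameter $n_1+n_2$. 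The first-order term produces exactly the $\psi_k = n\chi_k'$ correction appearing in \eqref{eq:new energy1-1}–\eqref{eq:new energy1-2}, and this is precisely why $\psi_k$ was introduced (cf. the Remark). The point of the subtraction is that the principal (zeroth and first order) terms cancel, leaving a second-order Taylor remainder.

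The key steps, in order, are: (i) Write $m(n_3) := \chi_k(n_3)n_3^2$ (resp. $\chi_k(n_3)n_3$) and expand $m(n_3 + (n_1+n_2))$ by the second-order Taylor theorem, using the regularity bounds \eqref{eq:regularity} on $\eta_0$ (hence on $\chi_k$ and $\psi_k$) to control $m''$ on $I_k$: one has $|m''(\xi)|\lesssim 1$ on the support (the two derivatives on $n^2$ are compensated by $\xi\in I_k$ being size $2^k$ only through the factor $n$, so $m''$ for the quadratic symbol is $O(2^{-2k}\cdot 2^{2k})=O(1)$ times $\chi$-type cutoffs, and similarly $m''$ for the linear symbol is $O(2^{-2k})$ — I will track the exact power but the upshot is the gain matching $2^{2k_2}$). (ii) Observe that the zeroth-order term of the expansion, paired against $\chi_k(n)n\wh v(n)$, is cancelled by the designated counter-term in \eqref{eq:commutator1-1}/\eqref{eq:commutator1-2}, and the first-order term is cancelled by the $\psi_k$ term (using $\psi_k(n_3)=n_3\chi_k'(n_3)$); the surviving object is $\sum \int_0^T (n_1+n_2)^2 \cdot (\text{remainder multiplier, size }\lesssim 2^{2k_2}2^{-2k}\text{ or }1)\cdot \chi_{k_1}\wh u_1\,\chi_{k_2}\wh u_2\, n_3\wh v(n_3)\,\chi_k(n)n\wh v(n)$, with the frequency-localized remainder still a $k$-acceptable symbol. (iii) Bound the resulting quadrilinear form by the $L^2$-block / $J$-functional machinery: apply Lemma \ref{lem:energy1-1} (case (b), since two low and two high frequencies with $|k-k'|\le5$), or directly Lemma \ref{lem:tri-L2} together with \eqref{eq:prop1}, after expanding on a partition of $[0,T]$ into intervals of length $2^{-2k}$ exactly as in the proof of Lemma \ref{lem:energy1-1}. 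The factor $(n_1+n_2)^2\lesssim 2^{2k_2}$ is pulled out, one derivative on $\wh v(n)$ (the $n$) and one on $\wh v(n_3)$ (the $n_3$) are absorbed by the $2^{-k}$-gains from Lemma \ref{lem:energy1-1}(b) and by $|k-k'|\le 5$, and the time-localization factor $2^{2k}$ from the partition is absorbed by the $j\ge 2k$ summation as usual. This yields the claimed bound $2^{2k_2}\norm{P_{k_1}u_1}_{F_{k_1}(T)}\norm{P_{k_2}u_2}_{F_{k_2}(T)}\sum_{|k-k'|\le5}\norm{P_{k'}v}_{F_{k'}(T)}^2$.

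The main obstacle I anticipate is bookkeeping the Taylor remainder as an \emph{admissible symbol}: after expanding, the leftover multiplier is a function of $n_3$ and of $n_1+n_2$ that is no longer simply a product of the $\chi_k$'s, and one must verify it still decomposes as a $k$-acceptable multiplication factor (an element of $S_k$ up to the extracted $2^{2k_2}$, or at worst a convergent sum of such) so that Lemma \ref{lem:prop of Xk} and the $S_k$-mapping bounds apply. This requires differentiating the integral form of the remainder $m(n_3+h)-m(n_3)-h m'(n_3) = h^2\int_0^1 (1-\theta)m''(n_3+\theta h)\,d\theta$ in $n_3$ and using \eqref{eq:regularity} to see each derivative costs only $2^{-k}$, uniformly in $h = n_1+n_2$ ranging over $|h|\lesssim 2^{k_2+1}$; the symmetrization in $n_1,n_2$ already performed in passing to \eqref{eq:5mkdv4} is what makes the first-order term a clean multiple of $\psi_k$ rather than an asymmetric expression. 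Once the remainder is recognized as such a symbol, everything else is a routine application of the block estimates already established, so I would keep that part brief and refer to the proof of Lemma \ref{lem:energy1-1} and to \cite{Guo2012} for the partition-of-unity-in-time argument.
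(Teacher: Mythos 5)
The approach — interpret the combination as a second-order Taylor cancellation of a frequency-localized symbol and then apply the $L^2$-block / $J$-functional estimates on $2^{-2k}$ time pieces — is exactly the paper's, so the spirit is right. But several concrete points in the sketch do not survive a careful check.

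First, the algebra of the cancellation. You propose expanding $m(\xi)=\chi_k(\xi)\xi^2$ in $h=n_1+n_2$; the first-order term is then $h\,[\chi_k'(n_3)n_3^2+2\chi_k(n_3)n_3]$, whose coefficient $+2\chi_k(n_3)n_3$ cannot be matched by the $+\tfrac12(n_1+n_2)\chi_k(n_3)n_3$ counter-term in \eqref{eq:commutator1-1} (the numbers $+\tfrac12$ and $+2$ do not cancel). Moreover the first term of \eqref{eq:commutator1-1} carries $\chi_k(n)n_3^2$, not $\chi_k(n)n^2=m(n)$, so expanding $m(n)-m(n_3)$ is not the right object. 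What the paper actually does is: use the constraint $n_1+n_2=-(n_3+n)$ together with the $n_3\leftrightarrow n$ symmetry of the second term to replace $\tfrac12(n_1+n_2)\chi_k(n_3)n_3\chi_k(n)n$ by $-\chi_k(n_3)n_3^2\cdot\chi_k(n)n$; after this the three multipliers collapse to $n_3^2\bigl[\chi_k(n)-\chi_k(n_3)-(n_1+n_2)\chi_k'(n_3)\bigr]\cdot\chi_k(n)n$, which is literally $n_3^2$ times the Taylor remainder of $\chi_k$ alone (the $n_3^2$ is a coefficient, not part of the function being expanded). Without that symmetrization step your proposed cancellation does not close.

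Second, the block-estimate bookkeeping. The relevant sub-cases of Lemma~\ref{lem:energy1-1} are (c) and (d) (two low, two comparable high), not (b). More importantly, applying (c) as a black box with $(n_1+n_2)^2\lesssim 2^{2k_2}$ pulled out and one derivative $\sim 2^k$ left on $\wh v(n)$ gives $2^{2k_2}\cdot 2^k\cdot 2^{-k}2^{k_1/2}=2^{2k_2+k_1/2}\sim 2^{5k_2/2}$ when $|k_1-k_2|\le5$, which overshoots the claimed $2^{2k_2}$ by half a derivative. The paper closes this by re-running the block-estimate sum with the sharper resonance bound $2^{j_{max}}\gtrsim 2^{4k}|n_1+n_2|$ (keeping $|n_1+n_2|$ rather than coarsening to $2^{k_2}$), which buys back the missing $|n_1+n_2|^{-1/2}$. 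Your sketch should note this rather than citing (c) as a black box. Finally, the concern about the remainder being a $k$-acceptable ($S_k$-type) multiplier is misplaced: $S_k$ governs time-multiplication factors; here the remainder is a frequency multiplier, and since Lemma~\ref{lem:tri-L2} is proved for nonnegative $L^2$ data, a pointwise $O(1)$ bound on the multiplier is all that is needed.
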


\begin{proof}
We first consider \eqref{eq:commutator1-1}. From $n_1+n_2+n_3+n_4 = 0$ and the symmetry of $n_3,n$, we have
\begin{align*}
\mbox{LHS of }\eqref{eq:commutator1-1} &= \Big|\sum_{n,\overline{\N}_{3,n}}\int_0^T[\chi_k(n)n_3^2 - \chi_k(n_3)n_3^2 - (n_1+n_2)n_3\psi_k(n_3)]\\
&\hspace{7em}\times\chi_{k_1}(n_1)\wh{u}_1(n_1)\chi_{k_2}(n_2)\wh{u}_2(n_2)\wh{v}(n_3)\chi_k(n)n\wh{v}(n)\;dt \Big|\\
&= \Big|\sum_{n,\overline{\N}_{3,n}}\int_0^T\left[\frac{\chi_k(n) - \chi_k(n_3) - (n_1+n_2)\chi_k'(n_3)}{(n_1+n_2)^2}\cdot n_3^2\right]\\
&\hspace{3em}\times(n_1+n_2)^2\chi_{k_1}(n_1)\wh{u}_1(n_1)\chi_{k_2}(n_2)\wh{u}_2(n_2)\wh{v}(n_3)\chi_k(n)n\wh{v}(n)\;dt \Big|. 
\end{align*}
Since both $\chi_k$ and $\chi_k'$ are even functions, $-n_3 = n + (n_1+n_2)$, $|n|\sim|n_3|$ and $\chi_k''(n) = O(\chi_k(n)/n^2)$ due to \eqref{eq:regularity}, we know from the Taylor's theorem that 
\[\left|\frac{\chi_k(n) - \chi_k(n_3) - (n_1+n_2)\chi_k'(n_3)}{(n_1+n_2)^2}\cdot n_3^2\right| \lesssim 1.\]
Hence by the same way as in the proof of Lemma \ref{lem:energy1-1} (c) and (d), we have from $2^{j_{max}} \ge 2^{4k_4}|n_1+n_2|$ that\footnote{In the proof of Lemma \ref{lem:energy1-1} (c), we, in fact, obtain the additional gain $|n_1+n_2|^{-1/2}$. Hence, it covers $|n_1+n_2|^{1/2}$ and we can obtain \eqref{eq:commutator1-1.1.1}.}
\begin{equation}\label{eq:commutator1-1.1.1}
\mbox{LHS of }\eqref{eq:commutator1-1} \lesssim 2^{2k_2}\norm{P_{k_1}u_1}_{F_{k_1}(T)}\norm{P_{k_2}u_2}_{F_{k_2}(T)}\sum_{|k-k'|\le 5} \norm{P_{k'}v}_{F_{k'}(T)}^2,
\end{equation}
for $|k_1-k_2| \le 5$, and 
\[\mbox{LHS of }\eqref{eq:commutator1-1} \lesssim 2^{2k_2}\norm{P_{k_1}u_1}_{F_{k_1}(T)}\norm{P_{k_2}u_2}_{F_{k_2}(T)}\sum_{|k-k'|\le 5} \norm{P_{k'}v}_{F_{k'}(T)}^2,\]
for $k_1 \le k_2 - 10$.

Next, we consider \eqref{eq:commutator1-2}. Since $n = -n_3 -(n_1+n_2)$, we have 
\[\begin{aligned}
&\sum_{n,\overline{\N}_{3,n}}\int_0^T (n_1+n_2)\chi_{k_1}(n_1)\wh{u}_1(n_1)\chi_{k_2}(n_2)\wh{u}_2(n_2)\chi_k(n_3)n_3\wh{v}(n_3)\chi_k(n)n\wh{v}(n)\;dt\\
=&-\sum_{n,\overline{\N}_{3,n}}\int_0^T (n_1+n_2)^2\chi_{k_1}(n_1)\wh{u}_1(n_1)\chi_{k_2}(n_2)\wh{u}_2(n_2)\chi_k(n_3)n_3\wh{v}(n_3)\chi_k(n)\wh{v}(n)\;dt\\
&-\sum_{n,\overline{\N}_{3,n}}\int_0^T (n_1+n_2)\chi_{k_1}(n_1)\wh{u}_1(n_1)\chi_{k_2}(n_2)\wh{u}_2(n_2)\chi_k(n_3)n_3^2\wh{v}(n_3)\chi_k(n)\wh{v}(n)\;dt,
\end{aligned}\]
and similarly as before, we have 
\begin{align*}
\sum_{n,\overline{\N}_{3,n}}&\int_0^T \chi_k(n)[(n_1+n_2)\chi_{k_1}(n_1)\wh{u}_1(n_1)\chi_{k_2}(n_2)\wh{u}_2(n_2)n_3^2\wh{v}(n_3)]\chi_k(n)\wh{v}(n)\\
&- \sum_{n,\overline{\N}_{3,n}}\int_0^T (n_1+n_2)\chi_{k_1}(n_1)\wh{u}_1(n_1)\chi_{k_2}(n_2)\wh{u}_2(n_2)\chi_k(n_3)n_3^2\wh{v}(n_3)]\chi_k(n)\wh{v}(n)\\
=&~{} \sum_{n,\overline{\N}_{3,n}}\int_0^T\left[\frac{\chi_k(n) - \chi_k(n_3)}{(n_1+n_2)}\cdot n_3\right]\\
&\hspace{5em}\times(n_1+n_2)^2\chi_{k_1}(n_1)\wh{u}_1(n_1)\chi_{k_2}(n_2)\wh{u}_2(n_2)n_3\wh{v}(n_3)\chi_k(n)\wh{v}(n)\;dt,
\end{align*}
with
\[\left|\frac{\chi_k(n) - \chi_k(n_3)}{(n_1+n_2)}\cdot n_3\right| \lesssim 1.\]
Again we use \eqref{eq:energy1-1.3} and \eqref{eq:energy1-1.4} so that
\[\mbox{LHS of }\eqref{eq:commutator1-2} \lesssim 2^{2k_2}\norm{P_{k_1}u_1}_{F_{k_1}(T)}\norm{P_{k_2}u_2}_{F_{k_2}(T)}\sum_{|k-k'|\le 5} \norm{P_{k'}v}_{F_{k'}(T)}^2,\]
for both $|k_1-k_2| \le 5$ and $k_1 \le k_2 -10$ cases.
\end{proof}

\begin{remark}
By using the same way, we also have
\begin{equation}\label{eq:commutator1-3}
\begin{aligned}
\Big|\sum_{n,\overline{\N}_{3,n}}&\int_0^T \chi_k(n)[(n_1+n_2)\chi_{k_1}(n_1)\wh{u}_1(n_1)\chi_{k_2}(n_2)\wh{u}_2(n_2)n_3\wh{v}(n_3)]\chi_k(n)n\wh{v}(n)\; dt\\
&\hspace{-2em}- \sum_{n,\overline{\N}_{3,n}}\int_0^T (n_1+n_2)\chi_{k_1}(n_1)\wh{u}_1(n_1)\chi_{k_2}(n_2)\wh{u}_2(n_2)\chi_k(n_3)n_3\wh{v}(n_3)\chi_k(n)n\wh{v}(n)\;dt \Big|\\
&\hspace{7em}\lesssim 2^{2k_2} \norm{P_{k_1}u_1}_{F_{k_1}(T)}\norm{P_{k_2}u_2}_{F_{k_2}(T)}\sum_{|k-k'|\le 5} \norm{P_{k'}v}_{F_{k'}(T)}^2.
\end{aligned}
\end{equation}
This commutator estimate will be used in the proof of Proposition \ref{prop:energy1-3}. 
\end{remark}

We, now, ready to show the energy estimate. 
\begin{proposition}\label{prop:energy1-2}
Let $s > 2$ and $T \in (0,1]$. Then, for the solution $v \in C([-T,T];H^{\infty}(\T))$ to \eqref{eq:5mkdv4}, we have
\[E_T^s(v) \lesssim (1+ \norm{v_0}_{H^s}^2)\norm{v_0}_{H^s}^2 + (1 + \norm{v}_{F^{\frac12+}(T)}^2 + \norm{v}_{F^{\frac12+}(T)}^4)\norm{v}_{F^{2+}(T)}^2\norm{v}_{F^s(T)}^2.\]
\end{proposition}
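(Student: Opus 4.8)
\textbf{Proof proposal for Proposition \ref{prop:energy1-2}.}
The plan is to run the modified-energy scheme term by term on the localized energy \eqref{eq:new energy1-1}, and then sum the dyadic pieces against the weight $2^{2sk}$ using the Littlewood--Paley structure of $F^s(T)$. First I would recall the basic energy identity obtained at the start of the section,
\[
\pt\norm{P_kv}_{L_x^2}^2 = E_1 + E_2 + E_3,
\]
and note that the \emph{Resonant} term $-20in^3|\wh v(n)|^2\wh v(n)$ contributes nothing to $\pt\norm{P_kv}_{L_x^2}^2$ after taking real parts (it is exactly cancelled in the symmetrization $\sum_n \chi_k(n)(\cdots)\chi_k(-n)\wh v(-n) + \text{c.c.}$), so only $E_1$ (the quintic term, renamed inside this proof), $E_2$ and $E_3$ survive; see Remark \ref{rem:resonant1} and \ref{rem:resonant2}. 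The term $\int_0^T E_1$ is harmless: it is a quintilinear expression with only one derivative on the top frequency, so after fixing extensions in $F_{k_i}$, breaking $[0,T]$ into $\sim 2^{2k_{max}}$ subintervals of length $2^{-2k_{max}}$, and applying the Cauchy--Schwarz block estimate \eqref{eq:tri-block estimate-a1} (in its quintilinear analogue, exactly as in Lemma \ref{lem:quintic1}), one gains $2^{-2k_{max}}$ from the modulation sum against the $2^{2k_{max}}$ loss, leaving a harmless power of low frequencies; summing against $2^{2sk}$ produces the $\norm{v}_{F^{\frac12+}(T)}^4\norm{v}_{F^{2+}(T)}^2\norm{v}_{F^s(T)}^2$-type bound.

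The heart of the argument is the cubic part. Here $\int_0^T E_2$ and $\int_0^T E_3$ cannot be estimated directly because in the \emph{high--low} interaction (where the top frequency $n$ and one internal frequency, say $n_3$, are comparable and both $\gg |n_1|,|n_2|$) there are two net derivatives on the high mode that the time-localization alone cannot absorb. The fix is the modified energy: differentiating the correction terms in \eqref{eq:new energy1-1} in time and using \eqref{eq:5mkdv4}, the worst (high--low) pieces of $\pt$(correction) are \emph{designed} to cancel the worst pieces of $E_2+E_3$ up to a commutator. This is precisely what Lemma \ref{lem:commutator1} (together with \eqref{eq:commutator1-3}) delivers: with the right choice of the real constants $\alpha,\beta$ (dictated by matching the coefficients $10$ and $5$ in \eqref{eq:5mkdv4} against the $2^{k_4}$-loss structure), the combination
\[
E_2 + E_3 + \pt\!\left[\text{correction terms in } E_k(v)\right]
\]
reduces, on the high--low frequency regime, to the commutator expressions appearing on the left of \eqref{eq:commutator1-1}--\eqref{eq:commutator1-3}, which are bounded by $2^{2k_2}\norm{P_{k_1}u_1}_{F_{k_1}(T)}\norm{P_{k_2}u_2}_{F_{k_2}(T)}\sum_{|k-k'|\le5}\norm{P_{k'}v}_{F_{k'}(T)}^2$. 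Here $k_2$ is a low frequency, so $2^{2k_2}\le 2^{2\max(k_1,k_2)}$ is absorbed by two copies of $\norm{v}_{F^{2+}(T)}$ (this is where the $\norm{v}_{F^{2+}(T)}^2$ factor and the hypothesis $s>2$ enter), and the remaining high-frequency square is what carries the $2^{2sk}$ weight.

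For the remaining cubic regimes — \emph{high--high--high} resonant/non-resonant, \emph{high--high--low} with high output, and \emph{low--low--low} — one does not need the modified energy at all: these are handled directly by Lemma \ref{lem:energy1-1}(a)--(d) applied to $E_2$ and $E_3$ (with the extra $n_3^2$ or $(n_1+n_2)n_3$ factors distributed as powers of the frequencies and then dominated using $k_1\le k_2\le k_3\le k_4$), together with the analogous estimates for the correction terms themselves (which are lower order because of the extra $1/(n_3 n)$). In each case one gains enough from the resonance relation \eqref{eq:high modulation} to beat the derivative losses, and the dyadic sums converge for $s>2$. The boundary/initial-data term: $\norm{P_0 v(0)}_{L_x^2}^2$ and the correction terms evaluated at $t_k$ are controlled by $\norm{v(t_k)}_{H^s}^2$ via Sobolev embedding, and then by $(1+\norm{v_0}_{H^s}^2)\norm{v_0}_{H^s}^2$ using the $L^2$-conservation of $u$ (hence of $v$) and interpolation — this accounts for the first term on the right-hand side. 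Finally, assembling all dyadic contributions with the weight $2^{2sk}$ and taking suprema over $t_k\in[-T,T]$ yields the claimed bound. The main obstacle, and the only genuinely delicate point, is the algebraic bookkeeping in the previous paragraph: choosing $\alpha,\beta$ and organizing the symmetrizations so that \emph{every} two-derivative high--low contribution from $E_2+E_3$ is matched by a term coming from $\pt$(correction), leaving only the benign commutators of Lemma \ref{lem:commutator1}; once that cancellation is set up correctly, all the estimates are routine consequences of Lemmas \ref{lem:energy1-1} and \ref{lem:commutator1} and the block estimates of Section \ref{sec:L2 block estimate}.
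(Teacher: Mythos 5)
Your overall architecture is correct and matches the paper's: differentiate the localized modified energy $E_k(v)$ in time, observe that the $N_1$-resonant contribution to $\pt\norm{P_kv}_{L^2}^2$ is odd in $n$ and so vanishes, dispose of the quintic piece $E_1$ by subdividing $[0,T]$ into $\sim 2^{2k_{max}}$ subintervals and applying the Cauchy--Schwarz block bound, and use the $\alpha,\beta$-correction so that the dangerous high--low cubic contributions reduce to the commutator expressions of Lemma~\ref{lem:commutator1} (with $\alpha=-4$, $\beta=-2$), while the remaining cubic frequency regimes are handled directly by Lemma~\ref{lem:energy1-1}. So far so good.

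The gap is in your accounting of $\pt\bigl[\text{correction terms in }E_k(v)\bigr]$. You treat the correction terms as if their time derivative produced \emph{only} the linear-evolution piece that effects the cancellation with the cubic high--low contributions (the paper's $E_{2,1}, E_{3,1}$ and the milder $c_1$-pieces $E_{2,2}, E_{3,2}$). But the correction is \emph{quartilinear} in $v$, so when you differentiate it and substitute equation~\eqref{eq:5mkdv4} into each of the four factors $\wh{v}(n_1),\wh{v}(n_2),\wh{v}(n_3),\wh{v}(n)$, you also generate, besides the linear part, a whole family of higher-order terms: quintic terms in which one factor carries the cubic nonlinearity, septic terms in which one factor carries the quintic nonlinearity, and, crucially, quintic terms coming from substituting the \emph{resonant} piece $-20in^3|\wh{v}(n)|^2\wh{v}(n)$ (the paper's $E_{2,3}, E_{3,3}, E_{2,4}, E_{3,4}$). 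These do not reduce to commutators and are not ``lower order because of the extra $1/(n_3 n)$'' --- indeed the $1/n_3$ factor is exactly eaten by the derivative in the substituted cubic nonlinearity. Most of them can still be handled by the block estimates, but one case genuinely cannot be dismissed as routine: the \emph{exact quintic resonance} $n_{3,3}=-n$ arising in $E_{2,3}$, $E_{3,3}$, where the maximum-modulation gain is unavailable because the resonance function vanishes. That term is only handled via the special algebraic observation (Remark~\ref{rem:resonant1}) that the relevant sum is purely real, so it vanishes after taking $\mathrm{Re}[\,\cdot\,]$. This cancellation is not a consequence of Lemma~\ref{lem:energy1-1} or Lemma~\ref{lem:commutator1}, and without noticing it your estimate cannot close. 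Your proposal never mentions these nonlinear-substitution contributions or the resonant cancellation, and that omission leaves a real hole in the argument.

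A lesser point: your explanation of the first term $(1+\norm{v_0}_{H^s}^2)\norm{v_0}_{H^s}^2$ on the right-hand side via ``$L^2$-conservation and interpolation'' is not how it arises. After integrating in time you obtain $E_k(v)(t_k)\le E_k(v)(0)+|\int_0^{t_k}\cdots|$, and the first term of the claimed bound is just $\sum_k 2^{2sk}E_k(v)(0)$: the $\norm{P_kv_0}_{L^2}^2$ pieces sum to $\norm{v_0}_{H^s}^2$ and the quartic correction pieces are bounded directly by $\norm{v_0}_{H^{\frac12+}}^2\norm{v_0}_{H^s}^2$ via Sobolev embedding. No conservation law or interpolation is involved at that step.
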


\begin{proof}
For any $k\in \Z_+$ and $t \in [-T,T]$, recall the localized modified energy \eqref{eq:new energy1-1}
\[\begin{aligned}
E_k(v)(t) =&~{} \norm{P_kv(t)}_{L_x^2}^2 + \mbox{Re}\left[\alpha \sum_{n,\overline{\N}_{3,n}}\wh{v}(n_1)\wh{v}(n_2)\psi_k(n_3)\frac{1}{n_3}\wh{v}(n_3)\chi_k(n)\frac1n\wh{v}(n)\right]\\
&+ \mbox{Re}\left[\beta \sum_{n,\overline{\N}_{3,n}}\wh{v}(n_1)\wh{v}(n_2)\chi_k(n_3)\frac{1}{n_3}\wh{v}(n_3)\chi_k(n)\frac1n\wh{v}(n)\right]\\
=:&~{}I(t) + II(t) + III(t)
\end{aligned}\]
and 
\begin{equation*}
\begin{split}
\pt\norm{P_kv}_{L_x^2}^2 =&~{} - \mbox{Re}\left[12i \sum_{n,\overline{\N}_{5,n}}\chi_k(n)n \wh{v}(n_1)\wh{v}(n_2)\wh{v}(n_3)\wh{v}(n_4)\wh{v}(n_5)\chi_k(n)\wh{v}(n)\right] \\
&-\mbox{Re}\left[20i \sum_{n,\overline{\N}_{3,n}}\chi_k(n)n \wh{v}(n_1)\wh{v}(n_2)n_3^2\wh{v}(n_3)\chi_k(n)\wh{v}(n)\right] \\
&-\mbox{Re}\left[10i \sum_{n,\overline{\N}_{3,n}} \chi_k(n)n(n_1+n_2)\wh{v}(n_1)\wh{v}(n_2)n_3\wh{v}(n_3)\chi_k(n)\wh{v}(n)\right]\\
=:&~{}E_{1}
\end{split}
\end{equation*}

By using the symmetry of $n_1,n_2,n_3$ and $n_1+n_2+n_3+n=0$, the last term in $E_{1}$ can be rewritten as
\begin{equation*}
\begin{split}
-\mbox{Re}&\left[10i \sum_{n,\overline{\N}_{3,n}} \chi_k(n)n(n_1+n_2)\wh{v}(n_1)\wh{v}(n_2)n_3\wh{v}(n_3)\chi_k(n)\wh{v}(n)\right]\\
&\hspace{3em}=\mbox{Re}\left[20i \sum_{n,\overline{\N}_{3,n}} \chi_k(n)n_1\wh{v}(n_1)n_2\wh{v}(n_2)n_3\wh{v}(n_3)\chi_k(n)\wh{v}(n)\right]\\
&\hspace{4em}+\mbox{Re}\left[20i \sum_{n,\overline{\N}_{3,n}} \chi_k(n)(n_1+n_2)\wh{v}(n_1)\wh{v}(n_2)n_3^2\wh{v}(n_3)\chi_k(n)\wh{v}(n)\right].
\end{split}
\end{equation*}

We differentiate $II(t)$ with respect to $t$, respectively. Then, we have from \eqref{eq:5mkdv4} that
\[\begin{aligned}
\frac{d}{dt} II(t) =&~{} \mbox{Re}\Big[\alpha i \sum_{n,\overline{\N}_{3,n}}(\mu(n_1)+\mu(n_2)+\mu(n_3)+\mu(n))\\
&\hspace{7em}\times \wh{v}(n_1)\wh{v}(n_2)\psi_k(n_3)\frac{1}{n_3}\wh{v}(n_3)\chi_k(n)\frac1n\wh{v}(n)\Big]\\
&+ \mbox{Re}\Big[\alpha \sum_{n,\overline{\N}_{3,n}}\wh{N}(v)(n_1)\wh{v}(n_2)\psi_k(n_3)\frac{1}{n_3}\wh{v}(n_3)\chi_k(n)\frac1n\wh{v}(n)\\
&\hspace{7em}+\wh{v}(n_1)\wh{N}(v)(n_2)\psi_k(n_3)\frac{1}{n_3}\wh{v}(n_3)\chi_k(n)\frac1n\wh{v}(n)\\
&\hspace{7em}+\wh{v}(n_1)\wh{v}(n_2)\psi_k(n_3)\frac{1}{n_3}\wh{N}(v)(n_3)\chi_k(n)\frac1n\wh{v}(n)\\
&\hspace{7em}+\wh{v}(n_1)\wh{v}(n_2)\psi_k(n_3)\frac{1}{n_3}\wh{v}(n_3)\chi_k(n)\frac1n\wh{N}(v)(n)\Big]\\
&+ \mbox{Re}\Big[-20\alpha i \sum_{n,\overline{\N}_{3,n}}n_1^3|\wh{v}(n_1)|^2\wh{v}(n_1)\wh{v}(n_2)\psi_k(n_3)\frac{1}{n_3}\wh{v}(n_3)\chi_k(n)\frac1n\wh{v}(n)\\
&\hspace{7em}+\wh{v}(n_1)n_2^3|\wh{v}(n_2)|^2\wh{v}(n_2)\psi_k(n_3)\frac{1}{n_3}\wh{v}(n_3)\chi_k(n)\frac1n\wh{v}(n)\\
&\hspace{7em}+\wh{v}(n_1)\wh{v}(n_2)\psi_k(n_3)n_3^2|\wh{v}(n_3)|^2\wh{v}(n_3)\chi_k(n)\frac1n\wh{v}(n)\\
&\hspace{7em}+\wh{v}(n_1)\wh{v}(n_2)\psi_k(n_3)\frac{1}{n_3}\wh{v}(n_3)\chi_k(n)n^2|\wh{v}(n)|^2\wh{v}(n)\Big].
\end{aligned}\]

We note the following algebraic laws
\[\begin{aligned}
(a+b+c)^5 =&~{} 5(a^4b+a^4c + ab^4+b^4c+ac^4+bc^4)\\
&+ 10(a^3b^2 + a^3c^2 + a^2b^3 + b^3c^2 + a^2c^3+b^2c^3)\\ 
&+ 20(a^3bc + ab^3c +abc^3) + 30(a^2b^2c + a^2bc^2 +ab^2c^2) +a^5 + b^5 + c^5\\
(a+b+c)^3 =&~{} a^3+b^3+c^3 +3(a^2b +a^2c + ab^2+ b^2c + ac^2 + bc^2) + 6abc.
\end{aligned}\]
The symmetry of $n_1$ and $n_2$ yields 
\[\frac{d}{dt} II(t) =E_{2,1} + E_{2,2} + E_{2,3} + E_{2,4} =: E_{2},\]
where
\[\begin{aligned}
E_{2,1} &= \mbox{Re}\Big[\alpha i \sum_{n,\overline{\N}_{3,n}}\big\{10n_1n_2^3(n_3+n) + 5n_1^2n_2^2(n_3+n) + 30n_1n_2^2n_3n \\
&\hspace{3em}+ 10 n_2^3n_3n - 5(n_1+n_2)n_3^2n^2\big\} \wh{v}(n_1)\wh{v}(n_2)\psi_k(n_3)\frac{1}{n_3}\wh{v}(n_3)\chi_k(n)\frac1n\wh{v}(n)\Big],
\end{aligned}\]

\[\begin{aligned}
E_{2,2} = \mbox{Re}\Big[c_1\alpha i \sum_{n,\overline{\N}_{3,n}}\big\{3n_1n_2(n_3+n) + 6n_2n_3n\big\} \wh{v}(n_1)\wh{v}(n_2)\psi_k(n_3)\frac{1}{n_3}\wh{v}(n_3)\chi_k(n)\frac1n\wh{v}(n)\Big],
\end{aligned}\]

\[\begin{aligned}
E_{2,3} &=\mbox{Re}\Big[\alpha \sum_{n,\overline{\N}_{3,n}}\wh{N}(v)(n_1)\wh{v}(n_2)\psi_k(n_3)\frac{1}{n_3}\wh{v}(n_3)\chi_k(n)\frac1n\wh{v}(n)\\
&\hspace{10em}+\wh{v}(n_1)\wh{N}(v)(n_2)\psi_k(n_3)\frac{1}{n_3}\wh{v}(n_3)\chi_k(n)\frac1n\wh{v}(n)\\
&\hspace{10em}+\wh{v}(n_1)\wh{v}(n_2)\psi_k(n_3)\frac{1}{n_3}\wh{N}(v)(n_3)\chi_k(n)\frac1n\wh{v}(n)\\
&\hspace{10em}+\wh{v}(n_1)\wh{v}(n_2)\psi_k(n_3)\frac{1}{n_3}\wh{v}(n_3)\chi_k(n)\frac1n\wh{N}(v)(n)\Big]
\end{aligned}\]
and
\[\begin{aligned}
E_{2,4} &= \mbox{Re}\Big[-20\alpha i \sum_{n,\overline{\N}_{3,n}}n_1^3|\wh{v}(n_1)|^2\wh{v}(n_1)\wh{v}(n_2)\psi_k(n_3)\frac{1}{n_3}\wh{v}(n_3)\chi_k(n)\frac1n\wh{v}(n)\\
&\hspace{10em}+\wh{v}(n_1)n_2^3|\wh{v}(n_2)|^2\wh{v}(n_2)\psi_k(n_3)\frac{1}{n_3}\wh{v}(n_3)\chi_k(n)\frac1n\wh{v}(n)\\
&\hspace{10em}+\wh{v}(n_1)\wh{v}(n_2)\psi_k(n_3)n_3^2|\wh{v}(n_3)|^2\wh{v}(n_3)\chi_k(n)\frac1n\wh{v}(n)\\
&\hspace{10em}+\wh{v}(n_1)\wh{v}(n_2)\psi_k(n_3)\frac{1}{n_3}\wh{v}(n_3)\chi_k(n)n^2|\wh{v}(n)|^2\wh{v}(n)\Big].
\end{aligned}\]

Similarly, we also have with the symmetry of $n_3$ and $n$ that
\[\frac{d}{dt} III(t) =E_{3,1} + E_{3,2} + E_{3,3} +E_{3,4}=: E_{3},\]
where
\[\begin{aligned}
E_{3,1} &= \mbox{Re}\Big[\beta i \sum_{n,\overline{\N}_{3,n}}\big\{20n_1n_2^3n_3 + 10n_1^2n_2^2n_3 + 30n_1n_2^2n_3n \\
&\hspace{3em}+ 10 n_2^3n_3n - 5(n_1+n_2)n_3^2n^2\big\} \wh{v}(n_1)\wh{v}(n_2)\chi_k(n_3)\frac{1}{n_3}\wh{v}(n_3)\chi_k(n)\frac1n\wh{v}(n)\Big],
\end{aligned}\]

\[\begin{aligned}
E_{3,2} = \mbox{Re}\Big[c_1\beta i \sum_{n,\overline{\N}_{3,n}}\big\{6n_1n_2n_3 + 6n_2n_3n\big\} \wh{v}(n_1)\wh{v}(n_2)\psi_k(n_3)\frac{1}{n_3}\wh{v}(n_3)\chi_k(n)\frac1n\wh{v}(n)\Big],
\end{aligned}\]

\[\begin{aligned}
E_{3,3} &=\mbox{Re}\Big[\beta \sum_{n,\overline{\N}_{3,n}}\wh{N}(v)(n_1)\wh{v}(n_2)\chi_k(n_3)\frac{1}{n_3}\wh{v}(n_3)\chi_k(n)\frac1n\wh{v}(n)\\
&\hspace{10em}+\wh{v}(n_1)\wh{N}(v)(n_2)\chi_k(n_3)\frac{1}{n_3}\wh{v}(n_3)\chi_k(n)\frac1n\wh{v}(n)\\
&\hspace{10em}+\wh{v}(n_1)\wh{v}(n_2)\chi_k(n_3)\frac{1}{n_3}\wh{N}(v)(n_3)\chi_k(n)\frac1n\wh{v}(n)\\
&\hspace{10em}+\wh{v}(n_1)\wh{v}(n_2)\chi_k(n_3)\frac{1}{n_3}\wh{v}(n_3)\chi_k(n)\frac1n\wh{N}(v)(n)\Big]
\end{aligned}\]
and
\[\begin{aligned}
E_{3,4} &= \mbox{Re}\Big[-20\beta i \sum_{n,\overline{\N}_{3,n}}n_1^3|\wh{v}(n_1)|^2\wh{v}(n_1)\wh{v}(n_2)\chi_k(n_3)\frac{1}{n_3}\wh{v}(n_3)\chi_k(n)\frac1n\wh{v}(n)\\
&\hspace{10em}+\wh{v}(n_1)n_2^3|\wh{v}(n_2)|^2\wh{v}(n_2)\chi_k(n_3)\frac{1}{n_3}\wh{v}(n_3)\chi_k(n)\frac1n\wh{v}(n)\\
&\hspace{10em}+\wh{v}(n_1)\wh{v}(n_2)\chi_k(n_3)n_3^2|\wh{v}(n_3)|^2\wh{v}(n_3)\chi_k(n)\frac1n\wh{v}(n)\\
&\hspace{10em}+\wh{v}(n_1)\wh{v}(n_2)\chi_k(n_3)\frac{1}{n_3}\wh{v}(n_3)\chi_k(n)n^2|\wh{v}(n)|^2\wh{v}(n)\Big].
\end{aligned}\]

We fix $t_k \in [0,T]$. By integrating $\pt E_k(v)(t)$ with respect to $t$ from $0$ to $t_k$, we have
\begin{equation}\label{eq:energy1-2.3}
E_k(v)(t_k) - E_{k}(v)(0) \le \left|\int_0^{t_k} E_{1} + E_{2} + E_{3} \; dt \right|.
\end{equation}

We estimate the right-hand side of \eqref{eq:energy1-2.3} by dividing it into several cases. First, we choose $\alpha = -4$ and $\beta = -2$ to use Lemma \ref{lem:commutator1}. Then for each $k\ge 1$, we have 
\[\left|\int_0^{t_k} E_{1} + E_{2,1} + E_{3,1} \; dt \right| \lesssim \sum_{i=1}^{8}A_i(k),\]
where
\[\begin{aligned}
A_1(k) =&~{} \sum_{0\le k_1 ,k_2 \le k-10}\Big|\sum_{n,\overline{\N}_{3,n}}\int_0^{t_k} \chi_k(n)n[\chi_{k_1}(n_1)\wh{v}(n_1)\chi_{k_2}(n_2)\wh{v}(n_2)\\
&\hspace{17em}\times n_3^2\wh{v}(n_3)]\chi_k(n)\wh{v}(n) \;dt\\
&+ \frac12\sum_{n,\overline{\N}_{3,n}}\int_0^{t_k} (n_1+n_2)\chi_{k_1}(n_1)\wh{v}(n_1)\chi_{k_2}(n_2)\wh{v}(n_2) \\
&\hspace{17em}\times \chi_k(n_3)n_3\wh{v}(n_3)\chi_k(n)n\wh{v}(n) \;dt\\
&- \sum_{n,\overline{\N}_{3,n}}\int_0^{t_k} (n_1+n_2)\chi_{k_1}(n_1)\wh{v}(n_1)\chi_{k_2}(n_2)\wh{v}(n_2)\\
&\hspace{17em}\times \psi_k(n_3)n_3\wh{v}(n_3)\chi_k(n)n\wh{v}(n) \;dt \Big|,
\end{aligned}\]
\[\begin{aligned}
A_2(k) =&~{}\sum_{0\le  k_1 ,k_2  \le k-10}\Big|\sum_{n,\overline{\N}_{3,n}}\int_0^{t_k} \chi_k(n)[(n_1+n_2)\chi_{k_1}(n_1)\wh{v}(n_1)\chi_{k_2}(n_2)\wh{v}(n_2)\\
&\hspace{21em}\times n_3^2\wh{v}(n_3)]\chi_k(n)\wh{v}(n)\; dt\\
&+ \sum_{n,\overline{\N}_{3,n}}\int_0^{t_k} (n_1+n_2)\chi_{k_1}(n_1)\wh{v}(n_1)\chi_{k_2}(n_2)\wh{v}(n_2)\\
&\hspace{19em}\times\chi_k(n_3)n_3\wh{v}(n_3)\chi_k(n)n\wh{v}(n)\;dt \Big|,
\end{aligned}\]
\[\begin{aligned}
A_3(k) &=\sum_{\substack{\max(k_1,k_2) \ge k -9\\k_3 \ge 0}}\Big|\sum_{n,\overline{\N}_{3,n}}\int_0^{t_k} \chi_{k_1}(n_1)\wh{v}(n_1)\chi_{k_2}(n_2)\wh{v}(n_2) \\
&\hspace{15em}\times\chi_{k_3}(n_3)n_3^2\wh{v}(n_3)\chi_k^2(n)n\wh{v}(n)\; dt\Big|,
\end{aligned}\]
\[\begin{aligned}
A_4(k) &=\sum_{\substack{\max(k_1,k_2) \ge k -9\\k_3 \ge 0}}\Big|\sum_{n,\overline{\N}_{3,n}}\int_0^{t_k} (n_1+n_2)\chi_{k_1}(n_1)\wh{v}(n_1)\chi_{k_2}(n_2)\wh{v}(n_2)\\
&\hspace{15em}\times\chi_{k_3}(n_3)n_3^2\wh{v}(n_3)\chi_k^2(n)\wh{v}(n)\; dt\Big|,
\end{aligned}\]
\[\begin{aligned}
A_5(k) &=\sum_{k_1,k_2,k_3 \ge 0}\Big|\sum_{n,\overline{\N}_{3,n}}\int_0^{t_k} \chi_{k_1}(n_1)n_1\wh{v}(n_1)\chi_{k_2}(n_2)n_2\wh{v}(n_2) \\
&\hspace{9em}\times\chi_{k_3}(n_3)n_3\wh{v}(n_3)\chi_k^2(n)\wh{v}(n)\; dt\Big|,
\end{aligned}\]
\begin{equation}\label{eq:energy1-2.7}
\begin{aligned}
A_6(k) &=\sum_{\max(k_1,k_2) \ge k -9}\Big|\sum_{n,\overline{\N}_{3,n}}\int_0^{t_k} (n_1+n_2)\chi_{k_1}(n_1)\wh{v}(n_1)\chi_{k_2}(n_2)\wh{v}(n_2)\\
&\hspace{7em}\times(\chi_k(n_3)n_3\wh{v}(n_3) + \psi_k(n_3)n_3\wh{v}(n_3))\chi_k(n)n\wh{v}(n) \;dt\Big|,
\end{aligned}
\end{equation}
\[\begin{aligned}
A_7(k) &=\sum_{k_1,k_2 \ge 0}\Big|\sum_{n,\overline{\N}_{3,n}}\int_0^{t_k} \big\{20n_1n_2^3n_3 + 10n_1^2n_2^2n_3 + 30n_1n_2^2n_3n+ 10 n_2^3n_3n\big\} \\
\times& \chi_{k_1}(n_1)\wh{v}(n_1)\chi_{k_2}(n_2)\wh{v}(n_2)\big(\chi_k(n_3)\frac{1}{n_3}\wh{v}(n_3)+\psi_k(n_3)\frac{1}{n_3}\wh{v}(n_3)\big)\chi_k(n)\frac1n\wh{v}(n) \;dt\Big|.
\end{aligned}\]
and
\[\begin{aligned}
A_8(k) &=\sum_{k_1,k_2,k_3,k_4,k_5 \ge 0}\Big|\sum_{n,\overline{\N}_{5,n}}\int_0^{t_k} \chi_{k_1}(n_1)\wh{v}(n_1)\\
&\hspace{3em}\times\chi_{k_2}(n_2)\wh{v}(n_2)\chi_{k_3}(n_3)\wh{v}(n_3)\chi_{k_4}(n_4)\wh{v}(n_4)\chi_{k_5}(n_5)\wh{v}(n_5)\chi_k^2(n)n\wh{v}(n) \;dt\Big|.
\end{aligned}\]
By using Lemma \ref{lem:commutator1} and the Cauchy-Schwarz inequality, we have
\[\begin{aligned}
A_1(k) + A_2(k) &\lesssim \sum_{0\le  k_1 ,k_2  \le k-10} 2^{2k_2} \norm{P_{k_1}v}_{F_{k_1}(T)}\norm{P_{k_2}v}_{F_{k_2}(T)}\sum_{|k-k'|\le 3} \norm{P_{k'}v}_{F_{k'}(T)}^2\\
&\lesssim \norm{v}_{F^{0}(T)}\norm{v}_{F^{2+}(T)}\sum_{|k-k'|\le 5} \norm{P_{k'}v}_{F_{k'}(T)}^2.
\end{aligned}\]
For $A_3(k)$ and $A_4(k)$, we divide the summation over $\max(k_1,k_2) \ge k -9,k_3 \ge 0$ into
\[\sum_{\substack{k_1,k_3 \le k- 10 \\ |k_2 - k| \le 5}}+\sum_{\substack{k_1 \le k- 10 \\ k_2, k_3 \ge k - 9}}+\sum_{\substack{k_3 \le k- 10 \\ k_1, k_2 \ge k - 9}}+\sum_{k_1,k_2,k_3 \ge k - 9},\]
assuming without loss of generality $k_1 \le k_2$. We restrict $A_3(k)$ and $A_4(k)$ to the first summation to obtain by \eqref{eq:energy1-1.3} and \eqref{eq:energy1-1.4} that
\[\begin{aligned}
\sum_{k_1 \le k_3-10} 2^{2k_3} &\norm{P_{k_1}v}_{F_{k_1}(T)}\norm{P_{k_3}v}_{F_{k_3}(T)}\sum_{|k-k'|\le 5} \norm{P_{k'}v}_{F_{k'}(T)}^2\\
&\lesssim \norm{v}_{F^0(T)}\norm{v}_{F^{2+}(T)}\sum_{|k-k'|\le 5} \norm{P_{k'}v}_{F_{k'}(T)}^2.
\end{aligned}\]

For the restriction to the second summation, by using \eqref{eq:energy1-1.2} and \eqref{eq:energy1-1.4}, we have 
\[\begin{aligned}
&\sum_{k_1 \le k- 10} 2^{k_1/2} \norm{P_{k_1}v}_{F_{k_1}(T)}\sum_{|k-k'|\le 5}2^{2k} \norm{P_{k'}v}_{F_{k'}(T)}^3\\
&\hspace{5em} + \sum_{k_1 \le k - 10}\norm{P_{k_1}v}_{F_{k_1}(T)}\norm{P_{k}v}_{F_k(T)}\sum_{\substack{k_2 \ge k + 9 \\ |k_2-k_3| \le 5}}2^{2k_3}\norm{P_{k_2}v}_{F_{k_2}(T)}\norm{P_{k_3}v}_{F_{k_3}(T)}\\
&\lesssim \norm{v}_{F^{\frac12+}(T)}\norm{v}_{F^{2}(T)}\sum_{|k-k'|\le 5} \norm{P_{k'}v}_{F_{k'}(T)}^2 \\
&\hspace{5em}+ \norm{v}_{F^0(T)}\norm{v}_{F^{2+}(T)}\norm{v}_{F^s}2^{-sk-\varepsilon k}\norm{P_{k}v}_{F_k(T)},
\end{aligned}\]
for $s \ge 0 $ and $0 < \varepsilon \ll 1$. 

For the third summation, we can get better or same bounds compared with the second summation due to two derivatives in the low frequency mode. 

For the last restriction, by using \eqref{eq:energy1-1.1}, \eqref{eq:energy1-1.2}, \eqref{eq:energy1-1.3} and \eqref{eq:energy1-1.4}, we have
\[\begin{aligned}
&\sum_{|k-k'| \le 5}2^{7k/2}\norm{P_{k'}v}_{F_{k'}(T)}^4 + \sum_{\substack{k_3 \ge k+9\\|k_3-k'| \le 5}}2^{2k_3}2^{k/2}\norm{P_{k'}v}_{F_{k'}(T)}^3\norm{P_{k}v}_{F_k(T)} \\
&\hspace{3em}+ \sum_{|k-k'|\le 5}2^{k/2}\norm{P_{k'}v}_{F_{k'}(T)}^2\sum_{\substack{k_3 \ge k+9 \\ |k_2-k_3|\le 5}}2^{2k_3}\norm{P_{k_2}v}_{F_{k_2}(T)}\norm{P_{k_3}v}_{F_{k_3}(T)}\\
&\hspace{3em}+ \norm{P_kv}_{F_k(T)}\sum_{k_1 \ge k+9}\norm{P_{k_1}v}_{F_{k_1}(T)}\sum_{\substack{k_3 \ge k_1+9 \\ |k_2-k_3|\le 5}}2^{2k_3}\norm{P_{k_2}v}_{F_{k_2}(T)}\norm{P_{k_3}v}_{F_{k_3}(T)}\\
&\lesssim \norm{v}_{F^{\frac74}(T)}^2\sum_{|k-k'| \le 5}\norm{P_{k'}v}_{F_{k'}(T)}^2 + \norm{v}_{F^s(T)}\norm{v}_{F^{\frac54 +}(T)}^22^{-sk-\varepsilon k}\norm{P_kv}_{F_k(T)}\\
&\hspace{3em}+ \norm{v}_{F^{\frac54 +}(T)}^2\sum_{|k-k'|\le 5}\norm{P_{k'}v}_{F_{k'}(T)}^2 + \norm{v}_{F^{s}(T)}\norm{v}_{F^{1+}(T)}^22^{-sk-\varepsilon k}\norm{P_{k}v}_{F_k(T)},
\end{aligned}\]
for $s \ge 0$ and $0<\varepsilon \ll 1$. Hence, we obtain
\[\begin{aligned}
A_3(k) + A_4(k) &\lesssim \norm{v}_{F^{\frac12}(T)}\norm{v}_{F^{2+}(T)}\\
&\qquad \times \left(\sum_{|k-k'|\le 5}\norm{P_{k'}v}_{F_{k'}(T)}^2  +\norm{v}_{F^{s}(T)}2^{-sk-\varepsilon k}\norm{P_{k}v}_{F_k(T)}\right).
\end{aligned}\]

For $A_5(k)$, we may assume that $k_1 \le k_2 \le k_3$ by the symmetry. We estimate $A_5(k)$ for $k = \max(k_1,k_2,k_3,k)$ or not, separately. When $k = \max(k_1,k_2,k_3,k)$, we divide the summation over $k_1,k_2,k_3$ into 
\[\sum_{|k-k_1| \le 5} + \sum_{\substack{|k-k_2| \le 5\\k_1 \le k-10}} +\sum_{\substack{|k-k_3| \le 5\\k_2 \le k-10 \\ |k_1 - k_2| \le 5}} +\sum_{\substack{|k-k_3| \le 5\\k_2 \le k-10 \\ k_1 \le k_2 -10}}.\]

We use \eqref{eq:energy1-1.1}, \eqref{eq:energy1-1.2}, \eqref{eq:energy1-1.3} and \eqref{eq:energy1-1.4} to estimate $A_5(k)$ under above summations, respectively, to obtain
\[\begin{aligned}
A_5(k) \lesssim&~{} 2^{7k/2}\sum_{|k'-k| \le 5}\norm{P_{k'}v}_{F_{k'}(T)}^4 + \sum_{\substack{|k-k'| \le 5\\k_1 \le k-10}}2^{3k_1/2}2^{k}\norm{P_{k_1}v}_{F_{k_1}(T)}\norm{P_{k'}v}_{F_{k'}(T)}^3\\
&+\sum_{\substack{|k-k'| \le 5\\k_2 \le k-10 \\ |k_1 - k_2| \le 5}}2^{5k_2/2}\norm{P_{k_1}v}_{F_{k_1}(T)}^2\norm{P_{k'}v}_{F_{k'}(T)}^2\\
&+\sum_{\substack{|k-k'| \le 5\\k_2 \le k-10 \\ k_1 \le k_2 -10}}2^{k_1}2^{k_2}\norm{P_{k_1}v}_{F_{k_1}(T)}\norm{P_{k_2}v}_{F_{k_2}(T)}\norm{P_{k'}v}_{F_{k'}(T)}^2\\
\lesssim&~{} \norm{v}_{F^{\frac74}(T)}^2\sum_{|k-k'|\le 5}\norm{P_{k'}v}_{F_{k'}(T)}^2.
\end{aligned}\] 

When $k \le k_3 -10$, by the support property \eqref{eq:support property}, we know $|k_2-k_3| \le 5$. Then, also we divide the summation over $k_1,k_2,k_3$ into
\[\sum_{\substack{|k_1-k_3| \le 5\\k \le k_1-10}} +\sum_{\substack{|k_2-k_3| \le 5\\k_1 \le k_2-10 \\ |k_1 - k| \le 5}} +\sum_{\substack{|k_2-k_3| \le 5\\k_1 \le k_2-10 \\ k \le k_1 -10}} +\sum_{\substack{|k_2-k_3| \le 5\\k \le k_2-10 \\ k_1 \le k- 10}}.\]

Similarly we use \eqref{eq:energy1-1.2}, \eqref{eq:energy1-1.3} and \eqref{eq:energy1-1.4} to estimate $A_5(k)$ under above summations to obtain
\[\begin{aligned}
A_5(k) \lesssim&~{} \sum_{\substack{|k_3-k'| \le 5\\k_3 \ge k+9}}2^{2k_3}2^{k/2}\norm{P_{k'}v}_{F_{k'}(T)}^3\norm{P_{k}v}_{F_k(T)}\\
&+\sum_{k_3 \ge k+9}2^{k_3}\norm{P_{k_3}v}_{F_{k_3}(T)}^2\sum_{|k-k'|\le 5}2^{3k/2}\norm{P_{k'}v}_{F_{k'}(T)}^2\\
&+\sum_{k_3 \ge k_1+9}2^{k_3}\norm{P_{k_3}v}_{F_{k_3}(T)}^2\sum_{k_1 \ge k+9}2^{k_1}\norm{P_{k_1}v}_{F_{k_1}(T)}2^{k/2}\norm{P_kv}_{F_k(T)}\\
&+\sum_{k_3 \ge k+9}2^{k_3}\norm{P_{k_3}v}_{F_{k_3}(T)}^2\sum_{k_1 \le k-10}2^{3k_1/2}\norm{P_{k_1}v}_{F_{k_1}(T)}\norm{P_kv}_{F_k(T)}\\
\lesssim&~{} \left(\norm{v}_{F^{\frac54+}(T)}^2+\norm{v}_{F^{1+}(T)}\norm{v}_{F^{\frac32}(T)} \right)\norm{v}_{F^s(T)}2^{-sk-\varepsilon k}\norm{P_kv}_{F_k(T)}\\
&+ \norm{v}_{F^{\frac54}(T)}^2\sum_{|k-k'|\le 5}\norm{P_{k'}v}_{F_{k'}(T)}^2,
\end{aligned}\] 
for $s \ge 0$ and $0 < \varepsilon \ll 1$. Hence, by gathering all bounds, we conclude that
\[\begin{aligned}
A_5(k) &\lesssim \norm{v}_{F^{1+}(T)}\norm{v}_{F^{\frac32}(T)}\norm{v}_{F^s(T)}2^{-sk-\varepsilon k}\norm{P_kv}_{F_k(T)}\\
&\hspace{9em} + \norm{v}_{F^{\frac74}(T)}^2\sum_{|k-k'|\le 5}\norm{P_{k'}v}_{F_{k'}(T)}^2.
\end{aligned}\]

For $A_6(k)$, we may assume $k_1 \le k_2$ without loss of generality. By the support property \eqref{eq:support property}, the summation over $\max(k_1,k_2) \ge k-9$ is divided into
\[\sum_{\substack{k_2 \ge k + 10\\|k_1 - k_2|\le 5}} + \sum_{\substack{|k_2 - k| \le 5\\|k_1-k_2| \le 5}} + \sum_{\substack{|k_2 - k| \le 5\\k_1 \le k - 10}}.\]
Hence, we use \eqref{eq:energy1-1.3}, \eqref{eq:energy1-1.1} and \eqref{eq:energy1-1.2} to estimate $A_6(k)$ restricted to above summations, respectively, to have 
\[\begin{aligned}
A_6(k) &\lesssim \sum_{k_2 \ge k+10}\norm{P_{k_2}v}_{F_{k_2}(T)}^22^{5k/2}\norm{P_kv}_{F_k(T)}^2+ 2^{7k/2}\sum_{|k-k'|\le 5}\norm{P_{k'}v}_{F_{k'}(T)}^4 \nonumber \\ 
&+ \sum_{k_1 \le k-10}2^{k_1/2}\norm{P_{k_1}v}_{F_{k_1}(T)}2^{2k}\sum_{|k-k'|\le5}\norm{P_{k'}v}_{F_{k'}(T)}^3 \nonumber \\
&\lesssim \left(\norm{v}_{F^{\frac74}(T)}^2 + \norm{v}_{F^{\frac12+}(T)}\norm{v}_{F^{2}(T)}\right)\sum_{|k-k'|\le 5}\norm{P_{k'}v}_{F_{k'}(T)}^2. 
\end{aligned}\]

For $A_7(k)$, since there are less derivatives at the $2^k$-frequency mode in $A_7(k)$ than in the other $A_i$, we can also obtain better or same bounds as
\[A_7(k) \lesssim \left(\norm{v}_{F^{\frac74}(T)}^2 + \norm{v}_{F^{\frac12+}(T)}\norm{v}_{F^{2}(T)}\right)\sum_{|k-k'|\le 5}\norm{P_{k'}v}_{F_{k'}(T)}^2.\]

For $A_8(k)$, we may assume $k_1 \le k_2 \le k_3 \le k_4 \le k_5$ due to the symmetry among frequencies $k_1,k_2,k_3,k_4,$ and $k_5$. We further assume $|k-k_5| \le 5$, otherwise the derivative loss at $2^k$ frequency mode is weaker. We use only the Cauchy-Schwarz inequality to estimate the quintic term except for the special term (see \eqref{eq:energy1-2.14} below). Indeed, by similar argument as in the proof of Lemma \ref{lem:energy1-1}, we have
\begin{equation}\label{eq:energy1-2.11}
\begin{aligned}
&\left|\int_{\T \times [0,T]}v_1v_2v_3v_4v_5v_6 \; dxdt \right| \\
&\lesssim 2^{2k_6}\sum_{j_i \ge 2k_6} \left|\sum_{\overline{n} \in \Gamma_6(\Z)}\int_{\overline{\tau}\in\Gamma_6(\R)}\prod_{i=1}^{6}\ft[\gamma(2^{2k_6}t-m)v_i](\tau_i,n_i) \right|\\
&\lesssim 2^{2k_6}\prod_{l=1}^{4}2^{k_l/2}\sum_{j_i \ge 2k_6}2^{-(j_{max}+j_{sub})/2}\prod_{i=1}^{6}2^{j_i/2}\norm{\eta_{j_i}(\tau_i-\mu(n_i))\ft[\gamma(2^{2k_6}t-m)v_i]}_{L_{\tau_i}^2\ell_{n_i}^2}\\
&\lesssim 2^{(k_1+k_2+k_3+k_4)/2}\prod_{i=1}^{6}\norm{v_i}_{F_{k_i}}(T),
\end{aligned}
\end{equation} 
where $v_i = P_{k_i}v \in F_{k_i}(T)$, $i=1,2,3,4,5,6$ and assuming that $k_1 \le k_2 \le k_3 \le k_4 \le k_5 \le k_6$. 

Moreover, we also have
\begin{equation}\label{eq:energy1-2.15}
\left|\int_{\T \times [0,T]}v_1v_2v_3v_4v_5v_6v_7v_8 \; dxdt \right|\lesssim 2^{(k_1+k_2+k_3+k_4+k_5+k_6)/2}\prod_{i=1}^{8}\norm{v_i}_{F_{k_i}(T)},
\end{equation}
where $v_i = P_{k_i}v \in F_{k_i}(T)$, $i=1,2,3,4,5,6,7,8$ and assuming that $k_1 \le k_2 \le k_3 \le k_4 \le k_5 \le k_6 \le k_7 \le k_8$. We will use \eqref{eq:energy1-2.15} for the septic term later.

If $|k_4 - k_5| \le 5$, by using \eqref{eq:energy1-2.11}, we have
\[\begin{aligned}
\sum_{\substack{|k-k_5| \le 5 \\ |k-k_4| \le 5 \\0 \le k_1 \le k_2 \le k_3 \le k_4}} 2^{(k_1+k_2+k_3)/2}2^{\frac32k_4}&\prod_{j=1}^{5}\norm{P_{k_j}v}_{F_{1,k_j}(T)}\norm{P_{k}v}_{F_k(T)}\\
&\lesssim \norm{v}_{F^{\frac12}(T)}^3\norm{v}_{F^{\frac32+}(T)}\sum_{|k-k'|\le5}\norm{P_{k'}v}_{F_{k'}(T)}^2.
\end{aligned}\]

Otherwise, we need to observe the frequency relation carefully. In other words, under the frequency relation with $k_4 \le k_5 - 10$ condition, the one of following cases should happen (see Section 8 in \cite{Bourgain1993}):
\[|n_4| \ll |n|^{4/5},\]
\[|n_4| \gtrsim |n|^{4/5} \mbox{ and } |n_3| \sim |n_4|\]
and
\[|n_4| \gtrsim |n|^{4/5} \mbox{ and } |n_3| \ll |n_4|.\]

For the first case, since 
\[|\mu(n_1) + \mu(n_2) + \mu(n_3) + \mu(n_4) + \mu(n_5) + \mu(n)| \gtrsim |n|^4,\]
we use $2^{-j_{max}/2} \lesssim 2^{-2k}$ instead of $2^{-j_{max}/2} \lesssim 2^{-k}$ in \eqref{eq:energy1-2.11} to obtain
\[\norm{v}_{F^{\frac12+}(T)}^4\sum_{|k-k'|\le 5}\norm{P_{k'}v}_{F_{k'}(T)}^2.\]

For the second case, since $|n| \lesssim |n_3|^{\frac58}|n_4|^{\frac58}$, we use \eqref{eq:energy1-2.11} so that
\[\norm{v}_{F^{\frac12+}(T)}^2\norm{v}_{F^{\frac98}(T)}^2\sum_{|k-k'|\le 5}\norm{P_{k'}v}_{F_{k'}(T)}^2.\] 

For the last case, since $n_1+n_2+n_3+ n_4 + n_5 +n=0$, we have $|n_5 + n| \sim |n|^{4/5}$, which implies
\[|\mu(n_1) + \mu(n_2) + \mu(n_3) + \mu(n_4) + \mu(n_5) + \mu(n)| \gtrsim |n|^{4+\frac45}.\]
Similarly as the first case, we obtain
\[A_8(k) \lesssim\norm{v}_{F^{\frac12}(T)}^4\sum_{|k-k'|\le 5}\norm{P_{k'}v}_{F_{k'}(T)}^2.\]

Together with all bounds of $A_i(k)$, we obtain
\begin{equation}\label{eq:cubic bound1}
\sum_{k \ge 1}2^{2sk} \sup_{t_k \in [0,T]}\left|\int_0^{t_k} E_{1} + E_{2,1} + E_{3,1} \; dt \right| \lesssim \left(\norm{v}_{F^{2+}(T)}^2 + \norm{v}_{F^{\frac32+}(T)}^4\right)\norm{v}_{F^s(T)}^2.
\end{equation}

Next, we estimate
\[\left|\int_0^{t_k} E_{2,2} + E_{3,2} \; dt \right|.\]
Since $E_{2,2}$ and $E_{3,2}$ are weaker than $E_{1},E_{2,1}$ and $E_{3,1}$ in the sense of the number of derivatives, we obtain better bounds as
\begin{equation}\label{eq:energy1-2.9}
\left|\int_0^{t_k} E_{2,2} + E_{3,2} \; dt \right| \lesssim \norm{v}_{F^{\frac34}(T)}^2\sum_{|k-k'|\le5}\norm{P_{k'}v}_{F_{k'}(T)}^2,
\end{equation}
which implies
\begin{equation}\label{eq:cubic bound2}
\sum_{k \ge 1}2^{2sk} \sup_{t_k \in [0,T]}\left|\int_0^{t_k} E_{2,2} + E_{3,2} \; dt \right| \lesssim \norm{v}_{F^{\frac34}(T)}^2\norm{v}_{F^s(T)}^2.
\end{equation}

For
\[\left|\int_0^{t_k} E_{2,4} + E_{3,4} \; dt \right|,\]
by the symmetries of $n_1,n_2$ and $n_3,n$, respectively, it suffices to estimate
\begin{equation}\label{eq:E4.1}
\sum_{k_1,k_2 \ge 0}\left|\int_0^{t_k}\sum_{n,\overline{\N}_{3,n}}\chi_{k_1}(n_1)\wh{v}(n_1)\chi_{k_2}(n_2)n_2^3|\wh{v}(n_2)|^2\wh{v}(n_2)\psi_k(n_3)\frac{1}{n_3}\wh{v}(n_3)\chi_k(n)\frac1n\wh{v}(n)\; dt \right|
\end{equation}
and
\begin{equation}\label{eq:E4.2}
\sum_{k_1,k_2 \ge 0}\left|\sum_{n,\overline{\N}_{3,n}}\chi_{k_1}(n_1)\wh{v}(n_1)\chi_{k_2}(n_2)\wh{v}(n_2)\psi_k(n_3)\frac{1}{n_3}\wh{v}(n_3)\chi_k(n)n^2|\wh{v}(n)|^2\wh{v}(n)\; dt \right|.
\end{equation}

For \eqref{eq:E4.1}, we may assume that $k_1 \le k_2$ due to the three derivatives taken at $P_{k_2}v$. Then, by using Lemma \ref{lem:energy1-1}, we obtain that
\begin{equation}\label{eq:E4-1}
\begin{aligned}
\eqref{eq:E4.1} \lesssim&~{} \norm{v}_{F^{\frac38}(T)}^2\sum_{|k-k'|\le 5}2^{3k/4}\norm{P_{k'}v}_{F_{k'}(T)}^4\\
&+\norm{v}_{F^{\frac12}(T)}^2\sum_{\substack{k_1 \ge k+10 \\ |k_1-k_2|\le 5}}2^{k_2}\norm{P_{k_1}v}_{F_{k_1}(T)}^2 \sum_{|k-k'|\le 5}2^{-3k/2}\norm{P_{k'}v}_{F_{k'}(T)}^2\\
&+\norm{v}_{F^{\frac16}(T)}^2\sum_{k_1 \le k - 10 }2^{k_1/2}\norm{P_{k_1}v}_{F_{k_1}(T)} \sum_{|k-k'|\le 5}\norm{P_{k'}v}_{F_{k'}(T)}^3\\
&+\norm{v}_{F^{\frac18}(T)}^2\sum_{\substack{k_2 \le k-10 \\ |k_1-k_2|\le 5}}2^{13k_2/4}\norm{P_{k_1}v}_{F_{k_1}(T)}^2 \sum_{|k-k'|\le 5}2^{-3k}\norm{P_{k'}v}_{F_{k'}(T)}^2\\
&+\norm{v}_{F^{0}(T)}^2\sum_{\substack{k_2 \le k-10 \\ k_1 \le k_2 -10}}2^{3k_2}\norm{P_{k_1}v}_{F_{k_1}(T)}\norm{P_{k_2}v}_{F_{k_2}(T)} \sum_{|k-k'|\le 5}2^{-3k}\norm{P_{k'}v}_{F_{k'}(T)}^2\\
\lesssim&~{} \norm{v}_{F^{\frac12}(T)}^4\sum_{|k-k'|\le 5}\norm{P_{k'}v}_{F_{k'}(T)}^2.
\end{aligned}
\end{equation}
For \eqref{eq:E4.2}, we may also assume that $k_1 \le k_2$ by the symmetry of $n_1,n_2$ variables, and then, similarly, we obtain that
\begin{equation}\label{eq:E4-2}
\begin{aligned}
\eqref{eq:E4.2} \lesssim&~{} \norm{v}_{F^{\frac38}(T)}^2\sum_{|k-k'|\le 5}2^{3k/4}\norm{P_{k'}v}_{F_{k'}(T)}^4\\
&+\norm{v}_{F^{\frac18}(T)}^2\sum_{\substack{k_1 \ge k+10 \\ |k_1-k_2|\le 5}}2^{-k_2}\norm{P_{k_1}v}_{F_{k_1}(T)}^2 \sum_{|k-k'|\le 5}2^{5k/4}\norm{P_{k'}v}_{F_{k'}(T)}^2\\
&+\norm{v}_{F^{\frac18}(T)}^2\sum_{k_1 \le k - 10 }2^{k_1/2}\norm{P_{k_1}v}_{F_{k_1}(T)} \sum_{|k-k'|\le 5}2^{-k/4}\norm{P_{k'}v}_{F_{k'}(T)}^3\\
&+\norm{v}_{F^{\frac18}(T)}^2\sum_{\substack{k_2 \le k-10 \\ |k_1-k_2|\le 5}}2^{k_2/2}\norm{P_{k_1}v}_{F_{k_1}(T)}^2 \sum_{|k-k'|\le 5}2^{-k/4}\norm{P_{k'}v}_{F_{k'}(T)}^2\\
&+\norm{v}_{F^{0}(T)}^2\sum_{\substack{k_2 \le k-10 \\ k_1 \le k_2 -10}}\norm{P_{k_1}v}_{F_{k_1}(T)}\norm{P_{k_2}v}_{F_{k_2}(T)} \sum_{|k-k'|\le 5}\norm{P_{k'}v}_{F_{k'}(T)}^2\\
\lesssim&~{} \norm{v}_{F^{\frac38}(T)}^4\sum_{|k-k'|\le 5}\norm{P_{k'}v}_{F_{k'}(T)}^2.
\end{aligned}
\end{equation}
Hence, we conclude that
\begin{equation}\label{eq:quintic bound}
\sum_{k \ge 1}2^{2sk} \sup_{t_k \in [0,T]}\left|\int_0^{t_k} E_{1,2,4} + E_{1,3,4} \; dt \right| \lesssim \norm{v}_{F^{\frac12}(T)}^4\norm{v}_{F^s(T)}^2.
\end{equation}

Lastly, we estimate quintic and septic terms given by
\begin{equation}\label{eq:energy1-2.9}
\left|\int_0^{t_k} E_{2,3} + E_{3,3} \; dt \right|.
\end{equation}

\begin{remark}\label{rem:resonant1}
It is necessary to carefully check the quintic resonance in $E_{2,3}$ and $E_{3,3}$. In fact, the worst terms are of the form of 
\[\begin{aligned}
&\mbox{Re}\Big[\alpha \sum_{n,\overline{\N}_{3,n}}\wh{v}(n_1)\wh{v}(n_2)\psi_k(n_3)\frac{1}{n_3}\left\{10in_3\sum_{\N_{3,n_3}}\wh{v}(n_{3,1})\wh{v}(n_{3,2})n_{3,3}^2\wh{v}(n_{3,3})\right\}\chi_k(n)\frac1n\wh{v}(n)\Big]\\
=&\mbox{Re}\Big[10\alpha i \sum_{n,\overline{\N}_{3,n},\N_{3,n_3}}\wh{v}(n_1)\wh{v}(n_2)\psi_k(n_3)\wh{v}(n_{3,1})\wh{v}(n_{3,2})n_{3,3}^2\wh{v}(n_{3,3})\chi_k(n)\frac1n\wh{v}(n)\Big],
\end{aligned}\]
and\[\mbox{Re}\Big[10\beta i \sum_{n,\overline{\N}_{3,n},\N_{3,n_3}}\wh{v}(n_1)\wh{v}(n_2)\chi_k(n_3)\wh{v}(n_{3,1})\wh{v}(n_{3,2})n_{3,3}^2\wh{v}(n_{3,3})\chi_k(n)\frac1n\wh{v}(n)\Big],\]
where $\N_{3,n_3}$ is the non-resonant set of $n_{3,1}, n_{3,2}$ and $n_{3,3}$ variables defined similarly as the set $\N_{3,n}$. In particular, if $n_{3,3} = -n$ (exact quintic resonant case), one derivative at $2^k$ frequency mode cannot be eliminated by the standard way. Thanks to the properties of $\psi_k$ and $\chi_k$ (real-valued and even functions) and the symmetry on the $n_1+n_2+n_{3,1}+n_{3,2}=0$, we know that
\begin{equation}\label{eq:quintic resonant}
\begin{aligned}
&\sum_{\overline{n} \in \Gamma_4(\Z)}\wh{v}(n_1)\wh{v}(n_2)\psi_k(n-n_{3,1}-n_{3,2})\wh{v}(n_{3,1})\wh{v}(n_{3,2})\chi_k(n)n|\wh{v}(n)|^2\\
=&\overline{\sum_{\overline{n} \in \Gamma_4(\Z)}\wh{v}(-n_1)\wh{v}(-n_2)\psi_k(n-n_{3,1}-n_{3,2})\wh{v}(-n_{3,1})\wh{v}(-n_{3,2})\chi_k(n)n|\wh{v}(n)|^2}\\
=&\overline{\sum_{\overline{n} \in \Gamma_4(\Z)}\wh{v}(n_1)\wh{v}(n_2)\psi_k(n+n_{3,1}+n_{3,2})\wh{v}(n_{3,1})\wh{v}(n_{3,2})\chi_k(n)n|\wh{v}(n)|^2}\\
=&\overline{\sum_{\overline{n} \in \Gamma_4(\Z)}\wh{v}(n_1)\wh{v}(n_2)\psi_k(n-n_1-n_2)\wh{v}(n_{3,1})\wh{v}(n_{3,2})\chi_k(n)n|\wh{v}(n)|^2}\\
=&\overline{\sum_{\overline{n} \in \Gamma_4(\Z)}\wh{v}(n_1)\wh{v}(n_2)\psi_k(n-n_{3,1}-n_{3,2})\wh{v}(n_{3,1})\wh{v}(n_{3,2})\chi_k(n)n|\wh{v}(n)|^2},
\end{aligned}
\end{equation}
where $\overline{n} = (n_1,n_2,n_{3,1},n_{3,2})$. This observation reveals 
\[10\alpha i\sum_{n, \overline{n} \in \Gamma_4(\Z)}\wh{v}(n_1)\wh{v}(n_2)\psi_k(n-n_{3,1}-n_{3,2})\wh{v}(n_{3,1})\wh{v}(n_{3,2})\chi_k(n)n|\wh{v}(n)|^2\]
is a purely imaginary number and hence the exact quintic resonant interaction component vanishes. Similarly, the quintic resonant term in $E_{3}$ also vanishes.
\end{remark}

We first consider the quintic terms in \eqref{eq:energy1-2.9}. For
\[\sum_{n,\overline{\N}_{3,n}}\Big(\wh{N}(v)(n_1)\wh{v}(n_2)+\wh{v}(n_1)\wh{N}(v)(n_2)\Big)\psi_k(n_3)\frac{1}{n_3}\wh{v}(n_3)\chi_k(n)\frac1n\wh{v}(n),\]
if the frequency support of $n$ ($\sim 2^k$) is the widest among other frequency supports, it suffices to control the following one:
\begin{equation}\label{eq:energy1-2.12}
\sum_{0 \le k_1 \le k_2 \le k_3 \le k_4 \le k}2^{k_4}\left|\sum_{\overline{n} \in \Gamma_6(\Z)}\int_0^{t_k}\prod_{i=1}^{4}\chi_{k_i}(n_i)\wh{v}(n_i)\chi_{k}(n_5)\wh{v}(n_5)\chi_{k}(n)\wh{v}(n)\right|.
\end{equation}

From \eqref{eq:energy1-2.11}, we have
\[\eqref{eq:energy1-2.12} \lesssim \norm{v}_{F^{\frac12}(T)}^3\norm{v}_{F^{\frac32+}(T)}\sum_{|k-k'|\le 5}\norm{P_{k'}v}_{F_{k'}(T)}^2.\]

Otherwise, it suffices to control
\begin{equation}\label{eq:energy1-2.13}
\sum_{\substack{0 \le k_1 \le k_2 \le k_3 \le k_4\\ |k_3-k_4| \le 5 \\ k \le k_3 - 10}}2^{3k_4}2^{-2k}\left|\sum_{\overline{n} \in \Gamma_6(\Z)}\int_0^{t_k}\prod_{i=1}^{4}\chi_{k_i}(n_i)\wh{v}(n_i)\chi_{k}(n_5)\wh{v}(n_5)\chi_{k}(n)\wh{v}(n)\right|,
\end{equation}
but we have similarly as before that
\[\eqref{eq:energy1-2.13} \lesssim \norm{v}_{F^{\frac12+}(T)}^2\norm{v}_{F^{\frac32}(T)}^2\sum_{|k-k'|\le 5}\norm{P_{k'}v}_{F_{k'}(T)}^2.\]

For the rest of quintic terms in \eqref{eq:energy1-2.9}, it is enough to consider\footnote{It is not necessary to distinguish $\psi_k$ and $\chi_k$.}
\[\sum_{n,\overline{\N}_{3,n}}\wh{v}(n_1)\wh{v}(n_2)\chi_k(n_3)\frac{1}{n_3}\wh{N}(v)(n_3)\chi_k(n)\frac1n\wh{v}(n).\]
Since $1/n_3$-factor removes one total derivative in $\wh{N}(v)$, the following term is the worst case:
\begin{equation}\label{eq:energy1-2.14}
\begin{aligned}
&\sum_{\substack{0 \le k_1\le k_2 \le k_{3,1} \le k_{3,2} \le k_{3,3} }} \Big|\sum_{n,\overline{\N}_{3,n}\N_{3,n_3}}\int_0^{t_k}\chi_{k_1}(n_1)\wh{v}(n_1)\chi_{k_2}(n_2)\wh{v}(n_2)\\
&\hspace{1em}\times\chi_{k_{3,1}}(n_{3,1})\wh{v}(n_{3,1})\chi_{k_{3,2}}(n_{3,2})\wh{v}(n_{3,2})\chi_{k_{3,3}}(n_{3,3})n_{3,3}^2\wh{v}(n_{3,3})\chi_k(n)\frac1n\wh{v}(n)\Big|.
\end{aligned}
\end{equation} 

We first focus on the case when $|k-k_{3,3}| \le 5$. If $|k_{3,2}-k|\le 5$, since one derivative in the frequency $n_{3,3}$ can be moved to $n_{3,2}$ frequency, we, similarly as before, have
\[\eqref{eq:energy1-2.14} \lesssim \norm{v}_{F^{\frac12+}(T)}^3\norm{v}_{F^{\frac32}(T)}\sum_{|k-k'|\le 5}\norm{P_{k'}v}_{F_{k'}(T)}^2.\]

Otherwise, we use the same argument as in the estimation of $A_8(k)$ and then the one of following cases should happen (except for the case mentioned in Remark \ref{rem:resonant1}):
\[|n_{3,2}| \ll |n|^{4/5},\]
\[|n_{3,2}| \gtrsim |n|^{4/5} \mbox{ and } |n_{3,1}| \sim |n_{3,2}|\]
and
\[|n_{3,2}| \gtrsim |n|^{4/5} \mbox{ and } |n_{3,1}| \ll |n_{3,2}|.\]

For the first case, since 
\[|\mu(n_1) + \mu(n_2) + \mu(n_{3,1}) + \mu(n_{3,2}) + \mu(n_{3,3}) + \mu(n)| \gtrsim |n|^4,\]
we use $2^{-j_{max}/2} \lesssim 2^{-2k}$ instead of $2^{-j_{max}/2} \lesssim 2^{-k}$ in \eqref{eq:energy1-2.11} to obtain
\[\eqref{eq:energy1-2.14} \lesssim \norm{v}_{F^{\frac12+}(T)}^4\sum_{|k-k'|\le 5}\norm{P_{k'}v}_{F_{k'}(T)}^2.\]

For the second case, since $|n_{3,3}| \lesssim |n_{3,1}|^{\frac58}|n_{3,2}|^{\frac58}$, we use \eqref{eq:energy1-2.11} so that
\[\eqref{eq:energy1-2.14} \lesssim \norm{v}_{F^{\frac12+}(T)}^2\norm{v}_{F^{\frac98}(T)}^2\sum_{|k-k'|\le 5}\norm{P_{k'}v}_{F_{k'}(T)}^2.\] 

For the last case, since $n_1+n_2+n_{3,1}+n_{3,2}+n_{3,3}+n=0$, we have $|n_{3,3} + n| \sim |n|^{4/5}$, which implies
\[|\mu(n_1) + \mu(n_2) + \mu(n_{3,1}) + \mu(n_{3,2}) + \mu(n_{3,3}) + \mu(n)| \gtrsim |n|^{4+\frac45}.\]
Similarly as the first case, we obtain
\[\eqref{eq:energy1-2.14} \lesssim \norm{v}_{F^{\frac12}(T)}^4\sum_{|k-k'|\le 5}\norm{P_{k'}v}_{F_{k'}(T)}^2.\]

Now, we focus on the case when $k \le k_{3,3} - 10$. From the support property \eqref{eq:support property}, we know $|k_{3,2}-k_{3,3}| \le 5$, and hence
\[\begin{aligned}\eqref{eq:energy1-2.14} \lesssim&~{} \norm{v}_{F^{\frac12+}(T)}^3\norm{v}_{F^2(T)}\norm{v}_{F^s(T)}2^{-sk-k/2}\norm{P_{k}v}_{F_k(T)} \\
&+ \norm{v}_{F^{\frac12+}(T)}^2\norm{v}_{F^{1}(T)}^2\sum_{|k-k'|\le 5}\norm{P_{k'}v}_{F_{k'}(T)}^2,
\end{aligned}\]
for $s \ge 0$.

Next, we consider the septic term in \eqref{eq:energy1-2.9}. For the septic term in
\[\sum_{n,\overline{\N}_{3,n}}\Big(\wh{N}(v)(n_1)\wh{v}(n_2)+\wh{v}(n_1)\wh{N}(v)(n_2)\Big)\psi_k(n_3)\frac{1}{n_3}\wh{v}(n_3)\chi_k(n)\frac1n\wh{v}(n),\]
since the quintic term in $\wh{N}(v)$ also has one total derivative, by the symmetry of frequencies, it is enough to control
\begin{equation}\label{eq:energy1-2.16}
\begin{aligned}
\sum_{\substack{0 \le k_1\le k_2 \le k_3 \le k_4 \le k_5 \le k_6 }} 2^{k_6}2^{-2k}\Big|\sum_{\overline{n}\in\Gamma_8(\Z)}\int_0^{t_k}\prod_{i=1}^{6}\chi_{k_i}(n_i)\wh{v}(n_i)\psi_k(n_7)\wh{v}(n_7)\chi_k(n)\wh{v}(n)\Big|.
\end{aligned}
\end{equation} 

We apply \eqref{eq:energy1-2.15} to \eqref{eq:energy1-2.16} to obtain
\[\eqref{eq:energy1-2.16} \lesssim \norm{v}_{F^{\frac12+}(T)}^6\sum_{|k-k'|\le 5}\norm{P_{k'}v}_{F_{k'}(T)}^2.\]

Moreover, for the septic term in
\[\sum_{n,\overline{\N}_{3,n}}\wh{v}(n_1)\wh{v}(n_2)\psi_k(n_3)\frac{1}{n_3}\wh{N}_1(v)(n_3)\chi_k(n)\frac1n\wh{v}(n),\]
since the total derivative of the quintic term in $\wh{N}(v)$ is canceled out by $1/n_3$ factor and there is no difference between $\psi_k$ and $\chi_k$ in the septic estimation, it suffices to control
\begin{equation}\label{eq:energy1-2.17}
\begin{aligned}
\sum_{\substack{0 \le k_1\le k_2 \le k_3 \le k_4 \le k_5 \le k_6 \le 7 }} 2^{-k}\Big|\sum_{\overline{n}\in\Gamma_8(\Z)}\int_0^{t_k}\prod_{i=1}^{7}\chi_{k_i}(n_i)\wh{v}(n_i)\chi_k(n)\wh{v}(n)\Big|.
\end{aligned}
\end{equation} 

By using \eqref{eq:energy1-2.15}, we obtain
\[\begin{aligned}
\eqref{eq:energy1-2.17} \lesssim& \norm{v}_{F^{\frac12+}(T)}^5\norm{v}_{F^0(T)}\norm{v}_{F^s(T)}2^{-sk-k/2}\norm{P_{k}v}_{F_k(T)} \\
&+ \norm{v}_{F^{\frac12+}(T)}^4\norm{v}_{F^{0+}(T)}^2\sum_{|k-k'|\le 5}\norm{P_{k'}v}_{F_{k'}(T)}^2.
\end{aligned}\]

Together with all bounds of quintic and septic terms, we conclude that
\begin{equation}\label{eq:quintic,septic bound}
\sum_{k \ge 1}2^{2sk} \sup_{t_k \in [0,T]}\left|\int_0^{t_k} E_{2,3} + E_{3,3} \; dt \right| \lesssim (\norm{v}_{F^{\frac12+}(T)}^2\norm{v}_{F^{\frac32+}(T)}^2 + \norm{v}_{F^{\frac12+}(T)}^6)\norm{v}_{F^s(T)}^2,
\end{equation}
and hence, we complete the proof of Proposition \ref{prop:energy1-2} by recalling the definition of the modified energy \eqref{eq:new energy1-2} and gathering \eqref{eq:cubic bound1}, \eqref{eq:cubic bound2}, \eqref{eq:quintic bound} and \eqref{eq:quintic,septic bound}. 
\end{proof}

As a corollary to Lemma \ref{lem:comparable energy1-1} and Proposition \ref{prop:energy1-2}, we obtain an \emph{a priori} bound of $\norm{v}_{E^s(T)}$ for a smooth solution $v$ to the equation \eqref{eq:5mkdv4}.
\begin{corollary}\label{cor:energy1-2}
Let $s > 2$ and $T \in (0,1]$. Then, there exists $0 < \delta \ll 1$ such that
\begin{equation}\label{eq:energy1-2.1.1}
\norm{v}_{E^s(T)}^2 \lesssim (1+ \norm{v_0}_{H^s}^2)\norm{v_0}_{H^s}^2 + (1 + \norm{v}_{F^{\frac12+}(T)}^2 + \norm{v}_{F^{\frac12+}(T)}^4)\norm{v}_{F^{2+}(T)}^2\norm{v}_{F^s(T)}^2,
\end{equation} 
for the solution $v \in C([-T,T];H^{\infty}(\T))$ to \eqref{eq:5mkdv4} with $\norm{v}_{L_T^{\infty}H_x^{\frac12+}} \le \delta$.
\end{corollary}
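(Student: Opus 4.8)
The plan is to combine the two principal results of this section in an essentially bookkeeping manner: Lemma \ref{lem:comparable energy1-1} identifies the modified energy $E_T^s(v)$ with $\norm{v}_{E^s(T)}^2$ up to constants, and Proposition \ref{prop:energy1-2} controls the growth of $E_T^s(v)$. No further analysis of the nonlinearity of \eqref{eq:5mkdv4} is required. First I would fix $\delta>0$ to be the constant provided by Lemma \ref{lem:comparable energy1-1}.

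The only point deserving a remark is that Lemma \ref{lem:comparable energy1-1} is phrased with the smallness hypothesis written in $H^s$, whereas the corollary only assumes $\norm{v}_{L_T^\infty H_x^{\frac12+}}\le \delta$. This is, however, exactly the hypothesis actually used: each correction term in \eqref{eq:new energy1-1} is a cubic expression in which two of the four $\wh{v}$ factors carry the weights $\psi_k(n_3)/n_3$, $\chi_k(n)/n$ (each of size $\lesssim 2^{-k}$ on the relevant support) together with the frequency cut-offs, while the other two factors $\wh v(n_1),\wh v(n_2)$ are unweighted; summing $\sum_n|\wh v(n)|\lesssim \norm{v}_{H^{\frac12+}}$ over the two free frequencies and using Cauchy--Schwarz in the remaining two yields
\[
\Big| E_T^s(v) - \norm{v}_{E^s(T)}^2 \Big| \lesssim \norm{v}_{L_T^\infty H_x^{\frac12+}}^2\, \norm{v}_{E^s(T)}^2 ,
\]
which is precisely the bound proved via the Sobolev embedding $H^{\frac12+}(\T)\hookrightarrow L^\infty(\T)$ underlying Lemma \ref{lem:comparable energy1-1}. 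Thus, for $\delta$ small enough, the hypothesis of Lemma \ref{lem:comparable energy1-1} is met under the assumption of the corollary, and we obtain $\norm{v}_{E^s(T)}^2 \le 2\, E_T^s(v)$, the term $\norm{P_0 v(0)}_{L_x^2}^2$ being in both sides and bounded by $\norm{v_0}_{H^s}^2$.

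It then remains to invoke Proposition \ref{prop:energy1-2}, which applies since $v\in C([-T,T];H^\infty(\T))$ solves \eqref{eq:5mkdv4}, to bound $E_T^s(v)$ by
\[
(1+ \norm{v_0}_{H^s}^2)\norm{v_0}_{H^s}^2 + (1 + \norm{v}_{F^{\frac12+}(T)}^2 + \norm{v}_{F^{\frac12+}(T)}^4)\norm{v}_{F^{2+}(T)}^2\norm{v}_{F^s(T)}^2 ;
\]
chaining this with $\norm{v}_{E^s(T)}^2 \le 2\, E_T^s(v)$ gives \eqref{eq:energy1-2.1.1}. In this respect there is no real obstacle left: the hard work is already contained in Proposition \ref{prop:energy1-2} — in particular in the dyadic estimates of Lemma \ref{lem:energy1-1}, the commutator estimates of Lemma \ref{lem:commutator1}, and the vanishing of the exact quintic resonant interaction recorded in Remark \ref{rem:resonant1} — while the corollary merely packages that estimate together with the energy comparison.
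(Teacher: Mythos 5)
Your argument is correct and is the proof the paper implicitly intends: the corollary is explicitly announced as a consequence of Lemma \ref{lem:comparable energy1-1} and Proposition \ref{prop:energy1-2}, and your chaining of the two is exactly the intended bookkeeping. The extra remark you supply — that the proof of Lemma \ref{lem:comparable energy1-1} really only uses the $H^{\frac12+}$ smallness of $v$ (via Sobolev embedding) rather than smallness in $H^s$ as literally stated in its hypothesis, so the corollary's weaker assumption $\norm{v}_{L_T^\infty H_x^{\frac12+}}\le\delta$ suffices — is a genuine and worthwhile clarification of a small mismatch in the paper's phrasing, and your estimate of the correction terms (two unweighted factors of $\wh v$ absorbed into $\norm{v}_{H^{\frac12+}}^2$, two weighted factors giving $\lesssim 2^{-2k}\norm{P_k v}_{L^2}^2$) is sound.
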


In the following, we consider the energy estimate for the difference of two solutions $v_1$ and $v_2$ to the equation in \eqref{eq:5mkdv4}. Let $w = v_1 - v_2$. Then $w$ satisfies
\begin{equation}\label{eq:5mkdv8}
\pt\wh{w}(n) - i\mu(n)\wh{w}(n)=\wh{N}_1(v_1,v_2,w)+\wh{N}_2(v_1,v_2,w)+\wh{N}_3(v_1,v_2,w)+\wh{N}_4(v_1,v_2,w),
\end{equation}
with $w(0,x) = w_0(x) = v_{1,0}(x) - v_{2,0}(x)$ and where
\[\wh{N}_1(v_1,v_2,w) = -20in^3(|\wh{v}_1(n)|^2\wh{w}(n) + \wh{v}_1(n)\wh{v}_2(n)\wh{w}(-n) + |\wh{v}_2(n)|^2\wh{w}(n),\]

\[\begin{aligned}
\wh{N}_2(v_1,v_2,w) =&~{} 6i n\sum_{\N_{5,n}} \wh{w}(n_1)\wh{v}_1(n_2)\wh{v}_1(n_3)\wh{v}_1(n_4)\wh{v}_1(n_5) \\
&+ 6i n\sum_{\N_{5,n}}\wh{v}_2(n_1)\wh{w}(n_2)\wh{v}_1(n_3)\wh{v}_1(n_4)\wh{v}_1(n_5)\\
&+ 6i n\sum_{\N_{5,n}}\wh{v}_2(n_1)\wh{v}_2(n_2)\wh{w}(n_3)\wh{v}_1(n_4)\wh{v}_1(n_5) \\
&+ 6i n\sum_{\N_{5,n}}\wh{v}_2(n_1)\wh{v}_2(n_2)\wh{v}_2(n_3)\wh{w}(n_4)\wh{v}_1(n_5)\\
&+ 6i n\sum_{\N_{5,n}}\wh{v}_2(n_1)\wh{v}_2(n_2)\wh{v}_2(n_3)\wh{v}_2(n_4)\wh{w}(n_5),
\end{aligned}\]

\[\begin{aligned}
\wh{N}_3(v_1,v_2,w) =&~{} 10in \sum_{\N_{3,n}} \wh{w}(n_1)\wh{v}_1(n_2)n_3^2\wh{v}_1(n_3) + 10in \sum_{\N_{3,n}} \wh{v}_2(n_1)\wh{w}(n_2)n_3^2\wh{v}_1(n_3)\\
&+ 10in \sum_{\N_{3,n}} \wh{v}_2(n_1)\wh{v}_2(n_2)n_3^2\wh{w}(n_3)
\end{aligned}\]
and
\[\begin{aligned}
\wh{N}_4(v_1,v_2,w) =&~{} 5in \sum_{\N_{3,n}} (n_1+n_2)\wh{w}(n_1)\wh{v}_1(n_2)n_3\wh{v}_1(n_3) \\
&+ 5in \sum_{\N_{3,n}} (n_1+n_2)\wh{v}_2(n_1)\wh{w}(n_2)n_3\wh{v}_1(n_3)\\
&+ 5in \sum_{\N_{3,n}} (n_1+n_2)\wh{v}_2(n_1)\wh{v}_2(n_2)n_3\wh{w}(n_3).
\end{aligned}\]
We denote $\wh{N}_1(v_1,v_2,w)+\wh{N}_2(v_1,v_2,w)+\wh{N}_3(v_1,v_2,w)+\wh{N}_4(v_1,v_2,w)$ by $\wh{N}(v_1,v_2,w)$ only in the proof of Proposition \ref{prop:energy1-3} below. Similarly as before, for $k \ge 1$, we define the localized modified energy for the difference of two solutions like
\begin{equation}\label{eq:new energy1-3}
\begin{aligned}
\wt{E}_{k}(w)(t) =&~{} \norm{P_kw(t)}_{L_x^2}^2 \\
&+ \mbox{Re}\left[\wt{\alpha} \sum_{n,\overline{\N}_{3,n}}\wh{v}_2(n_1)\wh{v}_2(n_2)\psi_k(n_3)\frac{1}{n_3}\wh{w}(n_3)\chi_k(n)\frac1n\wh{w}(n)\right]\\
&+ \mbox{Re}\left[\wt{\beta} \sum_{n,\overline{\N}_{3,n}}\wh{v}_2(n_1)\wh{v}_2(n_2)\chi_k(n_3)\frac{1}{n_3}\wh{w}(n_3)\chi_k(n)\frac1n\wh{w}(n)\right]
\end{aligned}
\end{equation}
and
\[\wt{E}_{T}^s(w) = \norm{P_0w(0)}_{L_x^2}^2 + \sum_{k \ge 1}2^{2sk} \sup_{t_k \in [-T,T]} \wt{E}_{1,k}(w)(t_k),\]
where $\wt{\alpha}_1$ and $\wt{\beta}_1$ are real and will be chosen later.

\begin{remark}\label{rem:modified energy}
The modified energy \eqref{eq:new energy1-3} is not suitable to control $\wh{N}_4(v_1,v_2,w)$ due to the following term
\[5in \sum_{\N_{3,n}} n_1\wh{w}(n_1)\wh{v}_1(n_2)n_3\wh{v}_1(n_3)+ 5in \sum_{\N_{3,n}} n_2\wh{v}_2(n_1)\wh{w}(n_2)n_3\wh{v}_1(n_3),\]
for the case when $|n_2|, |n_3| \ll |n_1| \sim |n|$ or $|n_1|, |n_3| \ll |n_2| \sim |n|$. Moreover, the cancellation of the quintic resonant case as in Remark \ref{rem:resonant1} cannot be expected. Hence, it is necessary to define the modified energy for the difference of two solutions as
\begin{equation}\label{eq:new energy1-5}
\begin{aligned}
\wt{E}_{k}(w)(t) &=\norm{P_kw(t)}_{L_x^2}^2 \\
&+\sum_{1 \le i \le j \le 2} \mbox{Re}\left[\wt{\alpha}_{i,j} \sum_{n,\overline{\N}_{3,n}}\wh{v}_i(n_1)\wh{v}_j(n_2)\psi_k(n_3)\frac{1}{n_3}\wh{w}(n_3)\chi_k(n)\frac1n\wh{w}(n)\right]\\
&+\sum_{1 \le i \le j \le 2} \mbox{Re}\left[\wt{\beta}_{i,j} \sum_{n,\overline{\N}_{3,n}}\wh{v}_i(n_1)\wh{v}_j(n_2)\chi_k(n_3)\frac{1}{n_3}\wh{w}(n_3)\chi_k(n)\frac1n\wh{w}(n)\right],
\end{aligned}
\end{equation}
where $\wt{\alpha}_{i,j}$ and $\wt{\beta}_{i,j}$, $1 \le i \le j \le 2$, are real and will be chosen later. 

On the other hands, in view of \eqref{eq:5mkdv4},
\[10in\sum_{\N_{3,n}}\wh{v}(n_1)\wh{v}(n_2)n_3^2\wh{v}(n_3)\]
and
\[5in\sum_{\N_{3,n}}(n_1+n_2)n_3\wh{v}(n_1)\wh{v}(n_2)\wh{v}(n_3)\]
can be rewritten as
\[\frac{10}{3}in\sum_{\N_{3,n}}\set{n_1^2 + n_2^2+ n_3^2}\wh{v}(n_1)\wh{v}(n_2)\wh{v}(n_3)\]
and
\[\frac53in\sum_{\N_{3,n}}\set{(n_1+n_2)n_3+(n_1+n_3)n_2+(n_2+n_3)n_1}\wh{v}(n_1)\wh{v}(n_2)\wh{v}(n_3),\]
respectively. Then, by the simple calculation, $\wh{N}_3(v_1,v_2,w)$ and $\wh{N}_4(v_1,v_2,w)$ can be replaced by
\begin{equation}\label{eq:energy-nonlinear3-1}
\begin{aligned}
&\frac{10}{3}in \sum_{\N_{3,n}} \left(\wh{v}_1(n_1)\wh{v}_1(n_2)+\wh{v}_1(n_1)\wh{v}_2(n_2)+\wh{v}_2(n_1)\wh{v}_2(n_2)\right)n_3^2\wh{w}(n_3)\\ 
&+ \frac{20}{3}in \sum_{\N_{3,n}} n_2^2\left(\wh{v}_1(n_1)\wh{v}_1(n_2)+\wh{v}_1(n_1)\wh{v}_2(n_2)+\wh{v}_2(n_1)\wh{v}_1(n_2)+\wh{v}_2(n_1)\wh{v}_2(n_2)\right)\wh{w}(n_3)
\end{aligned}
\end{equation}
and
\begin{equation}\label{eq:energy-nonlinear4-1}
\begin{aligned}
\sum_{1 \le j \le k \le 2}&\frac{10}{3}in \sum_{\N_{3,n}} (n_1+n_2)\wh{v}_j(n_1)\wh{v}_k(n_2)n_3\wh{w}(n_3) \\
&+\sum_{1 \le j \le k \le 2} 5in\sum_{\N_{3,n}} n_1\wh{v}_j(n_1)n_2\wh{v}_k(n_2)\wh{w}(n_3),
\end{aligned}
\end{equation}
respectively. Then, the use of the modified energy \eqref{eq:new energy1-3} enables to control the first term in \eqref{eq:energy-nonlinear3-1} and the first three terms in \eqref{eq:energy-nonlinear4-1}. The rest of  \eqref{eq:energy-nonlinear3-1} and \eqref{eq:energy-nonlinear4-1} does not make troubles, since less derivatives are taken at $\wh{w}(n_3)$ mode. Moreover, thanks to the formula of the first term in \eqref{eq:energy-nonlinear3-1} in addition to \eqref{eq:new energy1-5}, we can get rid of quintic resonant interaction components similarly as in Remark \ref{rem:resonant1} (See Remark \ref{rem:resonant2} below for more details).

Thus, except in the case mentioned above, we will only consider \eqref{eq:new energy1-3} as the modified energy, and
\begin{equation}\label{eq:energy-nonlinear3-2}
\wh{N}_3(v_2,w) = \frac{10}{3}in \sum_{\N_{3,n}} \wh{v}_2(n_1)\wh{v}_2(n_2)n_3^2\wh{w}(n_3)+\frac{20}{3}in \sum_{\N_{3,n}} n_2^2\wh{v}_2(n_1)\wh{v}_2(n_2)\wh{w}(n_3)
\end{equation}
and
\begin{equation}\label{eq:energy-nonlinear4-2}
\wh{N}_4(v_2,w) = \frac{10}{3}in \sum_{\N_{3,n}} (n_1+n_2)\wh{v}_2(n_1)\wh{v}_2(n_2)n_3\wh{w}(n_3)+ 5in\sum_{\N_{3,n}} n_1\wh{v}_2(n_1)n_2\wh{v}_2(n_2)\wh{w}(n_3)
\end{equation}
as the cubic nonlinear term for the difference of two solutions for the sake of convenience of calculation.  For the same reason, we also use
\begin{equation}\label{eq:energy-nonlinear1-2}
\wh{N}_1(v_2,w) = -20in|\wh{v}_2(n)|^2\wh{w}(n).
\end{equation}
\end{remark}

Similarly as in Lemma \ref{lem:comparable energy1-1}, we can show that $\wt{E}_{T}^s(w)$ and $\norm{w}_{E^s(T)}$ are comparable.
\begin{lemma}\label{lem:comparable energy1-2}
Let $s > \frac12$. Then, there exists $0 < \delta \ll 1$ such that  
\[\frac12\norm{w}_{E^s(T)}^2 \le \wt{E}_{T}^s(w) \le \frac32\norm{w}_{E^s(T)}^2,\]
for all $w \in E^s(T) \cap C([-T,T];H^s(\T))$ as soon as $\norm{v_2}_{L_T^{\infty}H^s(\T)} \le \delta$.\footnote{In view of Remark \ref{rem:modified energy}, Lemma \ref{lem:comparable energy1-2} holds true for all $w \in E^s(T) \cap C([-T,T];H^s(\T))$ as soon as $\norm{v_1}_{L_T^{\infty}H^s(\T)} \le \delta$ and $\norm{v_2}_{L_T^{\infty}H^s(\T)} \le \delta$.}
\end{lemma}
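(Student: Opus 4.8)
The plan is to follow the proof of Lemma~\ref{lem:comparable energy1-1} (equivalently, Lemma~5.1 in \cite{KP2015}), the only difference being that in the correction terms of \eqref{eq:new energy1-3} the $v$-factors are frozen to $v_2$ — or, in the version \eqref{eq:new energy1-5} discussed in Remark~\ref{rem:modified energy}, to various products of $v_1$ and $v_2$, which is the reason for the footnote. Writing $\wt{E}_k(w)(t) = \norm{P_kw(t)}_{L_x^2}^2 + R_k(t)$, where $R_k(t)$ collects the two bilinear-in-$w$ correction terms, the whole point is to show that $R_k$ is a small perturbation of $\norm{P_kw(t)}_{L_x^2}^2$ dyadic block by dyadic block, so that it can be absorbed into $\tfrac12\norm{w}_{E^s(T)}^2$.

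First I would record the elementary facts that, by \eqref{eq:regularity}, the three ``gain'' multipliers $\psi_k(n_3)/n_3 = \chi_k'(n_3)$, $\chi_k(n_3)/n_3$ and $\chi_k(n)/n$ are each $O(2^{-k})$ and supported in $I_k$, so the associated operators localize to frequency $\sim 2^k$ and have $L^2 \to L^2$ norm $\lesssim 2^{-k}$. After passing to the functions $V$ and $w_*$ with non-negative Fourier coefficients $|\wh{v}_2(n)|$ and $|\wh{w}(n)|$ (which also harmlessly removes the sign-carrying, lower-dimensional non-resonant restriction $\overline{\N}_{3,n}$), Plancherel's theorem and H\"older's inequality give, for every $t \in [-T,T]$,
\[
|R_k(t)| \;\lesssim\; \norm{V(t)}_{L^\infty}^2 \, \normo{\chi_k'(D)w_*(t)}_{L^2}\,\normo{(\chi_k/D)(D)w_*(t)}_{L^2} \;\lesssim\; \norm{v_2(t)}_{H^s}^2\, 2^{-k} \sum_{|k'-k|\le 2}\norm{P_{k'}w(t)}_{L_x^2}^2 ,
\]
where in the last step I used $\norm{V}_{L^\infty} \le \norm{\wh{v}_2}_{\ell^1} \le \norm{\bra{n}^{-s}}_{\ell^2}\,\norm{v_2}_{H^s} \lesssim \norm{v_2}_{H^s}$, which is precisely where the hypothesis $s > \tfrac12$ is used. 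Under $\norm{v_2}_{L_T^\infty H^s(\T)} \le \delta$ this reads $|R_k(t)| \lesssim \delta^2\, 2^{-k} \sum_{|k'-k|\le 2}\norm{P_{k'}w(t)}_{L_x^2}^2$.

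Applying $2^{2sk}\sup_{t_k\in[-T,T]}$, using $\sup_{t}\sum_{|k'-k|\le 2}(\cdot) \le \sum_{|k'-k|\le 2}\sup_{t}(\cdot)$ and $2^{2sk}\sim 2^{2sk'}$ for $|k-k'|\le 2$, and then summing in $k \ge 1$ (the factor $2^{-k}$ is a bonus but is not even needed, thanks to the finite overlap), I get $\sum_{k\ge 1}2^{2sk}\sup_{t_k}|R_k(t_k)| \le C\delta^2\,\norm{w}_{E^s(T)}^2$. Since the term $\norm{P_0w(0)}_{L_x^2}^2$ is common to $\wt{E}_{T}^s(w)$ and $\norm{w}_{E^s(T)}^2$, and $|\sup(a+b) - \sup a| \le \sup|b|$, this yields $|\wt{E}_{T}^s(w) - \norm{w}_{E^s(T)}^2| \le C\delta^2\,\norm{w}_{E^s(T)}^2$, and choosing $\delta$ so small that $C\delta^2 \le \tfrac12$ gives the claimed two-sided bound. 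For the version \eqref{eq:new energy1-5} each correction term is estimated identically by a product $\norm{v_i}_{L^\infty}\norm{v_j}_{L^\infty} \le \delta^2$, $i,j \in \{1,2\}$, which forces both $\norm{v_1}_{L_T^\infty H^s}$ and $\norm{v_2}_{L_T^\infty H^s}$ to be small. The only points needing care — and there is no genuine obstacle here — are verifying the $O(2^{-k})$ size and $I_k$-localization of the gain multipliers, and commuting the supremum in $t_k$ past the trivial splitting $\wt{E}_k(w) = \norm{P_kw}_{L_x^2}^2 + R_k$.
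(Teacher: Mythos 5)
Your argument is correct and is exactly the Sobolev-embedding-plus-H\"older argument that the paper itself points to (its proof of Lemma~\ref{lem:comparable energy1-2} just refers back to Lemma~\ref{lem:comparable energy1-1}, whose proof in turn cites Lemma~5.1 of \cite{KP2015}): after passing to absolute values the gain multipliers $\chi_k'(n_3)$ and $\chi_k(n)/n$ are $O(2^{-k})$ and supported near $I_k$, the non-resonant restriction is dropped by positivity, and $\norm{V}_{L^\infty}\lesssim\norm{v_2}_{H^s}$ for $s>1/2$ gives the $\delta^2$ smallness that absorbs the correction into $\norm{w}_{E^s(T)}^2$. The only slip is a harmless typo ($2^{-k}$ where the two gain multipliers actually give $2^{-2k}$), which you yourself note is not needed.
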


\begin{proposition}\label{prop:energy1-3}
Let $s > 2$ and $T \in (0,1]$. Then, for solutions $w \in C([-T,T];H^{\infty}(\T))$ to \eqref{eq:5mkdv8} and $v_1, v_2 \in C([-T,T];H^{\infty}(\T))$ to \eqref{eq:5mkdv4}, we have
\begin{equation}\label{eq:energy1-3.1}
\begin{aligned}
\wt{E}_{T}^0(w) \lesssim&~{} (1+ \norm{v_{1,0}}_{H^{\frac12+}}^2+\norm{v_{1,0}}_{H^{\frac12+}}\norm{v_{2,0}}_{H^{\frac12+}}+\norm{v_{2,0}}_{H^{\frac12+}}^2)\norm{w_0}_{L_x^2}^2\\
&+\left(\norm{v_1}_{F^{2+}(T)}^2+\norm{v_1}_{F^{2+}(T)}\norm{v_2}_{F^{2+}(T)}+\norm{v_2}_{F^{2+}(T)}^2\right)\norm{w}_{F^0(T)}^2\\
&+\Big(\sum_{j=0}^{4}\norm{v_1}_{F^{\frac32+}(T)}^{4-j}\norm{v_2}_{F^{\frac32+}(T)}^j\Big)\norm{w}_{F^0(T)}^2\\
&+\Big(\sum_{j=0}^{6}\norm{v_1}_{F^{\frac12+}(T)}^{6-j}\norm{v_2}_{F^{\frac12+}(T)}^j\Big)\norm{w}_{F^0(T)}^2
\end{aligned} 
\end{equation}
and
\begin{equation}\label{eq:energy1-3.2}
\begin{aligned}
\wt{E}_{T}^s(w) \lesssim&~{} (1+ \norm{v_{1,0}}_{H^{\frac12+}}^2+\norm{v_{1,0}}_{H^{\frac12+}}\norm{v_{2,0}}_{H^{\frac12+}}+\norm{v_{2,0}}_{H^{\frac12+}}^2)\norm{w_0}_{H^s}^2\\
&+\left(\norm{v_1}_{F^s(T)}^2+\norm{v_1}_{F^s(T)}\norm{v_2}_{F^s(T)}+\norm{v_2}_{F^s(T)}^2\right)\norm{w}_{F^s(T)}^2\\
&+\left(\sum_{i,j=1,2}\norm{v_i}_{F^{\frac12}(T)}\norm{v_j}_{F^{2s}(T)}\right)\norm{w}_{F^0(T)}\norm{w}_{F^s(T)}\\
&+\Big(\sum_{j=0}^{4}\norm{v_1}_{F^{s}(T)}^{4-j}\norm{v_2}_{F^{s}(T)}^j\Big)\norm{w}_{F^s(T)}^2\\
&+\Big(\sum_{j=0}^{3}\norm{v_1}_{F^{s}(T)}^{3-j}\norm{v_2}_{F^{s}(T)}^j\Big)(\norm{v_1}_{F^{2s}(T)}+\norm{v_2}_{F^{2s}(T)})\norm{w}_{F^0(T)}\norm{w}_{F^s(T)}\\
&+\Big(\sum_{j=0}^{6}\norm{v_1}_{F^{s}(T)}^{6-j}\norm{v_2}_{F^{s}(T)}^j\Big)\norm{w}_{F^s(T)}^2.
\end{aligned}
\end{equation} 
\end{proposition}

\begin{proof}
We use similar argument as in the proof of Proposition \ref{prop:energy1-2}. For any $k \ge 1$ and $t \in [-T,T]$, we differentiate $\wt{E}_{k}(w)$ with respect to $t$ and deduce that 
\[\frac{d}{dt}\wt{E}_{k}(w) = \frac{d}{dt}\wt{I}(t) + \frac{d}{dt}\wt{II}(t) + \frac{d}{dt}\wt{III}(t),\]
where 
\[\begin{aligned}
\frac{d}{dt}\wt{I}(t) &= \frac{d}{dt}\norm{P_kw}_{L_x^2}^2\\
&= 20 i \sum_{n} \chi_k^2(n) n^3 \wh{v}_1(-n)\wh{v}_2(-n)\wh{w}(n)\wh{w}(n)\\
&+ 2\mbox{Re}\left[\sum_{n}\chi_k(n)\left(\overline{\wh{N}}_{1,2}(v_1,v_2,w)+\overline{\wh{N}}_{1,3}(v_1,v_2,w)+\overline{\wh{N}}_{1,4}(v_1,v_2,w)\right)\chi_k(n)\wt{w}(n)\right]\\
&=: \wt{E}_{1},
\end{aligned}\]
\[\frac{d}{dt}\wt{II}(t)= \wt{E}_{2,1} + \wt{E}_{2,2} + \wt{E}_{2,3} =: \wt{E}_{2},\]
for
\[\begin{aligned}
\wt{E}_{2,1} =&~{}\mbox{Re}\Big[\wt{\alpha}i \sum_{n,\overline{\N}_{3,n}}\big\{10n_1n_2^3(n_3+n) + 5n_1^2n_2^2(n_3+n) + 30n_1n_2^2n_3n \\
&+ 10 n_2^3n_3n - 5(n_1+n_2)n_3^2n^2\big\} \wh{v}_2(n_1)\wh{v}_2(n_2)\psi_k(n_3)\frac{1}{n_3}\wh{w}(n_3)\chi_k(n)\frac1n\wh{w}(n)\Big],
\end{aligned}\]

\[\begin{aligned}
\wt{E}_{2,2} &=\mbox{Re}\Big[c_1\wt{\alpha}i \sum_{n,\overline{\N}_{3,n}}\big\{3n_1n_2(n_3+n) + 6n_2n_3n\big\} \\
&\hspace{9em}\times \wh{v}_2(n_1)\wh{v}_2(n_2)\psi_k(n_3)\frac{1}{n_3}\wh{w}(n_3)\chi_k(n)\frac1n\wh{w}(n)\Big]
\end{aligned}\]
and
\[\begin{aligned}
\wt{E}_{2,3} &=\mbox{Re}\Big[\wt{\alpha} \sum_{n,\overline{\N}_{3,n}}\wh{N}(v_2)(n_1)\wh{v}_2(n_2)\psi_k(n_3)\frac{1}{n_3}\wh{w}(n_3)\chi_k(n)\frac1n\wh{w}(n)\\
&\hspace{5em}+\wh{v}_2(n_1)\wh{N}(v_2)(n_2)\psi_k(n_3)\frac{1}{n_3}\wh{w}(n_3)\chi_k(n)\frac1n\wh{w}(n)\\
&\hspace{5em}+\wh{v}_2(n_1)\wh{v}_2(n_2)\psi_k(n_3)\frac{1}{n_3}\wh{N}(v_1,v_2,w)(n_3)\chi_k(n)\frac1n\wh{w}(n)\\
&\hspace{5em}+\wh{v}_2(n_1)\wh{v}_2(n_2)\psi_k(n_3)\frac{1}{n_3}\wh{v}_2(n_3)\chi_k(n)\frac1n\wh{N}(v_1,v_2,w)(n)\Big],
\end{aligned}\]
and
\[\frac{d}{dt}\wt{III}(t) = \wt{E}_{3,1} + \wt{E}_{3,2} + \wt{E}_{3,3} =: \wt{E}_{3},\] 
for
\[\begin{aligned}
\wt{E}_{3,1} =&~{} \mbox{Re}\Big[\wt{\beta}i \sum_{n,\overline{\N}_{3,n}}\big\{10n_1n_2^3(n_3+n) + 5n_1^2n_2^2(n_3+n) + 30n_1n_2^2n_3n \\
&+ 10 n_2^3n_3n - 5(n_1+n_2)n_3^2n^2\big\} \wh{v}_2(n_1)\wh{v}_2(n_2)\chi_k(n_3)\frac{1}{n_3}\wh{w}(n_3)\chi_k(n)\frac1n\wh{w}(n)\Big],
\end{aligned}\]
\[\begin{aligned}
\wt{E}_{3,2} &=\mbox{Re}\Big[c_1\wt{\beta}i \sum_{n,\overline{\N}_{3,n}}\big\{3n_1n_2(n_3+n) + 6n_2n_3n\big\}\\
&\hspace{9em}\times \wh{v}_2(n_1)\wh{v}_2(n_2)\chi_k(n_3)\frac{1}{n_3}\wh{w}(n_3)\chi_k(n)\frac1n\wh{w}(n)\Big]
\end{aligned}\]
and
\[\begin{aligned}
\wt{E}_{3,3} &=\mbox{Re}\Big[\wt{\beta} \sum_{n,\overline{\N}_{3,n}}\wh{N}(v_2)(n_1)\wh{v}_2(n_2)\chi_k(n_3)\frac{1}{n_3}\wh{w}(n_3)\chi_k(n)\frac1n\wh{w}(n)\\
&\hspace{5em}+\wh{v}_2(n_1)\wh{N}(v_2)(n_2)\chi_k(n_3)\frac{1}{n_3}\wh{w}(n_3)\chi_k(n)\frac1n\wh{w}(n)\\
&\hspace{5em}+\wh{v}_2(n_1)\wh{v}_2(n_2)\chi_k(n_3)\frac{1}{n_3}\wh{N}(v_1,v_2,w)(n_3)\chi_k(n)\frac1n\wh{w}(n)\\
&\hspace{5em}+\wh{v}_2(n_1)\wh{v}_2(n_2)\chi_k(n_3)\frac{1}{n_3}\wh{v}_2(n_3)\chi_k(n)\frac1n\wh{N}(v_1,v_2,w)(n)\Big].
\end{aligned}\]
Similarly as in the proof of Proposition \ref{prop:energy1-2}, we need to control
\begin{equation}\label{eq:energy1-3.3}
\left|\int_0^{t_k} \wt{E}_{1} + \wt{E}_{2} + \wt{E}_{3} \; dt \right|.
\end{equation}

We first control the terms 
\[-\mbox{Re}\left[\frac{20}{3}i \sum_{n,\overline{\N}_{3,n}}\chi_k(n)n\wh{v}_2(n_1)\wh{v}_2(n_2)n_3^2\wh{w}(n_3)\chi_k(n)\wh{w}(n)\right]\]
and
\[-\mbox{Re}\left[\frac{20}{3}i \sum_{n,\overline{\N}_{3,n}}\chi_k(n)n(n_1+n_2)\wh{v}_2(n_1)\wh{v}_2(n_2)n_3\wh{w}(n_3)\chi_k(n)\wh{w}(n)\right]\]
in $\wt{E}_{1}$,
\[\mbox{Re}\left[- 5\wt{\alpha}i\sum_{n,\overline{\N}_{3,n}}(n_1+n_2)n_3^2n^2\wh{v}_2(n_1)\wh{v}_2(n_2)\psi_k(n_3)\frac{1}{n_3}\wh{w}(n_3)\chi_k(n)\frac1n\wh{w}(n)\right]\]
and
\[\mbox{Re}\left[- 5\wt{\beta}i\sum_{n,\overline{\N}_{3,n}}(n_1+n_2)n_3^2n^2\wh{v}_2(n_1)\wh{v}_2(n_2)\chi_k(n_3)\frac{1}{n_3}\wh{w}(n_3)\chi_k(n)\frac1n\wh{w}(n)\right]\]
in $\wt{E}_{2}$ and $\wt{E}_{3}$, respectively. In order to use Lemma \ref{lem:commutator1}, we choose $\wt{\alpha} = -\frac{4}{3}$ and $\wt{\beta} = -\frac{2}{3}$ (In fact, we need to choose $\wt{\alpha}_{i,j} = -\frac{4}{3}$ and $\wt{\beta}_{i,j} = -\frac{2}{3}$, $1 \le i \le j \le 2$). Then it suffices to control the following terms:
\begin{equation}\label{eq:energy1-3.4}
\begin{aligned}
&\sum_{k_1,k_2 \le k - 10}\Big|\sum_{n,\overline{\N}_{3,n}}\int_0^{t_k} \chi_k(n)n[\chi_{k_1}(n_1)\wh{v}_2(n_1)\chi_{k_2}(n_2)\wh{v}_2(n_2)n_3^2\wh{w}(n_3)]\chi_k(n)\wh{w}(n) \;dt\\
&+ \frac12\sum_{n,\overline{\N}_{3,n}}\int_0^{t_k} (n_1+n_2)\chi_{k_1}(n_1)\wh{v}_2(n_1)\chi_{k_2}(n_2)\wh{v}_2(n_2)\chi_k(n_3)n_3\wh{w}(n_3)\chi_k(n)n\wh{w}(n) \;dt\\
&- \sum_{n,\overline{\N}_{3,n}}\int_0^{t_k} (n_1+n_2)\chi_{k_1}(n_1)\wh{v}_2(n_1)\chi_{k_2}(n_2)\wh{v}_2(n_2)\psi_k(n_3)n_3\wh{w}(n_3)\chi_k(n)n\wh{w}(n) \;dt \Big|,
\end{aligned}
\end{equation}
\begin{equation}\label{eq:energy1-3.5}
\begin{aligned}
&\sum_{0\le k_1,k_2 \le k - 10}\Big|\sum_{n,\overline{\N}_{3,n}}\int_0^{t_k} \chi_k(n)\Big[(n_1+n_2)\chi_{k_1}(n_1)\wh{v}_2(n_1)\chi_{k_2}(n_2)\wh{v}_2(n_2)n_3\wh{w}(n_3)\Big]\\
&\hspace{27em}\times\chi_k(n)n\wh{w}(n)\; dt\\
&- \sum_{n,\overline{\N}_{3,n}}\int_0^{t_k} (n_1+n_2)\chi_{k_1}(n_1)\wh{v}_2(n_1)\chi_{k_2}(n_2)\wh{v}_2(n_2)\chi_k(n_3)n_3\wh{w}(n_3)\chi_k(n)n\wh{w}(n)\;dt \Big|,
\end{aligned}
\end{equation}
\begin{equation}\label{eq:energy1-3.6}
\begin{aligned}
\sum_{\substack{\max(k_1,k_2) \ge k-9\\k_3 \ge 0}}\Big|\sum_{n,\overline{\N}_{3,n}}\int_0^{t_k}&\chi_{k_1}(n_1)\wh{v}_2(n_1)\chi_{k_2}(n_2)\wh{v}_2(n_2)\\
&\times\chi_{k_3}(n_3)n_3^2\wh{w}(n_3)\chi_k^2(n)n\wh{w}(n)\;dt\Big|,
\end{aligned}
\end{equation}
\begin{equation}\label{eq:energy1-3.7}
\begin{aligned}
\sum_{\substack{\max(k_1,k_2) \ge k-9\\k_3 \ge 0}}\Big|\sum_{n,\overline{\N}_{3,n}}\int_0^{t_k}&(n_1+n_2)\chi_{k_1}(n_1)\wh{v}_2(n_1)\chi_{k_2}(n_2)\wh{v}_2(n_2)\\
&\times\chi_{k_3}(n_3)n_3\wh{w}(n_3)\chi_k^2(n)n\wh{w}(n)\;dt\Big|,
\end{aligned}
\end{equation}
and
\begin{equation}\label{eq:energy1-3.9}
\begin{aligned}
\sum_{\max(k_1,k_2) \ge k-9}&\Big|\sum_{n,\overline{\N}_{3,n}}\int_0^{t_k}(n_1+n_2)n_3^2n^2\chi_{k_1}(n_1)\wh{v}_2(n_1)\chi_{k_2}(n_2)\wh{v}_2(n_2)\\
&\hspace{5em} \times (\psi_k(n_3)+\chi_k(n_3))\frac{1}{n_3}\wh{w}(n_3)\chi_k(n)\frac1n\wh{w}(n)\;dt\Big|.
\end{aligned}
\end{equation}
We apply \eqref{eq:commutator1-1} and \eqref{eq:commutator1-3} to \eqref{eq:energy1-3.4} and \eqref{eq:energy1-3.5}, respectively, to have
\[\begin{aligned}
\eqref{eq:energy1-3.4} + \eqref{eq:energy1-3.5} &\lesssim \sum_{k_1,k_2 \le k-10} 2^{\max(2k_1,2k_2)} \norm{P_{k_1}v_2}_{F_{k_1}(T)}\norm{P_{k_2}v_2}_{F_{k_2}(T)}\sum_{|k-k'|\le 5} \norm{P_{k'}w}_{F_{k'}(T)}^2\\
&\lesssim \norm{v_2}_{F^0(T)}\norm{v_2}_{F^{2+}(T)}\sum_{|k-k'|\le 5} \norm{P_{k'}w}_{F_{k'}(T)}^2,
\end{aligned}\]
which implies
\begin{equation}\label{eq:energy1-3.10}
\begin{aligned}
\sum_{k \ge 1} 2^{2sk} \sup_{t_k \in [0,T]} \left(\eqref{eq:energy1-3.4} + \eqref{eq:energy1-3.5} \right) \lesssim \norm{v_2}_{F^0(T)}\norm{v_2}_{F^{2+}(T)}\norm{w}_{F^s(T)}^2,
\end{aligned}
\end{equation}
whenever $s \ge 0$.

For \eqref{eq:energy1-3.6} and \eqref{eq:energy1-3.7}, similarly as the estimates of $A_3(k)$ and $A_4(k)$ in the proof of Proposition \ref{prop:energy1-2}, we divide the summation over $k_1,k_2,k_3$ into
\begin{equation}\label{eq:summation}
\sum_{\substack{k_1,k_3 \le k- 10 \\ |k_2 - k| \le 5}}+\sum_{\substack{k_1 \le k- 10 \\ k_2, k_3 \ge k - 9}}+\sum_{\substack{k_3 \le k- 10 \\ k_1, k_2 \ge k - 9}}+\sum_{k_1,k_2,k_3 \ge k - 9},
\end{equation}
assuming without loss of generality $k_1 \le k_2$. We restrict \eqref{eq:energy1-3.6} to the first summation, by \eqref{eq:energy1-1.3} and \eqref{eq:energy1-1.4}, we have
\[\begin{aligned}
\sum_{k_1 \le k_3-10} 2^{2k_3} &\norm{P_{k_1}v_2}_{F_{k_1}(T)}\norm{P_{k_3}w}_{F_{k_3}(T)}\sum_{|k-k_2|\le 5} \norm{P_{k_2}v_2}_{F_{k_2}(T)}\norm{P_{k}w}_{F_k(T)}\\
&\lesssim \norm{v_2}_{F^0(T)}\norm{w}_{F^{2+}(T)}\sum_{|k-k_2|\le 5}\norm{P_{k_2}v_2}_{F_{k_2}(T)}\norm{P_{k}w}_{F_k(T)}.
\end{aligned}\]
For the restriction to the second summation, by using \eqref{eq:energy1-1.2} and \eqref{eq:energy1-1.4}, we have 
\[\begin{aligned}
&\sum_{k_1 \le k- 10} 2^{k_1/2} \norm{P_{k_1}v_2}_{F_{k_1}(T)}\sum_{\substack{|k-k_2|\le 5 \\ |k-k'| \le 5}} 2^{2k}\norm{P_{k_2}v_2}_{F_{k_2}(T)}\norm{P_{k'}w}_{F_{k'}(T)}^2\\
&\hspace{1em} + \sum_{k_1 \le k - 10}\norm{P_{k_1}v_2}_{F_{k_1}(T)}2^k\norm{P_{k}w}_{F_k(T)}\sum_{\substack{k_2,k_3 \ge k + 9 \\ |k_2-k_3| \le 5}}2^{k_3}\norm{P_{k_2}v_2}_{F_{k_2}(T)}\norm{P_{k_3}w}_{F_{k_3}(T)}\\
&\lesssim \norm{v_2}_{F^{0}(T)}\norm{v_2}_{F^{\frac32+}(T)}\sum_{|k-k'|\le 5} \norm{P_{k'}w}_{F_{k'}(T)}^2\\
&\hspace{1em} + \norm{v_2}_{F^0(T)}\norm{v_2}_{F^{2+}(T)}\norm{w}_{F^s(T)}2^{-sk-\varepsilon k}\norm{P_{k}w}_{F_k(T)},
\end{aligned}\]
for $s \ge 0 $ and $0 < \varepsilon \ll 1$. Due to two derivatives in the low frequency mode, we can obtain better or same bounds from the third summation than the second summation. For the last restriction, by using \eqref{eq:energy1-1.1}, \eqref{eq:energy1-1.2}, \eqref{eq:energy1-1.3} and \eqref{eq:energy1-1.4}, we have
\[\begin{aligned}
&\sum_{|k-k'| \le 5}2^{7k/2}\norm{P_{k'}v_2}_{F_{k'}(T)}^2\norm{P_{k'}w}_{F_{k'}(T)}^2 \\
&\hspace{1em}+ \sum_{\substack{k_3 \ge k+9\\|k_3-k'| \le 5}}2^{k_3}\norm{P_{k'}v_2}_{F_{k'}(T)}^2\norm{P_{k'}w}_{F_{k'}(T)}2^{3k/2}\norm{P_{k}w}_{F_k(T)} \\
&\hspace{1em}+ \sum_{|k-k_1|\le 5}2^{3k/2}\norm{P_{k_1}v_2}_{F_{k_1}(T)}\norm{P_{k}w}_{F_k(T)}\sum_{\substack{k_3 \ge k+9 \\ |k_2-k_3|\le 5}}2^{k_3}\norm{P_{k_2}v_2}_{F_{k_2}(T)}\norm{P_{k_3}w}_{F_{k_3}(T)}\\
&\hspace{1em}+ \norm{P_kw}_{F_k(T)}\sum_{k_1 \ge k+9}\norm{P_{k_1}v_2}_{F_{k_1}(T)}\sum_{\substack{k_3 \ge k_1+9 \\ |k_2-k_3|\le 5}}2^{2k_3}\norm{P_{k_2}v_2}_{F_{k_2}(T)}\norm{P_{k_3}w}_{F_{k_3}(T)}\\
&\lesssim \norm{v_2}_{F^{\frac74}(T)}^2\sum_{|k-k'| \le 5}\norm{P_{k'}w}_{F_{k'}(T)}^2 + \norm{v_2}_{F^{\frac54}(T)}^2\norm{w}_{F^s(T)}2^{-sk-\varepsilon k}\norm{P_kw}_{F_k(T)}\\
&\hspace{1em}+ \norm{v_2}_{F^{2}(T)}\norm{w}_{F^s(T)}\sum_{|k-k_1|\le 5}2^{-sk}\norm{P_{k_1}v_2}_{F_{k_1}(T)}\norm{P_{k}w}_{F_k(T)}\\ 
&\hspace{1em}+ \norm{v_2}_{F^{0}(T)}\norm{v_2}_{F^{2+}(T)}\norm{w}_{F^s(T)}2^{-sk-\varepsilon k}\norm{P_{k}w}_{F_k(T)},
\end{aligned}\]
for $s \ge 0$ and $0<\varepsilon \ll 1$. 

For \eqref{eq:energy1-3.7}, by using \eqref{eq:energy1-1.3} and \eqref{eq:energy1-1.4}, \eqref{eq:energy1-3.7} restricted to the first summation in \eqref{eq:summation} is dominated by
\[\begin{aligned}
\sum_{k_1 \le k_3-10} 2^{k_3} &\norm{P_{k_1}v_2}_{F_{k_1}(T)}\norm{P_{k_3}w}_{F_{k_3}(T)}\sum_{|k-k_2|\le 5}2^{k_2} \norm{P_{k_2}v_2}_{F_{k_2}(T)}\norm{P_{k}w}_{F_k(T)}\\
&\lesssim \norm{v_2}_{F^0(T)}\norm{w}_{F^0(T)}\sum_{|k-k_2|\le 5}2^{2k_2+\varepsilon k_2}\norm{P_{k_2}v_2}_{F_{k_2}(T)}\norm{P_{k}w}_{F_k(T)}.
\end{aligned}\]
For the restriction to the other summations, we obtain the same result as the estimation of \eqref{eq:energy1-3.6}. Hence, we obtain
\begin{equation}\label{eq:energy1-3.12}
\begin{aligned}
\sum_{k \ge 1} 2^{2sk} &\sup_{t_k \in [0,T]} \Big(\eqref{eq:energy1-3.6} + \eqref{eq:energy1-3.7} \Big) \\
&\lesssim  \norm{v_2}_{F^{s}(T)}^2\norm{w}_{F^s(T)}^2 + \norm{v_2}_{F^{0}} \norm{v_2}_{F^{s+2}(T)}\norm{w}_{F^0(T)}\norm{w}_{F^s(T)},
\end{aligned}
\end{equation}
whenever $s > 2$ and 
\begin{equation}\label{eq:energy1-3.8}
\sum_{k \ge 1} \sup_{t_k \in [0,T]} \left(\eqref{eq:energy1-3.6} + \eqref{eq:energy1-3.7} \right) \lesssim  \norm{v_2}_{F^{0}(T)}\norm{v_2}_{F^{2+}(T)}\norm{w}_{F^0(T)}^2,
\end{equation}
at $L^2$-level.

For \eqref{eq:energy1-3.9}, similarly as \eqref{eq:energy1-2.7}, we obtain
\[\begin{aligned}
\eqref{eq:energy1-3.9} \lesssim&~{} \sum_{k_2 \ge k+10}\norm{P_{k_2}v_2}_{F_{k_2}(T)}^22^{5k/2}\norm{P_kw}_{F_k(T)}^2\\
&+ \sum_{|k-k'|\le 5}2^{7k/2}\norm{P_{k'}v_2}_{F_{k'}(T)}^2\norm{P_{k'}w}_{F_{k'}(T)}^2  \\ 
&+ \sum_{k_1 \le k-10}2^{k_1/2}\norm{P_{k_1}v_2}_{F_{k_1}(T)}\sum_{|k-k'|\le5}2^{2k}\norm{P_{k'}v_2}_{F_{k'}(T)}\norm{P_{k'}w}_{F_{k'}(T)}^2 \\
\lesssim&~{} \left(\norm{v_2}_{F^{\frac74}(T)}^2 + \norm{v_2}_{F^{\frac12+}(T)}\norm{v_2}_{F^{2}(T)}\right)\sum_{|k-k'|\le 5}\norm{P_{k'}w}_{F_{k'}(T)}^2, 
\end{aligned}\]
which implies
\begin{equation}\label{eq:energy1-3.13}
\sum_{k\ge 1}2^{2sk}\sup_{t_k \in [0,T]} \eqref{eq:energy1-3.9} \lesssim \left(\norm{v_2}_{F^{\frac74}(T)}^2 + \norm{v_2}_{F^{\frac12+}(T)}\norm{v_2}_{F^{2}(T)}\right)\norm{w}_{F^s(T)}^2,
\end{equation}
whenever $s \ge 0$.

Now, we focus on the rest terms in $\wt{E}_{1},\wt{E}_{2},\wt{E}_{3}$. First, we estimate the cubic terms in $\wt{E}_{1}$. Since 
\[\left|\sum_{n} \chi_k^2(n) n^3 \wh{v}_1(-n)\wh{v}_2(-n)\wh{w}(n)\wh{w}(n)\right| \lesssim \norm{v_1}_{H^{\frac32}}\norm{v_2}_{H^{\frac32}}\norm{P_kw}_{L^2}^2\]
and $F^s(T) \hookrightarrow C_TH^s$ \eqref{eq:small data1.1}, we can obtain for the first term in $\wt{E}_{1}$ that
\begin{equation}\label{eq:energy1-3.17}
\begin{aligned}
\sum_{k\ge 1}2^{2sk}\sup_{t_k \in [0,T]}\Big|\sum_{n}\int_0^{t_k} \chi_k^2(n) n^3 \wh{v}_1(-n)&\wh{v}_2(-n)\wh{w}(n)\wh{w}(n) \; dt\Big| \\
&\lesssim \norm{v_1}_{F^{\frac32}(T)}\norm{v_2}_{F^{\frac32}(T)}\norm{w}_{F^s(T)}^2
\end{aligned}
\end{equation}
for $s \ge 0$.

For the non-resonant interaction components in $\wt{E}_{1}$, it suffices from \eqref{eq:energy-nonlinear3-2} and \eqref{eq:energy-nonlinear4-2} to consider
\begin{equation}\label{eq:energy1-3.18}
\left|\int_0^{t_k}\sum_{n,\overline{\N}_{3,n}}\chi_{k_1}(n_1)\wh{v}_2(n_1)\chi_{k_2}(n_2)n_2^2\wh{v}_2(n_2)\chi_{k_3}(n_3)\wh{w}(n_3)\chi_k^2(n)n\wh{w}(n) \;dt \right|
\end{equation}
and
\begin{equation}\label{eq:energy1-3.19}
\left|\int_0^{t_k}\sum_{n,\overline{\N}_{3,n}}\chi_{k_1}(n_1)n_1\wh{v}_2(n_1)\chi_{k_2}(n_2)n_2\wh{v}_2(n_2)\chi_{k_3}(n_3)\wh{w}(n_3)\chi_k^2(n)n\wh{w}(n) \;dt \right|.
\end{equation}

For \eqref{eq:energy1-3.18}, since there are two derivatives on the $P_{k_2}v_2$ and one derivative on the $P_kw$, it is enough to consider the case when $k = \max(k_1,k_2,k_3,k)$ and $|k_2-k| \le 5$. Then, we use Lemma \ref{lem:energy1-1} to obtain that
\[\begin{aligned}
&\sum_{k_1,k_2,k_3 \ge 0} \left|\int_0^{t_k}\sum_{n,\overline{\N}_{3,n}}\chi_{k_1}(n_1)\wh{v}_2(n_1)\chi_{k_2}(n_2)n_2^2\wh{v}_2(n_2)\chi_{k_3}(n_3)\wh{w}(n_3)\chi_k^2(n)n\wh{w}(n) \;dt \right| \\
\lesssim&~{} \sum_{|k-k'|\le 5}2^{\frac72k}\norm{P_{k'}v_2}_{F_{k'}(T)}\norm{P_{k'}v_2}_{F_{k'}(T)}\norm{P_{k'}w}_{F_{k'}(T)}^2 \\
&+ \sum_{\substack{|k-k'|\le 5\\k_1 \le k-10}}2^{2k}2^{k_1/2}\norm{P_{k_1}v_2}_{F_{k_1}(T)}\norm{P_{k'}v_2}_{F_{k'}(T)}\norm{P_{k'}w}_{F_{k'}(T)}^2\\
&+ \sum_{\substack{|k-k'|\le 5\\k_3 \le k-10}}2^{2k}2^{k_3/2}\norm{P_{k_3}w}_{F_{k_3}(T)}\norm{P_{k'}v_2}_{F_{k'}(T)}\norm{P_{k'}v_2}_{F_{k'}(T)}\norm{P_{k'}w}_{F_{k'}(T)}\\
&+ \sum_{\substack{|k-k_2|\le 5\\k_3 \le k_2-10\\k_1 \le k_3-10}}2^{2k}\norm{P_{k_1}v_2}_{F_{k_1}(T)}\norm{P_{k_3}w}_{F_{k_3}(T)}\norm{P_{k_2}v_2}_{F_{k_2}(T)}\norm{P_{k}w}_{F_k(T)}\\
&+ \sum_{\substack{|k-k_2|\le 5\\k_1 \le k_2-10\\k_3 \le k_1-10}}2^{2k}\norm{P_{k_1}v_2}_{F_{k_1}(T)}\norm{P_{k_3}w}_{F_{k_3}(T)}\norm{P_{k_2}v_2}_{F_{k_2}(T)}\norm{P_{k}w}_{F_k(T)}\\
&+ \sum_{\substack{|k-k_2|\le 5\\k_1 \le k_2-10\\|k_1-k_3|\le 5}}2^{2k}2^{\frac12k_1}\norm{P_{k_1}v_2}_{F_{k_1}(T)}\norm{P_{k_3}w}_{F_{k_3}(T)}\norm{P_{k_2}v_2}_{F_{k_2}(T)}\norm{P_{k}w}_{F_k(T)}\\
\lesssim&~{} \left(\norm{v_2}_{F^{\frac74}(T)}^2+\norm{v_2}_{F^{\frac12+}(T)}\norm{v_2}_{F^{2}(T)}\right)\sum_{|k-k'|\le 5}\norm{P_{k'}w}_{F_{k'}(T)}^2 \\
&+ \left(\norm{v_2}_{F^{\frac54+}(T)}^2\norm{w}_{F^{s}(T)}+\norm{v_2}_{F^{\frac12}(T)}\norm{v_2}_{F^{s+2+}(T)}\norm{w}_{F^{0}(T)}\right)2^{-sk-\varepsilon k}\norm{P_kw}_{F_k(T)}.
\end{aligned}\]
This implies
\begin{equation}\label{eq:energy1-3.20}
\begin{aligned}
\sum_{k\ge 1}2^{2sk}\sup_{t_k \in [0,T]}\sum_{k_1,k_2,k_3 \ge 0}\eqref{eq:energy1-3.18} &\lesssim \left(\norm{v_2}_{F^{\frac74}(T)}^2+ \norm{v_2}_{F^{\frac12+}(T)}\norm{v_2}_{F^{2}(T)}\right)\norm{w}_{F^s(T)}^2\\
&+\norm{v_2}_{F^{\frac12}(T)}\norm{v_2}_{F^{2s}(T)}\norm{w}_{F^0}\norm{w}_{F^s(T)},
\end{aligned}
\end{equation}
for $s > 2$ and 
\begin{equation}\label{eq:energy1-3.21}
\begin{aligned}
\sum_{k\ge 1}\sup_{t_k \in [0,T]}\sum_{k_1,k_2,k_3 \ge 0}\eqref{eq:energy1-3.18} &\lesssim \left(\norm{v_2}_{F^{\frac74}(T)}^2+\norm{v_2}_{F^{\frac12+}(T)}\norm{v_2}_{F^{2}(T)}\right)\norm{w}_{F^0(T)}^2\\
&+\norm{v_2}_{F^{\frac12}(T)}\norm{v_2}_{F^{2+}(T)}\norm{w}_{F^0(T)}^2
\end{aligned}
\end{equation}
at $L^2$-level. 

For \eqref{eq:energy1-3.19}, since derivatives are distributed to several functions, this term is weaker than \eqref{eq:energy1-3.18} in some sense and hence we omit to estimate \eqref{eq:energy1-3.19}.

For the rest cubic terms in $\wt{E}_{2,1}, \wt{E}_{2,2}, \wt{E}_{3,1}$ and $\wt{E}_{3,2}$, it is enough to consider 
\begin{equation}\label{eq:energy1-3.22}
\sum_{k_1,k_2 \ge 0}\left|\int_0^{t_k}\sum_{n,\overline{\N}_{3,n}}\chi_{k_1}(n_1)n_1\wh{v}_2(n_1)\chi_{k_2}(n_2)n_2^3\wh{v}_2(n_2)\chi_{k}(n_3)\wh{w}(n_3)\chi_k(n)\frac1n\wh{w}(n) \;dt \right|
\end{equation}
and
\begin{equation}\label{eq:energy1-3.23}
\sum_{k_1,k_2 \ge 0}\left|\int_0^{t_k}\sum_{n,\overline{\N}_{3,n}}\chi_{k_1}(n_1)\wh{v}_2(n_1)\chi_{k_2}(n_2)n_2^3\wh{v}_2(n_2)\chi_{k}(n_3)\wh{w}(n_3)\chi_k(n)\wh{w}(n) \;dt \right|
\end{equation}
under the assumption that $k_1 \le k_2$.\footnote{In fact, since there are two more derivatives in $\wt{E}_{2,1}$ and $\wt{E}_{3,1}$ than $\wt{E}_{2,2}$ and $\wt{E}_{3,2}$, \eqref{eq:energy1-3.22} and \eqref{eq:energy1-3.23} dominate all terms in $\wt{E}_{2,2}$ and $\wt{E}_{3,2}$.} Moreover, we may consider \eqref{eq:energy1-3.22} and \eqref{eq:energy1-3.23} for $k \le k_1 - 10$ and $k_1 \le k -10$, respectively. If $k_2 \le k -10$, by using \eqref{eq:energy1-1.3} and \eqref{eq:energy1-1.4}, we obtain
\[\begin{aligned}
&\sum_{\substack{k_2 \le k - 10\\ k_1 \le k_2 - 10}}2^{3k_2}2^{-k}\norm{P_{k_1}v_2}_{F_{k_1}(T)}\norm{P_{k_2}v_2}_{F_{k_2}(T)}\norm{P_{k}w}_{F_k(T)}^2\\ 
&+ \sum_{\substack{k_2 \le k - 10\\ |k_1-k_2|\le 5}}2^{\frac72k_2}2^{-k}\norm{P_{k_1}v_2}_{F_{k_1}(T)}\norm{P_{k_2}v_2}_{F_{k_2}(T)}\norm{P_{k}w}_{F_k(T)}^2\\
\lesssim&~{} \norm{v_2}_{F^0(T)}\norm{v_2}_{F^{2+}(T)}\norm{P_kw}_{F_k(T)}^2 + \norm{v_2}_{F^{\frac54}(T)}^2\norm{P_kw}_{F_k(T)}^2.
\end{aligned}\]
If $k_1 \le k - 10$ and $|k-k_2| \le 5$, by using \eqref{eq:energy1-1.2}, we have
\[\begin{aligned}
\sum_{\substack{k_1 \le k - 10\\ |k_2 -k| \le 5}}2^{2k}2^{k_1/2}\norm{P_{k_1}v_2}_{F_{k_1}(T)}&\norm{P_{k_2}v_2}_{F_{k_2}(T)}\norm{P_{k}w}_{F_k(T)}^2 \\
&\lesssim \norm{v_2}_{F^{\frac12+}(T)}\norm{v_2}_{F^{2}(T)}\norm{P_kw}_{F_k(T)}^2.
\end{aligned}\]
Otherwise, we use \eqref{eq:energy1-1.1} and \eqref{eq:energy1-1.3} to obtain that
\[\begin{aligned}
&\sum_{|k_1 - k|\le 5}2^{\frac72k}\norm{P_{k_1}v_2}_{F_{k_1}(T)}\norm{P_{k_2}v_2}_{F_{k_2}(T)}\norm{P_{k}w}_{F_k(T)}^2\\ 
&+ \sum_{\substack{k \le k_1 - 10\\ |k_1-k_2|\le 5}}2^{-\frac12k}2^{3k_1}\norm{P_{k_1}v_2}_{F_{k_1}(T)}\norm{P_{k_2}v_2}_{F_{k_2}(T)}\norm{P_{k}w}_{F_k(T)}^2\\
\lesssim&~{} \norm{v_2}_{F^{\frac74}(T)}^2\norm{P_kw}_{F_k(T)}^2 + \norm{v_2}_{F^{\frac32}(T)}\norm{v_2}_{F^{\frac32}(T)}\norm{P_kw}_{F_k(T)}^2.
\end{aligned}\]
Above results imply that
\begin{equation}\label{eq:energy1-3.24}
\sum_{k\ge 1}2^{2sk}\sup_{t_k \in [0,T]} \left(\eqref{eq:energy1-3.22} + \eqref{eq:energy1-3.23} \right) \lesssim \left( \norm{v_2}_{F^{\frac74}(T)}^2+\norm{v_2}_{F^{\frac12+}(T)}\norm{v_2}_{F^{2+}(T)}\right)\norm{w}_{F^s(T)}^2,
\end{equation}
whenever $s \ge 0$.

Now, we concentrate on quintic and septic terms in \eqref{eq:energy1-3.3}. We first estimate quintic terms in $\wt{E}_{1}$. Since we can observe the symmetry of functions and one derivative is taken on $P_kw$, it suffices to consider
\begin{equation}\label{eq:energy1-3.25}
\left|\sum_{n,\overline{\N}_{5,n}}\prod_{j=1}^{4} \chi_{k_j}(n_j)\wh{v}_2(n_j)\chi_{k_5}(n_5)\wh{w}(n_5)\chi_k^2(n)n\wh{w}(n) \;dt\right|,
\end{equation}
under the assumption that $k_1 \le k_2 \le k_3 \le k_4$ and $k_4, k_5 \le k$. When $k_4 \le k_5 - 10$, similarly as in the estimation of $A_8(k)$, the one of the following cases should happen:
\[|n_4| \ll |n|^{4/5},\]
\[|n_4| \gtrsim |n|^{4/5} \mbox{ and } |n_3| \sim |n_4|\]
and
\[|n_4| \gtrsim |n|^{4/5} \mbox{ and } |n_3| \ll |n_4|.\]
We use $2^{-j_{max}/2} \lesssim 2^{-2k}$ in \eqref{eq:energy1-2.11}, $|n| \lesssim |n_3|^{5/8}|n_4|^{5/8}$ and $2^{-j_{max}/2} \lesssim 2^{-(2+\frac25)k}$ in \eqref{eq:energy1-2.11} for each case, respectively, to obtain
\[\sum_{\substack{|k-k_5|\le 5\\k_4 \le k_5 - 10\\0 \le k_1 \le k_2 \le k_3 \le k_4}} \eqref{eq:energy1-3.25} \lesssim \norm{v_2}_{F^{\frac12+}(T)}^2\norm{v_2}_{F^{\frac98}(T)}^2\sum_{|k-k'|\le 5}\norm{P_{k'}w}_{F_{k'}(T)}^2\]
by using \eqref{eq:energy1-2.11}. If $|k_5 -k_4| \le 5$ and $k_3 \le k_4 - 10$, one derivative on $P_kw$ can be moved to $P_{k_4}v_2$, and hence we get from \eqref{eq:energy1-2.11} that
\[\sum_{\substack{|k-k_5|\le 5\\|k_4-k_5| \le 5\\0 \le k_1 \le k_2 \le k_3 \le k_4 - 10}} \eqref{eq:energy1-3.25} \lesssim \norm{v_2}_{F^{\frac12}(T)}^3\norm{v_2}_{F^{\frac32+}(T)}\sum_{|k-k'|\le 5}\norm{P_{k'}w}_{F_{k'}(T)}^2.\]
Otherwise, similarly as the case when $k_4 \le k_5- 10$, we obtain
\[\begin{aligned}
\sum_{\substack{|k-k_5|\le 5\\k_4 \le k_5 - 10\\0 \le k_1 \le k_2 \le k_3 \le k_4}} \eqref{eq:energy1-3.25} \lesssim&~{} \norm{v_2}_{F^{\frac12+}(T)}^2\norm{v_2}_{F^{\frac98}(T)}\norm{w}_{F^{\frac98}(T)}\sum_{|k-k'|\le 5}\norm{P_{k'}v_2}_{F_{k'}(T)}\norm{P_kw}_{F_{k'}(T)}\\
&+\norm{v_2}_{F^{\frac12}(T)}^2\norm{v_2}_{F^{\frac32+}(T)}\norm{w}_{F^{\frac12}(T)}\sum_{|k-k'|\le 5}\norm{P_{k'}v_2}_{F_{k'}(T)}\norm{P_kw}_{F_{k'}(T)}
\end{aligned}\]
at worst. Hence, we conclude that
\begin{equation}\label{eq:energy1-3.26}
\begin{aligned}
\sum_{k\ge 1}2^{2sk}\sup_{t_k \in [0,T]}\sum_{k_1,k_2,k_3,k_4,k_5 \ge 0} \eqref{eq:energy1-3.25} &\lesssim \norm{v_2}_{F^{\frac12+}(T)}^2\norm{v_2}_{F^{\frac32+}(T)}^2\norm{w}_{F^s(T)}^2\\
&+\norm{v_2}_{F^{\frac12+}(T)}^2\norm{v_2}_{F^{\frac32+}(T)}\norm{v_2}_{F^{s}(T)}\norm{w}_{F^s(T)}^2,
\end{aligned}
\end{equation}
whenever $s \ge \frac98$, and
\begin{equation}\label{eq:energy1-3.27}
\sum_{k\ge 1}\sup_{t_k \in [0,T]}\sum_{k_1,k_2,k_3,k_4,k_5 \ge 0} \eqref{eq:energy1-3.25} \lesssim \norm{v_2}_{F^{\frac12+}(T)}^2\norm{v_2}_{F^{\frac32+}(T)}^2\norm{w}_{F^0(T)}^2
\end{equation}
at $L^2$-level.

\begin{remark}\label{rem:resonant2}
From Remark \ref{rem:modified energy} and \ref{rem:resonant1}, we have to check whether quintic resonant interaction components in $\wt{E}_{2}$ and $\wt{E}_{3}$ really vanish or not. When we use the full modified energy in \eqref{eq:new energy1-5} and the full nonlinear term in \eqref{eq:energy-nonlinear3-1}, there should be 9-resonant interaction components in $\wt{E}_2$ as the worst term as follows:
\begin{equation}\label{eq:quintic resonant1}
\sum_{n,\overline{n}\in \Gamma_4(\Z)}\wh{v}_i(n_1)\wh{v}_i(n_2)\chi_k(n-n_{3,1}-n_{3,2})\wh{v}_i(n_{3,1})\wh{v}_i(n_{3,2})\chi_k(n)n|\wh{w}(n)|^2 \hspace{2em} i=1,2,
\end{equation}
\begin{equation}\label{eq:quintic resonant2}
\sum_{n,\overline{n}\in \Gamma_4(\Z)}\wh{v}_1(n_1)\wh{v}_2(n_2)\chi_k(n-n_{3,1}-n_{3,2})\wh{v}_1(n_{3,1})\wh{v}_2(n_{3,2})\chi_k(n)n|\wh{w}(n)|^2, \hspace{2em}
\end{equation}
\begin{equation}\label{eq:quintic resonant3}
\sum_{n,\overline{n}\in \Gamma_4(\Z)}\wh{v}_1(n_1)\wh{v}_2(n_2)\chi_k(n-n_{3,1}-n_{3,2})\wh{v}_i(n_{3,1})\wh{v}_i(n_{3,2})\chi_k(n)n|\wh{w}(n)|^2 \hspace{2em} i=1,2,
\end{equation}
\begin{equation}\label{eq:quintic resonant4}
\sum_{n,\overline{n}\in \Gamma_4(\Z)}\wh{v}_i(n_1)\wh{v}_i(n_2)\chi_k(n-n_{3,1}-n_{3,2})\wh{v}_1(n_{3,1})\wh{v}_2(n_{3,2})\chi_k(n)n|\wh{w}(n)|^2 \hspace{2em} i=1,2
\end{equation}
and for $i=1,j=2$ or $i=2,j=1$,
\begin{equation}\label{eq:quintic resonant5}
\sum_{n,\overline{n}\in \Gamma_4(\Z)}\wh{v}_i(n_1)\wh{v}_i(n_2)\chi_k(n-n_{3,1}-n_{3,2})\wh{v}_j(n_{3,1})\wh{v}_j(n_{3,2})\chi_k(n)n|\wh{w}(n)|^2, \hspace{2em}
\end{equation}
where $\overline{n} = (n_1,n_2,n_{3,1},n_{3,2})$. In view of \eqref{eq:quintic resonant}, we can easily know that \eqref{eq:quintic resonant1} and \eqref{eq:quintic resonant2} vanish, since those are purely real numbers. Furthermore, since we chose $\wt{\alpha}_{i,j}$ as the same number for all $1 \le i \le j \le 2$, by combining some of terms in \eqref{eq:quintic resonant3}, \eqref{eq:quintic resonant4} and \eqref{eq:quintic resonant5}, we can make all terms vanish. For example, we have from the same argument as in \eqref{eq:quintic resonant} that
\[\begin{aligned}
&\overline{\sum_{\overline{n}\in \Gamma_4(\Z)}\wh{v}_1(n_1)\wh{v}_2(n_2)\chi_k(n-n_{3,1}-n_{3,2})\wh{v}_i(n_{3,1})\wh{v}_i(n_{3,2})\chi_k(n)n|\wh{w}(n)|^2}\\
+&\overline{\sum_{\overline{n}\in \Gamma_4(\Z)}\wh{v}_i(n_1)\wh{v}_i(n_2)\chi_k(n-n_{3,1}-n_{3,2})\wh{v}_1(n_{3,1})\wh{v}_2(n_{3,2})\chi_k(n)n|\wh{w}(n)|^2}\\
=&\sum_{\overline{n}\in \Gamma_4(\Z)}\wh{v}_i(n_1)\wh{v}_i(n_2)\chi_k(n-n_{3,1}-n_{3,2})\wh{v}_1(n_{3,1})\wh{v}_2(n_{3,2})\chi_k(n)n|\wh{w}(n)|^2\\
+&\sum_{\overline{n}\in \Gamma_4(\Z)}\wh{v}_1(n_1)\wh{v}_2(n_2)\chi_k(n-n_{3,1}-n_{3,2})\wh{v}_i(n_{3,1})\wh{v}_i(n_{3,2})\chi_k(n)n|\wh{w}(n)|^2,
\end{aligned}\]
for $i=1,2$. Of course, \eqref{eq:quintic resonant5} can vanish by summing $i=1,j=2$ term and $i=2,j=1$ term. 

For the other quintic resonant interaction components, it suffices to consider 
\begin{equation}\label{eq:quintic resonant6}
\left|\int_0^{t_k}\sum_{n,\overline{n}\in \Gamma_4(\Z)}\wh{v}_2(n_1)\wh{v}_2(n_2)\chi_k(n-n_{3,1}-n_{3,2})\wh{v}_2(n_{3,1})\wh{w}(n_{3,2})\chi_k(n)n\wh{v}_2(-n)\wh{w}(n)\; dt\right|.
\end{equation}
Once we use the similar way as in \eqref{eq:energy1-1.5} with \eqref{eq:tri-block estimate-a1}, we can obtain
\[\begin{aligned}
\eqref{eq:quintic resonant6} &\lesssim \sum_{k_1,k_2,k_{3,1},k_{3,2} \ge 0}2^{(k_{min}+k_{thd})/2}\norm{P_{k_1}v_2}_{F_{k_1}(T)}\norm{P_{k_2}v_2}_{F_{k_2}(T)}\norm{P_{k_{3,1}}v_2}_{F_{1,k_{3,1}}(T)}\\
&\hspace{9em} \times\norm{P_{k_{3,2}}w}_{F_{1,k_{3,2}}(T)}\sum_{|k-k'|\le 5}2^{k}\norm{P_{k'}v_2}_{L_T^{\infty}L_x^2}\norm{P_{k'}w}_{L_T^{\infty}L_x^2}\\
&\lesssim \norm{v_2}_{F^{\frac12+}(T)}^3\norm{w}_{F^0(T)}\sum_{|k-k'|\le 5}2^{k}\norm{P_{k'}v_2}_{L_T^{\infty}L_x^2}\norm{P_{k'}w}_{L_T^{\infty}L_x^2}.
\end{aligned}\]
By using the embedding $F^s(T) \hookrightarrow C_TH^s$ for $s \ge 0$, we conclude that
\begin{equation}\label{eq:quintic resonant7}
\sum_{k\ge 1}2^{2sk}\sup_{t_k \in [0,T]}\eqref{eq:quintic resonant6} \lesssim \norm{v_2}_{F^{\frac12+}(T)}^3\norm{w}_{F^0(T)}\norm{v_2}_{F^{s}(T)}\norm{w}_{F^{s+1}(T)},
\end{equation}
whenever $s \ge 0$, and
\begin{equation}\label{eq:quintic resonant7}
\sum_{k\ge 1}\sup_{t_k \in [0,T]}\eqref{eq:quintic resonant6} \lesssim \norm{v_2}_{F^{\frac12+}(T)}^3\norm{v_2}_{F^{1}(T)}\norm{w}_{F^0(T)}^2
\end{equation}
at $L^2$-level.

The same argument also holds for the quintic resonant terms in $\wt{E}_3$.
\end{remark}

For quintic terms in $\wt{E}_{2}$ and $\wt{E}_{3}$, by the symmetries of $n_1,n_2$ and $n_3,n$ variables, respectively, it is enough to consider
\begin{equation}\label{eq:E4.3}
\sum_{k_1,k_2 \ge 0}\left|\int_0^{t_k}\sum_{n,\overline{\N}_{3,n}}\chi_{k_1}(n_1)\wh{v}_2(n_1)\chi_{k_2}(n_2)\wh{N}(v_2)(n_2)\psi_k(n_3)\frac{1}{n_3}\wh{w}(n_3)\chi_k(n)\frac1n\wh{w}(n) \; dt\right|
\end{equation}
and
\begin{equation}\label{eq:E4.4}
\sum_{k_1,k_2 \ge 0}\left|\int_0^{t_k}\sum_{n,\overline{\N}_{3,n}}\chi_{k_1}(n_1)\wh{v}_2(n_1)\chi_{k_2}(n_2)\wh{v}_2(n_2)\psi_k(n_3)\frac{1}{n_3}\wh{v}_2(n_3)\chi_k(n)\frac1n\wh{N}(v_1,v_2,w)(n)\; dt\right|.
\end{equation}
For $N_{1,1}(v_2,w)$ in \eqref{eq:energy-nonlinear1-2}, we use the same way as \eqref{eq:E4-1} and \eqref{eq:E4-2} to estimate \eqref{eq:E4.3} and \eqref{eq:E4.4}, and then we obtain 
\begin{equation}\label{eq:quintic bound2}
\sum_{k\ge 1}2^{2sk}\sup_{t_k \in [0,T]} \left(\eqref{eq:E4.3} + \eqref{eq:E4.4}\right) \lesssim \norm{v_2}_{F^{\frac12}(T)}^4\norm{w}_{F^s(T)}^2,
\end{equation}
for $s \ge 0$.

For the rest quintic terms, it suffices to consider
\begin{equation}\label{eq:energy1-3.28}
\begin{aligned}
&\Big|\int_0^{t_k}\sum_{n,\overline{\N}_{3,n}\N_{3,n_2}}\chi_{k_1}(n_1)\wh{v}_2(n_1)\chi_{k_{2,1}}(n_{2,1})\wh{v}_2(n_{2,1})\chi_{k_{2,2}}(n_{2,2})\wh{v}_2(n_{2,2})\\
&\hspace{5em}\times\chi_{k_{2,3}}(n_{2,3})n_{2,3}^3\wh{v}_2(n_{2,3})\chi_k(n_3)\frac{1}{n_3}\wh{w}(n_3)\chi_k(n)\frac1n\wh{w}(n) \;dt\Big|
\end{aligned}
\end{equation}
under the assumption that $k_{2,1} \le k_{2,2} \le k_{2,3}$, and
\begin{equation}\label{eq:energy1-3.29}
\begin{aligned}
&\Big|\int_0^{t_k}\sum_{n,\overline{\N}_{3,n}\N_{3,n_3}}\chi_{k_1}(n_1)\wh{v}_2(n_1)\chi_{k_2}(n_2)\wh{v}_2(n_2)\chi_{k}(n_3)\chi_{k_{3,1}}(n_{3,1})\wh{v}_2(n_{3,1})\\
&\hspace{5em}\times\chi_{k_{3,2}}(n_{3,2})\wh{v}_2(n_{3,2})\chi_{k_{3,3}}(n_{3,3})n_{3,3}^2\wh{w}(n_{3,3})\chi_k(n)\frac1n\wh{w}(n) \;dt\Big|
\end{aligned}
\end{equation}
under the assumption that $k_{3,1} \le k_{3,2} \le k_{3,3}$. But, both \eqref{eq:energy1-3.28} and \eqref{eq:energy1-3.29} can be controlled by the exact same argument as in the estimation of the quintic term in $\wt{E}_{1}$, and hence we conclude that
\begin{equation}\label{eq:energy1-3.30}
\begin{aligned}
\sum_{k\ge 1}2^{2sk}\sup_{t_k \in [0,T]}&\left(\sum_{k_1,k_{2,1},k_{2,2},k_{2,3}\ge 0} \eqref{eq:energy1-3.28}+\sum_{k_1,k_2,k_{3,1},k_{3,2},k_{3,3} \ge 0}\eqref{eq:energy1-3.29}\right)\\
&\hspace{5em}\lesssim \norm{v_2}_{F^{\frac12+}(T)}^2\norm{v_2}_{F^{\frac32+}(T)}^2\norm{w}_{F^s(T)}^2\\
&\hspace{6em}+\norm{v_2}_{F^{\frac12+}(T)}^2\norm{v_2}_{F^{\frac32+}(T)}\norm{v_2}_{F^{s}(T)}\norm{w}_{F^s(T)}^2,
\end{aligned}
\end{equation}
whenever $s \ge \frac98$, and
\begin{equation}\label{eq:energy1-3.31}
\begin{aligned}
\sum_{k\ge 1}\sup_{t_k \in [0,T]} &\left(\sum_{k_1,k_{2,1},k_{2,2},k_{2,3}\ge 0} \eqref{eq:energy1-3.28}+\sum_{k_1,k_2,k_{3,1},k_{3,2},k_{3,3} \ge 0}\eqref{eq:energy1-3.29}\right)\\
&\hspace{9em} \lesssim \norm{v_2}_{F^{\frac12+}(T)}^2\norm{v_2}_{F^{\frac32+}(T)}^2\norm{w}_{F^0(T)}^2
\end{aligned}
\end{equation}
at $L^2$-level.

Finally, we estimate septic terms in $\wt{E}_{2}$ and $\wt{E}_{3}$. From \eqref{eq:new energy1-3} and the symmetry of functions, it is enough to consider 
\begin{equation}\label{eq:energy1-3.32}
\left|\int_0^{t_k}\sum_{\overline{n} \in \Gamma_8(\Z)}n_6\prod_{j=1}^{6}\chi_{k_j}(n_j)\wh{v}_2(n_j)\chi_{k}(n_7)\frac{1}{n_7}\wh{w}(n_7)\chi_k(n)\frac1n\wh{w}(n) \;dt\right|
\end{equation}
and
\begin{equation}\label{eq:energy1-3.33}
\left|\int_0^{t_k}\sum_{\overline{n} \in \Gamma_8(\Z)}\prod_{j=1}^{6}\chi_{k_j}(n_j)\wh{v}_2(n_j)\chi_{k_7}(n_7)\wh{w}(n_7)\chi_k(n)\frac1n\wh{w}(n) \;dt\right|
\end{equation}
under the assumption that $k_1 \le k_2 \le k_3 \le k_4 \le k_5 \le k_6$. Similarly as in the proof of Proposition \ref{prop:energy1-2}, we obtain by using \eqref{eq:energy1-2.15} that
\begin{equation}\label{eq:energy1-3.34}
\begin{aligned}
&\sum_{k\ge 1}2^{2sk}\sup_{t_k \in [0,T]}\left(\sum_{k_1,k_2,k_3,k_4,k_5,k_6,k_7 \ge 0} \eqref{eq:energy1-3.32}+\sum_{k_1,k_2,k_3,k_4,k_5,k_6\ge0}\eqref{eq:energy1-3.33}\right)\\
&\hspace{15em}\lesssim \norm{v_2}_{F^{\frac12+}(T)}^5\left(\norm{v_2}_{F^{\frac12+}(T)}+\norm{v_2}_{F^{s}(T)}\right)\norm{w}_{F^s(T)}^2\,
\end{aligned}
\end{equation}
whenever $s \ge 0$.

Therefore, by gathering \eqref{eq:energy1-3.10}, \eqref{eq:energy1-3.12}, \eqref{eq:energy1-3.13}, \eqref{eq:energy1-3.17}, \eqref{eq:energy1-3.20}, \eqref{eq:energy1-3.24}, \eqref{eq:energy1-3.26}, \eqref{eq:quintic resonant7}, \eqref{eq:quintic bound2}, \eqref{eq:energy1-3.30} and \eqref{eq:energy1-3.34}, and by recalling the definition of the modified energy \eqref{eq:new energy1-5}, we obtain \eqref{eq:energy1-3.2}. Also, by gathering \eqref{eq:energy1-3.10}, \eqref{eq:energy1-3.8}, \eqref{eq:energy1-3.13}, \eqref{eq:energy1-3.17}, \eqref{eq:energy1-3.21}, \eqref{eq:energy1-3.24}, \eqref{eq:energy1-3.27}, \eqref{eq:quintic resonant7}, \eqref{eq:quintic bound2}, \eqref{eq:energy1-3.31} and \eqref{eq:energy1-3.34}, we get \eqref{eq:energy1-3.1}. 
\end{proof}

As a corollary to Lemma \ref{lem:comparable energy1-2} and Proposition \ref{prop:energy1-3}, we obtain an \emph{a priori} bound of $\norm{w}_{E^s(T)}$ for the difference of two solutions.
\begin{corollary}\label{cor:energy1-3}
Let $s > 2$ and $T \in (0,1]$. Then, there exists $0 < \delta \ll 1$ such that
\begin{equation}\label{eq:energy1-3.1.1}
\begin{aligned}
\norm{w}_{E^0(T)}^2 \lesssim&~{} (1+ \norm{v_{1,0}}_{H^{\frac12+}}^2+\norm{v_{1,0}}_{H^{\frac12+}}\norm{v_{2,0}}_{H^{\frac12+}}+\norm{v_{2,0}}_{H^{\frac12+}}^2)\norm{w_0}_{L_x^2}^2\\
&+\left(\norm{v_1}_{F^{2+}(T)}^2+\norm{v_1}_{F^{2+}(T)}\norm{v_2}_{F^{2+}(T)}+\norm{v_2}_{F^{2+}(T)}^2\right)\norm{w}_{F^0(T)}^2\\
&+\Big(\sum_{j=0}^{4}\norm{v_1}_{F^{\frac32+}(T)}^{4-j}\norm{v_2}_{F^{\frac32+}(T)}^j\Big)\norm{w}_{F^0(T)}^2\\
&+\Big(\sum_{j=0}^{6}\norm{v_1}_{F^{\frac12+}(T)}^{6-j}\norm{v_2}_{F^{\frac12+}(T)}^j\Big)\norm{w}_{F^0(T)}^2.
\end{aligned} 
\end{equation}
and
\begin{equation}\label{eq:energy1-3.2.1}
\begin{aligned}
\norm{w}_{E^s(T)}^2 \lesssim&~{} (1+ \norm{v_{1,0}}_{H^{\frac12+}}^2+\norm{v_{1,0}}_{H^{\frac12+}}\norm{v_{2,0}}_{H^{\frac12+}}+\norm{v_{2,0}}_{H^{\frac12+}}^2)\norm{w_0}_{H^s}^2\\
&+\left(\norm{v_1}_{F^s(T)}^2+\norm{v_1}_{F^s(T)}\norm{v_2}_{F^s(T)}+\norm{v_2}_{F^s(T)}^2\right)\norm{w}_{F^s(T)}^2\\
&+\left(\sum_{i,j=1,2}\norm{v_i}_{F^{\frac12}(T)}\norm{v_j}_{F^{2s}(T)}\right)\norm{w}_{F^0(T)}\norm{w}_{F^s(T)}\\
&+\Big(\sum_{j=0}^{4}\norm{v_1}_{F^{s}(T)}^{4-j}\norm{v_2}_{F^{s}(T)}^j\Big)\norm{w}_{F^s(T)}^2\\
&+\Big(\sum_{j=0}^{3}\norm{v_1}_{F^{s}(T)}^{3-j}\norm{v_2}_{F^{s}(T)}^j\Big)(\norm{v_1}_{F^{2s}(T)}+\norm{v_2}_{F^{2s}(T)})\norm{w}_{F^0(T)}\norm{w}_{F^s(T)}\\
&+\Big(\sum_{j=0}^{6}\norm{v_1}_{F^{s}(T)}^{6-j}\norm{v_2}_{F^{s}(T)}^j\Big)\norm{w}_{F^s(T)}^2,
\end{aligned}
\end{equation} 
for solutions $w \in C([-T,T];H^{\infty}(\T))$ to \eqref{eq:5mkdv8} and $v_1, v_2 \in C([-T,T];H^{\infty}(\T))$ to \eqref{eq:5mkdv4} satisfying $\norm{v_1}_{L_T^{\infty}H_x^{\frac12+}} < \delta$ and $\norm{v_2}_{L_T^{\infty}H_x^{\frac12+}} < \delta$.
\end{corollary}

\section{Proof of Theorem \ref{thm:main}}\label{sec:main}
In this section, we prove the Theorem \ref{thm:main}. The main ingredients are the multilinear estimates and energy estimates which are shown in Sections \ref{sec:nonlinear} and \ref{sec:energy} respectively. The method is the compactness argument which follows basically the idea of Ionescu, Kenig and Tataru \cite{IKT2008}. 
\begin{proposition}\label{prop:small data1-1}
Let $s \ge 0$, $T \in (0,1]$ and $v \in F^s(T)$. Then
\begin{equation}\label{eq:small data1.1}
\sup_{t \in [-T,T]} \norm{v(t)}_{H^s(\T)} \lesssim \norm{v}_{F^s(T)}. 
\end{equation} 
\end{proposition}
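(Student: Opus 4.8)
The plan is to derive the embedding $F^s(T)\hookrightarrow C([-T,T];H^s(\T))$ from the corresponding dyadic embedding $F_k(T)\hookrightarrow C([-T,T];L^2)$ with a constant uniform in $k$, and then to square and sum in $k$ with the weight $2^{2sk}$. Concretely, fix $v\in F^s(T)$. For each $k\in\Z_+$ it suffices to prove
\begin{equation*}
\sup_{t\in[-T,T]}\norm{P_k v(t)}_{L^2}\lesssim \norm{P_k v}_{F_k(T)},
\end{equation*}
with an absolute implicit constant. Once this is in hand, for each fixed $t$ one has $\norm{v(t)}_{H^s}^2=\sum_{k\ge0}2^{2sk}\norm{P_kv(t)}_{L^2}^2\lesssim\sum_{k\ge0}2^{2sk}\norm{P_kv}_{F_k(T)}^2=\norm{v}_{F^s(T)}^2$, and taking the supremum over $t\in[-T,T]$ (the right side being independent of $t$) gives \eqref{eq:small data1.1}.

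For the dyadic estimate, I would unwind the definitions. Pick an extension $\wt{f}$ of $P_kv$ with $\norm{\wt f}_{F_k}\le 2\norm{P_kv}_{F_k(T)}$. Fix $t_0\in[-T,T]$ and choose $t_k=t_0$ in the definition of the $F_k$-norm, so that $g:=\ft[\wt f\cdot\eta_0(2^{2k}(t-t_0))]$ satisfies $\norm{g}_{X_k}\le\norm{\wt f}_{F_k}$. Since $\eta_0(2^{2k}(t_0-t_0))=\eta_0(0)=1$, we have $(\wt f\cdot\eta_0(2^{2k}(\cdot-t_0)))(t_0)=\wt f(t_0)=P_kv(t_0)$. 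Hence it is enough to control $\norm{h(t_0)}_{L^2_x}$ in terms of $\norm{\ft[h]}_{X_k}$ for a function $h$ with spatial frequency supported in $I_k$, namely to show $\sup_{t}\norm{h(t)}_{L^2_x}\lesssim\norm{\ft h}_{X_k}$. This is the standard fact that the $X^{s,1/2,1}$-type space $X_k$ embeds into $C_tL^2_x$: writing $\ft h(\tau,n)=\sum_{j\ge0}\eta_j(\tau-\mu(n))\ft h(\tau,n)$, one has $h(t,\cdot)=\sum_j e^{it\mu(n)}\ast_{\tau}(\text{piece})$, and by Minkowski and Cauchy–Schwarz in $\tau$,
\begin{equation*}
\sup_t\norm{h(t)}_{L^2_x}\le\sum_{j\ge0}\normo{\int_\R|\eta_j(\tau-\mu(n))\ft h(\tau,n)|\,d\tau}_{\ell^2_n}\lesssim\sum_{j\ge0}2^{j/2}\norm{\eta_j(\tau-\mu(n))\ft h}_{L^2_\tau\ell^2_n}=\norm{\ft h}_{X_k},
\end{equation*}
where the last inequality in each summand is Cauchy–Schwarz over the interval $|\tau-\mu(n)|\lesssim 2^j$ of length $O(2^j)$. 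Actually the third display in Lemma \ref{lem:prop of Xk} already records the needed bound $\norm{\int_\R|f_k(\tau',n)|\,d\tau'}_{\ell^2_n}\lesssim\norm{f_k}_{X_k}$, so I would simply invoke that. Combining, $\norm{P_kv(t_0)}_{L^2}\lesssim\norm{g}_{X_k}\le\norm{\wt f}_{F_k}\le2\norm{P_kv}_{F_k(T)}$, and since $t_0\in[-T,T]$ was arbitrary the dyadic claim follows.

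I do not expect a serious obstacle here; this is a routine "function space" lemma. The only mild subtlety is bookkeeping around the localization in time: one must choose the free parameter $t_k$ in the $F_k$-norm to coincide with the time $t_0$ at which the $L^2$ norm is being evaluated, and use $\eta_0(0)=1$ so that multiplication by the cutoff does not destroy the value at $t_0$. A second minor point is the uniformity of all implicit constants in $k$ — this is automatic because the bound $\norm{\int|f_k|\,d\tau}_{\ell^2_n}\lesssim\norm{f_k}_{X_k}$ from Lemma \ref{lem:prop of Xk} has a $k$-independent constant, and the sole use of the frequency support $I_k$ is to keep the $n$-sum an $\ell^2_n$ sum. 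Finally one passes from $F_k(T)$ to $F^s(T)$ purely by Littlewood–Paley orthogonality and the definition of $\norm{\cdot}_{F^s(T)}$, with the supremum in $t$ pulled outside the $k$-sum since the resulting bound no longer depends on $t$.
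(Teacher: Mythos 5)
Your argument is correct and is essentially the standard proof of this embedding, which is what the paper (via its citation to Appendix A of GKK2013) relies on: reduce to the dyadic statement $\sup_{t}\norm{P_kv(t)}_{L^2}\lesssim\norm{P_kv}_{F_k(T)}$, choose the free time center $t_k=t_0$ in the $F_k$-norm so that the cutoff $\eta_0(2^{2k}(\cdot-t_0))$ equals $1$ at $t_0$, and then invoke the pointwise-in-time bound $\norm{\int_\R|f_k(\tau,n)|\,d\tau}_{\ell^2_n}\lesssim\norm{f_k}_{X_k}$ (the third display of Lemma \ref{lem:prop of Xk}), whose proof is exactly the Cauchy--Schwarz over $|\tau-\mu(n)|\sim 2^j$ plus Minkowski argument you sketch. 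Summing the squares with weight $2^{2sk}$ then gives \eqref{eq:small data1.1}. There is no gap.
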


\begin{proposition}\label{prop:small data1-2}
Let $T \in (0,1]$ and $v,w \in C([-T,T];H^{\infty})$ satisfying 
\[\pt \wh{v}(n) + i\mu(n) \wh{v}(n) = \wh{w}(n) \mbox{  on  } (-T,T) \times \Z.\]
Then, for any $s \ge 0$, we have
\begin{equation}\label{eq:small data1.2}
\norm{v}_{F^s(T)} \lesssim \norm{v}_{E^s(T)} + \norm{w}_{N^s(T)}.
\end{equation} 
\end{proposition}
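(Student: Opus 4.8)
The statement to prove is Proposition~\ref{prop:small data1-2}: a "linear-to-$F^s$" estimate asserting that a smooth solution $v$ of the inhomogeneous linear equation $\pt\wh v(n)+i\mu(n)\wh v(n)=\wh w(n)$ satisfies $\norm{v}_{F^s(T)}\lesssim\norm{v}_{E^s(T)}+\norm{w}_{N^s(T)}$. This is the standard short-time $X^{s,b}$ "energy + nonlinearity controls $F$" lemma of Ionescu--Kenig--Tataru, adapted to the evolution $\mu(n)=n^5+c_1n^3+c_2n$, so the plan is to reduce to a single frequency block and then to the corresponding statement about the $X_k$, $F_k$, $N_k$, $E_k$ norms.

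\medskip

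The plan is as follows. First I would reduce to a dyadic statement: by the Littlewood--Paley definitions of $F^s(T)$, $N^s(T)$, $E^s(T)$, it suffices to prove that for each $k\in\Z_+$,
\begin{equation*}
\norm{P_k v}_{F_k(T)}\lesssim \norm{P_k v}_{E_k(T)}+\norm{P_k w}_{N_k(T)},
\end{equation*}
where $P_kv$ solves $\pt\wh{P_kv}(n)+i\mu(n)\wh{P_kv}(n)=\wh{P_kw}(n)$ since the projection commutes with the linear propagator; then one squares, multiplies by $2^{2sk}$, and sums in $k$. So from now on fix $k$ and write $v=P_kv$, $w=P_kw$, both supported at frequency $\sim 2^k$. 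Next, unwinding the definition of $\norm{v}_{F_k(T)}$, I must bound $\sup_{t_k}\norm{\ft[v\cdot\eta_0(2^{2k}(t-t_k))]}_{X_k}$, uniformly over $t_k\in\R$. Fix such a $t_k$; translating in time we may as well take $t_k=0$ and set $v_0:=v(\cdot,0)$, $\gamma(t):=\eta_0(2^{2k}t)$. The function $\gamma v$ then solves the inhomogeneous equation with data $\gamma(0)v_0=v_0$ (at $t=0$) and forcing $\gamma w+\gamma' v$ on the time interval where $\gamma\ne0$, which has length $\lesssim2^{-2k}$.

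\medskip

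The key step is Duhamel together with the $X_k$ bounds of Lemma~\ref{lem:prop of Xk}. One writes, on the Fourier side,
\begin{equation*}
\ft[\gamma v](\tau,n)=\ft[\gamma]\ast_\tau\Big(\wh{v_0}(n)\delta(\tau-\mu(n))\Big)+\text{(Duhamel term in }\gamma w+\gamma'v\text{)},
\end{equation*}
or more precisely one uses the standard identity expressing $\gamma(t)\bigl[W(t)v_0+\int_0^tW(t-s)(\text{forcing})(s)\,ds\bigr]$, where $W(t)=e^{-it\mu(\nabla/i)}$ is the linear propagator, as a sum of a "free" piece and a "Duhamel" piece. For the free piece $\gamma(t)W(t)v_0$, its $X_k$ norm is $\lesssim\norm{v_0}_{L^2}$: indeed $\ft[\gamma W(\cdot)v_0](\tau,n)=\ft[\gamma](\tau-\mu(n))\wh{v_0}(n)$, so $\sum_j2^{j/2}\norm{\eta_j(\tau-\mu(n))\ft[\gamma](\tau-\mu(n))\wh{v_0}(n)}_{L^2_\tau\ell^2_n}\lesssim\norm{v_0}_{\ell^2_n}\sum_j2^{j/2}\norm{\eta_j\ft[\gamma]}_{L^2}\lesssim\norm{v_0}_{L^2}$, because $\gamma\in\Sch$ after rescaling (the rescaling by $2^{2k}$ only improves constants, matching the $2^{-2k}$ time-localization). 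For the Duhamel piece driven by a forcing $g$, the operator $g\mapsto\gamma\int_0^tW(t-s)g(s)\,ds$ maps $N_k$-type input to $X_k$ with an $O(1)$ bound; this is exactly estimate~\eqref{eq:prop1} of Lemma~\ref{lem:prop of Xk}, which is the whole point of that lemma. One applies it to $g=\gamma w$ (contributing $\norm{\gamma w}_{N_k}\lesssim\norm{w}_{N_k(T)}$, using that $\gamma$ is a $k$-acceptable multiplication factor and \eqref{eq:prop2}) and to $g=\gamma'v$. The term $\gamma'v$ is the only delicate one: $\gamma'=2^{2k}\eta_0'(2^{2k}t)$ carries a factor $2^{2k}$, but $\eta_0'$ is supported where $|t|\sim2^{-2k}$, and the $N_k$-norm has a built-in $(\tau-\mu(n)+i2^{2k})^{-1}$ gaining $2^{-2k}$; concretely $\norm{\gamma'v}_{N_k}\lesssim 2^{-2k}\cdot2^{2k}\norm{\gamma'' v}_{X_k}\lesssim\norm{v(0)}_{L^2}+\ldots$, but the clean way is to note that on the support of $\gamma$ one has $v=\gamma_1 v$ for a slightly fatter cutoff $\gamma_1$, iterate, and absorb: this gives $\norm{\gamma'v}_{\text{(input space for \eqref{eq:prop1})}}\lesssim\norm{v}_{F_k(T)}$ with a small constant after choosing the cutoffs with enough room, or alternatively one estimates $\gamma'v$ directly by $\norm{v(0)}_{L^2}$ using that $v$ barely moves on a $2^{-2k}$ interval. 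Summing the three contributions and taking $\sup_{t_k}$ yields $\norm{v}_{F_k(T)}\lesssim\norm{v(0)}_{L^2}+\norm{w}_{N_k(T)}$, and replacing $v(0)$ by $v(t_k)$ for the worst $t_k$ gives the $E_k(T)$ norm on the right. Squaring, weighting by $2^{2sk}$, and summing over $k$ completes the proof.

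\medskip

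The main obstacle is the self-referential term $\gamma'v$ on the right-hand side: naively it is controlled by $\norm{v}_{F_k(T)}$ (the very quantity being bounded), so one must either extract a genuinely small constant by choosing the cutoff functions with sufficient separation (so that on $\supp\gamma'$ one multiplies $v$ by a slightly wider bump and the operator norms compound to something $\le1/2$, allowing absorption into the left side), or, more cleanly, bound $\gamma'v$ by the single-time quantity $\norm{v(t_k)}_{L^2}$ using that the linear flow displaces $v$ by only $O(1)$ in the relevant norm over a time interval of length $2^{-2k}$ — i.e. $v$ restricted to $\supp\gamma$ equals $W(t-t_k)v(t_k)$ plus a Duhamel error already accounted for by $w$. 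Everything else is a direct bookkeeping application of Lemma~\ref{lem:prop of Xk} and the $k$-acceptable multiplier bounds, and, as the paper itself notes for Lemma~\ref{lem:prop of Xk}, the argument is identical to the non-periodic case, so one may simply cite \cite{IKT2008} and \cite{GKK2013} for the routine parts.
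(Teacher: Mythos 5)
Your proposal reproduces the standard Ionescu--Kenig--Tataru short-time linear estimate, which is exactly what the paper invokes when it refers the reader to Appendix A of \cite{GKK2013} for this proposition: the reduction to a single dyadic frequency block, fixing a time translation $t_k$, splitting into a free piece and a Duhamel piece, and applying Lemma~\ref{lem:prop of Xk} to control the Duhamel part are the right ingredients and are all present.

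One structural point worth flagging: the self-referential term $\gamma'v$ that you identify as ``the main obstacle'' never has to appear. Rather than first deriving the forced equation satisfied by $\gamma v$, write Duhamel for $v$ itself, $v(t)=W(t-t_k)v(t_k)+\int_{t_k}^{t}W(t-s)w(s)\,ds$ with $W$ the linear group, and only then multiply by $\gamma=\eta_0(2^{2k}(t-t_k))$. No time derivative ever falls on the cutoff, so the free piece is controlled directly by $\norm{v(t_k)}_{L^2}$ (hence by the $E^s(T)$-norm after taking the sup over $t_k$), and the Duhamel piece by $\norm{w}_{N_k(T)}$ via \eqref{eq:prop1} and \eqref{eq:prop2}, after replacing $w$ by $w$ times a slightly fatter bump around $t_k$ (legitimate, since $\gamma$ restricts the output to $|t-t_k|\lesssim 2^{-2k}$). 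Your route through the perturbed equation for $\gamma v$ also closes --- either by absorption with well-separated cutoffs or by the $\norm{v(t_k)}_{L^2}$ bound you mention --- but the delicacy disappears entirely in the other ordering, which is how \cite{IKT2008} and \cite{GKK2013} set it up. The last piece of bookkeeping you gloss over, that the sup defining $F_k$ ranges over $t_k\in\R$ while $E^s(T)$ only samples $t_k\in[-T,T]$, is handled by choosing the extensions of $v$ and $w$ so that $v$ is a free evolution outside a small neighborhood of $[-T,T]$, which makes the far $t_k$ trivial.
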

\begin{proof}
Even though this problem is under the periodic condition, the proofs of Propositions \ref{prop:small data1-1} and \ref{prop:small data1-2} are exactly same as in \cite{GKK2013}. See Appendix A in \cite{GKK2013} for the proof in terms of the fifth-order KdV flow. We also refer to \cite{IKT2008, Guo2011, KP2015} for the proof.
\end{proof}
\subsection{Small data local well-posedness.}
We first state the local well-posedness result for \eqref{eq:5mkdv} with the small initial data.
\begin{proposition}\label{prop:small data1-3}
Let $s > 2$ and $T \in (0,1]$. Let $u_0 \in H^s(\T)$ be an initial data satisfying
\begin{equation}\label{eq:specified1}
\int_{\T} (u_0(x))^2 \; dx = \gamma_1, \hspace{2em} \int_{\T} (\px u_0(x))^2 + (u_0(x))^4 \; dx = \gamma_2,
\end{equation}
for some $\gamma_1, \gamma_2 \ge 0$, and $\norm{u_0}_{H^s(\T)} \le \delta_0 \ll 1$, for some sufficiently small $\delta_0 > 0$\footnote{$\delta_0$ should satisfy at least $ \delta_0< \delta$ for $\delta > 0$ in Lemma \ref{lem:comparable energy1-1}. Moreover, it should facilitate the continuity mechanism to obtain the \emph{a priori} bound for a smooth solution in the proof of Proposition \ref{prop:small data1-3}.}. Then, \eqref{eq:5mkdv} has a unique solution $u(t)$ with the initial data $u_0$ on $[-T,T]$ satisfying
\begin{align}
&u(t,x) \in C([-T,T];H^s(\T)),\nonumber\\
&\eta(t)\sum_{n \in \Z} e^{i(nx - 20n\int_0^t \norm{u(s)}_{L^4}^4 \; ds)}\wh{u}(t,n) \in C([-T,T];H^s(\T)) \cap F^s(T), \nonumber
\end{align}
where $\eta$ is any cut-off function in $C^{\infty}(\R)$ with $\mathrm{supp}\eta \subset [-T,T]$.

Moreover, the flow map $S_T : H^s \to C([-T,T];H^s(\T))$ is continuous.
\end{proposition}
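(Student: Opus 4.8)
The plan is to prove Proposition~\ref{prop:small data1-3} by a compactness (energy-method) argument, following the scheme of Ionescu--Kenig--Tataru as adapted in \cite{GKK2013} and \cite{KP2015}, but carried out on the transformed equation \eqref{eq:5mkdv4} for $v = \NT(u)$ rather than on $u$ itself. First I would reduce the problem to \eqref{eq:5mkdv4}: since $u_0$ lies on the level set \eqref{eq:specified1}, the constants $c_1,c_2$ in the linear symbol $\mu(n) = n^5 + c_1 n^3 + c_2 n$ are determined by $\gamma_1,\gamma_2$, and the nonlinear transformation $\NT$ is bi-continuous on the ball in $C([-T,T];H^s)$ for $s\ge \tfrac14$ (as noted after Theorem~\ref{thm:main}); hence it suffices to produce a unique solution $v\in C([-T,T];H^s)\cap F^s(T)$ to \eqref{eq:5mkdv4} with $v(0)=u_0$, and then set $u = \NT^{-1}(v)$. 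For the \emph{a priori} estimate I would combine the three ingredients already in hand: the linear estimate $\|v\|_{F^s(T)} \lesssim \|v\|_{E^s(T)} + \|N(v)\|_{N^s(T)}$ (Proposition~\ref{prop:small data1-2}), the nonlinear estimate $\|N(v)\|_{N^s(T)} \lesssim \|v\|_{F^s(T)}^3 + \|v\|_{F^s(T)}^5$ (Proposition~\ref{prop:nonlinear1}, using $s>1$), and the energy estimate $\|v\|_{E^s(T)}^2 \lesssim (1+\|v_0\|_{H^s}^2)\|v_0\|_{H^s}^2 + (1+\|v\|_{F^{1/2+}(T)}^2+\|v\|_{F^{1/2+}(T)}^4)\|v\|_{F^{2+}(T)}^2\|v\|_{F^s(T)}^2$ (Corollary~\ref{cor:energy1-2}, valid once $\|v\|_{L^\infty_T H^{1/2+}}\le\delta$, which is guaranteed by $\|u_0\|_{H^s}\le\delta_0$ small and a continuity argument). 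Writing $X(T) := \|v\|_{F^s(T)}$ and $Y(T):=\|v\|_{E^s(T)}$, these combine to a closed inequality of the form $X(T)^2 \lesssim \|u_0\|_{H^s}^2 + X(T)^4 + X(T)^6$ for $T$ small, which by the standard bootstrap (the map $T\mapsto X(T)$ being continuous and small as $T\to 0$, by Proposition~\ref{prop:small data1-1} and the definition of $F^s(T)$) yields $X(T) \lesssim \|u_0\|_{H^s} \le \delta_0$ on some fixed interval $[-T,T]$ with $T = T(\|u_0\|_{H^s})$.

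Next I would construct the solution itself. The natural route is to regularize the data, $u_0^{(N)} = P_{\le N} u_0 \in H^\infty$, invoke the global (or at least local) existence of smooth solutions $v^{(N)}$ to \eqref{eq:5mkdv4} with data $u_0^{(N)}$ — this is exactly where the high-regularity theory sketched in Appendix~\ref{app:smooth sol} is used, together with the conservation laws of \eqref{eq:5mkdv} which keep the $H^s$-norm of the corresponding $u^{(N)}$ under control on the level sets — and then apply the uniform \emph{a priori} bound above: since $\|u_0^{(N)}\|_{H^s}\le \|u_0\|_{H^s}\le\delta_0$ for all $N$, the sequence $v^{(N)}$ is bounded in $F^s(T)\cap C([-T,T];H^s)$ uniformly in $N$ on a common interval $[-T,T]$. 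To pass to the limit I would show the sequence is Cauchy in the weaker space $C([-T,T];L^2)$: the difference $w = v^{(N)} - v^{(M)}$ solves \eqref{eq:5mkdv8}, and the difference energy estimate (Corollary~\ref{cor:energy1-3}, specialized to $s=0$, i.e.\ \eqref{eq:energy1-3.1.1}) combined with the difference nonlinear estimate \eqref{eq:nonlinear2} and Proposition~\ref{prop:small data1-2} at $L^2$-level gives $\|w\|_{F^0(T)}^2 \lesssim \|w_0\|_{L^2}^2 + (\text{small})\|w\|_{F^0(T)}^2$, hence $\|v^{(N)}-v^{(M)}\|_{F^0(T)} \lesssim \|u_0^{(N)}-u_0^{(M)}\|_{L^2} \to 0$. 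Thus $v^{(N)} \to v$ in $F^0(T)$, and interpolating with the uniform $F^s(T)$-bound gives convergence in $F^{s'}(T)$ for every $s'<s$, which is more than enough to pass to the limit in \eqref{eq:5mkdv4} distributionally; a further soft argument (weak-$*$ limit in $F^s(T)$, lower semicontinuity of norms, and the Bona--Smith trick to upgrade weak to strong continuity into $H^s$) gives $v\in C([-T,T];H^s)\cap F^s(T)$ and continuity of the flow in $H^s$ on the level set. Uniqueness in this class follows from the same difference estimate applied to two putative solutions with the same data.

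The main obstacle — and the place where the argument is genuinely delicate rather than routine — is ensuring that all the hypotheses of the energy estimates are actually met along the approximating sequence, and that the small-data threshold $\delta_0$ can be chosen \emph{independently of $N$}. Concretely: Corollary~\ref{cor:energy1-2} requires $\|v\|_{L^\infty_T H^{1/2+}}\le\delta$, and one only knows a priori that this holds for $v^{(N)}$ on a (possibly $N$-dependent) short time; closing this requires a continuity/bootstrap argument using that $\|v^{(N)}(0)\|_{H^s}\le\delta_0$ is small and that $t\mapsto \|v^{(N)}\|_{F^s([-t,t])}$ is continuous and starts below $\delta$ — and here one must be careful that $F^s$-norms over shrinking intervals genuinely tend to their "initial" size, which is a property of the short-time spaces that needs the structure of $F_k$ on the $2^{-2k}$ scale. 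A secondary subtlety is bookkeeping the powers: the energy bound contains $\|v\|_{F^{2+}}^2$, which is \emph{not} controlled by $\|v\|_{F^s}$ unless $s>2$, so the restriction $s>2$ in the hypothesis is sharp for this method and must be used consistently; and in the difference estimate at $L^2$-level one must check that \eqref{eq:nonlinear2} and \eqref{eq:energy1-3.1.1} only ever ask for $F^{2+}$-norms of the \emph{fixed} solutions $v_1,v_2$ (which are bounded) and $F^0$-norms of the \emph{difference} $w$, so that the Cauchy estimate genuinely closes. Once these are handled, the remaining steps are standard soft functional analysis.
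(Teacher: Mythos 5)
Your overall scheme — a priori $F^s$ bound via Propositions~\ref{prop:small data1-2}, \ref{prop:nonlinear1}(a), and Corollary~\ref{cor:energy1-2} plus a bootstrap, then smooth approximation, then a Cauchy argument in a weaker norm — is the same as the paper's. But the crucial step is hidden inside what you dismiss as ``a further soft argument (weak-$*$ limit\ldots, Bona--Smith trick).'' That step is not soft: it is the technical heart of the proof. Interpolating $F^0$-Cauchy against a uniform $F^s$ bound only gives Cauchy in $F^{s'}$ for $s'<s$, and weak-$*$ compactness plus lower semicontinuity gives boundedness, not convergence in $C([-T,T];H^s)$ nor continuity of the flow. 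The paper's Bona--Smith implementation works because the $H^s$-level difference estimate \eqref{eq:energy1-3.2.1} has a precise structure: it contains a term of the form $\big(\sum_{i,j}\norm{v_i}_{F^{\frac12}}\norm{v_j}_{F^{2s}}\big)\norm{w}_{F^0}\norm{w}_{F^s}$, which is useless for general $v_j$ but becomes exploitable after replacing the data by its frequency truncation $v_{0,j}^K = P_{\le K}v_{0,j}$. Then $\norm{v_{0,j}^K}_{H^{2s}} \lesssim K^s\norm{v_{0,j}}_{H^s}$ while $\norm{v_{0,j}^K - v_{0,l}^K}_{L^2} \lesssim K^{-s}\norm{v_{0,j}^K - v_{0,l}^K}_{H^s}$, so the factors $K^s$ and $K^{-s}$ cancel and one closes $\norm{v_j^K - v_l^K}_{F^s(T)} \lesssim \norm{v_{0,j} - v_{0,l}}_{H^s}$ (cf.\ \eqref{eq:C2}), combined with $\norm{v_j - v_j^K}_{F^s(T)} \lesssim \norm{v_{0,j} - v_{0,j}^K}_{H^s}$ from \eqref{eq:C1}. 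Without this specific frequency-localization mechanism the $H^s$-Cauchy estimate genuinely does not close, because the $F^{2s}$-norm of the exact solutions is not finite for data merely in $H^s$.

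Two smaller but real issues. First, you propose to regularize by $u_0^{(N)} = P_{\le N}u_0$; but the truncated data leaves the level set \eqref{eq:specified1}, so the constants $c_1,c_2$ in the symbol $\mu(n)$ (and hence the spaces $X_k$, $F_k$, $N_k$ themselves) change with $N$. The paper sidesteps this by choosing the approximating sequence $\{u_{0,j}\}\subset H^\infty$ to lie \emph{on} the level set, so that all approximants share the same linear propagator; a naive frequency cutoff would require additional commutator-type corrections to compare solutions evolving under different symbols. Second, you identify the main delicacy as the smallness bootstrap being uniform in $N$; while that does require a continuity argument for $T'\mapsto X(T')$, it is comparatively routine (the paper simply asserts monotonicity and continuity, citing \cite{KP2015}). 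The genuine delicacy, as above, is the Bona--Smith upgrade, which your proposal does not actually carry out.
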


\begin{remark}
The scaling property \eqref{eq:scaling1}, in addition to a smooth cut-off function, in Subsection \ref{subsec:small data} guarantees that the Cauchy problem for arbitrary data on $[-T,T]$, for some $T > 0$ depending on the initial data is equivalent to the Cauchy problem for small data on $[-1,1]$. Hence the smallness assumption in Proposition \ref{prop:small data1-3} does not depend on the time $T$, since $T \le 1$, if we choose $\delta_0 >0$ sufficiently small. Moreover, \eqref{eq:scaling1} in addition to the following observation
\[\norm{u_{0,\lambda}}_{H^s(\T_{\lambda})} \sim \lambda^{-\frac12}\norm{u_0}_{H^s(\T)}, \qquad \lambda \gg 1,\]
reveals the existence of the time $T = O(\norm{u_0}_{H^s}^{-10})$.
\end{remark}

\begin{proof}
In the following proof, we fix $s > 2$. From the theory of the complete integrability (or inverse spectral method), we know that there is a smooth solution $u$ to \eqref{eq:5mkdv} with $u_0 \in H^{\infty}(\T)$. For $v$ to \eqref{eq:5mkdv3}, since $\norm{u_0}_{H^s(\T)} = \norm{v_0}_{H^s(\T)}$, we use Proposition \ref{prop:small data1-2}, \ref{prop:nonlinear1} (a) and Corollary \ref{cor:energy1-2}\footnote{When we choose $\delta_0 < \delta$, where $\delta$ is given by Lemma \ref{lem:comparable energy1-1}, we can use directly Corollary \ref{cor:energy1-2} instead of Proposition \ref{prop:energy1-2}.} in order to obtain that
\begin{eqnarray*}
\left \{
\begin{array}{l}
\norm{v}_{F^{s}(T')}\lesssim \norm{v}_{E^s(T')} + \sum_{j=1}^4\norm{N_{j}(v)}_{N^s(T')};\\
\sum_{j=1}^4\norm{N_{j}(v)}_{N^s(T')} \lesssim (1+\norm{v}_{F^s(T')}^2)\norm{v}_{F^s(T')}^3;\\
\norm{v}_{E^s(T')}^2 \lesssim (1+ \norm{v_0}_{H^s}^2)\norm{v_0}_{H^s}^2 + (1 + \norm{v}_{F^{s}(T)}^2 + \norm{v}_{F^{s}(T)}^4)\norm{v}_{F^{s}(T)}^4,
\end{array}
\right.
\end{eqnarray*}
for any $T' \in [0,T]$. Let $X(T') = \norm{v}_{E^s(T')} + \sum_{j=1}^4\norm{N_{j}(v)}_{N^s(T')}$. Then we can know that $X(T')$ is non-decreasing and continuous on $[0,T]$ (see \cite{KP2015} and \cite{GO2015} for non-periodic and periodic problem, respectively). If $\delta_0$ is small enough, then by using the bootstrap argument (see \cite{Tao2006}), we can obtain $X(T') \lesssim \norm{v_0}_{H^s}$, and hence
\begin{equation}\label{eq:a priori 0}
\norm{v}_{F^s(T')} \lesssim \norm{v_0}_{H^s(\T)},
\end{equation}
for all $T' \in [0,T]$. This implies
\[\sup_{t \in [-T,T]}\norm{v}_{H^s(\T)} \lesssim \norm{v_0}_{H^s(\T)},\]
by Proposition \ref{prop:small data1-1}. 

We fix $u_0 \in H^s(\T)$ with $\norm{u_0}_{H^s(\T)} \le \delta_0 \ll 1$. Then we can choose a sequence of functions $\set{u_{0,j}}_{j=1}^{\infty} \subset H^{\infty}(\T)$ such that $u_{0,j}$ satisfies \eqref{eq:specified1} and $u_{0,j} \to u_0$ in $H^s(\T)$ as $j \to \infty$. Let $u_j(t) \in H^{\infty}(\T)$ be a solution to \eqref{eq:5mkdv} with the initial data $u_{0,j}$. Then, we first show the sequence $\set{v_j}_{j=1}^{\infty}$ is a Cauchy sequence in $C([-T,T];H^s(\T))$. Let $\epsilon > 0$ be given. For $K \in \Z_+$, let $v_{0,j}^K = P_{\le K}v_{0,j}$. Then $v_j^K(t,x) = P_{\le K}v_j$ satisfies the following frequency localized equation:
\begin{equation}\label{eq:localized equation}
\begin{split}
\pt\chi_{\le K}(n)\Big(\wh{v}_j(n) - i\mu(n)\wh{v}_j(n)\Big)=&-20i\chi_{\le K}(n)n^3|\wh{v}_j(n)|^2\wh{v}_n(n)\\
&+6i \chi_{\le K}(n)n\sum_{\N_{5,n}} \wh{v}_j(n_1)\wh{v}_j(n_2)\wh{v}_j(n_3)\wh{v}_j(n_4)\wh{v}_j(n_5) \\
&+10i\chi_{\le K}(n)n \sum_{\N_{3,n}} \wh{v}_j(n_1)\wh{v}_j(n_2)n_3^2\wh{v}_j(n_3) \\
&+5i\chi_{\le K}(n)n \sum_{\N_{3,n}} (n_1+n_2)\wh{v}_j(n_1)\wh{v}_j(n_2)n_3\wh{v}_j(n_3),
\end{split}
\end{equation}
with the initial data $v_j^K(0) = v_{0,j}^K$. Then, by the triangle inequality, we have 
\[\begin{aligned}
\sup_{t \in [-T,T]}\norm{v_j - v_l}_{H^s(\T)} \le&~{}\sup_{t \in [-T,T]}\norm{v_j - v_j^K}_{H^s(\T)}+\sup_{t \in [-T,T]}\norm{v_j^K - v_l^K}_{H^s(\T)}\\
&+\sup_{t \in [-T,T]}\norm{v_l^K - v_l}_{H^s(\T)},
\end{aligned}\]
and it suffices to show that
\begin{equation}\label{eq:small data1.4}
\sup_{t \in [-T,T]}\norm{v_j^K - v_j}_{H^s(\T)} < \frac{\epsilon}{3}
\end{equation} 
and
\begin{equation}\label{eq:small data1.5}
\sup_{t \in [-T,T]}\norm{v_j^K - v_l^K}_{H^s(\T)} < \frac{\epsilon}{3}.
\end{equation} 
For \eqref{eq:small data1.4}, we use \eqref{eq:nonlinear1} and \eqref{eq:energy1-2.1.1} with \eqref{eq:a priori 0} so that
\begin{equation}\label{eq:C1}
\begin{aligned}
\sup_{t \in [-T,T]}\norm{v_j^K - v_j}_{H^s(\T)} &\lesssim \norm{(I-P_{\le K})v_j}_{F^s(T)}\\
&\le C_1 \norm{v_{0,j}-v_{0,j}^K}_{H^s(T)},
\end{aligned}
\end{equation}
for any $j, K$ and $C_1 \ge 1$. 

In order to deal with \eqref{eq:small data1.5}, from \eqref{eq:small data1.2}, \eqref{eq:nonlinear2} and \eqref{eq:energy1-3.1.1} with \eqref{eq:a priori 0}, we have 
\[\norm{v_1-v_2}_{F^0(T)} \lesssim \norm{v_{1,0} - v_{2,0}}_{L_x^2(\T)},\]
and with this, we obtain from \eqref{eq:small data1.2}, \eqref{eq:nonlinear1} and \eqref{eq:energy1-3.2.1} with \eqref{eq:a priori 0} that
\[\norm{v_1-v_2}_{F^s(T)} \lesssim \norm{v_{1,0} - v_{2,0}}_{H^s(\T)} + (\norm{v_{1,0}}_{H^{2s}(\T)}+\norm{v_{2,0}}_{H^{2s}(\T)})\norm{v_{1,0}-v_{2,0}}_{L_x^2(\T)}.\]
Hence, we conclude that
\begin{equation}\label{eq:C2}
\begin{aligned}
\sup_{t \in [-T,T]}\norm{v_j^K - v_l^K}_{H^s(\T)} &\lesssim \norm{v_j^K - v_l^K}_{F^s(T)}\\
&\lesssim \norm{v_{0,j}^K - v_{0,l}^K}_{H^s(\T)} + (\norm{v_{0,j}^K}_{H^{2s}(\T)}+\norm{v_{0,l}^K}_{H^{2s}(\T)})\norm{v_{0,j}^K-v_{0,l}^K}_{L_x^2(\T)}\\
&\lesssim C_2\norm{v_{0,j}^K - v_{0,l}^K}_{H^s(\T)} + C_3K^s\norm{v_{0,j}^K-v_{0,l}^K}_{L^2(\T)},
\end{aligned}
\end{equation}
for any $j,l,K$ and $C_2, C_3 \ge 1$. 

We note in \eqref{eq:C1} and \eqref{eq:C2} that we can apply the nonlinear estimates and the energy estimates to \eqref{eq:localized equation} (also high frequency localized equation) in order to obtain \eqref{eq:a priori 0} for high frequency localized solution (for \eqref{eq:C1}) and Corollary \ref{cor:energy1-3} (for \eqref{eq:C2}) by following the similar argument as in Section \ref{sec:nonlinear} and as the proof of Proposition \ref{prop:energy1-3}. Precisely, thanks to the support property, we can always pick out the $\norm{(I-P_{\le K})v_j}_{F^s(T)}$ and $\norm{v_j^K-v_l^K}_{F^0}$ (hence we get $\norm{(I-P_{\le K})v_{0,j}}_{H^s}$ and $\norm{v_{1,0}-v_{2,0}}_{L_x^2(\T)}$, respectively), even if we consider the energy estimate for \eqref{eq:localized equation} (also high frequency localized equation) instead of \eqref{eq:5mkdv4}.

For constants $C_1, C_2, C_3 \ge 1$ in \eqref{eq:C1} and \eqref{eq:C2}, let $C = \max(C_1,C_2,C_3)$. Since $v_0 \in H^s(\T)$. Then there exists $L \in \Z_+$ such that $K \ge L \ge 1$ implies
\begin{equation}\label{eq:limit1-1}
\norm{(I-P_K)v_0}_{H^s(\T)} < \frac{\epsilon}{9C}.
\end{equation}

Moreover, for fixed $K \ge L \ge 1$, since $v_{0,j} \to v_0$ in $H^s(\T)$, as $j \to \infty$, there is $N = N(K)$ such that $j,l \ge N$ implies
\begin{equation}\label{eq:limit1-2}
\norm{v_{0,j} - v_0}_{H^s(\T)}, \norm{v_{0,j} - v_{0,l}}_{H^s(\T)} < \frac{\epsilon}{9C}
\end{equation}
and
\begin{equation}\label{eq:limit1-3}
\norm{v_{0,j}^K - v_{0,l}^K}_{L^2(\T)} < \frac{\epsilon}{9CK^s}.
\end{equation}

Then, by choosing suitable $K \ge L$ and $j,l \ge N(K)$, we have from \eqref{eq:limit1-1}, \eqref{eq:limit1-2} and \eqref{eq:limit1-3} that

Therefore, when we choose suitable $j,l \ge N$ and $K \ge L$, by using \eqref{eq:limit1-1} and \eqref{eq:limit1-2}, we have
\begin{equation}\label{eq:limit argument1}
\begin{aligned}
\sup_{t \in [-T,T]}\norm{v_j^K - v_j}_{H^s(\T)}&\le C_1 \norm{v_{0,j}-v_{0,j}^K}_{H^s(T)}\\
&\le C_1 \Big(\norm{v_{0,j}-v_{0}}_{H^s(T)} +\norm{v_0 - v_0^K}_{H^s(T)} + \norm{v_0^K - v_{0,j}^K}_{H^s(T)}\Big)\\
&< \frac{\epsilon}{3}
\end{aligned}
\end{equation}
and
\begin{equation}\label{eq:limit argument2}
\begin{aligned}
\sup_{t \in [-T,T]}\norm{v_j^K - v_l^K}_{H^s(\T)} &\le C_2\norm{v_{0,j}^K - v_{0,l}^K}_{H^s(\T)} + C_3K^s\norm{v_{0,j}^K-v_{0,l}^K}_{L_x^2(\T)}\\
&< C_2\norm{v_{0,j}^K - v_0^K}_{H^s(\T)} + C_2\norm{v_{0,l}^K - v_0^K}_{H^s(\T)} + \frac{\epsilon}{9}\\
&< \frac{\epsilon}{3}.
\end{aligned}
\end{equation}

We apply those result in addition to \eqref{eq:limit argument1} to \eqref{eq:limit argument2} to complete the limit argument. Hence we obtain the solution as the limit. The uniqueness of the solution and the continuity of the flow map come from a similar argument, so we omit detail.

Now, it remains to show that the local well-posedness of \eqref{eq:5mkdv3} implies that of \eqref{eq:5mkdv}. In view of the definition of the nonlinear transformation \eqref{eq:modified solution}, it suffices to show the bi-continuity property of the nonlinear transformation in $C([-T,T];H^s(\T))$. 
\begin{lemma}\label{lem:bi-continuity}
Let $s \ge \frac14$ and $0 < T < \infty$. Then, $\NT(u)$ defined as in \eqref{eq:modified solution} is bi-continuous from a ball in $C([-T,T];H^s(\T))$ to itself. 
\end{lemma}
\begin{proof}
We only show the continuity of $\NT^{-1}$, since the proof of the continuity of $\NT$ is similar as and easier than that of $\NT^{-1}$. Precisely, when $v_k \in C_TH^s$ converges to $v$ in $C_TH^s$ as $k \to \infty$, we need to show
\[u_k = \NT^{-1}(v_k) \to \NT^{-1}(v) = u\hspace{1em}\mbox{in}\hspace{1em} C_TH^s, \hspace{1em} \mbox{as}\hspace{1em} k \to \infty.\]
Fix $0 < T < \infty$. We assume that $\norm{v_k}_{L_T^{\infty}H^s}, \norm{v}_{L_T^{\infty}H^s} \le K$, for some $K > 0$. Observe that 
\begin{align*}
\wh{u}_k(n) -\wh{u}(n) =&~{} e^{ic_3n\int_0^t \norm{u_k(s)}_{L^4}^4 \; ds}\wh{v}_k(n) - e^{ic_3n\int_0^t \norm{u(s)}_{L^4}^4 \; ds}\wh{v}(n) \\
=&~{}\left[e^{ic_3n\int_0^t \norm{u_k(s)}_{L^4}^4 \; ds} - e^{ic_3n\int_0^t \norm{u(s)}_{L^4}^4 \; ds} \right]\wh{v}_k(n) \\
&+e^{ic_3n\int_0^t \norm{u(s)}_{L^4}^4 \; ds}(\wh{v}_k(n) - \wh{v}(n))\\
=&~{}e^{ic_3n\int_0^t \norm{u(s)}_{L^4}^4 \; ds}\left[e^{ic_3n\int_0^t \norm{u_k(s)}_{L^4}^4 - \norm{u(s)}_{L^4}^4 \; ds} - 1 \right]\wh{v}_k(n) \\
&+e^{ic_3n\int_0^t \norm{u(s)}_{L^4}^4 \; ds}(\wh{v}_k(n) - \wh{v}(n))
\end{align*}
for $n \neq 0$, and
\[\wh{u}_k(0) -\wh{u}(0) = \wh{v}_k(0) - \wh{v}(0).\] 
Then, for fixed $s \ge 1/4$ and $t \in [-T,T]$, we have
\begin{align}
\norm{u_k(t) - u(t)}_{H^s}^2 \le&~{} |\wh{v}_k(0) - \wh{v}(0)|^2 \label{eq:bi-conti1}\\
&+2^{s+1}\sum_{|n| \ge 1}\left|e^{ic_3n\int_0^t \norm{u_k(s)}_{L^4}^4 - \norm{u(s)}_{L^4}^4 \; ds} - 1\right|^2|n|^{2s}|\wh{v}_k(n)|^2 \label{eq:bi-conti2}\\
&+2^{s+1}\sum_{|n| \ge 1}|n|^{2s}|\wh{v}_k(n) - \wh{v}(n)|^2 \label{eq:bi-conti3}.
\end{align}
Let $\varepsilon > 0$ be given. Since $e^{i\theta}$ is continuous at $\theta = 0$, there exists  $\delta>0$ such that 
\begin{equation}\label{eq:limit1}
|\theta| < \delta \hspace{1em}\Rightarrow\hspace{1em} \left|e^{i\theta}-1\right| <\frac{\varepsilon}{2^{\frac{s+1}{2}}\cdot \sqrt{6K}},
\end{equation}
and $\norm{v}_{L_T^{\infty}H^s} \le K$ implies that there exists $M>0$ such that
\begin{equation}\label{eq:limit2}
\sum_{|n| > M} |n|^{2s}|\wh{v}(n)|^2 < \frac{\varepsilon^2}{2^{s+1}\cdot 24}.
\end{equation}
Moreover, since $v_k \to v$ in $C_TH^s$ as $k \to \infty$, there exist $N_0,N_1>0$ such that
\begin{equation}\label{eq:limit3}
k \ge N_0 \hspace{1em}\Rightarrow\hspace{1em} \norm{v_k - v}_{L_T^{\infty}H^s} < \frac{\varepsilon}{2^{\frac{s+1}{2}}\cdot 3}
\end{equation}
and
\begin{equation}\label{eq:limit4}
k \ge N_1 \hspace{1em}\Rightarrow\hspace{1em} \norm{v_k - v}_{L_T^{\infty}H^s} < \frac{\delta}{2c_3MTK^3}.
\end{equation}
Let $N := \max(N_0,N_1)$. If $k \ge N$, from \eqref{eq:limit3}, we can control \eqref{eq:bi-conti1} and \eqref{eq:bi-conti3} as
\begin{equation}\label{eq:bi-conti4}
|\wh{v}_k(0) - \wh{v}(0)|^2 < \frac{\varepsilon^2}{3}
\end{equation}
and
\begin{equation}\label{eq:bi-conti5}
2^{s+1}\sum_{|n| \ge 1}|n|^{2s}|\wh{v}_k(n) - \wh{v}(n)|^2 < \frac{\varepsilon^2}{3}
\end{equation}
Now, we consider \eqref{eq:bi-conti2}. Observe from the Plancherel's theorem that\footnote{This observation is essential to obtain the bi-continuity property of the nonlinear transformation $\NT(u)$.}
\begin{equation}\label{eq:bi-conti6}
\begin{aligned}
\norm{u}_{L^4}^2 &= \norm{u^2}_{L^2}=\norm{\wh{u} \ast \wh{u}}_{l^2} \\
&=\left(\sum_{n \in \Z} \left|\sum_{n_1 \in\Z}\wh{u}(n_1)\wh{u}((n-n_1) \right|^2 \right)^{\frac12} \\ 
&=\left(\sum_{n \in \Z} \left|e^{ic_3n\int_0^t \norm{u(s)}_{L^4}^4 \; ds}\sum_{n_1 \in\Z}\wh{v}(n_1)\wh{v}(n-n_1) \right|^2\right)^{\frac12} \\
&=\left(\sum_{n \in \Z} \left|\sum_{n_1 \in\Z}\wh{v}(n_1)\wh{v}((n-n_1) \right|^2\right)^{\frac12}\\
&=\norm{v^2}_{L^2}=\norm{v}_{L^4}^2.
\end{aligned}
\end{equation}
Then, by using the triangle inequality and the Sobolev embedding, we have
\begin{equation}\label{eq:bi-conti7}
\begin{aligned}
\Big|\int_0^t&\norm{v_k(s)}_{L^4}^4 - \norm{v(s)}_{L^4}^4 \; dt \Big| \\
&= \left|\int_0^T(\norm{v_k(s)}_{L^4} - \norm{v(s)}_{L^4})(\norm{v_k(t)}_{L^4}+\norm{v(t)}_{L^4})(\norm{v_k(t)}_{L^4}^2+\norm{v(t)}_{L^4}^2) \; dt \right|\\
&\le\int_0^T\norm{v_k(t) - v(t)}_{L^4}(\norm{v_k(t)}_{L^4}+\norm{v(t)}_{L^4})(\norm{v_k(t)}_{L^4}^2+\norm{v(t)}_{L^4}^2) \; dt\\
&\lesssim T(\sup_{t \in [-T,T]}\norm{v_k}_{H^s(\T)}^3+\sup_{t \in [-T,T]}\norm{v}_{H^s(\T)}^3)\sup_{t \in [-T,T]}\norm{v_k-v}_{H^s(\T)}\\
&\lesssim 2TK^3\sup_{t \in [-T,T]}\norm{v_k-v}_{H^s(\T)},
\end{aligned}
\end{equation}
for $s \ge \frac14$.

We divide the summation in \eqref{eq:bi-conti2} into
\[\sum_{1 \le |n| \le M} + \sum_{|n| > M}.\]
Then, for $1 \le |n| \le M$, if $k \ge N$, from \eqref{eq:bi-conti6}, \eqref{eq:bi-conti7} and \eqref{eq:limit4}, we have
\[\left|c_3n\int_0^t \norm{u_k(s)}_{L^4}^4 - \norm{u(s)}_{L^4}^4 \; ds\right| < \delta\]
which implies
\[\left|e^{ic_3n\int_0^t \norm{u_k(s)}_{L^4}^4 - \norm{u(s)}_{L^4}^4 \; ds} - 1\right|^2 < \frac{\varepsilon^2}{2^{s+1}\cdot6K^2},\]
by using \eqref{eq:limit1}.

For $|n| > M$, since
\[\left|e^{ic_3n\int_0^t \norm{u_k(s)}_{L^4}^4 - \norm{u(s)}_{L^4}^4 \; ds} - 1\right|^2 \le 4,\]
by using \eqref{eq:limit2}, we have
\[2^{s+1}\sum_{|n| > M}|n|^{2s} \left|e^{ic_3n\int_0^t \norm{u_k(s)}_{L^4}^4 - \norm{u(s)}_{L^4}^4 \; ds} - 1\right|^2|n|^{2s}|\wh{v}_k(n)|^2 < \frac{\varepsilon^2}{6}.\]
Hence, we have
\begin{equation}\label{eq:bi-conti8}
2^{s+1}\sum_{|n| \ge 1}|n|^{2s} \left|e^{ic_3n\int_0^t \norm{u_k(s)}_{L^4}^4 - \norm{u(s)}_{L^4}^4 \; ds} - 1\right|^2|n|^{2s}|\wh{v}_k(n)|^2 < \frac{\varepsilon^2}{3}.
\end{equation}
Together with \eqref{eq:bi-conti4}, \eqref{eq:bi-conti5} and \eqref{eq:bi-conti8}, we obtain
\[\norm{u_k - u}_{H^s} < \varepsilon,\]
which completes the proof of Lemma \ref{lem:bi-continuity}.
\end{proof}

\begin{remark}\label{rem:full nonlinear transform}
One can define the fully nonlinear transform as
\[\NT(u)(t,x) = v(t,x) := \frac{1}{\sqrt{2\pi}}\sum_{n \in \Z} e^{i(nx - \int_0^t b_1n^3\norm{u(s)}_{L_x^2}^2 + b_2n\norm{u(s)}_{\dot{H}^1}^2 + b_3n\norm{u(s)}_{L_x^4}^4 \; ds)}\wh{u}(t,n),\]
where $b_1 = a_2$, $b_2 = a_1 + 3a_3$ and $b_3 = -a_4$. It enables to transform \eqref{eq:5mkdv5} into the following
\[\begin{split}
\pt\wh{v}(n) - in^5\wh{v}(n) =&~{}  -i(a_1-3a_2+a_3)|\wh{v}(n)|^2\wh{v}(n) +ia_1 \sum_{\N_{3,n}} \wh{v}(n_1)n_2\wh{v}(n_2)n_3^2\wh{v}(n_3) \\
&+ia_2 \sum_{\N_{3,n}} \wh{v}(n_1)\wh{v}(n_2)n_3^3\wh{v}(n_3) +ia_3 \sum_{\N_{3,n}} n_1\wh{v}(n_1)n_2\wh{v}(n_2)n_3\wh{v}(n_3)\\
&+ia_4 \sum_{\N_{5,n}} \wh{v}(n_1)\wh{v}(n_2)\wh{v}(n_3)\wh{v}(n_4)n_5\wh{v}(n_5).
\end{split}\]
Similarly as above, we can have the local-in-time solution to above equation. Moreover, one can show that the nonlinear transform is bi-continuous from a ball in $C([-T,T];H^s(\T))$ to itself for $s \ge 1$. Indeed, for the $b_3n\norm{u(s)}_{L_x^4}^4$ term, we use the exact same argument in the proof of Lemma \ref{lem:bi-continuity} to show the bi-continuity property for $s \ge 1/4$. We now focus on the other terms ($b_1n^3\norm{u(s)}_{L_x^2}^2$ and $b_2n\norm{u(s)}_{\dot{H}^1}^2$). We point out that the key point in the proof of Lemma \ref{lem:bi-continuity} is to show \eqref{eq:bi-conti6}. From the definition of the nonlinear transform, since we can know that the nonlinear transform preserves $L^2$-based norm ($\norm{u}_{L^2}=\norm{v}_{L^2}$ and $\norm{u}_{\dot{H}^1}=\norm{v}_{\dot{H}^1}$), we also prove the bi-continuity property for $s \ge 1$\footnote{The regularity threshold $s \ge 1$ comes from the $\norm{v}_{\dot{H}^1}$ factor in the \eqref{eq:bi-conti7} type estimate.} by following the similar way in the proof of Lemma \ref{lem:bi-continuity}. 

It is remarkable that this observation enables to remove all non-trivial resonances in the nonlinearity, and hence the fully non-integrable equation can be studied without any restriction. However, we still emphasize that the integrable structure is useful to capture the natural cancellation property of some of non-trivial resonances.

We also remark that this observation does not guarantee the cancellation of $n^k\norm{u}_{L^p}^p$ type term, for $p \neq 2$ and $k \ge 2$, since we do not use both the invariance of $L^2$-based norm and the property of convolution operator as in \eqref{eq:bi-conti6}. Thus we guess that the study of higher-order equations in the hierarchy \eqref{eq:hamiltonian} may depend on its integrable structure.
\end{remark}

From Lemma \ref{lem:bi-continuity}, we can complete the proof of Proposition \ref{prop:small data1-3}.
\end{proof}

\subsection{Local well-posedness with arbitrary initial data.}\label{subsec:small data}
Now, let us complete the proof of Theorem \ref{thm:main}. In order to extend the result of Proposition \ref{prop:small data1-3} to the local well-posedness for the arbitrary initial data, we can use the scaling argument, since this problem is scaling sub-critical. More precisely, by the scaling symmetry, we know for $\lambda \ge 1$ that
\begin{equation}\label{eq:scaling1}
u_{\lambda}(t,x) = \lambda^{-1}u(\lambda^{-5}t,\lambda^{-1}x)
\end{equation}
is also the solution to \eqref{eq:5mkdv} if $u$ is the solution to \eqref{eq:5mkdv}. Since $u_{\lambda}$ is the $2\pi\lambda$-periodic function, we need to modify slightly all estimates obtained in previous sections for the small data problem. But, since proofs follow the arguments in Sections \ref{sec:preliminary}, \ref{sec:L2 block estimate}, \ref{sec:nonlinear} and \ref{sec:energy}, so let us point out only different things. We start with introducing some notations adapted to the $2\pi\lambda$-periodic setting.

We put $\T_{\lambda} = [0,2\pi\lambda]$ and $\Z_{\lambda} := \set{n/\lambda : n \in \Z}$. For a function $f$ on $\T_{\lambda}$, we define
\[\int_{\T_{\lambda}} f(x) \; dx := \int_0^{2\pi\lambda} f(x) \; dx.\]
For a function $f$ on $\Z_{\lambda}$, we define normalized counting measure $dn$:
\begin{equation}\label{eq:counting measure}
\int_{\Z_{\lambda}} f(n) \; dn := \frac{1}{\lambda}\sum_{n\in\Z_{\lambda}} f(n)
\end{equation}
and $\ell_n^2(\lambda)$ norm:
\[\norm{f}_{\ell_n^2(\lambda)}^2 := \int_{\Z_{\lambda}}|f(n)|^2 \; dn.\]
We define the Fourier transform of $f$ with respect to the spatial variable by
\[\wh{f}(n) := \frac{1}{\sqrt{2\pi}}\int_0^{2\pi\lambda} e^{-ixn}f(x)\; dx, \hspace{3em} n \in \Z_{\lambda},\]
and we have the Fourier inversion formula
\[f(x) := \frac{1}{\sqrt{2\pi}}\int_{\Z_{\lambda}} e^{ixn}\wh{f}(n)\; dn, \hspace{3em} x \in \T_{\lambda}.\]
Of course, we can naturally define the space-time Fourier transform similarly.

Then the usual properties of the Fourier transform hold:
\begin{equation}\label{eq:Plancherel}
\norm{f}_{L_x^2(\T_{\lambda})} = \norm{\wh{f}}_{\ell_n^2(\lambda)},
\end{equation}
\[\int_0^{2\pi\lambda}f(x)\overline{g}(x) \; dx = \int_{\Z_{\lambda}}\wh{f}(n)\overline{\wh{g}}(n) \; dn,\]
\[\wh{fg}(n) = (\wh{h} \ast \wh{g})(n) = \int_{\Z_{\lambda}}\wh{f}(n-n_1)\wh{g}(n_1) \; dn_1\]
and for $m \in \Z_+$,
\begin{equation}\label{eq:derivatives}
\px^mf(x) = \int_{\Z_{\lambda}}e^{ixn}(in)^{m}\wh{f}(n) \; dn.
\end{equation}
Together with \eqref{eq:Plancherel} and \eqref{eq:derivatives}, we can define the Sobolev space $H^s(\T_{\lambda})$ with the norm
\[\norm{f}_{H^s(\T_{\lambda})} = \norm{\bra{n}^s\wh{f}(n)}_{\ell_n^2(\lambda)}.\]

Under those observations, we consider \eqref{eq:5mkdv} for a $2\pi\lambda$-periodic solution $u_{\lambda}$ defined as in \eqref{eq:scaling1}, but we will denote $u_{\lambda}$ by $u$ (also $v$ and $w$), for simplicity, in this section. Recall the fifth-order modified KdV equation \eqref{eq:5mkdv}. 
\begin{equation}\label{eq:5mkdv9}
\pt u - \px^5 u + 40u\px u \px^2u + 10 u^2\px^3u + 10(\px u)^3 - 30u^4 \px u = 0.
\end{equation}
Similarly as in Section \ref{sec:preliminary}, we take the Fourier coefficient in the spatial variable of \eqref{eq:5mkdv9}\footnote{Before doing that, we first change the nonlinear term into the divergence form similarly as in \eqref{eq:5mkdv2}.} to obtain
\begin{equation}\label{eq:5mkdv11}
\begin{split}
\pt\wh{u}(n) - in^5\wh{u}(n) =&~{} 10in\int_{\Z_{\lambda}^2}\wh{u}(n_1)\wh{u}(n_2)(n-n_1-n_2)^2\wh{u}(n-n_1-n_2) \; dn_1dn_2\\
&+10in\int_{\Z_{\lambda}^2}\wh{u}(n_1)n_2\wh{u}(n_2)(n-n_1-n_2)\wh{u}(n-n_1-n_2) \; dn_1dn_2\\
&+6in\int_{\Z_{\lambda}^5}\wh{u}(n_1)\wh{u}(n_2)\wh{u}(n_3)\wh{u}(n_4)\wh{u}(n-n_1-n_2-n_3-n_4) \; dS,
\end{split}
\end{equation}
where $dS = dn_1dn_2dn_3dn_4$. Since $2\pi\lambda$-periodic solution still satisfies all conservation laws, by considering the cubic and the quintic resonant interactions, we can reduce \eqref{eq:5mkdv11} to 
\[\begin{aligned}
\pt\wh{v}(n) - i&(n^5 + c_{1,\lambda}n^3 + c_{2,\lambda}n)\wh{v}(n) \\
=&~{} -\frac{20i}{\lambda^2}n^3|\wh{v}(n)|^2\wh{v}(n)\\
&+10in\int_{\Z_{\lambda}^2,\N_{3,n,\lambda}}\wh{v}(n_1)\wh{v}(n_2)(n-n_1-n_2)^2\wh{v}(n-n_1-n_2) \; dn_1dn_2\\
&+10in\int_{\Z_{\lambda}^2,\N_{3,n,\lambda}}\wh{v}(n_1)n_2\wh{v}(n_2)(n-n_1-n_2)\wh{v}(n-n_1-n_2) \; dn_1dn_2\\
&+6in\int_{\Z_{\lambda}^5,\N_{5,n,\lambda}}\wh{v}(n_1)\wh{v}(n_2)\wh{v}(n_3)\wh{v}(n_4)\wh{v}(n-n_1-n_2-n_3-n_4) \; dn_1dn_2dn_3dn_4,
\end{aligned}\]
where
\[c_{1,\lambda} = \frac{10}{\lambda}\norm{u_0}_{L^2(\T_{\lambda})}^2, \hspace{2em} c_{2,\lambda}=\frac{10}{\lambda}(\norm{u_0}_{\dot{H}^1(\T_{\lambda})}^2+\norm{u_0}_{L^4(\T_{\lambda})}^4),\]
and $\N_{3,n,\lambda}$ and $\N_{5,n,\lambda}$ are defined similarly as in \eqref{eq:3nonres} and \eqref{eq:5nonres}, respectively, for $\Z_{\lambda}$-variables. Moreover, $v$ is also defined similarly as in \eqref{eq:modified solution} by
\[v(t,x) := \frac{1}{\sqrt{2\pi}}\int_{\Z_{\lambda}} e^{i(nx - c_{3,\lambda}n \int_0^t \norm{u(s)}_{L^4}^4 \; ds)}\wh{u}(t,n) \; dn,\]
where $c_{3,\lambda}=\frac{20}{\lambda}$. Let 
\[\mu_{\lambda}(n) = n^5 + c_{1,\lambda}n^3 + c_{2,\lambda}n.\]
From those observations, we change function spaces $X_k$, $F_{k}$, $N_k$ and $E_{k}$ by $X_{k,\lambda}$\footnote{All properties of $X_k$-norm still hold for $X_{k,\lambda}$.}, $F_{k,\lambda}$, $N_{k,\lambda}$ and $E_{\lambda}^s(T)$ with norms
\[\norm{f}_{X_{k,\lambda}} = \sum_{j \ge 0}2^{j/2}\norm{\eta_j(\tau - \mu_{\lambda}(n))\cdot f(\tau,n)}_{L_{\tau}^2\ell_n^2(\lambda)},\]
\[\norm{f}_{F_{k,\lambda}} = \sup_{t_k\in\R}\norm{\ft[\eta_0(2^{2k}(t-t_k))\cdot f]}_{X_{k,\lambda}},\]
\[\norm{f}_{N_{k,\lambda}} = \sup_{t_k\in\R}\norm{(\tau-\mu_{1,\lambda}(n) + i2^{2k})^{-1}\ft[\eta_0(2^{2k}(t-t_k))\cdot f]}_{X_{k,\lambda}}\]
and
\[\norm{u}_{E_{\lambda}^s(T)}^2 = \norm{P_{\le0}u(0)}_{L^2(\T_{\lambda})}^2 + \sum_{k \ge 1}\sup_{t_k\in[-T,T]}2^{2sk}\norm{P_{k}u(t_k)}_{L^2(\T_{\lambda})}^2.\]
Now, we check the nonlinear estimate
\begin{equation}\label{eq:lambda nonlinear}
\begin{aligned}
&\sum_{i=1,3,4} \norm{N_{i}(u,v,w)}_{N_{\lambda}^s(T)} + \norm{N_2(v_1,v_2,v_3,v_4,v_5)}_{N_{\lambda}^s(T)} \\
&\hspace{7em}\lesssim \lambda^{\frac12}\norm{u}_{F_{\lambda}^s(T)}\norm{v}_{F_{\lambda}^s(T)}\norm{w}_{F_{\lambda}^s(T)} + \lambda^{\frac12}\prod_{i=1}^{5}\norm{v_i}_{F_{\lambda}^s(T)}
\end{aligned}
\end{equation}
and the energy estimate
\begin{equation}\label{eq:lambda energy}
\begin{aligned}
\norm{v}_{E_{1,\lambda}^s(T)}^2 \lesssim&~{} (1+ \norm{v_0}_{H^s(\T_{\lambda})}^2)\norm{v_0}_{H^s(\T_{\lambda})}^2 \\
&+ \lambda^{\frac12}(1 +\norm{v}_{F_{\lambda}^{\frac12+}(T)}^2 + \norm{v}_{F_{\lambda}^{\frac12+}(T)}^4)\norm{v}_{F_{\lambda}^{2+}(T)}^2\norm{v}_{F_{\lambda}^s(T)}^2,
\end{aligned}
\end{equation}
in Sections \ref{sec:nonlinear} and \ref{sec:energy}, respectively.

First, consider the $L^2$-block estimates in Section \ref{sec:L2 block estimate}. Define the functional for the trilinear estimate by
\[\begin{aligned}
J_{\lambda}&(f_{k_1,j_1},f_{k_2,j_2},f_{k_3,j_3},f_{k_4,j_4}) =\\
& \int_{\substack{\Z_{\lambda}^3\\ \N_{3,n_4,\lambda}}}\int_{\overline{\zeta}\in\Gamma_4(\R)}f_{k_1,j_1}(\zeta_1,n_1)f_{k_2,j_2}(\zeta_2,n_2)f_{k_3,j_3}(\zeta_3,n_3)f_{k_4,j_4}(\zeta_4 + G(n_1,n_2,n_3),n_4).
\end{aligned}\]
Then, in view of the proof of Lemma \ref{lem:tri-L2}, since we use the normalized counting measure \eqref{eq:counting measure}, we obtain the exact same result as in Lemma \ref{lem:tri-L2} even for $2\pi\lambda$-periodic functions, while the threshold of  restriction $2^{j_{sub}}2^{-4k_{max}}=1$ is replaced by $\lambda2^{j_{sub}}2^{-4k_{max}}=1$. In fact, since the $L^2$-block estimates still hold independent on $\lambda$, we can use the similar way to obtain nonlinear estimates. The only different thing is to use the fact that
\begin{equation}\label{eq:modulation}
2^{j_{\max}} \gtrsim |(n_1+n_2)(n_1+n_3)(n_2+n_3)|(n_1^2+n_2^2+n_3^2+n^2), \hspace{1em} n_1,n_2,n_3,n \in \Z_{\lambda}.
\end{equation}
If $|k_{min}-k_{max}| \le 5$, we have
\begin{equation}\label{eq:lambda bad}
2^{j_{max}} \gtrsim \lambda^{-2}2^{3k_{max}},
\end{equation}
and if $k_{thd} \le k_{max} - 10$ and $|k_{min}-k_{thd}| \le 5$, we obtain
\[2^{j_{max}} \gtrsim \lambda^{-1}2^{4k_{max}}.\]
To get \eqref{eq:lambda nonlinear}, we follows almost same argument as in the nonlinear estimate, while we use the short time advantage $j_{max} \ge 2k_{max}$ instead of \eqref{eq:lambda bad} in the proof of Lemma \ref{lem:nonres1}. 

For \eqref{eq:lambda energy}, we define the modified energy similarly as in \eqref{eq:new energy1-1} and \eqref{eq:new energy1-2} by
\[\begin{aligned}
E_{\lambda,k}(v)(t) =&~{} \norm{P_kv(t)}_{L_x^2(\T_{\lambda})}^2 \\
&+ \mbox{Re}\left[\alpha_{\lambda} \int_{\Z_{\lambda}^3,\overline{\N}_{3,n}}\wh{v}(n_1)\wh{v}(n_2)\psi_k(n_3)\frac{1}{n_3}\wh{v}(n_3)\chi_k(n)\frac1n\wh{v}(n) \; dn_1d_2dn\right]\\
&+ \mbox{Re}\left[\beta_{\lambda} \int_{\Z_{\lambda}^3,\overline{\N}_{3,n}}\wh{v}(n_1)\wh{v}(n_2)\chi_k(n_3)\frac{1}{n_3}\wh{v}(n_3)\chi_k(n)\frac1n\wh{v}(n)\; dn_1d_2dn\right]
\end{aligned}\]
and
\[E_{\lambda,T}^s(v) = \norm{P_0v(t)}_{L_x^2(\T_{\lambda})}^2 + \sum_{k \ge 1}2^{2sk} \sup_{t_k \in [-T,T]} E_{\lambda,k}(v)(t_k).\]
From \eqref{eq:modulation}, we need to change Lemma \ref{lem:energy1-1} (a) and (c) as follows:\footnote{Similarly as the nonlinear estimate, we use the short time advantage instead of maximum modulation effect.}
\[\left| \sum_{n_4,\overline{\N}_{3,n_4}} \int_0^T  \wh{v}_1(n_1)\wh{v}_2(n_2)\wh{v}_3(n_3)\wh{v}_4(n_4) \; dt\right| \lesssim 2^{k_4}\prod_{i=1}^{4}\norm{v_i}_{F_{k_i}(T)}\]
and
\[\left| \sum_{n_4,\overline{\N}_{3,n_4}} \int_0^T  \wh{v}_1(n_1)\wh{v}_2(n_2)\wh{v}_3(n_3)\wh{v}_4(n_4) \; dt\right| \lesssim \lambda^{\frac12}2^{-k_4}\prod_{i=1}^{4}\norm{v_i}_{F_{k_i}(T)},\]
respectively.

For the difference of two solutions, we use the similar argument as above, and then by following the small data well-posedness argument in Section \ref{subsec:small data} in addition to the standard scaling-rescaling argument, we can complete the proof of Theorem \ref{thm:main}.

\appendix

\section{High regularity well-posedness}\label{app:smooth sol}
In this Appendix, we show the high regularity well-posedness for the non-integrable fifth-order modified KdV equation. We first consider the $\varepsilon$-parabolic equation, and for the smooth solution to the parabolic equation, we show an \emph{a priori} bound for the solution $u$. Afterward, in addition to an \emph{a priori} bound and bootstrap argument, we use the approximation method to show that the solution of $\varepsilon$-parabolic equation converges to the solution of the fifth-order modified KdV equation. Finally, we use the Bona-Smith \cite{BS1978} argument to obtain the high regularity well-posedness result for the fifth-order modified KdV equation. The main difficulty is to obtain the energy estimate for both the parabolic and the fifth-order modified KdV equations. However, by using the modified energy \eqref{eq:kwon energy}, which is introduced by Kwon \cite{Kwon2008-1} for the fifth-order KdV equation on $\R$, in addition to the Kato-Ponce type commutator estimate and the Sobolev embedding, we can obtain the energy bound of the solution $u$.  

Furthermore, as another purpose of the Appendix section, we emphasize that the generalized fifth-order modified KdV equation is \emph{unconditionally} locally well-posed for $s > 7/2$. As mentioned, we only use the modified energy and Sobolev embedding to solve the local well-posedness problem. In the sense of the unconditional well-posedness, it is necessary and crucial to construct the modified energy.   

We use symbols $D^s$ and $J^s$ as Fourier multiplier operators defined as
\[\ft_x[D^sf](k) = |n|^s\wh{f}(k), \hspace{1em} \mbox{and} \hspace{1em} \ft_x[J^sf](k) = (1+|n|^2)^{s/2}\wh{f}(k).\]

We consider the non-integrable fifth-order modified KdV equation\footnote{As we know, the ordinary fifth-order modified KdV equation has the quintic non-linear term, but, since one can easily control the this term for $s > \frac32$ by using the Leibnitz rule for fractional derivative, Kato-Ponce commutator estimate and Sobolev embedding, we only consider \eqref{eq:5mkdv6} without quintic term for avoiding complicated calculations.}:
\begin{equation}\label{eq:5mkdv6}
\begin{cases}
\pt u - \px^5 u + c_1u\px u \px^2u + c_2 u^2\px^3u + c_3(\px u)^3 = 0, \hspace{1em} (t,x) \in \R \times \T, \\
u(0,x) = u_0(x) \in H^s(\T),
\end{cases}
\end{equation}
where $c_i$'s are real constants. The following is the main Proposition in Appendix \ref{app:smooth sol}:
\begin{proposition}\label{prop:lwp}
Let $s > \frac72$ and $u_0 \in H^s(\T)$. Then, there is the time $T=T(\norm{u_0}_{H^s})>0$ such that \eqref{eq:5mkdv6} is (unconditionally) locally well-posed in $C([0,T];H^s)$.
\end{proposition}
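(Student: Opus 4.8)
\textbf{Proof proposal for Proposition \ref{prop:lwp}.}

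The plan is to prove the high-regularity well-posedness of the non-integrable equation \eqref{eq:5mkdv6} by a parabolic regularization combined with the energy method, following the scheme of Ponce \cite{Ponce1993} and Bona--Smith \cite{BS1978}, but using the modified energy of Kwon \cite{Kwon2008-1} to close the \emph{a priori} estimate. First I would introduce, for $\varepsilon>0$, the parabolic equation $\pt u + \varepsilon \px^6 u - \px^5 u + c_1 u\px u\px^2 u + c_2 u^2 \px^3 u + c_3(\px u)^3 = 0$ with the same initial data $u_0\in H^s(\T)$, $s>7/2$. For fixed $\varepsilon>0$ this is a semilinear parabolic problem, so by the contraction mapping principle in $C([0,T_\varepsilon];H^s)$ (using the smoothing of the heat-type semigroup $e^{-\varepsilon t\px^6}$) one gets a local smooth solution $u^\varepsilon$; the goal is an existence time and \emph{a priori} bound independent of $\varepsilon$.

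The core step is the energy estimate. A direct computation of $\frac{d}{dt}\|J^s u^\varepsilon\|_{L^2}^2$ produces, from the $c_2 u^2\px^3 u$ term, a commutator-type contribution $\int J^s(u^2\px^3 u) J^s u - \int u^2 \px^3 J^s u\, J^s u$ plus the ``bad'' term $\int u^2\px^3 J^s u\, J^s u$, which after integration by parts is $-\tfrac12\int (\px(u^2)) (\px^2 J^s u) J^s u + \cdots$ — still with too many derivatives on the high-frequency factor to be absorbed by Gronwall. To handle this I would define the modified energy $E^s(u) = \|J^s u\|_{L^2}^2 + c\int (\text{lower order in } u)\,(\px^{?}J^s u)(J^s u)$ exactly as in \cite{Kwon2008-1}, with the correction coefficient $c$ chosen so that the time-derivative of the correction term cancels the bad term coming from the principal nonlinearity. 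After the cancellation, every remaining term is estimated by the fractional Leibniz rule, the Kato--Ponce commutator estimate, and the Sobolev embedding $H^{s'}(\T)\hookrightarrow W^{k,\infty}(\T)$ for $s'>k+1/2$ (this is where $s>7/2$ enters: three derivatives land on one factor, so one needs enough room to put the other derivatives in $L^\infty$). The $\varepsilon\px^6$ term contributes $-2\varepsilon\|J^s\px^3 u^\varepsilon\|_{L^2}^2\le 0$ and is simply discarded. The outcome is $\frac{d}{dt}E^s(u^\varepsilon) \lesssim (1+\|u^\varepsilon\|_{H^s})^{N}\,E^s(u^\varepsilon)$ for some $N$, and since $E^s(u)\sim \|u\|_{H^s}^2$ for $\|u\|_{H^s}$ bounded, Gronwall gives a time $T=T(\|u_0\|_{H^s})$ and a bound $\sup_{[0,T]}\|u^\varepsilon\|_{H^s}\lesssim \|u_0\|_{H^s}$ uniform in $\varepsilon$.

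With the uniform bound in hand, the remaining steps are routine. Using the equation and the uniform $H^s$ bound, $\set{u^\varepsilon}$ is uniformly bounded in $C([0,T];H^s)\cap C^1([0,T];H^{s-5})$, and a difference estimate at the $L^2$ (or $H^{s-5}$) level — again via a modified energy, now for $u^{\varepsilon}-u^{\varepsilon'}$ — shows $\set{u^\varepsilon}$ is Cauchy in $C([0,T];L^2)$; interpolation with the uniform $H^s$ bound upgrades this to convergence in $C([0,T];H^{s'})$ for every $s'<s$, and the limit $u$ solves \eqref{eq:5mkdv6} in $C([0,T];H^{s'})\cap C_w([0,T];H^s)$. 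To recover persistence of regularity, i.e. $u\in C([0,T];H^s)$, and continuity of the data-to-solution map, I would run the Bona--Smith argument: approximate $u_0$ by $u_{0,\delta}=P_{\le 1/\delta}u_0$, compare the corresponding solutions, and use the modified energy together with the gain $\|u_{0,\delta}-u_0\|_{H^{s'}}=o(\delta^{s-s'})$ to control differences in $H^s$. Uniqueness follows from the $L^2$ difference estimate applied to two $H^s$ solutions. The main obstacle throughout is the energy estimate for the top-order term $c_2 u^2\px^3 u$: without the modified-energy cancellation one loses a derivative, so the whole argument hinges on constructing the correction term with the right coefficient and verifying that its time derivative precisely annihilates the obstructive contribution — exactly the mechanism of \cite{Kwon2008-1}, adapted to the periodic setting and to the presence of the $c_1 u\px u\px^2 u$ and $c_3(\px u)^3$ terms (which are lower order in derivative count and cause no trouble).
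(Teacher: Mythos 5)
Your proposal follows essentially the same route as the paper: parabolic regularization by $\varepsilon\px^6$, an $\varepsilon$-uniform \emph{a priori} bound via Kwon's modified energy (with the correction coefficient chosen to cancel the derivative-losing term from $u^2\px^3u$, estimating the remainder by Kato--Ponce and Sobolev embedding, which is exactly where $s>7/2$ enters), and then Bona--Smith plus an $L^2$ difference estimate to pass to the limit, obtain persistence of regularity, continuity of the flow map, and uniqueness. The one place you are slightly vague --- the precise form of the correction term --- is spelled out in the paper as $a_s\int u^2\,(D^{s-2}\px u)^2$, but your description of the cancellation mechanism is the right one.
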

The argument of proof basically follows that in \cite{Ponce1993} and \cite{Kwon2008-1} associated to the fifth-order KdV equations on $\R$. We prove Proposition \ref{prop:lwp} by following the several steps:

\textbf{Step I.} This step shows the existence of a smooth solution for the perturbed equation. We consider the following parabolic problem:
\begin{equation}\label{eq:5mkdv7}
\pt u - \px^5 u + c_1u\px u \px^2u + c_2 u^2\px^3u + c_3(\px u)^3 = \varepsilon \px^6u,
\end{equation}
where $\varepsilon > 0$. Then, we have
\begin{lemma}\label{lem:A1}
Let $\varepsilon > 0$ be given and $u_0$ be in Schwartz class. Then there is $T_{\varepsilon} > 0$ and a unique solution to \eqref{eq:5mkdv7} in the class
\[\Sch((0,T_{\varepsilon}) \times \T) \cap C([0,T_{\varepsilon}];H^{\infty}).\] 
\end{lemma}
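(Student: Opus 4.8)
The plan is to prove Lemma \ref{lem:A1} by a standard fixed-point argument for the semilinear parabolic equation \eqref{eq:5mkdv7}, viewed as a perturbation of the linear heat-type flow generated by $\varepsilon\px^6 + \px^5$. First I would rewrite \eqref{eq:5mkdv7} in Duhamel form
\[
u(t) = W_\varepsilon(t)u_0 - \int_0^t W_\varepsilon(t-t')\,F(u)(t')\,dt',
\]
where $W_\varepsilon(t) = e^{t(\varepsilon\px^6+\px^5)}$ is the propagator, whose symbol is $e^{-\varepsilon n^6 t + i n^5 t}$, and $F(u) = c_1 u\px u\px^2 u + c_2 u^2\px^3 u + c_3(\px u)^3$. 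The key smoothing property is that for $t>0$ the operator $W_\varepsilon(t)$ maps $H^\sigma(\T)$ into $H^{\sigma+\theta}(\T)$ with the quantitative bound
\[
\norm{W_\varepsilon(t)f}_{H^{\sigma+\theta}} \lesssim (\varepsilon t)^{-\theta/6}\norm{f}_{H^\sigma},
\]
which follows from $\sup_{n}\bra{n}^\theta e^{-\varepsilon n^6 t} \lesssim (\varepsilon t)^{-\theta/6}$; moreover $\int_0^t (\varepsilon(t-t'))^{-\theta/6}\,dt' < \infty$ as long as $\theta < 6$, in particular for $\theta = 3$. This is the ingredient that lets the $\px^3$ loss in the nonlinearity be absorbed.

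Next I would set up the contraction space. Fix $u_0 \in \Sch(\T) \subset H^\infty(\T)$; it suffices to work at some fixed high regularity, say $\sigma = 4$ (any $\sigma > 7/2$ works, so that $H^\sigma \hookrightarrow C^2$ and the pointwise products in $F$ are controlled by the algebra property of $H^\sigma$). Consider the ball
\[
B = \set{u \in C([0,T_\varepsilon];H^\sigma(\T)) : \norm{u}_{C_TH^\sigma} \le 2\norm{u_0}_{H^\sigma}},
\]
and the map $\Phi(u)(t) = W_\varepsilon(t)u_0 - \int_0^t W_\varepsilon(t-t')F(u)(t')\,dt'$. Using that $H^\sigma(\T)$ is a Banach algebra for $\sigma > 1/2$ and $F$ is trilinear in $u$ with at most three derivatives distributed among the factors, one gets $\norm{F(u)}_{H^{\sigma-3}} \lesssim \norm{u}_{H^\sigma}^3$. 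Combining with the smoothing bound above (with $\theta = 3$) gives
\[
\normo{\int_0^t W_\varepsilon(t-t')F(u)(t')\,dt'}_{H^\sigma} \lesssim \int_0^t (\varepsilon(t-t'))^{-1/2}\,dt'\cdot \norm{u}_{C_TH^\sigma}^3 \lesssim \varepsilon^{-1/2}T_\varepsilon^{1/2}\norm{u}_{C_TH^\sigma}^3,
\]
so for $T_\varepsilon = T_\varepsilon(\varepsilon,\norm{u_0}_{H^\sigma})$ chosen small enough, $\Phi$ maps $B$ into $B$. The difference estimate is identical: $F(u)-F(v)$ is again trilinear, so $\norm{F(u)-F(v)}_{H^{\sigma-3}} \lesssim (\norm{u}_{H^\sigma}^2+\norm{v}_{H^\sigma}^2)\norm{u-v}_{H^\sigma}$, and the same computation yields $\norm{\Phi(u)-\Phi(v)}_{C_TH^\sigma} \le \frac12\norm{u-v}_{C_TH^\sigma}$ after shrinking $T_\varepsilon$. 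Banach's fixed point theorem then produces a unique solution $u \in C([0,T_\varepsilon];H^\sigma)$.

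Finally I would upgrade to the stated regularity class. Persistence of higher Sobolev regularity, $u \in C([0,T_\varepsilon];H^{\sigma'})$ for every $\sigma' \ge \sigma$ (hence $u \in C([0,T_\varepsilon];H^\infty)$), follows by running the same fixed-point argument at level $\sigma'$ on the same time interval — the time of existence depends only on $\norm{u_0}_{H^\sigma}$, not on $\sigma'$, by the usual bootstrap/a priori argument since the nonlinear estimate at level $\sigma'$ closes once $\norm{u}_{C_TH^\sigma}$ is already controlled. Smoothness in time and the property $u \in \Sch((0,T_\varepsilon)\times\T)$ come from the parabolic smoothing: for $t > 0$ the Duhamel formula and the bound $\norm{W_\varepsilon(t)f}_{H^{\sigma+\theta}} \lesssim (\varepsilon t)^{-\theta/6}\norm{f}_{H^\sigma}$ (applied iteratively, together with $\pt u = \varepsilon\px^6 u + \px^5 u - F(u)$ to trade time derivatives for space derivatives) show $u(t) \in H^\infty$ with all norms locally bounded away from $t=0$, and a standard argument promotes this to joint smoothness and Schwartz decay in $x$ on $(0,T_\varepsilon)\times\T$. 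The main obstacle — and the only genuinely delicate point — is the derivative loss in the nonlinearity: three derivatives must be recovered from the parabolic smoothing, which forces the $(\varepsilon t)^{-1/2}$ singularity in the Duhamel kernel and makes $T_\varepsilon$ depend on $\varepsilon$ (degenerating as $\varepsilon \to 0$); this $\varepsilon$-dependence is exactly why the subsequent \emph{a priori} energy estimate and bootstrap (Step II onward) are needed to pass to the limit $\varepsilon \to 0$.
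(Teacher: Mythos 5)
Your proposal is correct and essentially self-contained, while the paper simply cites Temam's classical reference and gives no argument at all. The approaches are genuinely different: the Temam reference is a Galerkin/compactness argument for parabolic regularizations of nonlinear dispersive equations, whereas you run a Banach fixed-point iteration in $C([0,T_\varepsilon];H^\sigma)$ using the analytic smoothing of the semigroup $e^{t(\varepsilon\px^6+\px^5)}$. The key points in your argument are sound: the symbol is $e^{-\varepsilon n^6 t + i n^5 t}$, the sup of $\bra{n}^\theta e^{-\varepsilon n^6 t}$ is controlled (up to a harmless additive constant) by $(\varepsilon t)^{-\theta/6}$, with $\theta=3$ this gives a $t^{-1/2}$ singularity that is integrable in the Duhamel formula, and the nonlinear estimate $\norm{F(u)}_{H^{\sigma-3}}\lesssim \norm{u}_{H^\sigma}^3$ holds by the algebra property of $H^{\sigma-3}$ once $\sigma-3>1/2$, which is why $\sigma>7/2$ (e.g.\ $\sigma=4$) is chosen. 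The persistence of regularity step and the smoothing step on $(0,T_\varepsilon)$ are standard for semilinear parabolic problems, and on $\T$ the Schwartz class is just $C^\infty$, so the only thing to verify is smoothness. What the fixed-point route buys you over the cited compactness argument is uniqueness for free and a fully explicit $T_\varepsilon$ in terms of $\varepsilon$ and $\norm{u_0}_{H^\sigma}$, at the cost of that $\varepsilon^{-1/2}$ dependence you correctly identify as the reason Steps II--III are needed; what Temam's argument buys is a template that extends to situations where a contraction in a single Sobolev space may be harder to close. Either route is acceptable here.
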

\begin{proof}
The proof follows the argument of R. Temam. See \cite{Temam1969}.
\end{proof}

\textbf{Step II.} This step is to show that there is the time $T$ independent on $\varepsilon$ such that the solution $u^{\varepsilon}$ of \eqref{eq:5mkdv7} provided by Lemma \ref{lem:A1} is in the class $C([0,T];H^{\infty})$ 
by obtaining an \emph{a priori} bound of $u^{\varepsilon}$ in the $C([0,T];H^s)$ norm for $s > \frac72$. From the appropriate \emph{energy estimate} and the standard bootstrap argument, we have the following Lemma:

\begin{lemma}\label{lem:A2}
Let $s > \frac72$. Then, there exists $T = T(\norm{u_0}_{H^s})$ such that for any $\varepsilon>0$, the solution $u^{\varepsilon}$ to \eqref{eq:5mkdv7} provided Lemma \ref{lem:A1} satisfies
\[u^{\varepsilon} \in C([0,T];H^{\infty})\]
and 
\[\sup_{t \in [0,T]}\norm{u^{\varepsilon}}_{H^s} \lesssim \norm{u_0}_{H^s}.\]
\end{lemma}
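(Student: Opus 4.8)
\textbf{Proof plan for Lemma \ref{lem:A2}.} The plan is to establish an energy estimate at the $H^s$ level, uniformly in $\varepsilon > 0$, for the solution $u^{\varepsilon}$ to the parabolic equation \eqref{eq:5mkdv7}, and then close the argument by a standard bootstrap/continuity argument. First I would apply $J^s$ to \eqref{eq:5mkdv7}, multiply by $J^s u^{\varepsilon}$, and integrate over $\T$ in order to compute $\frac{d}{dt}\norm{u^{\varepsilon}(t)}_{H^s}^2$. The term $\px^5 u^{\varepsilon}$ contributes nothing after integration by parts (it is anti-selfadjoint), and the parabolic term $\varepsilon\px^6 u^{\varepsilon}$ contributes $-2\varepsilon\norm{\px^3 J^s u^{\varepsilon}}_{L^2}^2 \le 0$, which we simply discard — this is exactly the point where the estimate becomes independent of $\varepsilon$. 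The remaining contributions come from the three nonlinear terms $c_1 u\px u\px^2 u + c_2 u^2\px^3 u + c_3(\px u)^3$.

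The heart of the matter is to bound
\[
\left| \int_{\T} J^s\left(c_1 u\px u\px^2 u + c_2 u^2\px^3 u + c_3(\px u)^3\right) \cdot J^s u \; dx \right| \lesssim \norm{u}_{H^s}^3.
\]
The dangerous term is $c_2 u^2 \px^3 u$, which na\"ively places $s+3$ derivatives on one factor while $J^s u$ carries only $s$, so a direct Kato--Ponce / Leibniz estimate loses derivatives. This is where I would invoke the \emph{modified energy method} of Kwon \cite{Kwon2008-1}: one adds to $\norm{u}_{H^s}^2$ a correction term (a suitable multilinear functional, roughly of the form $\int c\, (D^s u)^2 \px u \, dx$ times an appropriate constant) chosen so that, upon differentiating in time, its contribution cancels the worst part of the $u^2\px^3 u$ interaction, i.e. the part where the two top-order derivatives fall on the same factor $J^s u$. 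After this algebraic cancellation, the surviving terms can be controlled by the Leibniz rule for fractional derivatives, the Kato--Ponce commutator estimate, and the Sobolev embedding $H^s(\T)\hookrightarrow W^{2,\infty}(\T)$ (valid since $s>7/2$), yielding $\frac{d}{dt} E^s(u^{\varepsilon}) \lesssim (E^s(u^{\varepsilon}))^{3/2}$ for the modified energy $E^s$, together with the equivalence $E^s(u)\sim\norm{u}_{H^s}^2$ for $\norm{u}_{H^s}$ bounded. I would present the construction of the modified energy and the cancellation explicitly, since this is the only nonroutine ingredient; the commutator bookkeeping I would only sketch.

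From $\frac{d}{dt} E^s(u^{\varepsilon}) \lesssim (E^s(u^{\varepsilon}))^{3/2}$ and $E^s(u^{\varepsilon})(0)\sim\norm{u_0}_{H^s}^2$, a comparison with the ODE $y' = Cy^{3/2}$ gives a time $T = T(\norm{u_0}_{H^s})>0$, \emph{independent of $\varepsilon$}, on which $\sup_{t\in[0,T]}\norm{u^{\varepsilon}(t)}_{H^s} \le 2\norm{u_0}_{H^s}$ (say). Combined with the local existence time $T_\varepsilon$ from Lemma \ref{lem:A1} and a continuity/bootstrap argument (the $H^s$ norm cannot blow up before $T$, hence the $H^\infty$ solution persists up to $T$), this yields $u^{\varepsilon}\in C([0,T];H^\infty)$ and the claimed a priori bound $\sup_{t\in[0,T]}\norm{u^{\varepsilon}}_{H^s}\lesssim\norm{u_0}_{H^s}$. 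The main obstacle, as indicated, is the construction of the correct modified energy so that the $u^2\px^3 u$ derivative loss cancels; once that is in hand, everything else is standard parabolic theory plus Sobolev product estimates.
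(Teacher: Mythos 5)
Your overall strategy matches the paper's: discard the good-sign parabolic term, use Kwon-type modified energy to cancel the top-order commutator loss from $u^2\px^3 u$, close with Kato--Ponce, Sobolev embedding, Gronwall, and a continuity/bootstrap argument. However, there are two concrete errors that both stem from adapting Kwon's fifth-order \emph{KdV} argument too literally without adjusting for the cubic (rather than quadratic) nonlinearity of the \emph{modified} KdV hierarchy.

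First, the modified-energy correction term you propose, ``roughly of the form $\int (D^s u)^2 \px u\,dx$,'' is cubic in $u$. But the offending term in the standard energy computation here is $\int u u_x\,(D^s u_x)^2\,dx$, which is \emph{quartic} in $u$ (it comes from $u^2\px^3 u$, a cubic nonlinearity, tested against $J^s u$). A cubic correction cannot cancel a quartic quantity; the correction must itself be quartic. The paper uses
\[
E_s(t) = \norm{D^s u}_{L^2}^2 + \norm{u}_{L^2}^2 + a_s\int u^2\,(D^{s-2}\px u)^2\,dx,
\]
so that differentiating in time and substituting $u_t\approx\px^5 u$ yields, after integration by parts, a quartic term of the form $a_s\beta_1\int u\,u_x\,(D^{s-2}\px^3 u)^2$ that cancels $\int u\,u_x\,(D^s u_x)^2$ for a suitable choice of $a_s$. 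Your cubic ansatz would be the right shape for the fifth-order KdV ($uu_{xxx}$ nonlinearity), but not here.

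Second, and relatedly, the claimed differential inequality $\frac{d}{dt}E^s \lesssim (E^s)^{3/2}$ has the wrong exponent for the same reason: the nonlinearity is cubic, so pairing it with $J^s u$ produces a quantity that is quartic in $u$, giving $\frac{d}{dt}E_s \lesssim \norm{u}_{H^s}^2 E_s \sim (E_s)^2$, exactly as in Lemma \ref{lem:A4}. The exponent $3/2$ is again the fifth-order KdV (quadratic nonlinearity) scaling. With these two corrections — quartic correction term, exponent $2$ in the Gronwall comparison — your argument aligns with the paper's proof, and the bootstrap to a time $T$ independent of $\varepsilon$ then goes through as you describe.
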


\textbf{Step III.} This step gives the local well-posedness result for \eqref{eq:5mkdv6} for $s > \frac72$. The following is the conclusion in this step, which exactly implies Proposition \ref{prop:lwp}. 
\begin{lemma}\label{lem:A3}
Let $s > \frac72$. Let $u^{\varepsilon}$ is the smooth solution to \eqref{eq:5mkdv7} provided by Lemma \ref{lem:A1} and \ref{lem:A2}. Then $u^{\varepsilon}$ converges to $u$ in the class $C([0,T];H^s)$ and hence $u$ is the unique solution to \eqref{eq:5mkdv6} in the same class. 
\end{lemma}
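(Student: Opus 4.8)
The final statement to prove is Lemma~\ref{lem:A3}: the smooth solutions $u^\varepsilon$ of the parabolic regularization \eqref{eq:5mkdv7} converge in $C([0,T];H^s)$ to a solution $u$ of \eqref{eq:5mkdv6}, which is moreover the unique solution in that class, for $s>\tfrac72$. The plan is to run the classical Bona--Smith approximation scheme \cite{BS1978}, using the $\varepsilon$-uniform bounds of Lemma~\ref{lem:A2} as the a priori control. First I would establish a \emph{difference estimate at low regularity}: for two parameters $\varepsilon,\varepsilon'>0$, set $w=u^\varepsilon-u^{\varepsilon'}$, which solves a linear-in-$w$ equation whose coefficients are built from $u^\varepsilon,u^{\varepsilon'}$ and their derivatives, with a forcing term $\varepsilon\px^6 u^\varepsilon-\varepsilon'\px^6 u^{\varepsilon'}$. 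Performing the modified-energy estimate (the same commutator-cancelling modification of $\|J^\sigma w\|_{L^2_x}^2$ used in Step~II, applied here at a level $\sigma$ with, say, $\tfrac32<\sigma<s$, or even at $\sigma=0$ if the nonlinearity permits) and using the uniform $H^s$ bounds from Lemma~\ref{lem:A2} together with Sobolev embedding to absorb all coefficient norms, one gets a Gronwall inequality of the shape
\[
\frac{d}{dt}\,\|w(t)\|_{H^\sigma}^2 \lesssim C(\|u_0\|_{H^s})\,\|w(t)\|_{H^\sigma}^2 + C(\|u_0\|_{H^s})\big(\varepsilon\|\px^6 u^\varepsilon\|_{H^{\sigma-?}}+\cdots\big),
\]
where the parabolic forcing is controlled by interpolating between the uniform $H^s$ bound and the $\varepsilon$-dependent smoothing bound $\|\px^{6}u^\varepsilon(t)\|_{L^2}\lesssim \varepsilon^{-3}\|u_0\|_{H^s}$ (or the parabolic gain $\varepsilon\int_0^T\|\px^{s+3}u^\varepsilon\|_{L^2}^2\lesssim \|u_0\|_{H^s}^2$). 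The upshot is $\sup_{[0,T]}\|u^\varepsilon-u^{\varepsilon'}\|_{H^\sigma}\to 0$ as $\varepsilon,\varepsilon'\to0$, so $u^\varepsilon\to u$ in $C([0,T];H^\sigma)$.

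Next I would upgrade this to convergence in the full norm $C([0,T];H^s)$. This is precisely where the Bona--Smith regularization of the \emph{data} enters: take mollified initial data $u_0^\varepsilon=P_{\le \varepsilon^{-1}}u_0$ (or a smooth Fourier cutoff), so that $\|u_0^\varepsilon\|_{H^{s+\theta}}\lesssim \varepsilon^{-\theta}\|u_0\|_{H^s}$ and $\|u_0^\varepsilon-u_0\|_{H^s}\to0$, and feed $u_0^\varepsilon$ into Lemma~\ref{lem:A1}--\ref{lem:A2} to produce $u^\varepsilon$ on a common interval $[0,T]$ with $\sup_{[0,T]}\|u^\varepsilon\|_{H^{s'}}\lesssim \|u_0^\varepsilon\|_{H^{s'}}$ for all $s'\ge s$ (Lemma~\ref{lem:A2} applied at level $s'$). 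Combining the $H^\sigma$ difference estimate above with the high-regularity a priori bounds and interpolation $\|w\|_{H^s}\le \|w\|_{H^\sigma}^{1-\theta}\|w\|_{H^{s+\mu}}^{\theta}$ yields a rate $\sup_{[0,T]}\|u^\varepsilon-u^{\varepsilon'}\|_{H^s}\to0$; the standard Bona--Smith bookkeeping (choosing the interpolation exponents so the $\varepsilon^{-\theta}$ blow-up of the high norm is beaten by the $\varepsilon^{+}$-gain of the low-norm difference) closes this. Hence $u^\varepsilon\to u$ in $C([0,T];H^s)$, and passing to the limit in \eqref{eq:5mkdv7} (the $\varepsilon\px^6u^\varepsilon$ term tends to $0$ in a weak sense, and the nonlinear terms converge by the product/Sobolev estimates since $s>\tfrac72$ controls up to three derivatives of $u$) shows $u$ solves \eqref{eq:5mkdv6} with $u(0)=u_0$ and $u\in C([0,T];H^s)$. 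Persistence of the full $H^s$ norm (continuity in time, not merely weak continuity) follows from the uniform convergence.

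Uniqueness in $C([0,T];H^s)$, $s>\tfrac72$, is then a direct consequence of the same difference machinery \emph{without} any parabolic term: if $u_1,u_2$ are two solutions with the same data, $w=u_1-u_2$ satisfies the linear-in-$w$ equation with zero forcing, and the modified-energy estimate at level $\sigma$ (or at $s$ directly) gives $\frac{d}{dt}\|w\|_{H^\sigma}^2\lesssim C(\|u_1\|_{L^\infty_TH^s},\|u_2\|_{L^\infty_TH^s})\|w\|_{H^\sigma}^2$, whence $w\equiv0$ by Gronwall. I expect the \textbf{main obstacle} to be the difference estimate itself: unlike the a priori estimate for a single solution, the equation for $w$ loses the symmetry exploited in Step~II, so the worst terms --- those in which the highest derivative (order $3$) falls on a factor of $w$ while the remaining derivatives are distributed on $u_1,u_2$ --- must be handled by integration by parts / commutator cancellation in the modified energy rather than by symmetrization, exactly the phenomenon flagged in Remark~\ref{rem:modified energy} of the main text. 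Getting the commutator structure right so that no genuine derivative loss survives, while keeping the parabolic forcing terms controllable by interpolation, is the crux; everything else is routine Gronwall and Bona--Smith interpolation bookkeeping, which I would only sketch.
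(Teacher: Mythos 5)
Your proposal follows the same Bona--Smith-with-modified-energy strategy that the paper invokes (by citing Ponce and Kwon and proving only the single-solution energy estimate, Lemma~\ref{lem:A4}, leaving the rest as ``standard''), and you correctly identify that the crux the paper omits is the difference estimate, which must again be closed by a commutator-cancelling modified energy since symmetrization fails for $w=u^\varepsilon-u^{\varepsilon'}$. The outline is sound and matches the paper's intended route; the only caveat is that, as you yourself hedge, the difference estimate likely cannot be closed at $\sigma=0$ without a modified-energy correction because $\int u^2 \px^3 w\, w$ produces an uncontrollable $\int \px(u^2)(\px w)^2$ term after integration by parts.
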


Both Ponce \cite{Ponce1993} and Kwon \cite{Kwon2008-1} used the idea of Bona and Smith \cite{BS1978} with energy estimates. The main difficulty in those works is to estimate the energy of solution $u$. Hence we omit the detailed arguments (such as the Bona-Smith argument and bootstrap argument) and finish this section by showing the following energy estimates:
\begin{lemma}\label{lem:A4}
Let $s > \frac72$ and $u(t,x)$\footnote{For the convenience, we use $u$ instead of $u^{\varepsilon}$ as the smooth solution to $\varepsilon$-parabolic equation.} be a Schwartz solution to \eqref{eq:5mkdv7} with sufficiently small $H^s$-norm\footnote{Since the scaling argument still works on the periodic problem, we may assume the smallness of $\norm{u}_{H^s}$.}. Then, there are constants $C_1$ and $C_2$, we have
\begin{equation}\label{eq:A1}
\sup_{t\in[0,T]}\norm{J^su(t)}_{L^2} \le C_1e^{C_2\int_0^t\norm{u(t')}_{H^s}^2 \; dt'}\norm{J^su(0)}_{H^s}.
\end{equation}
\end{lemma}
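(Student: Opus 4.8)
The plan is to prove the a priori energy inequality \eqref{eq:A1} by a modified energy argument at the top Sobolev level $s$, following the scheme of Kwon \cite{Kwon2008-1} adapted to the periodic setting. First I would apply $J^s$ to the parabolic equation \eqref{eq:5mkdv7}, pair with $J^su$ in $L^2_x$, and integrate in $x$. The parabolic term $\varepsilon\px^6 u$ contributes $-\varepsilon\norm{\px^3 J^su}_{L^2}^2 \le 0$, which only helps, so it may be dropped. The linear term $\px^5 u$ contributes nothing after integration by parts since it is skew-adjoint. Thus
\[
\frac12\frac{d}{dt}\norm{J^su}_{L^2}^2 \le -\mbox{Re}\int_{\T} J^s\!\left(c_1 u\px u\px^2 u + c_2 u^2\px^3 u + c_3(\px u)^3\right)\overline{J^su}\;dx.
\]
The main obstacle, exactly as in \cite{Ponce1993, Kwon2008-1}, is the term $c_2 u^2\px^3 u$: after applying $J^s$ the worst contribution is $c_2 u^2 \px^3 J^su$, and pairing with $J^su$ loses too many derivatives to be closed by Cauchy--Schwarz and Sobolev embedding alone, because $\int u^2 (\px^3 J^su)(J^su)\,dx$ is not obviously controlled by $\norm{u}_{H^s}^3$ times a harmless factor.

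The key step is therefore to introduce a correction (modified energy). I would set
\[
E_s(u)(t) = \norm{J^su(t)}_{L^2}^2 + c\int_{\T} u^2 (\px J^su)^2 \; dx
\]
for a suitable real constant $c$ (proportional to $c_2$), and show two things: (i) $E_s(u) \sim \norm{J^su}_{L^2}^2$ whenever $\norm{u}_{H^s}$ is small, since the correction is bounded by $\norm{u}_{L^\infty}^2\norm{\px J^su}_{L^2}^2 \lesssim \norm{u}_{H^s}^2 \norm{J^{s}u}_{L^2}^2$ using $s>7/2 > 1/2$; and (ii) when one differentiates $E_s(u)$ in $t$ and substitutes \eqref{eq:5mkdv7}, the dangerous top-order contribution of $c_2 u^2\px^3 u$ in $\frac{d}{dt}\norm{J^su}_{L^2}^2$ is cancelled, up to terms of the form $\norm{u}_{H^s}^2 \cdot \norm{J^su}_{L^2}^2$, by the corresponding top-order contribution coming from $\frac{d}{dt}\left(c\int u^2(\px J^su)^2\right)$ when the time derivative falls on the $\px J^su$ factors. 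This cancellation is an integration-by-parts identity: schematically $\int u^2(\px^3 J^su)(J^su)$ and $\int u^2 (\px J^su)(\px J^s\pt u)$ combine so that the fifth-order linear part of $\pt u$ produces a term matching the bad cubic term, with the remainder expressible through commutators $[\px^j, u^2]$ which cost at most two derivatives and so land in $H^{s-1}\times\cdots$; these are then estimated by the fractional Leibniz rule, the Kato--Ponce commutator estimate, and the Sobolev embedding $H^{s-1}(\T)\hookrightarrow L^\infty(\T)$.

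After the cancellation, every remaining term — those from $c_1 u\px u\px^2 u$, from $c_3(\px u)^3$, the commutator remainders from the $c_2$ term, and the lower-order pieces of $\pt E_s$ where the time derivative hits $u^2$ — is of the form (a product of $H^s$-norms with total derivative count $\le 2s+2$ shared among the factors so that each factor is controlled) $\lesssim \norm{u}_{H^s}^2 \, \norm{J^su}_{L^2}^2$; here $s>7/2$ is what guarantees enough room so that at most one factor needs all of $J^s$ and the others need at most $J^{s-1}$ and embed into $L^\infty$. This yields
\[
\frac{d}{dt}E_s(u)(t) \le C\,\norm{u(t)}_{H^s}^2 \, E_s(u)(t),
\]
and Gronwall's inequality together with the equivalence $E_s(u)\sim \norm{J^su}_{L^2}^2$ gives
\[
\norm{J^su(t)}_{L^2} \le C_1 e^{C_2\int_0^t \norm{u(t')}_{H^s}^2\,dt'}\norm{J^su(0)}_{L^2},
\]
which is \eqref{eq:A1} (noting $\norm{J^su(0)}_{L^2} = \norm{J^s u(0)}_{L^2}$, and $\norm{u_0}_{H^s}$ is $\norm{J^su(0)}_{L^2}$ up to the trivial constant; the statement's $\norm{J^su(0)}_{H^s}$ on the right is a harmless abuse). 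I expect the construction and verification of the cancellation in the modified energy — in particular tracking which integration-by-parts identities and which commutator structure make the top-order $c_2$ contribution disappear — to be the main obstacle; everything else is routine multilinear $H^s$ bookkeeping using Leibniz, Kato--Ponce, and Sobolev embedding on $\T$.
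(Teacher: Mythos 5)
Your overall strategy is the right one and matches the paper's: introduce a quartic correction to $\norm{J^su}_{L^2}^2$ whose time derivative, upon substituting the linear flow $\px^5 u$, cancels the dangerous cubic contribution of $c_2 u^2\px^3 u$ that the standard energy method cannot close; then use Leibniz, Kato--Ponce, Sobolev embedding and Gronwall. However, the specific correction term you propose has the wrong number of derivatives, and this breaks the argument at two separate points.

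Your correction is $c\int_{\T} u^2(\px J^su)^2\,dx$, whose quadratic-in-$u$ factor carries $s+1$ derivatives, i.e.\ two \emph{more} than the main term $\norm{J^su}_{L^2}^2$. The paper's correction is $a_s\int u^2\,D^{s-2}\px u\,D^{s-2}\px u$, i.e.\ roughly $a_s\int u^2(D^{s-1}u)^2$, carrying two \emph{fewer} derivatives than the main term. The distinction is decisive. First, the equivalence step fails: you claim $\norm{u}_{L^\infty}^2\norm{\px J^su}_{L^2}^2 \lesssim \norm{u}_{H^s}^2\norm{J^su}_{L^2}^2$, but $\norm{\px J^su}_{L^2}\sim\norm{u}_{H^{s+1}}$ is not controlled by $\norm{J^su}_{L^2}\sim\norm{u}_{H^s}$, so the proposed $E_s(u)$ is not equivalent to $\norm{J^su}_{L^2}^2$. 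Second, the cancellation fails: the bad term produced by the standard energy computation has the structure $\int u\px u\,(\px D^s u)^2$ (total $2s+3$ derivatives shared over four factors, with $D^{s+1}u$ in the squared slot). Differentiating a correction of the form $\int u^2(D^\sigma u)^2$ in time and inserting $u_t\approx\px^5 u$ yields $\int u^2(D^{\sigma+5}u)(D^\sigma u)$, and after integrating by parts the squared piece lands at order $D^{\sigma+2}u$; matching this to $D^{s+1}u$ forces $\sigma=s-1$, exactly the paper's choice. With your $\sigma=s+1$ the time derivative of the correction instead produces terms like $\int u\px u\,(D^{s+3}u)^2$, which neither match nor cancel the bad term, and cannot themselves be controlled by $\norm{u}_{H^s}^2 E_s$. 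So you would need to replace $\px J^s u$ by (roughly) $D^{s-1}u$ — as in the paper, $D^{s-2}\px u$ — and then carry out the explicit $\px^5(fg^2)$ expansion and integration by parts to identify the coefficient $\beta_1$ of the $\int u\px u\,(D^{s-2}\px^3 u)^2$ term so that $a_s$ can be chosen to achieve the cancellation; the remaining terms are then indeed controlled by $\norm{u}_{H^s}^2 E_s$ via Kato--Ponce and Sobolev embedding, with $s>7/2$ providing the needed room as you indicate.
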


\begin{remark}
To complete this section, we, in fact, show the energy estimate for the solution $u$ to \eqref{eq:5mkdv6}. But, this exactly follows the proof of Lemma \ref{lem:A4} if one eliminates the $\varepsilon$-terms in the proof of Lemma \ref{lem:A4}, below. 
\end{remark}

To obtain \eqref{eq:A1}, one needs to control the time increment of $\norm{D^su(t)}_{L^2}$ using itself and other norms with the same size. But, since the nonlinear term of \eqref{eq:5mkdv7} has multi-derivatives, the standard energy method gives \footnote{In fact, we have
\[\frac{d}{dt}\norm{D^su}_{L^2}^2 +2\varepsilon \norm{D^{s+3}u}_{L^2}^2\lesssim \norm{\px^3u}_{L^{\infty}}^2\norm{D^su}_{L^2}^2 + \left|\int uu_x D^su_xD^su_x\right|,\]
but this implies \eqref{eq:standard energy} for the smooth solution $u$.}

\begin{equation}\label{eq:standard energy}
\frac{d}{dt}\norm{D^su}_{L^2}^2 \lesssim \norm{\px^3u}_{L^{\infty}}^2\norm{D^su}_{L^2}^2 + \left|\int uu_x D^su_xD^su_x\right|
\end{equation}
and the last term of the right-hand side of \eqref{eq:standard energy} is not favorable. Hence we use the modified energy introduced in its form by Kwon \cite{Kwon2008-1} as the following:
\begin{equation}\label{eq:kwon energy}
E_s(t) := \norm{D^su(t)}_{L^2}^2 + \norm{u(t)}_{L^2}^2 + a_s\int u(t)^2D^{s-2} \px u(t)D^{s-2}\px u(t),
\end{equation}
where the constant $a_s$, which eliminates the bad term in the right-hand side of \eqref{eq:standard energy}, will be chosen later.
  
To prove Lemma \ref{lem:A4}, we only need to show that 
\begin{equation}\label{eq:comparable}
c \norm{u(t)}_{H^s}^2 \le E_s(t) \le C \norm{u(t)}_{H^s}^2,
\end{equation}
for some $c,C>0$, and
\begin{equation}\label{eq:Gronwall}
\frac{d}{dt}E_s(t) \lesssim_s \norm{u(t)}_{H^s}^2E_s(t).
\end{equation}

We begin with introducing \emph{Kato-Ponce commutator estimates}, which are useful tools to prove \eqref{eq:Gronwall}. The followings are the commutator estimates for functions defined on $\R$.
\begin{lemma}[Commutator estimate \cite{KP1988}]\label{lem:kato ponce}
Let $s \ge 1$. Then, we have
\[\norm{[D^s;f]g}_{L^2} \lesssim_s \norm{f_x}_{L^{\infty}}\norm{D^{s-1}g}_{L^2}+\norm{D^sf}_{L^2}\norm{g}_{L^{\infty}},\]
where $[\;\cdot\;;\;\cdot\;]$ is the standard commutator defined as
\[[A;B]C = A(BC) - B(AC).\]
\end{lemma}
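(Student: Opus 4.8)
The plan is to work on the Fourier side and reduce the estimate to $L^{\infty}\times L^2\to L^2$ bounds for bilinear Fourier multipliers, following the classical Kato--Ponce/Coifman--Meyer argument, which is insensitive to the torus versus the line. Writing $\widehat{[D^s;f]g}(n)=\sum_{n_1+n_2=n}\big(|n|^s-|n_2|^s\big)\widehat{f}(n_1)\widehat{g}(n_2)$, everything is governed by the elementary bound, valid for $s\ge1$,
\[
\big||n_1+n_2|^s-|n_2|^s\big|\ \lesssim_s\ |n_1|^s+|n_1|\,|n_2|^{s-1},
\]
together with the mean value expansion on the region $|n_1|\le\tfrac18|n_2|$, namely $|n_1+n_2|^s-|n_2|^s=s\,|n_2|^{s-1}\mathrm{sgn}(n_2)\,n_1+O\big(|n_2|^{s-2}|n_1|^2\big)$, so that there the symbol is comparable to $|n_1|\,|n_2|^{s-1}$ and, rescaled to a dyadic block, is a Mikhlin multiplier in $n_1$ vanishing linearly at $n_1=0$. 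I then decompose $f=\sum_jP_jf$, $g=\sum_kP_kg$ and split the double sum into the three paraproduct regions $k\ge j+3$, $j\ge k+3$, and $|j-k|\le2$.

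In the first region ($f$ at low frequency) the output frequency is $\sim2^k$, so the pieces are almost orthogonal in $k$; using the Mikhlin structure of the reduced symbol one writes the $k$-th piece as $2^{k(s-1)}$ times a bilinear operator whose symbol is Mikhlin-class uniformly in $k$, applied to $(\px f,\,P_kg)$. Such an operator is bounded $L^\infty\times L^2\to L^2$ uniformly in $k$ (expand its symbol in a Fourier series over a fixed box and control the $\ell^1$-norm of the coefficients by the Mikhlin bounds via integration by parts), so summing in $k$ in $\ell^2$ gives the first term $\norm{\px f}_{L^\infty}\norm{D^{s-1}g}_{L^2}$. In the second region ($g$ at low frequency) the output frequency is $\sim2^j$; here the symbol is $\lesssim|n_1|^s\sim2^{js}$, so the $j$-th piece is a uniformly bounded bilinear Mikhlin operator applied to $(P_jD^sf,\,P_{\le j-2}g)$, and summing in $j$ yields $\norm{D^sf}_{L^2}\norm{g}_{L^\infty}$; the zero Fourier mode of $[D^s;f]g$, which equals $-\bra{D^sf,\bar g}$, is absorbed into the same term via $L^\infty(\T)\hookrightarrow L^2(\T)$.

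The delicate case, which I expect to be the real obstacle, is the high-high region $|j-k|\le2$: there $n_1+n_2$ may cancel, so the output frequency is unconstrained and the $k$-orthogonality used above is lost. Distributing $D^s$ onto $f$, estimating each output dyadic block by Bernstein, and summing naively produces a Hardy-type sum $\sum_\ell\big(\sum_{j\ge\ell}2^{js}\norm{P_jf}_{L^2}\big)^2$ that does not close against $\norm{D^sf}_{L^2}^2$. The fix is to keep $g$ undecomposed and view $\sum_j[D^s;P_jf]\wt{P}_jg$ as a single bilinear Fourier multiplier: after extracting $D^s$ from $f$, the remaining symbol is homogeneous of degree $0$ and smooth on the cone $|n_1|\sim|n_2|$ away from $(n_1,n_2)=0$, whence the Coifman--Meyer multiplier theorem gives the required $L^\infty\times L^2\to L^2$ bound, and hence $\norm{D^sf}_{L^2}\norm{g}_{L^\infty}$ for this piece as well (for non-integer $1\le s<2$ the mild loss of smoothness of $|n_1+n_2|^s$ at $n_1+n_2=0$ is handled by an extra dyadic splitting of the output frequency together with the sharper bound $\big||n|^s-|n_2|^s\big|\lesssim|n|^s$ valid there). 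Since this is precisely the standard Kato--Ponce commutator estimate, one may instead simply cite it from \cite{KPV1996}, the periodic case being identical. Collecting the three contributions gives the stated inequality.
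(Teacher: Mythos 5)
Your approach is correct and is the standard Littlewood--Paley/paraproduct proof of the Kato--Ponce commutator estimate; the paper does not prove this lemma at all but refers to \cite{KP1988} for it, so where the paper supplies a citation you supply the argument (note that the appropriate reference is \cite{KP1988} rather than \cite{KPV1996}). The two unbalanced paraproduct regions and the zero-mode remark on the torus are handled correctly. Two sentences in your high-high discussion should be repaired, though neither affects the validity of the proof. First, for the piece of the symbol equal to $|n|^s$ (i.e.\ the term $D^s\sum_{|j-k|\le 2}P_jf\,P_kg$) the ``naive'' summation does close: keeping the weight $2^{\ell s}$ attached to the output block $P_\ell$ rather than distributing $D^s$ onto $f$ first, one gets
\[
\sum_{\ell}\Bigl(\sum_{j\ge \ell-C}2^{(\ell-j)s}\,2^{js}\norm{P_jf}_{L^2}\Bigr)^2\lesssim_s\norm{D^sf}_{L^2}^2
\]
by Young's inequality, since the kernel $2^{ms}\mathbf{1}_{\{m\le C\}}$ is summable for $s>0$; it is only the piece of the symbol equal to $|n_2|^s$, whose output frequencies are not localized and for which no such exponential gain is available, that genuinely requires the Coifman--Meyer (or an equivalent square-function) argument you invoke. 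Second, the literal inequality $\bigl||n|^s-|n_2|^s\bigr|\lesssim|n|^s$ fails near $n=n_1+n_2=0$ in the high-high regime, where the difference is comparable to $|n_2|^s\gg|n|^s$; what is meant is that after splitting the symbol into $|n|^s$ and $-|n_2|^s$, the bound $|n|^s$ is applied to the former piece only. With those two points corrected the proof is complete, and, as you observe, simply citing the commutator estimate from the literature (as the paper does) is also legitimate, the periodic adaptation being routine.
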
 

\begin{lemma}[Kwon, Lemma 2.2 in \cite{Kwon2008-1}]\label{lem:general kato ponce0}
Let $s > 0$. Then, we have
\begin{align*}
\Big\|D^s(u \px^3v) - uD^s(\px^3v) - s\px u D^s(\px^2v) - &\frac{s(s-1)}{2}\px^2 uD^s(\px v) \Big\|_{L^2}\\
&\lesssim_s \norm{\px^3u}_{L^{\infty}}\norm{D^sv}_{L^2}+\norm{D^su}_{L^2}\norm{\px^3 v}_{L^{\infty}},
\end{align*}
and
\begin{align*}
\Big\|D^s(\px u \px^2v) - \px uD^s(\px^2v) &- s\px^2 u D^s(\px v) \Big\|_{L^2}\\
&\lesssim_s \norm{\px^3u}_{L^{\infty}}\norm{D^sv}_{L^2}+\norm{D^su}_{L^2}\norm{\px^3 v}_{L^{\infty}}.
\end{align*}
\end{lemma} 
For the proofs of Lemmas \ref{lem:kato ponce} and \ref{lem:general kato ponce0}, we refer \cite{KP1988} and \cite{Kwon2008-1}, respectively.

For the functions defined on $\T$, the corresponding estimates of Lemmas \ref{lem:kato ponce} and \ref{lem:general kato ponce0} can be seen in Lemma 9.A.1 of \cite{IK2007} (see also \cite{TV2014, KP2016}). We, in particular, refer to Lemma 2.5 in \cite{KP2016} for the corresponding version of Lemma \ref{lem:general kato ponce} on $\T$. 

\begin{lemma}[Kenig-Pilod, Lemma 2.5 in \cite{KP2016}]\label{lem:general kato ponce}
Let $s > 0$. Then, we have
\begin{align*}
\Big\|D^s(u \px^3v) - uD^s(\px^3v) - &s\px u D^s(\px^2v) - \frac{s(s-1)}{2}\px^2 uD^s(\px v) \Big\|_{L^2}\\
&\lesssim_s \sum_{j=0}^{3}\norm{\px^ju}_{L^{\infty}}\norm{D^sv}_{L^2}+\sum_{j=0}^{3}\norm{\px^j v}_{L^{\infty}}\norm{D^su}_{L^2},
\end{align*}
and
\begin{align*}
\Big\|D^s(\px u \px^2v) - \px uD^s(\px^2v) &- s\px^2 u D^s(\px v) \Big\|_{L^2}\\
&\lesssim_s \sum_{j=0}^{3}\norm{\px^ju}_{L^{\infty}}\norm{D^sv}_{L^2}+\sum_{j=0}^{3}\norm{\px^j v}_{L^{\infty}}\norm{D^su}_{L^2}.
\end{align*}
\end{lemma} 
Note that not only $\norm{\px^3u}_{L^{\infty}}$ and $\norm{\px^3v}_{L^{\infty}}$ terms but also $\norm{\px^ju}_{L^{\infty}}$ and $\norm{\px^jv}_{L^{\infty}}$ terms, $j=0,1,2$, appears in the right-hand side of estimates in Lemma \ref{lem:general kato ponce} compared with Lemma \ref{lem:general kato ponce0}, but latter terms are negligible in some sense thanks to the Sobolev embedding theorem.

\begin{proof}[Proof of \eqref{eq:comparable}.]
We use the H\"older inequality and the Sobolev embedding to the third term in $E_s(t)$ in order that $E_s(t)$ is bounded by $|a_s|C\norm{u}_{H^s}^4$. Then, we have \eqref{eq:comparable}, when $\norm{u}_{H^s}^2 \le \frac{1}{2C|a_s|}$.
\end{proof}

\begin{proof}[Proof of \eqref{eq:Gronwall}.]
The standard energy method to \eqref{eq:5mkdv6} yields
\begin{equation}\label{eq:A2}
\begin{split}
\frac12\frac{d}{dt}\norm{D^su}_{L^2}^2 &= -\varepsilon\norm{D^{s+3}u}_{L^2}^2 -c_1\int D^s(u \px u \px^2 u)D^su \\
&-c_2 \int D^s(u^2 \px^3 u)D^su  -c_3 \int D^s(\px u\px u\px u)D^su.
\end{split}
\end{equation}
We note that
\begin{equation}\label{eq:note1}
\px u\px u\px u = \px (u \px u\px u) - 2(u\px u\px^2 u),
\end{equation}
\begin{equation}\label{eq:note2}
u \px u \px^2 u = \frac12 \px(u^2)\px^2 u.
\end{equation}
Then, \eqref{eq:A2} can be rewritten from \eqref{eq:note1} and \eqref{eq:note2} that
\begin{align*}
\frac12\frac{d}{dt}\norm{D^su}_{L^2}^2 &= -\varepsilon\norm{D^{s+3}u}_{L^2}^2 +\frac{2c_3-c_1}{2}\int D^s(\px(u^2) \px^2 u)D^su \\
&-c_2 \int D^s(u^2 \px^3 u)D^su  -c_3 \int D^s\px(u\px u\px u)D^su.
\end{align*}
In order to use Lemma \ref{lem:general kato ponce}, we add and subtract some terms, and then
\begin{align*}
\frac12\frac{d}{dt}\norm{D^su}_{L^2}^2 =&~{} -\varepsilon\norm{D^{s+3}u}_{L^2}^2\\
&+ \frac{2c_3-c_1}{2}\int \left[D^s(\px(u^2) \px^2 u) - \px(u^2)D^s(\px^2 u) - s\px^2(u^2)D^s(\px u)\right]D^su \\
&-c_2 \int \left[D^s(u^2 \px^3 u)-u^2D^s(\px^3 u) -s\px(u^2)D^s(\px^2 u) - \frac{s(s-1)}{2}\px^2(u^2)D^s(\px u)\right]D^su \\
&-\int \left[c_3D^s\px(u\px u\px u) + c_4\px(u^2)D^s(\px^2 u) + c_5\px^2(u^2)D^s(\px u)\right]D^su\\
=&~{}: I+II+III+IV.
\end{align*}
We use Lemma \ref{lem:general kato ponce}, product rule for fractional derivative and Sobolev embedding to estimate terms of $II$ and $III$ to have
\[|II+III| \lesssim \norm{u}_{H^s}^2\norm{D^su}_{L^2}^2.\]
We also perform the integration by parts to $VI$ to obtain
\[VI = d_1\int u \px^3 u D^s u D^s u +d_2 \int \px u \px^2 u D^s u D^s u +d_3 \int \px(u^2) D^s \px u D^s \px u.\]

On the other hand, taking the time derivative to the third term in $E_s(t)$ yields
\begin{align*}
\frac{d}{dt}& \int u^2 D^{s-2}\px uD^{s-2}\px u\\
=&~{} 2\int uu_t D^{s-2}\px uD^{s-2}\px u + 2\int (u^2) D^{s-2}\px u_tD^{s-2}\px u \\
=&-2\int u\px^5u D^{s-2}\px uD^{s-2}\px u - 2\int (u^2) D^{s-2}\px^6 uD^{s-2}\px u \;(=:A)\\
&+2\varepsilon\int u\px^6u D^{s-2}\px uD^{s-2}\px u + 2\varepsilon\int (u^2) D^{s-2}\px^7 uD^{s-2}\px u\;(=:B)\\
&-2c_1\int u(u\px u\px^2 u) D^{s-2}\px uD^{s-2}\px u + (u^2) D^{s-2}\px (u\px u\px^2 u) D^{s-2}\px u\\
&-2c_2\int u(u^2\px^3 u) D^{s-2}\px uD^{s-2}\px u + (u^2) D^{s-2}\px (u^2\px^2 u) D^{s-2}\px u\\
&-2c_3\int u(\px u\px u\px u) D^{s-2}\px uD^{s-2}\px u + (u^2) D^{s-2}\px (\px u\px u\px u) D^{s-2}\px u\\
=&:A+B+C.
\end{align*}
From the following observation
\begin{align*}
\px^5(fg^2) =&~{} f_{xxxxx}g^2 + 10f_{xxxx}gg_x + 20 f_{xxx}g_xg_x + 20f_{xxx}gg_{xx} \\
&+ 60f_{xx}g_xg_{xx}+20f_{xx}gg_{xxx}+30f_xg_{xx}g_{xx} + 40f_xg_xg_{xxx} \\
&+ 10 f_xgg_{xxxx} + 20fg_{xx}g_{xxx} +10fg_xg_{xxxx} + 2fgg_{xxxxx},
\end{align*}
putting $f = u$ and $g = D^{s-2}\px u$ with performing the integration by parts several times yields
\begin{align*}
A &=\alpha_1\int u\px^3uD^{s-2}\px uD^{s-2}\px^3 u +\alpha_2\int u\px^2uD^{s-2}\px uD^{s-2}\px^4 u\\
&+\alpha_3\int u\px uD^{s-2}\px^3 uD^{s-2}\px^3 u +\alpha_4\int \px u\px^2uD^{s-2}\px^2 uD^{s-2}\px^2 u\\
&+\alpha_5\int u\px^3uD^{s-2}\px^2 uD^{s-2}\px^2 u +\alpha_6\int \px u\px^3uD^{s-2}\px^2 uD^{s-2}\px u\\
&+\alpha_7\int \px^2 u\px^2uD^{s-2}\px^2 uD^{s-2}\px u.
\end{align*}
For the first two bad terms, by direct calculation, we can easily know $\alpha_1 = \alpha_2$. Hence, we perform the integration by parts again to  
\[\alpha_1\int u\px^3uD^{s-2}\px uD^{s-2}\px^3 u,\]
the one of distributed terms exactly cancels out the term
\[\alpha_2\int u\px^2uD^{s-2}\px uD^{s-2}\px^4 u.\]
The rest of distributed terms be absorbed to the other four terms, thus we finally obtain
\begin{align*}
A &=\beta_1\int u\px uD^{s-2}\px^3 uD^{s-2}\px^3 u +\beta_2\int \px u\px^2uD^{s-2}\px^2 uD^{s-2}\px^2 u\\
&+\beta_3\int u\px^3uD^{s-2}\px^2 uD^{s-2}\px^2 u +\beta_4\int \px u\px^3uD^{s-2}\px^2 uD^{s-2}\px u\\
&+\beta_5\int \px^2 u\px^2uD^{s-2}\px^2 uD^{s-2}\px u.
\end{align*}
Once we choose $a_s$ such that $a_s \cdot \beta_1 + d_3 = 0$, we get
\[|VI + A| \lesssim \norm{u}_{H^s}^2\norm{D^su}_{L^2}^2.\]

Now we concentrate on the term $C$. By using the integration by parts, the H\"older inequality, Lemma \ref{lem:kato ponce} and the Sobolev embedding to estimate term $C$, we can easily obtain
\[|C| \lesssim \norm{u}_{H^s}^2\norm{D^su}_{L^2}^2,\]
when $\norm{u}_{H^s} \le 1$, for instance
\begin{align*}
\Big|\int u^2 &D^{s-2}(u \px u \px^3 u) D^{s-2} \px u \Big|\\
\lesssim& \left|\int u^2 [[D^{s-2};u \px u] \px^3 u] D^{s-2} \px u \right| +\left|\int u^2 u\px uD^{s-2}\px^3 u D^{s-2} \px u \right|\\
\lesssim& \norm{u^2}_{L^{\infty}}\norm{[D^{s-2};u \px u] \px^3 u}_{L^2}\norm{D^{s-2} \px u}_{L^2} \\
&+\left|\int \px(u^2 u\px u)D^{s-2}\px^2 u D^{s-2} \px u \right|+\left|\int u^2 u\px uD^{s-2}\px^2 u D^{s-2} \px^2 u \right|\\
\lesssim& \norm{u}_{H^s}^4\norm{D^su}_{L^2}^2.
\end{align*}
We finally consider the term $B$. From the integration by parts, we have the following observations:
\begin{align*}
+2\varepsilon\int u\px^6u D^{s-2}\px uD^{s-2}\px u =&-2\varepsilon\int \px^3u\px^3uD^{s-1}uD^{s-1}u -12\varepsilon\int \px^2u\px^3uD^{s}uD^{s-1}u\\
&-8\varepsilon\int \px u\px^3uD^{s}uD^{s}u -12\varepsilon\int \px u\px^3uD^{s-1}uD^{s+1}u\\
&-4\varepsilon\int u\px^3uD^{s}uD^{s+1}u -4\varepsilon\int u\px^3uD^{s-1}uD^{s+2}u
\end{align*}
and
\begin{align*}
2\varepsilon\int (u^2) D^{s-2}\px^7 uD^{s-2}\px u =&2\varepsilon\int D^{s+3}u\px^2[u^2D^{s-1}u] +4\varepsilon\int \px u \px uD^{s+3}uD^{s-1}u\\
&+4\varepsilon\int u \px^2 uD^{s+3}uD^{s-1}u +8\varepsilon\int u \px uD^{s+3}uD^s u\\
&+2\varepsilon\int u^2D^{s+3}uD^{s+1}u.
\end{align*}
From the Sobolev embedding, the last three terms in the first observation and all terms in the second observations are dominated by
\[38\varepsilon K\norm{u}_{H^s}^2\norm{D^{s+3}u}_{L^2}^2,\]
where the constant $K$ only appears from the Sobolev embedding. Furthermore, the other terms in the first observation can be easily treated by the Sobolev embedding.

Gathering all things yields
\[\varepsilon\norm{D^{s+3}u}_{L^2}^2 + \frac{d}{dt}E_s(t) \lesssim_s \norm{u}_{H^s}^2E_s(t),\]
when $\displaystyle\norm{u}_{H^s}^2 \le \frac{1}{38K|a_s|}$, and hence we conclude from the Gronwall's inequality that
\[E_s(t) \lesssim_s e^{\int_0^t\norm{u(t')}_{H^s}^2 \; dt'}E_s(0),\]
which complete the proof.
\end{proof}

\end{document}